\newtheorem{theorem}{Theorem}[section]
\newtheorem{proposition}[theorem]{Proposition}
\newtheorem*{coro}{Corollary} 
\newtheorem*{lem}{Lemma} 
\newtheorem*{prop}{Proposition} 
\theoremstyle{definition}
\newcommand{\End}{\operatorname{End}}
\newcommand{\mo}{\operatorname{mod}}
\newcommand{\rad}{\operatorname{rad}}
\newcommand{\soc}{\operatorname{soc}}
\newcommand{\nn}{\operatorname{nn}}
\newcommand{\topp}{\operatorname{top}}
\newcommand{\ssize}{\scriptstyle }
\title[The minimal representation-infinite algebras which are special biserial]{The Minimal Representation-Infinite Algebras which are Special Biserial}
\author[Claus Michael Ringel]{Claus Michael Ringel}
\begin{document}

\begin{abstract} Let $k$ be a field. 
A finite dimensional $k$-algebra  is said to be minimal repre\-sentation-infinite provided
it is representation-infinite and all its proper factor algebras are 
representation-finite. Our aim is to classify the special
biserial algebras which are minimal representation-infinite. The second part describes
the corresponding module categories.
\end{abstract}

\begin{classification}
Primary 16G20, 16G60. Secondary 16D90, 16G70.
\end{classification}

\begin{keywords}
Minimal representation-infinite algebras, special biserial algebras. Quiver. 
Auslander-Reiten quiver. Auslander-Reiten quilt. Sectional paths. Irreducible maps.
Gorenstein algebras. Semigroup algebras. 
\end{keywords}

\maketitle

\section{Introduction}

The study of
minimal representation-infinite $k$-algebras with $k$ an algebraically closed field was one of the central
themes of the representation theory around 1984 with contributions by Bautista, Gabriel, Roiter,
Salmeron, Bongartz, Fischbacher and many others. Recent
investigations of Bongartz \cite{B2} provide a new impetus for analyzing the module category of such an algebra and
seem to yield  a basis for a classification of these algebras. Here is a short summery of this development.
First of all, there are algebras 
with a non-distributive ideal lattice, such algebras have been studied already 1957 by Jans \cite{J}. 
Second, there are  algebras  with a good
universal cover $\widetilde \Lambda$ and such that $\widetilde \Lambda$ 
has a convex subcategory which is a tame concealed algebra of
type $\widetilde {\mathbb D}_n, \widetilde {\mathbb E}_6,  
\widetilde {\mathbb E}_7$ or $ \widetilde {\mathbb E}_8;$  these were the algebras 
which have been discussed by Bautista, Gabriel, Roiter and Salmeron in \cite{BGRS}  (we say that the universal cover is 
{\it good}
provided it is a Galois cover with free Galois group and is interval-finite).
As Bongartz now has shown, the remaining minimal representation-infinite
algebras also have a good cover $\widetilde \Lambda$, but all finite convex subcategories of $\widetilde \Lambda$
are representation-finite. These are the algebras which will be discussed here.
We will show that such an algebra is special biserial and we will provide a full
classification of the special biserial algebras which are minimal representation-infinite.
	              \medskip

Let us recall the definition: 
A finite dimensional $k$-algebra is said to be {\it special biserial} (see \cite{SW})
provided it is Morita equivalent to the path algebra of a quiver $Q$
with relations with the following properties:
 
(1) Any vertex of $Q$ is endpoint of at most two arrows, and 
also starting point of at most two arrows.

(2) If two different arrows $\gamma$ and $\delta$
start in the endpoint of the arrow $\alpha$, then at least one of the paths
$\gamma\alpha, \delta\alpha$ is a relation.

(2$'$) If two different arrows $\alpha$ and $\beta$
end in the starting point of the arrow $\gamma$, then at least one of the paths
$\gamma\alpha, \gamma\beta$ is a relation.\par

Note that the composition of an arrow $\alpha$ with endpoint $a$
and an arrow 
$\gamma$ with starting point $a$ is here denoted by  $\gamma\alpha$,
one should visualize the situation as follows:
$$
\hbox{\beginpicture
\setcoordinatesystem units <0.6cm,.6cm>
\multiput{} at 0 0.2  2 1.8 /
\put{$\circ$} at 1 1
\arrow <1.5mm>  [0.25,0.75 ] from 0.8 0.8 to 0.2 0.2

\arrow <1.5mm>  [0.25,0.75 ] from 1.8 1.8 to 1.2 1.2

\put{$\alpha$} at 1.7 1.3
\put{$\gamma$} at 0.7 0.3
\endpicture}
$$

The definition of a special biserial algebra looks quite technical,
but actually there are a lot of natural examples of algebras which turn
out to be of this kind. Note that a special biserial algebra is
hereditary if and only if it is Morita equivalent to the path algebra of a quiver of type 
$\mathbb A_n$ or $\widetilde{\mathbb A}_n$, where the cyclic
orientation of $\widetilde {\mathbb A}_n$ has to be excluded in order to get a finite dimensional
algebra.
	
Special biserial algebras were first studied by Gelfand and Ponomarev \cite{GP}:
they have provided the methods in order to classify all the
indecomposable representations of such an algebra. This classification shows that special biserial
algebras are always tame (see also \cite{WW, DS}) and usually they are of non-polynomial growth. 
For the structure
of the Auslander-Reiten quiver of a special biserial algebra we refer
to \cite{BR}. 
The aim of the present paper is to describe the special biserial
algebras which are minimal representation-infinite and to exhibit the
corresponding module categories.

We can assume that the defining relations of the special
biserial algebras to be considered are monomials (since otherwise
we will obtain an indecomposable module which is both projective and
injective, but minimal representation-infinite algebras do not have
indecomposable modules which are both projective and injective).

If $\Lambda$ is a finite dimensional algebra, a simple module $S$ is said to be
a {\it  node} provided $S$ is neither projective nor injective, and
such that $S$ does not occur as a composition factor of a module of the
form $\rad M / \soc M$, where $M$ is indecomposable and not simple.
If $\Lambda$
is given by a quiver with relations, then the simple module $S(a)$ corresponding
to a vertex $a$ is a  node 
if and only if $a$ is neither a sink nor a source
and given an arrow $\alpha$ which ends in $a$ and an arrow $\gamma$ which
starts in $a$, then $\gamma\alpha$ is a relation. There is a well-known
procedure \cite{M} to resolve  nodes: For any algebra $\Lambda$, there is an
algebra $\nn(\Lambda)$ without  nodes such that $\Lambda$ and $\nn(\Lambda)$ 
are stably equivalent:
in case $\Lambda$ is given by a quiver with relations, one just replaces any
vertex $a$ with $S(a)$ a  node by two vertices $a_+, a_-$ such
that $a_+$ is a sink and $a_-$ a source.
   
A vertex $a$ of a quiver will be called an {\it $n$-vertex} provided 
$a$ has $n$ neighbors (this means that there are $n_1$ arrows ending in $a$
and $n_2$ arrows starting in $a$ and $n = n_1+n_2$; observe that in this way, 
the loops at $a$ are counted twice).

\begin{theorem}
 Assume the $k$-algebra $\Lambda$ is special biserial and minimal 
representation-infinite. Then any $4$-vertex of the quiver of $\Lambda$  is a node.
\end{theorem}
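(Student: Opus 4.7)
The plan is by contradiction: if a $4$-vertex $a$ fails to be a node, I would produce an indecomposable $\Lambda$-module that is both projective and injective, contradicting the observation (already invoked in the paper to reduce to monomial relations) that no such module exists over a minimal representation-infinite algebra. Since $\Lambda$ is special biserial, $a$ has exactly two arrows $\alpha_1,\alpha_2$ ending at $a$ and two arrows $\gamma_1,\gamma_2$ starting at $a$. Since $a$ is neither a sink nor a source but is not a node, some composition $\gamma_i\alpha_j$ is a non-zero element of $\Lambda$, using monomiality of the relations. After relabelling, $\gamma_1\alpha_1\neq 0$; then the biserial axioms (2) and ($2'$) applied at $a$ force $\gamma_2\alpha_1=0$ and $\gamma_1\alpha_2=0$.

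Starting from the three-dimensional string module $M=M(\gamma_1\alpha_1)$, supported at the source of $\alpha_1$, at $a$, and at the target of $\gamma_1$, I would extend the walk $\gamma_1\alpha_1$ biserially on both sides, adjoining at each step the unique admissible continuation forced by the biserial axioms at the current endpoint. The process either terminates at a finite \emph{bi-maximal} walk $w$ or continues indefinitely. If it terminates, then $M(w)$ is simultaneously the projective cover of $\topp M(w)$ (no admissible extension remains at any source-end of $w$) and the injective envelope of $\soc M(w)$ (dually at each sink-end), so $M(w)$ is an indecomposable projective-injective, giving the contradiction.

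If the extension does not terminate, it produces a band $B$ of $\Lambda$. By minimality, $B$ must then use every arrow of the quiver: for any arrow $\delta$ avoided by $B$, the proper factor $\Lambda/\langle\delta\rangle$ would still carry the band $B$ and hence be representation-infinite, contradicting minimality. A combinatorial analysis of the admissible transit-types at $a$---the forward transit $\alpha_1\to\gamma_1$ together with the two ``reflection'' transits $\alpha_1\to\alpha_2^{-1}$ and $\gamma_1^{-1}\to\gamma_2$---in a cyclic walk $B$ covering every arrow then produces an auxiliary finite bi-maximal walk, hence another indecomposable projective-injective, giving the contradiction in this case too. The remaining sub-case $\gamma_2\alpha_2\neq 0$ (two non-zero diagonal compositions at $a$) is handled by the same argument applied symmetrically.

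The principal obstacle is the final combinatorial analysis in the band case: ruling out the configuration in which a single band absorbs the entire quiver while simultaneously accommodating a non-node $4$-vertex without already forcing an auxiliary bi-maximal walk. Setting up the biserial extension and treating the finite case are routine; the real work lies in making this local-to-global argument precise at $a$ and checking it uniformly across both sub-cases of the local analysis.
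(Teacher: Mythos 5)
Your route is genuinely different from the paper's, and it contains a gap I do not see how to repair. The crux is the implication ``bi-maximal walk $\Rightarrow$ indecomposable projective-injective''. For a special biserial algebra with monomial relations, a string module $M(w)$ is projective only when $w$ has a unique source, i.e.\ $w=u^{-1}v$ with $u,v$ the two maximal nonzero paths out of a common vertex, and it is injective only in the dual situation; a word with exactly one source and exactly one sink is forced to be serial. Mere inextendability of $w$ at both ends gives neither property: your extension process will in general append inverse letters (hooks) at the ends, creating extra summands of $\topp M(w)$ and $\soc M(w)$, so $M(w)$ cannot be the projective cover of its top or the injective envelope of its socle; and even when the extension stays a direct path $p$, the module $M(p)$ already fails to be projective as soon as the starting vertex of $p$ carries a second outgoing arrow. (Also, the continuation at a given end is not unique: there is at most one direct and at most one inverse admissible letter, and both may exist.) So the terminating case does not yield the contradiction. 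The non-terminating case is worse, because the logic is inverted: exhibiting a band that passes through $\gamma_1\alpha_1$ and meets every arrow contradicts nothing. What the argument needs is a band that \emph{avoids} the two compositions through $a$, so that adding them as relations leaves the algebra representation-infinite and minimality forces them to be zero already. Your ``auxiliary bi-maximal walk'' in this case again rests on the false implication above, and you flag precisely this step as unresolved.

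For comparison, the paper's proof runs the avoidance argument directly: label the arrows at the $4$-vertex so that $\gamma\beta$ and $\delta\alpha$ are relations, take a cyclic word $w$ of minimal length (so that, by the lemma on cyclic words of the form $\alpha u\alpha v$, no letter repeats), and by rotating, inverting and splicing pieces of $w$ produce a cyclic word containing neither $\gamma\alpha$ nor $\delta\beta$ nor their inverses. Hence the factor algebra by these two paths is still representation-infinite, and minimality forces $\gamma\alpha=\delta\beta=0$, i.e.\ $a$ is a node. The non-existence of projective-injective indecomposables is used in the paper only once, to reduce to monomial relations; it is not strong enough to carry the $4$-vertex step. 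If you want to salvage your local-to-global idea, the missing ingredient is a surgery lemma on minimal cyclic words, not a statement about projective-injectives.
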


If we want to classify algebras which are minimal 
representation-infinite, it is sufficient to deal with algebras without a node,
since an algebra $\Lambda$ is minimal representation-infinite if and only if the
node-free algebra $\nn(\Lambda)$ is minimal
representation-infinite, see section 7.

A finite dimensional hereditary algebra of type $\widetilde {\mathbb A}$ will be said to
be a {\it cycle algebra}. The main task of the paper will be to define two
classes of finite dimensional algebras, the so-called  barbell algebras and wind wheel algebras, 
see sections 5 and 6, respectively. These algebras
are obtained from cycle algebras by a construction which we call barification (see section 4) 
and adding, if necessary, suitable zero relations. 

\begin{theorem}
The special biserial algebras which are 
minimal representation-infinite and have no nodes are the cycle algebras,
 the barbell algebras with non-serial bars
and the wind wheel algebras. 
\end{theorem}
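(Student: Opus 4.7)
The plan is to combine Theorem~1 with a careful case analysis of the quiver, using the minimal representation-infinite hypothesis to prune configurations. Since $\Lambda$ has no nodes and Theorem~1 forces every $4$-vertex to be a node, the quiver $Q$ of $\Lambda$ has only $1$-, $2$- and $3$-vertices; $Q$ is connected (as $\Lambda$ is indecomposable) and, because $\Lambda$ is representation-infinite, the underlying graph of $Q$ must contain a cycle: a special biserial algebra whose quiver is a tree has only finitely many strings and no bands, so by Gelfand--Ponomarev it is representation-finite.

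If $Q$ has no $3$-vertex, then it is a disjoint union of paths, cycles, and isolated loops; connectedness together with representation-infiniteness force $Q$ to be a single cycle. By the standing assumption the relations are monomials, and any nontrivial monomial relation on a cycle would make $\Lambda$ a cyclic Nakayama algebra, which is self-injective of finite representation type -- a contradiction. Hence $\Lambda$ is the path algebra of $\widetilde{\mathbb A}_n$, that is, a cycle algebra.

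If $Q$ has at least one $3$-vertex, I would first pin down the local structure: the special biserial conditions (1), (2) and (2$'$) together with the fact that no $S(a)$ is a node force, at each $3$-vertex $a$, a precise local shape, namely two arrows in one direction and one in the other, with exactly one of the two possible compositions at $a$ being a monomial relation. Next I would pass from local to global via the following minimality test: for any arrow $\beta$ of $Q$, the factor algebra $\Lambda/\langle\beta\rangle$ must be representation-finite, and the Gelfand--Ponomarev string-and-band description then constrains the shape of $Q\setminus\{\beta\}$. Running $\beta$ through all arrows of $Q$, the global shape of $Q$ is forced into one of two patterns: either a single cycle with serial arms attached at its $3$-vertices (the wind wheel pattern), or two disjoint cycles joined by a single path (the bar) attached at one $3$-vertex of each cycle (the barbell pattern), with the bar required to be non-serial precisely so that the minimality test returns a representation-finite factor.

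It remains to verify the converse for each family. Representation-infiniteness is witnessed by an explicit band, while representation-finiteness of every proper factor is checked by listing the bands that survive after killing any primitive idempotent or arrow and observing that the resulting quotient is gentle of finite type. The main obstacle will be the global case analysis in the presence of $3$-vertices: ruling out quivers in which two cycles share a vertex, or are joined by more than one bridge, or a would-be wind wheel carries non-serial arms. For each such forbidden configuration one must exhibit a proper factor that is itself representation-infinite, and constructing the witnessing bands is where most of the real work lies.
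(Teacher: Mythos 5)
Your reduction via Theorem~1.1 to quivers with only $2$- and $3$-vertices, and your treatment of the case with no $3$-vertex, match the paper and are fine. The gap is in the global step. First, the asserted dichotomy of quiver shapes is false: a wind wheel quiver is \emph{not} ``a single cycle with serial arms attached at its $3$-vertices.'' Wind wheels arise from a cycle algebra by identifying pairs of serial segments (barification), so their quivers typically contain several cycles, loops and parallel paths -- see Examples 4, 5 and 6 of the paper. Second, the classification cannot be read off the underlying quiver at all: the defining datum of a wind wheel is a primitive cyclic word $w$ with a factorization $w=u_1v_1\cdots u_{2t}v_{2t}$, a fixed-point-free involution $\sigma$ pairing each $v_i$ with $v_{\sigma(i)}^{-1}$, and attracting/non-attracting conditions on the junctions, together with the specific short and long zero relations this dictates. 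A case analysis of quiver shapes never produces this structure.

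The paper's actual argument works with a cyclic word $w$ of minimal length: one shows each edge occurs in $w$ either once or twice (twice exactly for the ``trunk'' edge at each $3$-vertex), cuts $w$ into pieces $u_iv_i$ accordingly, and then repeatedly performs surgery on $w$ -- replacing a segment by the inverse of another, or splicing two subwords -- to manufacture a cyclic word that avoids a prescribed path $z$. Whenever this succeeds, $\Lambda/\langle z\rangle$ still carries a band and is representation-infinite, contradicting minimality; the configurations where every such surgery fails are precisely the barbells with non-serial bar ($n=1$) and the wind wheels (all $v_i$ serial with the attracting conditions). Your minimality test of killing a single arrow is too coarse for this: several of the decisive steps add as a relation a \emph{path} of length at least $2$ (for instance the serial word $u_{1,\omega}v_1u_{2,1}$ in the paper's claim $(*)$), which no quotient by an arrow detects. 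Without the word-surgery arguments, the exclusion of the ``forbidden configurations'' you list at the end -- which you correctly identify as the real work -- is not achievable by your proposed route.
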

    
\begin{theorem}A minimal representation-infinite algebra is special biserial if and only if
its universal cover $C$ is good and any finite convex subcategory of $C$ is representation-finite.
\end{theorem}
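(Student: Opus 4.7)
The plan is to treat the two implications separately, in each case reducing to the node-free situation via section~7 and then appealing to the classification of Theorem~1.2.

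For the forward direction, assume $\Lambda$ is minimal representation-infinite and special biserial. After checking that node resolution preserves both the hypothesis (special biserial, minimal representation-infinite) and the conclusion (good universal cover with all finite convex subcategories representation-finite), Theorem~1.2 reduces the task to three families: cycle algebras, barbell algebras with non-serial bars, and wind wheel algebras. For each family I would write down the universal cover $\widetilde\Lambda$ explicitly as an unwinding of the defining cycle(s): for a cycle algebra of type $\widetilde{\mathbb A}_n$ the cover is the linear quiver $\mathbb A_\infty^\infty$ with $\mathbb Z$ acting by translation; for the barbell and wind wheel cases one unrolls the cyclic backbone into a line and keeps the bar or wheel arms attached as finite trees at the appropriate translates. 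In every case the cover is manifestly a Galois cover by a free (in fact infinite cyclic) group and is interval-finite, hence good, and any finite convex subcategory is a special biserial algebra whose underlying quiver is a tree-with-attachments, so representation-finite by the Gelfand--Ponomarev string-and-band classification.

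For the converse, suppose $\Lambda$ is minimal representation-infinite with good universal cover $C$ such that every finite convex subcategory of $C$ is representation-finite. I would prove that $C$ (and hence $\Lambda$) satisfies the three defining axioms of special biserial algebras. The crucial ingredient is that representation-finiteness of every finite convex subcategory of $C$ precludes any tame concealed convex subcategory in $C$, in particular one of type $\widetilde{\mathbb D}_n$ or $\widetilde{\mathbb E}$. Condition~(1) then follows because a vertex of $C$ with three arrows starting (or three ending) would, together with sufficiently long paths drawn from the representation-infinite structure of $\Lambda$ and lifted to $C$, generate a convex subquiver of shape $\widetilde{\mathbb D}$ or $\widetilde{\mathbb E}$; conditions~(2) and (2$'$) follow because the absence of the required relation would permit building a small commutative configuration which, once extended inside $C$, forces a tame concealed convex subcategory. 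Theorem~1.1 can be brought in at the outset to eliminate the $4$-vertex configurations as automatically being nodes, which simplifies the remaining case analysis.

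The hard part is the converse: the casewise analysis of local configurations in $C$ violating any of (1), (2) or (2$'$), and the verification that each such configuration extends, within $C$, to a finite convex subcategory containing a tame concealed subalgebra of type $\widetilde{\mathbb D}_n$, $\widetilde{\mathbb E}_6$, $\widetilde{\mathbb E}_7$ or $\widetilde{\mathbb E}_8$. Controlling these extensions while staying inside $C$ and reaching enough combinatorial length to realise a full Euclidean frame is the delicate step; by contrast the forward direction is mostly bookkeeping over the three families produced by Theorem~1.2.
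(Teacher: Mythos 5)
Your forward direction is workable but takes a detour the paper does not need: the paper never invokes Theorem~1.2 here. It observes directly that for a special biserial algebra whose relations are monomial, the universal cover is the special biserial algebra on the tree $\widetilde Q$ with the lifted relations, so every indecomposable over the cover is a string module with support of type $\mathbb A_n$ and every finite convex subcategory is representation-finite. Your route via the classification into cycle, barbell and wind wheel algebras would also succeed (modulo the extra check that node resolution preserves both sides of the equivalence), but your parenthetical claim that the Galois group is ``in fact infinite cyclic'' is false for barbells and wind wheels: the underlying graph of a barbell has first Betti number $2$, and a wind wheel with $t$ bars has Betti number $t+1$, so the fundamental group is free of the corresponding rank. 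Only ``free'' is needed for the cover to be good, so this is a repairable slip rather than a fatal one.

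The genuine gap is in the converse, precisely at the step you defer as ``the delicate step.'' You assert that a forbidden local configuration in $Q$, ``together with sufficiently long paths drawn from the representation-infinite structure of $\Lambda$ and lifted to $C$,'' extends to a Euclidean convex subquiver, but you give no mechanism producing those long relation-free paths through a \emph{prescribed} arrow of the cover. The paper supplies exactly this: since $\widetilde\Lambda$ has unbounded representation type, it has indecomposables of arbitrarily large length whose support algebras are representation-directed with many simples; by Bongartz's classification of sincere representation-directed algebras with large support, such an algebra contains a convex subcategory of type $\mathbb A_n$ \emph{without relations} with $n$ large; choosing $n > 4p(\Lambda)s(\Lambda)$ forces the corresponding word to revisit a vertex of $Q$, producing a string module whose pushdown admits a one-parameter family of indecomposable quotients and is therefore \emph{faithful} by minimal representation-infiniteness; faithfulness means its support contains a fundamental domain, which is what licenses prolonging any lifted arrow $\tilde\alpha$ to a relation-free word of length $3$ (resp.\ $4$). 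Only then does the union of supports of the two or three prolonged words form a relation-free convex subquiver of type $\widetilde{\mathbb E}_7$, contradicting the hypothesis. Without the Bongartz input and the faithfulness argument, your prolongations might immediately run into relations or leave the support, and no Euclidean frame is obtained; note also that the contradiction is with a \emph{hereditary} subquiver of type $\widetilde{\mathbb E}_7$, so no appeal to tame concealed algebras is needed.
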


The first part of these notes is devoted to a proof of theorems 1.1, 1.2 and 1.3. 
    \medskip

The second part provides information on the module categories of the minimal
representation-infinite special biserial algebras. As we have mentioned already,  all special biserial 
algebras are tame. Dealing with the minimal representation-infinite ones,
we encounter both algebras of non-polynomial
growth (namely the  barbell algebras) as well as domestic ones
(the  hereditary algebras of type  $\widetilde {\mathbb A}$ as well as all the wind wheel algebras),
note that the domestic ones all are  even $1$-domestic. 
Here, an algebra is said to be {\it $n$-domestic} in case there are precisely $n$
primitive 1-parameter family of indecomposable modules (and additional ``isolated''
indecomposables). 

Not much is known about domestic algebras $\Lambda$ in general, 
not even about 1-domestic algebras! The wind wheel algebras $W$ provide new
examples of $1$-domestic algebras such that the Auslander-Reiten quiver has  an arbitrary 
finite number  of non-regular  components (12.5)
as well as having non-regular components with
arbitrary ramification (12.6). Let us stress that the examples which we present
all have Loewy length $3$. 

Also, we will describe in detail the corresponding
Auslander-Reiten quilt $\Gamma$ of a wind wheel, it is
obtained from the set of Auslander-Reiten components which contain
string modules by inserting suitable infinite dimensional algebraically compact indecomposable modules.
We will see that $\Gamma$  
is a connected orientable 
surface with boundary, its Euler characteristic is $\chi(\Gamma) = -t,$ 
where $t$ is the number of bars.

A further property of the wind wheels $W$ with $t$ bars seems to be of interest: Let $M$ be
a primitive homogeneous and absolutely indecomposable $W$-module, and $\underline E$  
the factor ring of $\End(M)$ modulo the ideal of endomorphisms with semisimple
image, then $\underline E$  is of dimension $t+1$, thus arbitrarily large. 

In general, we will show:  
Let $\Lambda$ be a $k$-algebra which is minimal representation-infinite and special biserial.
Then any complete sectional path is a mono ray, an epi coray or the concatenation of an
epi coray with a mono ray. This implies in particular the following: If $X,Y,Z$ are
indecomposable $\Lambda$-modules with an irreducible monomorphism $X\to Y$
and an irreducible epimorphism $Y \to Z$, then $X = \tau Z.$
    \bigskip

Whereas we present in Part I full proofs for the main results, the discussion in Part II
is less complete, several of the (sometimes tedious) combinatorial verifications are left
to the reader. 
   \bigskip

{\bf Acknowledgment.} The classification of the minimal representation-infinite special
biserial algebras was first announced at the Trondheim conference 2007 
and then presented in lectures at several places. The author is indebted to various
mathematicians for helpful comments. At the ICRA workshop Tokyo 2010, the author
gave a sequence of lectures dealing with minimal representation-infinite algebras
in general. The following text written for the workshop proceedings 
restricts the attention again to the special biserial algebras.
	          \bigskip\bigskip\bigskip

\centerline{\large\bf Part I. The algebras}

\section{Preliminaries: Words }
     
Given a quiver $Q$ with vertex set $Q_0$ and arrow set $Q_1$
and a set $\rho$ of monomial relations (monomial relations are paths of length at least 2),
 we consider (usually finite) words 
using as letters the arrows of the quivers and formal inverses 
of these arrows, the set of such words will be denoted by $\Omega(Q,\rho)$
(and just by $\Omega(Q)$ if no relations are given). In case the algebra $A$
is given by the quiver $Q$ with relations $\rho$, we also dare to write $\Omega(A)$
instead of $\Omega(Q,\rho)$ (but this is an abuse of notation).

Here is the proper definition:
Let $\overline Q$ be the quiver obtained from $Q$ by adding formal 
inverses of the arrows (given an arrow $\alpha$ with starting point 
$s(\alpha)$ and terminal point
$t(\alpha),$ we denote by $\alpha^{-1}$ a formal inverse of
$\alpha,$ with starting point $s(\alpha^{-1}) = t(\alpha)$ and terminal
point $t(\alpha^{-1}) = s(\alpha);$ 
given such a formal inverse $l = \alpha^{-1},$ one writes $l^{-1} = \alpha$).
We consider paths in the quiver $\overline Q$, those of length $n\ge 1$ are of the form 
$$
 w = l_1l_2\cdots l_n \quad\text{with}\quad s(l_i) = t(l_{i+1})\quad  \text{for all}\quad 1 \le i < n
$$
(one may consider $w$ just as the sequence $(l_1,l_2,\dots,l_n)$, but
it will be convenient, to delete the brackets and the colons).
In addition, there are the paths of length zero corresponding to the vertices.
By definition, the {\it inverse\/} 
of $w = l_1\dots l_n$ is $w^{-1} = l_n^{-1}\dots l_1^{-1}$;
a subword of $w$ is of the form 
$l_il_{i+1}\dots l_{j-1}l_j$ (with $1 \le i \le j \le n$ or else a vertex which
is starting or terminal point for some $l_i$.
The elements of $\Omega(Q,\rho)$ are the paths $w = l_1\dots l_n$
in $\overline Q$ which satisfy the following conditions:
                                                 
(W1) We have $l_i^{-1} \neq l_{i+1},$ for all $1 \le i < n.$

(W2) No subword of $w$ or its inverse belongs to $\rho$.
     \medskip

The elements of $\Omega(Q,\rho)$ will be called {\it words} for
the quiver $Q$ with the relations $\rho$, the arrows and their formal
inverses will be called the {\it letters}.
A word $w = l_1\dots l_n$   is said to be {\it direct} provided all the 
letters $l_i$ are arrows, and {\it inverse} provided $w^{-1}$ is direct. 
A word which is either direct of inverse is said
to be {\it serial.} Two letters $l,l'$ will be said to have the same direction, if both
are direct or if both are inverse letters. 

We say that a word $w$ is {\it without repetition} provided no letter
appears twice in $w$. Given a word without repetition and a letter $l$, 
then both $l$ and $l^{-1}$ 
may appear in $w$; in this case we will say that the edge $l^{\pm} = \{l,l^{-1}\}$ 
occurs twice in $w$. 

Given a word $v = l_1\cdots l_s$ of length $s\ge 1$, we will write $v_1 = l_1$
and $v_\omega = l_s$. Given words $v,w$ such that 
the starting point of $v_\omega$ is the endpoint of $w_1$ and
$v_\omega,\ (w_1)^{-1}$ are different, but have the same direction, then we
will say that the pair $(v,w)$ is {\it attracting.} Note that for an attracting
pair $(v,w)$, the composition $vw$ is a word again.

Finally, recall that a word $w$ is called {\it cyclic\/} provided it contains both
direct and inverse letters and such that also $w^2 = ww$ is a word. 
A cyclic word $w$ is said to be {\it primitive\/} provided it is not of the form $v^t$
with $t\ge 2.$
     \medskip

An infinite sequence $l_1l_2 \cdots$ using our letters will be called  a $\mathbb N$-word
provided all the finite subsequences $l_1l_2\cdots l_n$ are words. 
Similarly, a double infinite sequence $\cdots l_{-1}l_0l_1 \cdots$ is said to be a 
$\mathbb Z$-word
provided all the finite subsequences $f_{-n}\cdots l_{-1}l_0l_1\cdots l_n$ are words.
	     \medskip

This report deals mainly with quivers $Q$  with a set $\rho$ of monomial relations 
which yield a special biserial algebra $A$. In this case, the finite dimensional 
$A$-modules are easy to construct and to characterize, this classification goes back
to Gelfand and Ponomarev \cite{GP}. There are two
kinds of indecomposable modules, the string modules and the band modules.
Starting with any word $w\in \Omega(Q,\rho)$ of length $n$, there is an
indecomposable module $M(w)$ of length $n+1$, called a string module.
In addition, there
are one-parameter families of indecomposable $A$-modules which are
constructed starting with a primitive cyclic word $w$ as well as a
finite dimensional vector space $V$ with an automorphism $\phi$ such that
the pair $(V,\phi)$ is indecomposable; the modules $M(w,\phi)$ are called the
band modules. If $V$ is one-dimensional and $\phi$ is the multiplication by
$\lambda\in k\setminus\{0\}$, then we write $M(w,\lambda)$ instead of $M(w,\lambda)$.
For an outline of these constructions we refer to  \cite{Ralgcom}.

\section{The cycle algebras.}
     
We first describe the hereditary algebras of type $\widetilde {\mathbb A}$. They also will be
used in order to construct the barbell as well as the wind wheel algebras.
     
We start with a function $\epsilon:\{1,\dots,n\} \to \{1,-1\}$; if necessary,
we call such a function an {\it orientation sequence} 
of length $n$.
In order to specify $\epsilon$, we usually will write
just the sequence $\epsilon(1)\epsilon(2)\cdots\epsilon(n)$, or the corresponding
sequence of signs $+$ and $-$. 
Note that this means that we consider $\epsilon$ as a word of length $n$ in the
letters $+$ and $-$. This interpretation explains also the following conventions:
Assume there is given an orientation sequence $\epsilon$ of length $n$. We say that
$\epsilon$ starts with $\epsilon(1)$ and ends with $\epsilon(n).$ We write
$\epsilon^{-1}$ for the function with $\epsilon^{-1}(i) = -\epsilon(n+1-i)$ for 
$1 \le i \le n$. Given a further orientation sequence $\epsilon'$ say of length $n'$,
let $\epsilon\epsilon'$ be the orientation sequence of length $n+n'$ with
$\epsilon\epsilon'(i) = \epsilon(i)$ for $1\le i \le n$ and 
$\epsilon\epsilon'(i) = \epsilon'(i-n)$ for $n+1 \le i \le n+n'$.

The orientation sequences which we are interested in will be obtained by starting
with a word $w= l_1\cdots l_s\in\Omega(Q,\rho)$, where $Q$ is a quiver with
monomial relations $\rho$ and looking at $\epsilon(w)$ defined by
$\epsilon(w)(i) = 1$ if $l_i$ is a direct letter and $\epsilon(w)(i) = -1$
otherwise.

Let $\epsilon$ be an orientation sequence.
We attach to $\epsilon$ the hereditary algebra
$H(\epsilon)$ with the following quiver: its vertices are $a_1,a_2,\dots,a_n = a_0$,
and there is an arrow $\alpha_i:a_i \to a_{i-1}$ in case $\epsilon(i) = 1$ and
$\alpha_i:a_{i-1} \to a_i$ in case $\epsilon(i) = -1.$ 
The algebra $H(\epsilon)$ is finite dimensional if and only if $\epsilon$ is not 
constant. The algebras $H(\epsilon)$ with $\epsilon)$ not constant, will be called
the {\it cycle algebras}.

For example, if $\epsilon = (+ + - + - +),$ then $H(\epsilon)$ is the path algebra 
$$
\hbox{\beginpicture
\setcoordinatesystem units <1.2cm,1cm>
\put{$a_1$} at 1 0
\put{$a_2$} at 2 0
\put{$a_3$} at 3 1
\put{$a_4$} at 2 2
\put{$a_5$} at 1 2
\put{$a_0$} at 0 1
\arrow <1.5mm>  [0.25,0.75 ] from 0.8 0.2 to 0.2 0.8
\arrow <1.5mm>  [0.25,0.75 ] from 1.8 0    to 1.2 0
\arrow <1.5mm>  [0.25,0.75 ] from 2.2 0.2 to 2.8 0.8
\arrow <1.5mm>  [0.25,0.75 ] from 2.2 1.8 to 2.8 1.2
\arrow <1.5mm>  [0.25,0.75 ] from 1.8 2    to 1.2 2
\arrow <1.5mm>  [0.25,0.75 ] from 0.2 1.2 to 0.8 1.8
\put{$\alpha_1$} at 0.3 0.3
\put{$\alpha_2$} at 1.5 0.2
\put{$\alpha_3$} at 2.7 0.3
\put{$\alpha_4$} at 2.7 1.7
\put{$\alpha_5$} at 1.5 2.2
\put{$\alpha_6$} at 0.3 1.7
\endpicture}
$$
Always $\alpha_1^{\epsilon(1)}\alpha_2^{\epsilon(2)}\cdots \alpha_n^{\epsilon(n)}$
is a primitive cyclic word.

\section{Barification.}
     
Starting from a hereditary algebra of type $\widetilde {\mathbb A}$, the further
algebras will be obtained by identifying some subquivers and adding zero relations.
The essential part of the construction will be described now.
    
Let $Q$ be a quiver with relations. Let $a_1,\dots,a_t, a'_1,\dots, a'_t$
be pairwise different $2$-vertices such that $a_i, a_{i+1}$ as well
es $a'_i, a'_{i+1}$ are neighbors, for all $1 \le i < t.$ 
Thus there are letters $l_i, l'_i$ for $0 \le i \le t$ such that
$l_{i-1}, l'_{i-1}$ end in $a_{i}$, or $a'_{i}$ respectively, 
and $l_{i}, l'_{i}$ start in $a_{i}$, or $a'_{i}$ respectively, for
$1 \le i \le t.$ We assume that the letters $l_i,l'_i$
have the same direction, for any $1 \le i < t$, wheres $l_0, l'_0$
have different direction, and also $l_t, l'_t$ have different direction.
We assume in addition that the letters $l_i, l'_i$ for $1 \le i < t$ are
not involved in any relation.

Let $v = l_1\cdots l_{t-1}$ and $v' = l'_1\cdots l'_{t-1}$ The {\it
barification}  of $v$ and $v'$ is defined as follows: We identify
the vertex $a_i$ with $a'_i$ for $1 \le i \le t$, and label the new vertex again $a_i$;
also, we identify the arrow $\alpha_i$
between $a_i,a_{i+1}$ with the arrow between $a'_i,a'_{i+1}$ and label it again $\alpha_i$.
We add as new relation the compositions $l_0(l'_0)^{-1}$ as well as
$(l'_t)^{-1}l_t$. If necessary, we will denote the new quiver with relations by
$Q(v,v').$ The subquiver of $Q(v,v')$ given by the identified vertices and arrows is called
a {\it bar} (at least if $t \ge 2$).

If $t = 1$, then we just identify two $2$-vertices of $Q$ in order to form
a $4$-vertex. If $t \ge 2$, then we identify sequences of $2$-vertices
and obtain from the identification of $a_1$ with $a'_1$ a $3$-vertex,
then several $2$-vertices, and finally as the identification of $a_t$ with
$a_t$ again a $3$-vertex. 
    
Note that in case we start with a quiver $Q$ which is special biserial, 
the new quiver $Q(v,v')$ with relations
again will be special biserial. 
     
Here is a schematic example with $t = 5$.
We indicate the relevant parts $v = (a_1 \leftarrow a_2
\leftarrow a_3 \rightarrow a_4 \leftarrow a_5)$ and 
$v' = (a'_1 \leftarrow a'_2
\leftarrow a'_3 \rightarrow a'_4 \leftarrow a'_5)$, but we do not
specify what happens further (we just draw a box) 
$$
\hbox{\beginpicture
\setcoordinatesystem units <1.2cm,.35cm>
\multiput{} at 0 0  10 4 /
\put{$a'_1$} at 3 4
\put{$a'_2$} at 4 4
\put{$a'_3$} at 5 4
\put{$a'_4$} at 6 4
\put{$a'_5$} at 7 4
\put{$a_1$} at 3 0
\put{$a_2$} at 4 0
\put{$a_3$} at 5 0
\put{$a_4$} at 6 0
\put{$a_5$} at 7 0

\arrow <1.5mm> [0.25,0.75] from 2.3 4 to 2.7 4
\arrow <1.5mm> [0.25,0.75] from 3.7 4 to 3.3 4
\arrow <1.5mm> [0.25,0.75] from 4.7 4 to 4.3 4
\arrow <1.5mm> [0.25,0.75] from 5.3 4 to 5.7 4
\arrow <1.5mm> [0.25,0.75] from 6.7 4 to 6.3 4
\arrow <1.5mm> [0.25,0.75] from 7.7 4 to 7.3 4
\arrow <1.5mm> [0.25,0.75] from 2.7 0 to 2.3 0
\arrow <1.5mm> [0.25,0.75] from 3.7 0 to 3.3 0
\arrow <1.5mm> [0.25,0.75] from 4.7 0 to 4.3 0
\arrow <1.5mm> [0.25,0.75] from 5.3 0 to 5.7 0
\arrow <1.5mm> [0.25,0.75] from 6.7 0 to 6.3 0
\arrow <1.5mm> [0.25,0.75] from 7.3 0 to 7.7 0

\put{} at 0 -3
\setlinear
\plot 2.5 4.5  1.5 4.5  1.5 -3  8.5 -3  8.5 4.5  7.5 4.5  7.5 -1  2.5 -1  2.5 4.5 / 

\setshadegrid span <.4mm>
\vshade 1.5 -3 4.5 <,z,,> 2.5 -3 4.5 <z,z,,> 2.55  -3 -1  <z,z,,>
7.45  -3 -1 <z,z,,> 7.5  -3 4.5 <z,,,> 8.5  -3 4.5 /

\endpicture}
$$
The barification yields a quiver of the following form:
$$
\hbox{\beginpicture
\setcoordinatesystem units <1.2cm,1.2cm>
\multiput{} at 0 -1.3  10 1 /
\put{$a_1$} at 3 0
\put{$a_2$} at 4 0
\put{$a_3$} at 5 0
\put{$a_4$} at 6 0
\put{$a_5$} at 7 0

\arrow <1.5mm> [0.25,0.75] from 2.3 0.5 to 2.8 0.1
\arrow <1.5mm> [0.25,0.75] from 2.8 -.1 to 2.3 -.5
\arrow <1.5mm> [0.25,0.75] from 3.7 0 to 3.3 0
\arrow <1.5mm> [0.25,0.75] from 4.7 0 to 4.3 0
\arrow <1.5mm> [0.25,0.75] from 5.3 0 to 5.7 0
\arrow <1.5mm> [0.25,0.75] from 6.7 0 to 6.3 0
\arrow <1.5mm> [0.25,0.75] from 7.7 0.5 to 7.2 0.1
\arrow <1.5mm> [0.25,0.75] from 7.2 -.1 to 7.7 -.5
\setsolid

\setdots <.7mm>
\setquadratic
\plot 7.45 -0.2  7.35 0 7.45 0.2 /
\plot 2.55 -0.2  2.65 0 2.55 0.2 /

\setlinear
\setsolid
\plot 2.5 0.7  1.5 0.7  1.5 -1.3  8.5 -1.3  8.5 0.7  7.5 0.7  7.5 -.7  2.5 -.7  2.5 0.7 / 

\setshadegrid span <.4mm>
\vshade 1.5 -1.3 0.7 <,z,,> 2.5 -1.3 0.7 <z,z,,> 2.55  -1.3 -.7  <z,z,,>
7.45  -1.3 -.7 <z,z,,> 7.5  -1.3 0.7 <z,,,> 8.5  -1.3 0.7 /
\endpicture}
$$
(the box is not changed).

\section{The barbell algebras.}
     
Definition: Consider  orientation sequences $\epsilon, \eta,\epsilon'$ and assume that
both $\epsilon$ and $\epsilon'$ start and end with $+$. We start with the
hereditary algebra $H(\epsilon\eta\epsilon'\eta^{-1})$, and construct the 
barification using the two copies of $\eta$, this will be the {\it barbell algebra}
$B(\epsilon,\eta,\epsilon')$. The subquiver given by (the identified copies of) $\eta$
will be called its {\it bar}. 
     \medskip

{\bf Example 1:} 
Start with $\epsilon = \eta = \epsilon' = (+).$ Then $H(\epsilon\eta\epsilon\eta^{-1})$ has the following
shape:
$$
\hbox{\beginpicture
\setcoordinatesystem units <1.5cm,1.2cm>
\put{$a_0$} at 0 1
\put{$a_1$} at 0 0
\put{$a_2$} at 1 0
\put{$a_3$} at 1 1
\arrow <1.5mm> [0.25,0.75] from 0 0.2 to 0 0.8
\arrow <1.5mm> [0.25,0.75] from 0.8 0 to 0.2 0
\arrow <1.5mm> [0.25,0.75] from 1 0.8 to 1 0.2
\arrow <1.5mm> [0.25,0.75] from 0.8 1 to 0.2 1
\put{$\alpha= \alpha_1$}  at -0.5 0.5
\put{$\alpha_2$}          at  0.5 -0.3
\put{$\alpha_3 = \gamma$} at  1.5 0.5
\put{$\alpha_4$}          at  0.5 1.3
\endpicture}
$$
After the identification of $\alpha_2$ and $\alpha_4$, we write $\beta$ for the
identified arrow:
$$
\hbox{\beginpicture
\setcoordinatesystem units <0.8cm,0.8cm>
\put{} at -1 0.8
\put{} at  3 -0.8
\put{$a_1$} at 0 -0.04
\put{$a_2$} at 2 -0.04
\arrow <1.5mm> [0.25,0.75] from 1.7 0 to 0.3 0
\circulararc 320 degrees from -0.05 0.2 center at -0.8 0 
\arrow <1.5mm> [0.25,0.75] from -0.09 0.3 to -0.05 0.2
\circulararc -330 degrees from  2.05 0.2 center at  2.8 0 
\arrow <1.5mm> [0.25,0.75] from 2.09 -0.3 to 2.05 -0.2
\put{$\alpha$} at -2 0
\put{$\beta$} at 1 0.4
\put{$\gamma$} at 4 0
\setdots <.7mm>
\setquadratic
\setdots <.7mm>
\setquadratic
\plot -0.4 -0.4 -0.3 0 -0.4 0.4 /
\plot 2.4 -0.4 2.3 0 2.4 0.4 /

\endpicture}
$$
Here, the bar is just one arrow (namely $\beta$), thus serial.

\begin{proposition}
 A barbell algebra is minimal representation-infinite if and only if the bar
is not serial.
\end{proposition}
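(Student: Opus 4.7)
The plan is to use the Gelfand--Ponomarev classification of indecomposables for the special biserial algebra $B=B(\epsilon,\eta,\epsilon')$, which describes them as string and band modules indexed by words in $\Omega(B)$, and to verify minimal representation-infiniteness via combinatorial analysis of these words.

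For the forward direction (non-serial bar implies minimal representation-infinite) I proceed in two steps. \emph{Step 1: $B$ is representation-infinite.} The word $w=\epsilon\eta\epsilon'\eta^{-1}$, a primitive cyclic word for the hereditary cycle algebra $H(\epsilon\eta\epsilon'\eta^{-1})$, descends after barification to a word $\overline w \in \Omega(B)$: the new relations of $B$ are the two monomial relations at the bar endpoints, and they forbid only those transitions that would bypass the bar directly between the $\epsilon$- and $\epsilon'$-sides, whereas $\overline w$ always enters and exits the bar via $\eta$ or $\eta^{-1}$ and so avoids them. The non-seriality of $\eta$ guarantees that $\overline w$ contains both direct and inverse letters and, generically, is primitive; the band family $\{M(\overline w,\lambda):\lambda\in k\setminus\{0\}\}$ then gives infinitely many non-isomorphic indecomposables. \emph{Step 2: every proper factor is representation-finite.} Any non-zero ideal $I$ of $B$ imposes at least one additional monomial relation $p$ (up to quotient by a vertex idempotent, which trivially simplifies the quiver). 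The combinatorial key is that every primitive cyclic word in $\Omega(B)$ of sufficient length must traverse the bar and the adjoining outer arrows on both sides, so it contains $p$ as a subword and is killed in $B/I$. Thus only finitely many primitive cyclic words and strings of bounded length survive in $\Omega(B/I)$, and $B/I$ has only finitely many indecomposables.

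For the reverse direction (serial bar implies not minimal representation-infinite), suppose $\eta$ is serial. Then the two $\eta$-subpaths in $H(\epsilon\eta\epsilon'\eta^{-1})$ are directed paths of opposite quiver orientations (since $\eta^{-1}$ reverses each letter), and because $\epsilon,\epsilon'$ start and end with $+$, the outer arrows from $\epsilon$ and $\epsilon'$ at each bar endpoint collapse after identification into loops, as in Example~1 where the loops are $\alpha,\gamma$. Killing the ideal generated by a single bar arrow (for instance, the connecting arrow $\beta$ in Example~1) yields a proper factor whose quiver decomposes into pieces, each supporting a free loop or a tame hereditary subalgebra of type $\widetilde{\mathbb A}$; such a factor is representation-infinite. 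Hence $B$ admits a proper representation-infinite factor and is not minimal representation-infinite.

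The principal obstacle is Step~2 of the forward direction: showing that no proper quotient of $B$ preserves the infinite band family. The argument hinges on the rigidity imposed by a non-serial bar, which forces every sufficiently long primitive cyclic word to make use of every arrow of the bar and of its adjacent outer arrows, so that any single additional monomial relation suffices to collapse the entire infinite family at once.
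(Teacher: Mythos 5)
Your reverse direction is incorrect. Killing a bar arrow does not produce a representation-infinite quotient: in Example~1 the quotient of $B$ by the ideal generated by $\beta$ is $k[\alpha]/(\alpha^2)\times k[\gamma]/(\gamma^2)$, which is representation-finite. The loops and cycles coming from $\epsilon$ and $\epsilon'$ are neither free nor hereditary of type $\widetilde{\mathbb A}$: they carry the zero relations created by the barification (in Example~1, $\alpha^2=\gamma^2=0$), and a cycle carrying a zero relation of length two admits no cyclic word, hence no band modules. In general, deleting a bar arrow leaves two components, each consisting of one such relation-bearing cycle with a dangling path attached, so the quotient is representation-finite and witnesses nothing. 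The correct witness for non-minimality, which is the one the paper uses, is the quotient by the single additional relation $\alpha\eta\gamma$, where $\alpha\eta\gamma$ is the maximal directed path running through the entire (serial) bar together with the two adjacent outer arrows; this quotient is a wind wheel algebra and is still representation-infinite (indeed $1$-domestic). Such a path exists precisely because the bar is serial, which is where the dichotomy actually lives.

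Your forward direction has two gaps in Step~2. First, the conclusion that ``only finitely many primitive cyclic words survive'' in $B/I$ does not yield representation-finiteness: a single surviving primitive cyclic word already produces an infinite family of band modules, so you must show that \emph{no} primitive cyclic word survives. Second, and more seriously, the inference from ``the word traverses the bar and the adjoining outer arrows'' to ``the word contains $p$ as a subword'' is exactly the step that fails when the bar is serial: for $p=\alpha\eta\gamma$ the cyclic word $\alpha\beta\gamma^{-1}\beta^{-1}$ of Example~1 uses all of these arrows yet contains neither $p$ nor $p^{-1}$ as a subword, since after the bar it continues with an inverse letter. Taken at face value, your Step~2 would therefore ``prove'' minimality for serial bars as well, contradicting the other implication. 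What is actually needed is the structural fact that every cyclic word of $B$ is a concatenation of full traversals of $\epsilon$, $\epsilon'$ and the bar $v$ (the two cycles are attached at single $3$-vertices whose unique length-two relation forbids going around twice in succession, and the interior of the bar consists of $2$-vertices), combined with the observation that when $v$ is non-serial every nonzero path of $B$ lies inside one of the blocks $u^{\pm1}$, $v^{\pm1}$, $(u')^{\pm1}$ or inside one of the forced junction transitions, and hence appears, up to inversion, in every cyclic word. Non-seriality is precisely what rules out a directed path crossing the whole bar; your argument never isolates this point.
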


In case the bar $\eta$ is direct, there are arrows $\alpha, \gamma$
such that $\alpha \eta \gamma$ is a word for the barbell algebra. If we add
this word as a relation, we obtain an algebra which still is
representation-infinite (it is a wind wheel algebras as discussed in the next section, 
thus  $1$-domestic).  In  example 1, the bar was serial, thus this barbell algebra
was not minimal representation-infinite. Here is an example of a barbell
with non-serial bar:
     \medskip

{\bf Example 2.}
We start with 
$$
\hbox{\beginpicture
\setcoordinatesystem units <1cm,1cm>
\put{} at 0 2
\put{$1$} at 0 1
\put{$3$} at 2 1
\put{$2$} at 1 0
\arrow <1.5mm> [0.25,0.75] from 0.2 0.8 to 0.8 0.2
\arrow <1.5mm> [0.25,0.75] from 1.8 0.8 to 1.2 0.2
\circulararc 310 degrees from 0.2 1.1 center at 0 1.5 
\circulararc 310 degrees from 2.2 1.1 center at 2 1.5 
\arrow <1.5mm> [0.25,0.75] from -0.25 1.14 to -0.2 1.1
\arrow <1.5mm> [0.25,0.75] from 1.75 1.14 to 1.8 1.1
\setdots <.5mm>
\setquadratic
\plot -0.25 1.3 0 1.25 .25 1.3 /
\plot 1.75 1.3 2 1.25 2.25 1.3 /
\endpicture}
$$
In order to construct this algebra, we can start with 
$\epsilon = (+),$ $\eta = (-+), \epsilon' = (+).$ Then  $H(\epsilon\eta\epsilon\eta^{-1})$
has the following shape:
$$
\hbox{\beginpicture
\setcoordinatesystem units <1.5cm,1.2cm>
\put{$a_0$} at 0 1
\put{$a_1$} at 0 0
\put{$a_2$} at 1 0
\put{$a_3$} at 2 0 
\put{$a_4$} at 2 1
\put{$a_5$} at 1 1
\arrow <1.5mm> [0.25,0.75] from -0.175 0.8 to -0.15 0.85
\arrow <1.5mm> [0.25,0.75] from 2.175 0.2 to 2.15 0.15

\arrow <1.5mm> [0.25,0.75] from 0.2 0 to 0.8 0
\arrow <1.5mm> [0.25,0.75] from 1.8 0 to 1.2 0

\arrow <1.5mm> [0.25,0.75] from 0.2 1 to 0.8 1
\arrow <1.5mm> [0.25,0.75] from 1.8 1 to 1.2 1
\circulararc -80 degrees from -0.15 0.15 center at .2 0.5 
\circulararc 80 degrees from  2.15 0.15 center at 1.8 0.5 

\put{$\alpha_1$}  at -0.5 0.5
\put{$\alpha_2$}  at  0.5 -0.2
\put{$\alpha_3$}  at  1.5 -0.2
\put{$\alpha_4$}  at  2.5 0.5
\put{$\alpha_5$}  at  0.5 1.2
\put{$\alpha_6$}  at  1.5 1.2

\endpicture}
$$
Here the bar (given by the arrows $1 \rightarrow 2 \leftarrow 3$) is not serial, thus
the algebra is minimal representation-infinite. 
     \medskip

\section{The wind wheel algebras.}
    
A {\it wind wheel algebra} $W$ 
is given by a cyclic word $w$ without repetition which is of the form
$$
 w = u_1v_1\cdots u_{2t}v_{2t}
$$
with words $u_i, v_i$ of length at least 1 and such that there is a 
(necessarily fixed point free) involution $\sigma$ on the set $\{1,2,\dots 2t\}$ 
with the following properties:

(WW1)  The words $v_i$ are serial and $v_i = v_{\sigma(i)}^{-1}$.

(WW2) The edges appearing in some $u_i$ occur only once in $w$, those
   occurring in some $v_i$ occur twice in $w$ (namely in $v_i$ and in $v_{\sigma(i)}$).

(WW3) The pairs $(v_i,u_{i+1})$ are attracting, the pairs $(u_i,v_i)$ are
   not attracting (here, $u_{2t+1} = u_1)$.

Note that the factorization into the subwords $u_i,v_i$ and the permutation
$\sigma$ are uniquely determined by $w$, thus we can write $W = W(w)$.

The algebra $W(w)$ is obtained from the $\widetilde {\mathbb A}$-algebra $H(\epsilon(w))$
by identifying the path $v_i$ with $v_{\sigma(i)}^{-1}$ (''barification''), for all
$i,$ and using additional zero relations as follows:
Let $v_i$ be direct, and $v_j = v_i^{-1}$ (thus $j = \sigma(i)$). Then the barification
relations are
$$
   u_{i,\omega}u_{j+1,1}, \quad (u_{i+1,1})^{-1}(u_{j.\omega})^{-1}
$$
(recall that for any path $u_i$, we denote by $u_{i,1}$ its first letter, by $u_i,\omega$ the last one).
And we take in addition also the paths
$$
  u_{i,\omega}v_i(u_{j,\omega})^{-1}
$$
as relations. Thus, there are $2t$ monomial relations of length $2$ as well as $t$ long relations
(of length at least $3$). 
      \bigskip

As in the case of a barbell algebra, a subquiver given by 
(the identified copies of) some $v_i$ will be called a {\it bar}. 
     \bigskip

Our example 1 yields the wind wheel algebra for the following word
$$
 \alpha\beta\gamma^{-1}\beta^{-1} \quad\text{with}\quad u_1 = \alpha,\ v_1 = \beta,\
 u_2 = \gamma^{-1},\ v_2 = \beta^{-1}, \quad\text{and}\quad \sigma = (1,2),
$$
its quiver with relations is as follows:
$$
\hbox{\beginpicture
\setcoordinatesystem units <0.8cm,0.8cm>
\put{} at -1 0.8
\put{} at  3 -0.8
\put{$a_1$} at 0 -0.04
\put{$a_2$} at 2 -0.04
\arrow <1.5mm> [0.25,0.75] from 1.7 0 to 0.3 0
\circulararc 320 degrees from -0.05 0.2 center at -0.8 0 
\arrow <1.5mm> [0.25,0.75] from -0.09 0.3 to -0.05 0.2

\circulararc -330 degrees from  2.05 0.2 center at  2.8 0 

\arrow <1.5mm> [0.25,0.75] from 2.09 -0.3 to 2.05 -0.2
\put{$\alpha$} at -2 0
\put{$\beta$} at 1 0.4
\put{$\gamma$} at 4 0
\setdots <.7mm>
\setquadratic
\setdots <.7mm>
\setquadratic
\plot -0.4 -0.4 -0.3 0 -0.4 0.4 /
\plot 2.4 -0.4 2.3 0 2.4 0.4 /
\plot 0 -.7  1 -.3  2 -.7 /
\put{with $\alpha^2 = \gamma^2 = \alpha\beta\gamma = 0$} at  7 0
\endpicture}
$$
	\medskip

Further examples are presented at the end of part I. 
	\medskip

There is a a canonical map $\eta:\Omega(H(\epsilon(w))) \to \Omega(W(w))$ defined
as follows: write $w = l_1\cdots l_n$  with letters $l_i$ for the quiver of $W(w)$, then
$l_i$ may be considered as an arrow of the quiver of $H(\epsilon(w))$. We set
$\eta(l_i) = l_i$ and extend this multiplicatively. 

Given a bar $v$ of $W(w)$, there are uniquely defined letters $l_1, l_2$ such that
both $(l_1,v)$ and $(v,l_2)$ are attracting pairs. 
We call $\overline v = l_1vl_2$ the {\it closure} of the bar $v$. 

\begin{proposition}
A word in $\Omega(W(w))$ does not belong to the image of $\eta$ if and only if
it contains the closure of a bar as a subword. 
\end{proposition}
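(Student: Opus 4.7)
The plan is to prove both implications of the biconditional by analyzing, for each bar, the letters of $w_W$ that can flank a traversal of the bar. For the direction ($\Leftarrow$), suppose $w_W\in\Omega(W(w))$ contains a closure $\overline{v}=l_1 v l_2$, with $v$ oriented so that it corresponds to the forward traversal $v_i=\beta_1\cdots\beta_m$. If $w_W=\eta(w_H)$ for some $w_H\in\Omega(H(\epsilon(w)))$, the subword of $w_H$ mapping to $v_i$ must be a composable sequence in $H$ whose $\eta$-image is $\beta_1\cdots\beta_m$. Because $H(\epsilon(w))$ is a $2$-regular cycle and each bar letter of $W(w)$ has exactly two preimages in $H$, only two such sequences exist: the ``$v_i$-side lift'' $\alpha_{Q+1}\cdots\alpha_{Q+m}$, using the $H$-positions $Q+1,\dots,Q+m$ of $v_i$ in $w$, and the ``$v_{\sigma(i)}$-side lift'' $\alpha_{P+m}\alpha_{P+m-1}\cdots\alpha_{P+1}$, using the positions of $v_{\sigma(i)}$. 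In the first case, (W1) together with the $2$-regularity of $H$ forces the predecessor of the bar lift in $w_H$ to be $\alpha_Q$, so $l_1=\eta(\alpha_Q)=u_{i,\omega}$; but $(u_i,v_i)$ is non-attracting by (WW3), contradicting the closure condition $(l_1,v)$ attracting. In the second case the analogous argument at the far end forces $l_2=\eta(\alpha_P)=u_{\sigma(i),\omega}^{-1}$, and a direct check shows $(v_i,u_{\sigma(i),\omega}^{-1})$ is not attracting, again contradicting the closure.

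For the direction ($\Rightarrow$), suppose $w_W\in\Omega(W(w))$ contains no closure; I would construct a lift $w_H\in\Omega(H(\epsilon(w)))$ letter by letter. Non-bar letters have a unique lift. Since every interior bar vertex of $W(w)$ is a $2$-vertex, (W1) forces each maximal bar subword occurring strictly in the interior of $w_W$ to be a full traversal of some bar from one endpoint to the other in a fixed direction; moreover (W2) rules out every non-bar to non-bar transition at a bar endpoint, since the only candidates $u_{i,\omega}u_{\sigma(i)+1,1}$ and $u_{\sigma(i)+1,1}^{-1}u_{i,\omega}^{-1}$ are exactly the length-two barification relations of $W(w)$. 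For an interior forward traversal of $v=v_i$ with flanking non-bar letters $L_1$ and $L_2$, (W1) and the structure at the bar endpoints leave only four possible pairs $(L_1,L_2)$: two coherent pairs $(u_{i,\omega},u_{i+1,1})$ and $(u_{\sigma(i)+1,1}^{-1},u_{\sigma(i),\omega}^{-1})$, admitting the $v_i$-side and $v_{\sigma(i)}$-side lifts respectively; and two mixed pairs, one of which coincides with the long relation $u_{i,\omega}v_i(u_{\sigma(i),\omega})^{-1}$ of $W(w)$ (excluded by (W2)), and the other with the closure $\overline{v_i}=u_{\sigma(i)+1,1}^{-1}v_i u_{i+1,1}$ (excluded by hypothesis). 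The backward traversal is symmetric under $i\leftrightarrow\sigma(i)$, and partial bar subwords at the ends of $w_W$ carry at most one constraint and are always liftable. The local lifts then assemble into the required $w_H$.

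The main obstacle is careful orientation bookkeeping: one must identify, under the paper's right-to-left composition convention and the formula $\eta(\alpha_i^{\epsilon(w)(i)})=l_i$, which $H$-arrow at a given bar-position maps via $\eta$ to which bar letter, and then verify that the two ``mixed'' subwords appearing in the forward case analysis agree letter-for-letter with the long relation and the closure as defined in the paper. These identifications are mechanical but notation-heavy; once they are in place, the enumeration of the four pairs $(L_1,L_2)$ and the assembly of the local lifts into a single word $w_H$ become straightforward.
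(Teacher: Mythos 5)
The paper states this proposition without proof (it is one of the combinatorial verifications the author explicitly leaves to the reader), so there is no argument in the text to compare yours against; what you have written is a correct and essentially complete verification. Your two main points are exactly the right ones. For ($\Leftarrow$): a full traversal of a bar has precisely two composable lifts to $\Omega(H(\epsilon(w)))$, and in either case the $2$-regularity of the cycle together with (W1) forces the lift of the adjacent letter, whose $\eta$-image then violates the attracting condition defining the closure. For ($\Rightarrow$): the flanking letters of an interior full traversal of $v_i$ range over a $2\times 2$ set of possibilities, of which the two coherent pairs are liftable on the $v_i$- and $v_{\sigma(i)}$-sides respectively, while the two mixed pairs are exactly the long relation $u_{i,\omega}v_i(u_{\sigma(i),\omega})^{-1}$ (killed by (W2)) and the closure (killed by hypothesis). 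Two points you pass over quickly deserve a sentence each: first, that the flanking letters of a full traversal are automatically non-bar letters, which needs (WW2) — the letters $u_{i,\omega}$ and $u_{\sigma(i)+1,1}$ incident to a bar endpoint lie in $u$-parts, so distinct bars cannot share an endpoint and a bar run cannot be continued by another bar letter; second, that in the final assembly the composability of consecutive local lifts and condition (W1) for the assembled word hold because the two $H$-preimages of any bar arrow are supported at disjoint vertices of the cycle, so $\eta$ is injective on the letters incident to any fixed vertex of $H$. Neither point is a gap in substance, only in exposition.
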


\begin{proposition}  The wind wheel algebra $W = W(w)$ is 
domestic with only one primitive cyclic word, namely $w$. If $t$ is the number of bars, then
there are precisely
$t$ non-periodic (but biperiodic) $\mathbb Z$-words: Write $w = w_1vw_2v^{-1}$, where $v$
is a bar. Then 
$$
 {}^\infty(w^{-1})w_2^{-1}v^{-1}w_1^{-1}vw_2v^{-1}w^\infty
$$
is such a $\mathbb Z$-word.
\end{proposition}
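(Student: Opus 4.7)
The overall strategy is to transfer both assertions from $W(w)$ to the cycle algebra $H = H(\epsilon(w))$ by means of the canonical map $\eta$ of Proposition~6.2, for which the answer is immediate; the real work then concentrates at the bars, where $\eta$ fails to be surjective.

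\textbf{Step 1: the only primitive cyclic word is $w$.} Let $c$ be a primitive cyclic word in $\Omega(W(w))$. I claim first that $c$ contains no bar closure. The key is a local rigidity statement at a bar: if $c$ contains $\overline v = l_1 v l_2$, then the letter immediately preceding $l_1$ and the letter immediately following $l_2$ are uniquely forced by (WW3) together with the long monomial relation attached to the bar $v$, and iterating the argument propagates this forcing outwards until one is compelled to traverse $v$ a second time, necessarily via $\overline v$ again. A cyclic word exhibiting this behaviour is a concatenation of two equal subwords and hence not primitive, a contradiction. Therefore $c$ lies in the image of $\eta$ by Proposition~6.2, and a lift $\tilde c \in \Omega(H)$ is then a cyclic word. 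Since the quiver of $H$ is an oriented cycle and (W1) rules out backtracking, any primitive cyclic word in $\Omega(H)$ is the loop $w$ (up to rotation and inversion), and therefore so is $c$.

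\textbf{Step 2: the biperiodic $\mathbb Z$-words.} Let $z$ be non-periodic but biperiodic. Each tail of $z$ is periodic, hence by Step 1 its period is $w$ or $w^{-1}$; non-periodicity forces the two periods to differ, and up to inverting $z$ we may assume the left period is $w^{-1}$ and the right period is $w$. Reading $z$ from left to right, the partial lift under $\eta$ first fails at some occurrence of a bar closure $\overline v$. The same local rigidity as in Step 1 then forces a specific finite middle pattern before $z$ can become $w$-periodic: writing $w = w_1 v w_2 v^{-1}$ and tracking the letters through (WW3) and the relations at the bar $v$, the middle is exactly $w_2^{-1} v^{-1} w_1^{-1} v w_2 v^{-1}$, which gives the $\mathbb Z$-word displayed. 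A direct check of (W1) and (W2) confirms that this formula does define an element of $\Omega(W(w))$. Since the bar $v$ is recoverable from $z$ (as the unique bar whose traversal is responsible for the failure of periodicity), the $t$ choices of bar yield $t$ pairwise distinct such $\mathbb Z$-words.

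\textbf{Main obstacle.} The technical heart of both steps is the local rigidity statement at a bar: any word containing $\overline v$ is forced into a symmetric finite pattern that traverses $v$ a second time in the reverse manner, with uniquely prescribed letters in between. This single combinatorial fact, extracted from (WW3) and the three monomial relations attached to each bar of $W(w)$, simultaneously rules out competing primitive cyclic words and pins down the middle of each non-periodic biperiodic $\mathbb Z$-word; once it is in hand, the global statements reduce to the well-understood word combinatorics of the cycle algebra $H(\epsilon(w))$.
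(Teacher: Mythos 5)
Your overall strategy -- transporting everything to the cycle algebra $H(\epsilon(w))$ via $\eta$ and concentrating the work at the bar closures -- is the right one, and it is essentially the route the paper intends (the paper gives no written proof beyond the remark that $r(v)$ is the unique $\mathbb Z$-word containing $\overline v$, leaving the combinatorics to the reader). Step 2 is sound in outline. But the key claim in Step 1 is false as stated. Propagating the constraints outward from an occurrence of $\overline v = l_1vl_2$ (the two short relations kill the ``straight-ahead'' continuations, the long relation $u_{i,\omega}v(u_{j,\omega})^{-1}$ kills the remaining bad branch) forces precisely the two-sided infinite word $r(v)$, whose tails are $w^{\pm 1}$-periodic. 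Since $w$ lies in the image of $\eta$, neither $w^{\infty}$ nor ${}^\infty(w^{-1})$ contains a bar closure, so $\overline v$ occurs in the forced extension \emph{exactly once}: one is never ``compelled to traverse $v$ a second time via $\overline v$'' -- the later traversals of $v$ are all in the reverse manner, through $v^{-1}$ inside copies of $w^{\pm1}$ (your own closing paragraph says exactly this, contradicting Step 1). Hence the conclusion ``$c$ is a concatenation of two equal subwords, so not primitive'' does not follow; and even if a second $\overline v$ were forced at some distance $d$, that would only make $c$ invariant under rotation by $d$, which is no contradiction when $d$ equals the length of $c$.

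The correct way to close Step 1 from the (true) forcing statement is by non-periodicity rather than by a proper-power argument: if a cyclic word $c$ contained $\overline v$, the periodic $\mathbb Z$-word ${}^\infty c^\infty$ would contain $\overline v$ and would therefore coincide with the forced extension $r(v)$; but $r(v)$ contains $\overline v$ only once, so it is not periodic -- contradiction. Two smaller gaps should also be filled. In Step 2, ``non-periodicity forces the two periods to differ'' is not justified and is not needed in that form: what you need is that every non-periodic $\mathbb Z$-word contains some bar closure, which holds because a $\mathbb Z$-word avoiding all closures lifts to $H$, and the unique reduced doubly infinite walk on a cycle quiver is the periodic one; after that the forcing pins down $z = r(v)$ directly. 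Finally, the word ``domestic'' in the statement is never addressed; it should be recorded that for a special biserial algebra a single primitive cyclic word yields a single primitive one-parameter family, hence $1$-domesticity.
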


The $\mathbb Z$-word ${}^\infty(w^{-1})w_2^{-1}v^{-1}w_1^{-1}vw_2v^{-1}w^\infty$
determines uniquely  the central part $\overline v = l_1vl_2$ where
$l_1 = (w_1^{-1})_\omega$ and $l_2 = (w_2)_1$, thus it determines the bar $v$;
Indeed, this word is the only $\mathbb Z$-word which contains a subword of the form    
$l_1vl_2$ such that both $(l_1,v)$ and $(v,l_2)$ are attracting pairs.

\begin{proposition} Let $W = W(w)$ be a wind wheel algebra with $t$ bars. Let $\lambda\in k\setminus\{0\}$.
Then  the endomorphism ring of $M = M(w,\lambda)$ is a radical square zero algebra with radical dimension 
$t$, and the only endomorphism 
of $M$ with semisimple image is the zero endomorphism. 
\end{proposition}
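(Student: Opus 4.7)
The plan is to apply the standard combinatorial description of homomorphisms between band modules over a special biserial algebra (Crawley-Boevey, Krause; see \cite{Ralgcom}). Recall that every element of $\Hom(M(w,\lambda),M(w,\lambda))$ is a sum of \emph{graph maps}, one for each ``admissible factorisation'' expressing a common subword of $w$ (viewed cyclically) that can be factored out of $w$ on both sides. The full-length factorisation $w=w$ contributes the one-dimensional space $k\cdot \mathrm{id}_M$, which maps isomorphically onto $\End(M)/\rad\End(M)$ since $M(w,\lambda)$ is a brick. Every other graph map will lie in $\rad\End(M)$, so the first task is to classify the proper common subwords of $w$.

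By (WW2), an edge occurring in some $u_i$ appears only once in $w$, while an edge occurring in a bar $v_i$ appears exactly twice, once in $v_i$ and once in $v_{\sigma(i)}=v_i^{-1}$. Hence the only proper subwords of $w$ that appear at least twice in $w$ are (up to inversion) subwords of bars. A routine check using (WW3) shows that for each unordered pair $\{i,\sigma(i)\}$, i.e. for each of the $t$ bars $v$, the maximal common subword is $v$ itself, and that the attracting/non-attracting conditions on the ends of $v$ determine a unique admissible factorisation. This yields a graph map $\phi_v\in\rad\End(M)$ for each bar $v$, and these $\phi_v$ are linearly independent. Together with $\mathrm{id}_M$ they span $\End(M)$, so $\dim\rad\End(M)=t$.

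To show $\rad\End(M)^2=0$, observe that a nonzero composition $\phi_{v}\phi_{v'}$ would correspond to a common subword of $w$ containing letters of both bars, hence to a subword strictly larger than $v$ (or than $v'$) which is either a bar or contains the closure $\overline{v}=l_1vl_2$ (with $l_1,l_2$ the unique letters making $(l_1,v)$ and $(v,l_2)$ attracting). The first alternative is excluded by maximality in the previous step; the second is excluded because, by the definition of $W(w)$ in Section 6, the paths $u_{i,\omega}v_i(u_{j,\omega})^{-1}$ are zero relations, so $\overline v$ is never a subword of any word in $\Omega(W)$ (this is Proposition 6.2). Hence $\phi_v\phi_{v'}=0$ for all bars $v,v'$, and $\rad\End(M)^2=0$.

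It remains to prove that no nonzero endomorphism has semisimple image. Write $f=c\cdot\mathrm{id}_M+\sum_v c_v\phi_v$. If $c\ne 0$, then $f$ is invertible because the radical is square-zero, so $\im f=M$, which is not semisimple (the cyclic word $w$ has length $\geq 2$ and the band module acts nontrivially through each letter of $w$). If $c=0$ and some $c_v\ne 0$, then $\im f$ contains, by the explicit form of the graph map $\phi_v$, the submodule generated by the image of a basis vector sitting at an endpoint of $v$ under the action of some letter of $v$; since $v$ has length $\geq 1$ this submodule is not simple and in particular meets $\rad M$ nontrivially, so $\im f$ is not contained in $\soc M$. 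Thus $f=0$ is the only endomorphism with semisimple image.

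The main obstacle is the combinatorial verification in the middle step: one must check carefully that the bar relations and the attracting conditions (WW3) exclude every way of enlarging a bar into a larger common subword, and that the $t$ graph maps $\phi_v$ are genuinely independent rather than coinciding after the identifications made in the barifications; both checks reduce to Proposition 6.2 and the uniqueness of the factorisation of $w$ into the pieces $u_i,v_i$.
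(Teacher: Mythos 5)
Your strategy coincides with the paper's (one\mbox{-}sentence) proof: exhibit, for each bar $b$, the endomorphism $M\twoheadrightarrow M(b)\hookrightarrow M$ obtained from the direct and the inverse occurrence of $b$ in $w$, and argue via the graph\mbox{-}map calculus that these span the radical. The paper omits all verification, and it is in the verification that your write\mbox{-}up has two concrete problems. (A small terminological slip first: $M(w,\lambda)$ is not a brick --- that is the whole point of the proposition; what you need is only that the non\mbox{-}graph\mbox{-}map part of $\End(M)$ is $\End_{k[T,T^{-1}]}(k_\lambda)=k$ and that every graph map factors through a module of smaller length, hence is non\mbox{-}invertible.)

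First, the spanning step. Your edge count via (WW2) handles common factors of positive length (and the hypothesis that $w$ is without repetition is what excludes a proper suffix of a bar coinciding with a proper prefix, which would otherwise give extra graph maps), but the calculus also produces graph maps from common factors of \emph{length zero}: whenever a peak of $w$ (a direct letter followed by an inverse one, giving a canonical surjection $M\to S(x)$) and a valley of $w$ (giving an inclusion $S(x)\to M$) lie over the same vertex $x$, one obtains a nonzero endomorphism with \emph{simple} image. Since the barification deliberately identifies vertices, this is not vacuous, and it is exactly the case that would refute the last assertion of the proposition. What has to be checked --- and what your ``routine check'' should say --- is that the peak attached to a bar pair sits at the top end of the direct occurrence $v_i$ while the valley sits at the bottom end of $v_{\sigma(i)}$, hence at the \emph{other} end of the bar, and that peaks and valleys inside the $u_i$ lie over vertices visited only once by $w$. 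Maximality of $v$ among common subwords does not by itself exclude these length\mbox{-}zero factorizations.

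Second, your argument for $\rad\End(M)^2=0$ rests on a false statement: the closure $\overline v=l_1vl_2$ \emph{is} a word in $\Omega(W)$ --- Proposition 6.2 only says it is not in the image of $\eta$, and $M(\overline v)$ is in fact one of the corner modules of Section 9.3. (You are conflating $\overline v$, whose outer letters are inverse letters, with the long zero relation $u_{i,\omega}v_i(u_{j,\omega})^{-1}$, which is a direct path.) The correct argument is simpler and does not mention closures: the image of $\phi_{v'}$ is spanned by the basis vectors sitting on the inverse occurrence of $v'$, while $\phi_v$ annihilates every basis vector outside the direct occurrence of $v$; since the $u_i$ have length at least one, a direct and an inverse bar occurrence are disjoint intervals of $w$, so the image of $\phi_{v'}$ lies in the kernel of $\phi_v$ and all products vanish. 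With these two repairs, your description of the image of $\sum_v c_v\phi_v$ as the direct sum of the serial modules $M(v)$ with $c_v\neq 0$ does settle the semisimple\mbox{-}image claim.
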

 
Thus we see that the wind wheels provide examples of $1$-domestic algebras
$\Lambda$  with a primitive homogeneous
and absolutely indecomposable $\Lambda$-module $M$ such that 
the factor ring of $\End(M)$ modulo the ideal of endomorphisms with semisimple
image is of arbitrarily large dimension. 

\begin{proof} Any bar $b$ provides an endomorphism of $M$ with image $M(b)$, 
these endomorphisms form a basis of the radical of $\End(M).$
\end{proof}

\section{ Proof of theorem 1.1}
     	           
We want to present the proof of Theorem 1.1. 

\subsection{Resolving a node} The process of resolving a node $a$ can be visualized as follows:
 $$
\hbox{\beginpicture
\setcoordinatesystem units <.6cm,.6cm>
\put{\beginpicture
\multiput{} at 0 0  5 4 /
\plot 0 0  0 4   5 4  5 3  1 3  1 1  5 1  5 0  0 0 /
\arrow <1.5mm> [0.25,0.75] from 1.5 3.5 to 2.8 2.2
\arrow <1.5mm> [0.25,0.75] from 1.5 3.5 to 2.8 2.2
\arrow <1.5mm> [0.25,0.75] from 3 3.5 to 3 2.2
\arrow <1.5mm> [0.25,0.75] from 4.5 3.5 to 3.2 2.2
\put{$\circ$} at 3 2
\arrow <1.5mm> [0.25,0.75] from 2.9 1.8 to 2.5 0.5
\arrow <1.5mm> [0.25,0.75] from 3.2 1.8 to 4.2 0.5
\put{$a$} at 3.5 2
\put{$\Lambda$} at -0.7 3.5
\setquadratic
\setdots <.5mm>
\plot 2.3  2.5  2.2 2  2.7  1.3 /
\plot 2.9  2.7  2.4 2.1  2.75  1.5 /
\plot 3.5  2.7  2.6 2.2  2.8  1.7 /
\plot 2.4  2.7   3.7 2.5        3.6 1.5 /
\plot 3.1  2.8   3.9 2.4        3.6 1.35 /
\plot 3.9  2.8   4.1 2.2        3.7 1.2 /
\setlinear
\setshadegrid span <0.5mm>
\hshade 0  0 5  <,,z,z>  0.99  0 5 <,,z,z>  1 0 1 <,,z,z> 3 0 1 <,,z,z> 3.01 0 5 <,,z,z>
  4 0 5 /  
\endpicture} at 0 0

\put{\beginpicture
\multiput{} at 0 0  5 4 /
\plot 0 0  0 4   5 4  5 3  1 3  1 1  5 1  5 0  0 0 /
\arrow <1.5mm> [0.25,0.75] from 1.5 3.5 to 2.3 2.2
\arrow <1.5mm> [0.25,0.75] from 3 3.5 to 2.5 2.2
\arrow <1.5mm> [0.25,0.75] from 4.5 3.5 to 2.7 2.2
\put{$\circ$} at 2.5 2
\put{$\circ$} at 3.5 2
\arrow <1.5mm> [0.25,0.75] from 3.4 1.8 to 2.7 0.5
\arrow <1.5mm> [0.25,0.75] from 3.7 1.8 to 4.2 0.5
\put{$a_+$} at 2.1 1.8
\put{$a_-$} at 4.1 2.2
\put{$\Lambda'$} at -0.7 3.5
\setshadegrid span <0.5mm>
\hshade 0  0 5  <,,z,z>  0.99  0 5 <,,z,z>  1 0 1 <,,z,z> 3 0 1 <,,z,z> 3.01 0 5 <,,z,z>
  4 0 5 /  
\endpicture} at 9 0
\endpicture}
$$
We replace the vertex $a$ by two vertices labeled $a_+$ and $a_-$ such that $a_+$ becomes a sink, 
$a_-$ a source: 
all arrows of $\Lambda$ will be kept, however,
if an arrow of $\Lambda$ ends in $a$, then in $\Lambda'$ it ends in $a_+$, whereas
if an arrow of $\Lambda$ starts in $a$, then in $\Lambda'$ it starts in $a_-$. 
Since all the paths
$\gamma\alpha$ with $\alpha$ ending in $a$ (and therefore $\gamma$ starting in $a$)  are relations for
$\Lambda$, there is a minimal set of relations consisting of these paths as well as of a set
$\rho'$ of relations which do not pass through $a$ (a relation is a linear combination of paths
and we say that the relation passes through $a$ provided at least one of the paths contains
a subpath $\gamma\alpha$  with $\alpha$ ending in $a$). It is the set $\rho'$ which is
used as set of relations for $\Lambda'$.  

The important feature of this construction is the following: 
There is a canonical functor $\mo \Lambda \to \mo \Lambda'$ which yields a bijection
between the indecomposable $\Lambda$-modules and the indecomposable $\Lambda'$-modules different from the
simple $\Lambda'$-module $S(a_-).$)
       
There is the  following quite obvious assertion:

\begin{lem}  Assume $\Lambda$ is a finite dimensional algebra
with a node $a$. Let $\Lambda'$ be obtained from $\Lambda$ by resolving the 
node. Then $\Lambda$ is minimal representation-infinite if and only
if $\Lambda'$ is minimal representation-infinite.
\end{lem}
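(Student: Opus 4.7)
The plan is to combine two ingredients: (a) the stable equivalence between $\Lambda$ and $\Lambda'$ supplied by Mart\'\i{}nez-Villa's resolution procedure (the functor $\mo\Lambda \to \mo\Lambda'$ described just above the statement), and (b) a careful matching between two-sided ideals of $\Lambda$ and of $\Lambda'$ so that the process of resolving the node commutes with forming factor algebras.

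For ingredient (a): the bijection between indecomposables of $\mo\Lambda$ and indecomposables of $\mo\Lambda'$ other than $S(a_-)$ immediately gives that $\Lambda$ is representation-finite if and only if $\Lambda'$ is. In particular, $\Lambda$ is representation-infinite iff $\Lambda'$ is, which is the first half of the definition of minimal representation-infiniteness. So the work is to compare \emph{proper factor algebras}.

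For ingredient (b), I would set up a correspondence $I \leftrightarrow I'$ between two-sided ideals, where $I'$ is generated by lifting to $\Lambda'$ a minimal generating set of $I$ (vertices $b \neq a$ stay, arrows stay, the idempotent $e_a$ is replaced by $e_{a_+}+e_{a_-}$, and any element passing through $a$ is rewritten through the split vertices). The key check is that whenever $I$ is a proper ideal, the quotient $\Lambda/I$ still has a node at $a$ (or $a$ has become a sink or source, a degenerate case) and that resolving this node in $\Lambda/I$ produces precisely $\Lambda'/I'$; conversely, for $J' \subseteq \Lambda'$ properly contained, the preimage $J = \varphi^{-1}(J')$ under the natural map $\varphi\colon\Lambda \to \Lambda'/J'$ pulled back through the identification of $a_+, a_-$ yields a proper ideal of $\Lambda$ whose resolution is $\Lambda'/J'$. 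In every case, $\Lambda/I$ and $\Lambda'/I'$ are related by a resolution of a node, hence by ingredient (a) have the same representation type. Thus every proper factor of $\Lambda$ is representation-finite iff every proper factor of $\Lambda'$ is, and the lemma follows.

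The main obstacle is the degenerate cases where the correspondence $I \leftrightarrow I'$ is not an obvious bijection. Specifically, the ideal $(e_{a_-}) \subseteq \Lambda'$ (and similarly $(e_{a_+})$) has no direct counterpart as an ideal of $\Lambda$: killing $e_{a_-}$ amounts to deleting the source vertex together with all arrows starting there, whereas in $\Lambda$ the vertex $a$ cannot be deleted without also losing the arrows ending at it. One must verify that $\Lambda'/(e_{a_-})$ is nevertheless isomorphic to the factor of $\Lambda$ obtained by imposing the relations $\gamma = 0$ for every arrow $\gamma$ starting at $a$ (an honest proper factor of $\Lambda$), so representation-finiteness is inherited, and symmetrically for $(e_{a_+})$. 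Once these boundary cases are handled, the rest is bookkeeping: proper factors on either side organise into three strata (ideals meeting $\Lambda e_{a} \Lambda$ only through incoming, only through outgoing, or through both paths at $a$), and in each stratum the resolution procedure gives a canonical bijection, completing the argument.
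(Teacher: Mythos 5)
The paper itself offers no argument for this lemma beyond the remark that it is ``quite obvious'', the only supporting fact recorded being the bijection between the indecomposable $\Lambda$-modules and the indecomposable $\Lambda'$-modules other than $S(a_-)$. Your proposal supplies an actual proof, and its two-ingredient strategy is sound: the bijection settles the representation-infinite half, and a correspondence of proper factor algebras settles minimality. You have also correctly isolated the only genuinely delicate points, namely the ideals $(e_{a_+})$ and $(e_{a_-})$ of $\Lambda'$, and the strata where $a$ degenerates to a sink or a source in $\Lambda/I$; your identification $\Lambda'/(e_{a_-}) \cong \Lambda/\langle \gamma : \gamma \text{ starts at } a\rangle$ is correct, and the latter is an honest proper nonzero quotient because a node is by definition not a sink.

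Two remarks that would tighten the write-up. First, the ``identification of $a_+,a_-$'' you invoke can be made precise at no cost: the assignment $e_a \mapsto e_{a_+}+e_{a_-}$, $e_b\mapsto e_b$ for $b\neq a$, arrows to arrows, is an honest injective algebra homomorphism $\varphi\colon \Lambda \hookrightarrow \Lambda'$ of codimension one (it respects the relations $\gamma\alpha$ through $a$ because $e_{a_-}e_{a_+}=0$ in $\Lambda'$). With this in hand the pullback direction is genuinely a preimage of an ideal under an algebra map; one only needs to check that a nonzero proper ideal $J'\subseteq\Lambda'$ meets $\varphi(\Lambda)$ nontrivially, which again uses that $a$ has at least one incoming and one outgoing arrow. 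Second, your phrase ``in every case $\Lambda/I$ and $\Lambda'/I'$ are related by a resolution of a node'' should be weakened: in the degenerate strata the two quotients are either isomorphic outright or differ by a simple algebra direct factor (an isolated vertex $a_\pm$), and in the generic stratum the embedding $\Lambda/I\hookrightarrow\Lambda'/I'$ is either onto or of codimension one splitting the idempotent $\bar e_a$, i.e.\ again a node resolution. Each of these relations visibly preserves representation-finiteness, so the conclusion stands; I am only flagging that the uniform statement as written is not literally true.
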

   
Thus, if we want to classify algebras which are minimal 
representation-infinite, it is sufficient to deal with algebras without a node.
			                                    
\subsection{Cyclic words}
Recall that a word $w$ which is neither direct nor inverse is called cyclic,
provided $w^2$ is a word.
	     
\begin{lem}
 Let $w = \alpha u \alpha v$ be a cyclic word with $\alpha$ an arrow.
Then at least one of the words $\alpha u$, $\alpha v$ is a cyclic word.
\end{lem}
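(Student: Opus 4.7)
The plan is to argue by contradiction, assuming neither $\alpha u$ nor $\alpha v$ is cyclic. Since $w$ is cyclic it contains an inverse letter, and since $\alpha$ is direct at least one of $u, v$ must contain an inverse letter; by symmetry I may assume $u$ does. Then $\alpha u$ has both directions, and its failure to be cyclic means $(\alpha u)^2$ is not a word. The junction pair $(u_\omega, \alpha)$ of $(\alpha u)^2$ already appears inside $w$, so W1 holds automatically at the junction, and the obstruction must be W2: some relation $p\in\rho$ appears as a subword crossing the junction. Such a $p$ is forced into the shape
\[
   p \;=\; u_a u_{a+1}\cdots u_n\,\alpha\,u_1 u_2\cdots u_b,
\]
with $1\le a\le n$ and $1\le b\le n$, since the extreme cases $a = n+1$ or $b = 0$ would already place $p$ as a subword of $w$ itself. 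Because relations are paths of arrows, each of $u_a,\ldots,u_n,u_1,\ldots,u_b$ is a direct letter. If $v$ also contains an inverse letter, the symmetric argument produces $q = v_c\cdots v_m\,\alpha\,v_1\cdots v_d\in\rho$ with analogous all-arrow conditions.

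Next I would exploit that $w^2 = \alpha u\alpha v\,\alpha u\alpha v$ is a word and so cannot contain $p$ as a subword. Enumerating the four occurrences of $\alpha$ in $w^2$ shows that $p$ embeds into $w^2$ precisely when one of the two matching conditions $u_1\cdots u_b = v_1\cdots v_b$ (at the second or fourth $\alpha$, requiring $b\le m$) or $u_a\cdots u_n = v_{m-n+a}\cdots v_m$ (at the third $\alpha$, requiring $n-a+1\le m$) holds, so both matching conditions fail; in particular $u \ne v$. Let $k \ge 0$ be the largest integer with $u_1\cdots u_k = v_1\cdots v_k$. The failure of the prefix match combined with the direct-prefix condition forces $k < b$, so both $u_{k+1}$ and $v_{k+1}$ lie inside the direct prefix regions of $u$ and $v$ and are therefore arrows, with $u_{k+1} \ne v_{k+1}$ by maximality of $k$. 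The agreement of the first $k$ letters forces $t(u_{k+1}) = t(v_{k+1})$, equal to $s(u_k) = s(v_k)$ when $k \ge 1$ and to $s(\alpha)$ when $k = 0$.

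The contradiction now comes from the special biserial axiom~(2$'$) applied at this common target vertex with outgoing arrow $\gamma = u_k$ (or $\gamma = \alpha$ when $k = 0$): two distinct arrows $u_{k+1}$ and $v_{k+1}$ end at $s(\gamma)$, so one of $\gamma\,u_{k+1}$ or $\gamma\,v_{k+1}$ must belong to $\rho$. But $u_k u_{k+1}$ (respectively $\alpha u_1$) is a subword of $w$ inside the $u$-part, and $v_k v_{k+1} = u_k v_{k+1}$ (respectively $\alpha v_1$) is a subword of $w$ inside the $v$-part, so neither can be a relation, the desired contradiction. The sub-case in which $v$ is itself direct is handled by exactly the same argument using only $p$, since then $v_{k+1}$ is automatically an arrow and the biseriality step goes through verbatim.

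The hard part will be the middle step: enumerating exactly how $p$ can embed into $w^2$ and locating the first position where $u$ and $v$ diverge. Once these indices are pinned down the biseriality axiom~(2$'$) finishes the argument at once, with the degenerate case $k = 0$ the only situation requiring a separate mention, since there the relevant outgoing arrow is $\alpha$ itself rather than some $u_k$.
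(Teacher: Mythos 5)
Your strategy is sound, and in the sub-case where both $u$ and $v$ contain inverse letters it is complete: there you have two relations $p$ and $q$, the non-embedding of $p$ in $w^2$ gives $k<b\le n$ and the non-embedding of $q$ gives $k<d\le m$, so $u_{k+1}$ and $v_{k+1}$ both exist, both are arrows, and axiom (2$'$) finishes the argument. (The word ``precisely'' in your enumeration step is an overclaim, since $\alpha$ may also occur as a letter inside $u$ or $v$, but you only use the harmless forward implication.) This part is essentially the mirror image of the paper's first case: the paper compares the direct \emph{suffixes} of $u$ and $v$ preceding the two occurrences of $\alpha$ and shows one is a suffix of the other, so that the junction of $(\alpha v)^2$ sits inside a relation-free subword of $w$; you compare the direct \emph{prefixes} following the two occurrences of $\alpha$ and run both comparisons at once to reach a contradiction. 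Both rest on the same determinism supplied by (2) and (2$'$).

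The genuine gap is in the sub-case where $v$ is direct, which you dismiss as going through ``verbatim''. There you have only the relation $p$, hence only the bound $k<b\le n$; nothing forces $k<m$. If $v=u_1\cdots u_m$ happens to coincide with the first $m$ letters of the direct prefix $u_1\cdots u_b$ of $p$ (so that $k=m<b$), then $v_{k+1}$ simply does not exist and your biseriality step cannot be applied. The divergence in that situation is between $u_{m+1}$ and the wrap-around occurrence of $\alpha$: since $v_\omega\alpha=u_m\alpha$ is a subword of $w^2$ while $u_mu_{m+1}$ is a subword of $w$, axiom (2$'$) forces $u_{m+1}=\alpha$, so $u_1\cdots u_b=v\,\alpha\,u_{m+2}\cdots u_b$; one must then iterate, peeling off successive copies of $v\alpha$ until the remaining tail of $u_1\cdots u_b$ is an honest prefix of $u$, at which point $p$ does embed into $w^2$ and the contradiction finally appears. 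This extra unrolling induction (which, to be fair, also lurks unacknowledged behind the paper's own second case) is a necessary separate argument, not a verbatim repetition of your main step, so as written your proof is incomplete exactly when $v$ is direct and agrees with an initial segment of $u$.
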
    

\begin{proof}
First, assume that neither $u$ nor $v$ is direct, write $u = u_1u_2u_3$
and $v = v_1v_2v_3$ with $u_1,u_3, v_1, v_3$ all being direct and of maximal
possible length. Since $w$ is a cyclic word, noth $v_3\alpha u_1$ and 
$u_3\alpha v_1$ are words. Assume that $\alpha u$ is not a cyclic word, then
there is a zero relation which is a subword of $u_3\alpha u_1$. Since no
subword of $v_3\alpha u_1$ is a zero relation, we conclude that $u_3 = u_3'v_3$
for some word $u_3.$ With $u_3\alpha v_1 = u_3'v_3\alpha v_1$ also
$v_3\alpha v_1$ is a word. This implies that $\alpha v$ is a cyclic word.

Now assume $u$ is direct. Since $w$ is not direct, we know that $v$ cannot be
direct. As above, write $v = v_1v_2v_3$ with $v_1, v_3$ direct and of maximal
possible length. Since $w$ is a cyclic word, there is no zero relation
which is a subword of $v_3\alpha u \alpha v_1$. But the direct word 
$v_3\alpha v_1$ is a subword of $v_3\alpha u \alpha v_1$, thus we see that
there is no subword of $v_3\alpha v_1$ is a zero relation, thus $\alpha v$
is a cyclic word. This completes the proof.
\end{proof}
   
As a consequence, we see: if $w$ is a cyclic word of minimal length, then
any arrow can occur in $w$ at most once as a direct letter, and at most once as an inverse letter. In particular, the length of $w$ is bounded 
by $2a$, where $a$ is the number of arrows. 
(A typical example of a cyclic word of minimal length which contains both
an arrow as well as its inverse is given by example 1.)

\subsection{The $4$-vertices}
Let $a$ be a $4$-vertex, with arrows $\alpha,\beta$ ending in $a$
and arrows $\gamma, \delta$ starting in $a$ such that the
words $\gamma\beta$ and $\delta\alpha$ are relations. 

Let $w$ be a cyclic word of smallest possible length.
Assume $w$ contains $\gamma\alpha$ as a subword. 

Up to rotation, we can assume that
$w$ starts with $\gamma\alpha$. Assume $w$ also contains $\beta^{-1},$
say $w = \gamma\alpha u \beta^{-1} v$, for some words $u,v$. Then
$\alpha u \beta^{-1}$ is a cyclic word of shorter length, 
a contradiction.
Similarly, if $w = \gamma\alpha u \delta^{-1} v$, for some words $u, v$,
then $\gamma \delta^{-1}v$ is a cyclic word of shorter length.

Now assume that $w$ contains $\delta\beta$ as a subword. 
Then $w = \gamma u \beta v$ where $u,v$ are words such that $u$ starts
with $\alpha$ and ends with $\delta$ (it may be that $u = \alpha u'\delta$,
or else $\alpha = u = \delta$). Then we consider $w' = \gamma u^{-1} \beta v$.
This is again a cyclic word, and it contains neither $\delta\beta$
or its inverse as a subword. Namely, according to Lemma 1, $\delta\beta$
was contained just once as a subword of $w$, and this composition has been
destroyed when we built $w'$. Also no new composition has been created.

Finally, we have to consider the case that $w$ does not contain $\delta\beta$.
Since $w$ has to contain $\beta$ and $\delta$, it must
contain $\alpha^{-1}\beta$ and $\delta\gamma^{-1}.$
By Lemma 1, $w$ contains $\gamma\alpha$ only once, and it cannot contain
$\alpha^{-1}\gamma^{-1}$, since otherwise we apply the previous 
considerations to $w^{-1}$. Let $w = \gamma\alpha u \delta\gamma^{-1} u'$,
then the subword $\alpha u \delta$ does not contain $\gamma\alpha$ or
its inverse. Similarly, write $w = \gamma\alpha v'\alpha^{-1}\beta v$, then
$\beta v \gamma$ is a subword of $w^2$ and does not 
contain $\gamma\alpha$ or its inverse. We form the word
$\alpha u \delta (\beta v \gamma)^{-1} = 
\alpha u \delta \gamma^{-1} v^{-1} \beta^{-1}$. This is a cyclic word
which does not contain $\gamma\alpha$ or its inverse. This shows that the
factor algebra with the added relations $\gamma\alpha$ and
$\delta\beta$ is still representation-infinite.
This completes the proof.

\section{Proof of Theorem 1.2}
    
We can assume that we deal with a special biserial algebra  $\Lambda$ with no 4-vertex. 
Let $w$ be a cyclic word of minimal length, thus no letter occurs twice. 
Given an arrow $\alpha$, we will say that the edge 
$\alpha^\pm$ occurs once in $w$ if precisely
one of the letters $\alpha, \alpha^{-1}$ occurs in $w$, and that it occurs twice if
both occur; these are the only possibilities, since we can assume that any arrow or its inverse occurs in $w$. 

We can assume that all the vertices of $Q$ are 2-vertices or 3-vertices (note that the support
of a cyclic word cannot contain a $1$-vertex). In case all the vertices are 2-vertices,
then we deal with a hereditary algebra of type $\widetilde {\mathbb A}$. Thus we can assume that there
is at least one $3$-vertex.

If $a$ is a 3-vertex, there
can be only one zero-relation of length two passing through $a$, since otherwise the
word $w$ could not pass through $a$. Thus, we deal with the following local situation
$$
\hbox{\beginpicture
\setcoordinatesystem units <1cm,1cm>
\multiput{} at 0 0  2 2 /
\put{$\circ$} at 1 1
\plot 0 1  0.8  1 /
\arrow <1.5mm> [0.25,0.75] from 1.2 0.8 to 1.8 0.2
\arrow <1.5mm> [0.25,0.75] from 1.8 1.8 to 1.2 1.2
\put{$\alpha$} at 1.7 1.4
\put{$\delta$} at 1.7 0.6
\put{$\eta$} at 0.4 1.2
\setquadratic
\setdots <.5mm>
\plot 1.4 0.7  1.3 1  1.4 1.3 /
\endpicture}
$$
{\it Then $\eta^\pm$ occurs twice in $w$ whereas $\alpha^{\pm}$ and 
$\delta^\pm$ occur just once.} Proof: Since $\alpha^\pm$ as well as $\delta^\pm$ 
both have to occur at least once,
this yields two different subwords of $w$ involving $\eta^\pm$. But if $\alpha^\pm$
or $\delta^\pm$ would occur twice, we would obtain at least three letters 
of the form $\eta$ and $\eta^{-1}$, impossible.

In this way, we see that there are edges which occur once, as well as edges which occur
twice. 

This shows clearly the structure of the word $w$. Up to rotation, we can assume that
$w$ starts in a 3-vertex, and that the inverse of the first letter does not occur in $w$.
Of course, then the last letter yields an edge which occurs twice. We cut $w$ into pieces
$$
 w = u_1v_1\cdots u_mv_m,
$$ 
such that any $u_i$ uses edges which occur only once, whereas
the $v_i$ use edges which occur twice. We obtain an involution $\sigma$ on the
set $\{1,2,\dots,m\}$ with $v_{\sigma(i)} = v_i^{-1}$ (note that in case $l$ is a letter which 
occurs in some $v_i$, and $l^{-1}$ occurs in $v_j$, then necessarily $v_j = v_i^{-1}$, thus
we define $\sigma(i) = j).$ {\it The involution $\sigma$ has no fixed point.} Namely,
$v^{-1} \neq v$ for any word $v$: in case $v$ has odd length, just consider the middle
letter, it cannot be both direct and inverse; in case $v$ has even length, say
$v= l_1\cdots l_{2t}$ with letters $l_i$, then $l_{t+1} = l_t^{-1}$, which is excluded.
This shows that $m = 2n$ is even.
     
Let us look at the behavior of the compositions $u_iv_i$ and $v_iu_{i+1}$ and 
compare this with the compositions $u_{\sigma(i)}v_{\sigma(i)}$ and 
$v_{\sigma(i)}u_{\sigma(i)+1}$, taking into account that 
$v_{\sigma(i)}= v_i^{-1}$. The composition $u_iv_i$ occurs at the same 3-vertex as
$v_{\sigma(i)}u_{\sigma(i)+1}$, thus precisely one of the pairs $(u_i,v_i)$ and 
$(v_{\sigma(i)}, u_{\sigma(i)+1})$ is attracting.
Similarly, precisely one of the pairs $(v_i,u_{i+1})$ and $(u_{\sigma(i)},v_{\sigma(i)})$
is attracting.

   \medskip

Claim: {\it If one of the words $v_i$ is not serial, then $n=1.$}
Let us assume that $n \ge 2$ and that one of the words $v_i$ is not serial, 
say $v_i = xy$ such that the pair ($x,y)$ is attracting.

Two different cases have to be considered. The first case is the following:
$$
\hbox{\beginpicture
\setcoordinatesystem units <.7cm,1cm>
\plot 0 0  12 0 /
\plot 1 0.1 1 -0.1 /
\plot 3 0.1 3 -0.1 /
\plot 4 0.1 4 -0.1 /
\plot 6 0.1 6 -0.1 /
\plot 7 0.1 7 -0.1 /
\plot 9 0.1 9 -0.1 /
\plot 10 0.1 10 -0.1 /
\plot 12 0.1 12 -0.1 /
\put{$v$} at 2 0.3
\put{$v^{-1}$} at 5 0.37
\put{$xy$} at 8 0.3
\put{$y^{-1}x^{-1}$} at 11 0.37
\put{$w_1$} at 0.5 0.3
\put{$w_2$} at 3.5 0.3
\put{$w_3$} at 6.5 0.3
\put{$w_4$} at 9.5 0.3
\endpicture}
$$
where all the $w_i$ start and end with edges which occur only once.

We can assume that the pair $(w_1,v)$ is not attracting.  
Otherwise, $(v^{-1},w_3)$ is not attracting, and we replace the given word $w$ 
first by $w^{-1}$ and then by a rotated
one in order to obtain a similar situation.

We claim that we can replace $w_1$ at the beginning by the word by 
$w_3^{-1}$, thus dealing with 
$$
 w_3^{-1}vw_2v^{-1}w_3xyw_4y^{-1}x^{-1}.
$$
This is a word. Namely, since the pair $(w_1,v)$ is not attracting, the pair
$(v^{-1},w_3)$ is attracting, thus the same is true for the inverse.
Also, it is a cyclic word, since the pair $(w_3x,y)$, thus also 
$(y^{-1},x^{-1}w_3)$ is attracting. 

It remains to observe that this new cyclic word uses less
arrows: all the edges of $w_1$ which have multiplicity 1 in $w$ 
have disappeared. This contradicts that $Q$
is minimal representation-infinite. 
   \bigskip

Let us now discuss the second case, where $xy$ lies in between $v$ and $v^{-1}$. 
$$
\hbox{\beginpicture
\setcoordinatesystem units <.7cm,1cm>
\plot 0 0  12 0 /
\plot 1 0.1 1 -0.1 /
\plot 3 0.1 3 -0.1 /
\plot 4 0.1 4 -0.1 /
\plot 6 0.1 6 -0.1 /
\plot 7 0.1 7 -0.1 /
\plot 9 0.1 9 -0.1 /
\plot 10 0.1 10 -0.1 /
\plot 12 0.1 12 -0.1 /
\put{$v$} at 2 0.3
\put{$xy$} at 5 0.3
\put{$v^{-1}$} at 8 0.37
\put{$y^{-1}x^{-1}$} at 11 0.37
\put{$w_1$} at 0.5 0.3
\put{$w_2$} at 3.5 0.3
\put{$w_3$} at 6.5 0.3
\put{$w_4$} at 9.5 0.3
\endpicture}
$$
where again all the $w_i$ start and end with edges which occur only once.
Again, we can assume that the pair $(w_1,v)$ is not attracting (otherwise, the
pair $(v^{-1},w_4)$ is not attracting and we invert and rotate $w$).

This time, we claim that
$$
 vw_2x|yw_4^{-1}
$$
is a cyclic word. 
The first part is a subword of $w$, the inverse of the second part is also a
subword of $w$. Thus, both words $vw_2x$ and $yw_4^{-1}$ exist and they can be composed,
since $(x,y)$ is an attracting pair. Also, it is a cyclic word, since $(w_4^{-1},v)$
is an attracting pair:  By assumption, $(w_1,v)$ is not attracting, thus
$(v^{-1},w_4)$ is an attracting pair, and therefore the same is true for its inverse
$(w_4^{-1},v).$
	
The case $n=1$ with $v = v_1$ non-serial yields a barbell.
    \bigskip

Thus, we now assume that all the words $v_i$ are serial. 

Claim: 
($*$) 
{\it If $(u_1,v_1)$ is not attracting, then also $(u_{\sigma(1)},v_{\sigma(1)})$ is not attracting, but $(v_1,u_2)$ is attracting.}

Let $s = \sigma(1),$ and assume that $(u_1,v_1)$ is not attracting, but $(u_s,v_s)$ is attracting. 
Let $w_2 = u_2v_2\cdots v_{s-1}u_s$ and $w' = u_{s+1}v_{s+1}\cdots
u_mv_m$, thus $w = u_1v_1w_2v_s w'$ and $v_s = v_1^{-1}$. We claim that 
$$
 w'' = u_1v_1w_2^{-1}v_sw'
$$
is a cyclic word. Of course, $u_1v_1$ is a subword of $w$.
Also, $w_2^{-1}v_s = w_2^{-1}v_1^{-1} = (v_1w_2)^{-1}$
is the inverse of a subword of $w$. Since we assume that $(w_2,v_s) = (w_2,v_1^{-1})$ is 
attracting, also the inverse $(v_1,w_2^{-1})$ is attracting, and thus also
$(u_1v_1,w_2^{-1}v_s)$ is attracting. 
Since $(u_1,v_1)$ is not attracting, the pair $(v_s,u_{s+1}) = (v_1^{-1},u_{s+1})$ is attracting. This shows that 
the concatenation of $v_s$ with $w'$ or even with $w'u_1v_1$ provides no problem.
Therefore we see that $w''$ is a cyclic word.

Note that $w$ has the serial word $z = u_{1,\omega}v_1u_{2,1}$ as a subword, whereas this is
no longer a subword of $w''$. This means that we can use $z$ as an additional relation
and still have the cyclic word $w''$. 
This contradicts the assumption that we deal with a minimal representation-infinite
algebra.
This contraction shows that $(u_{\sigma(1)},v_{\sigma(1)})$ is not attracting, 
but then $(v_1,u_2)$ has to be attracting.
 
Claim: {\it If $(u_1,v_1)$ is not attracting, then also $(u_2,v_2)$ is not attracting.}
Thus, let us assume that $(u_2,v_2)$ is attracting, whereas $(u_1,v_1)$ is not.
We already have seen in ($*$) that $(u_{\sigma(1)},v_{\sigma(1)})$ is not attracting, thus
$\sigma(1) \neq 2.$
	               
We know from ($*$) that $(v_1,u_2)$ is attracting.
By assumption, also $(u_2,v_2)$ is attracting. 
 
Consider the case that $\sigma(2) < \sigma(1).$
$$
\hbox{\beginpicture
\setcoordinatesystem units <.5cm,1cm>
\plot 0 0  8.3 0 /
\plot 9.7 0  14.3 0 /
\plot 15.7 0  20.3 0 /
\plot 0 0.1 0 -0.1 /
\plot 2 0.1 2 -0.1 /
\plot 4 0.1 4 -0.3 /
\plot 6 0.1 6 -0.3 /
\plot 8 0.1 8 -0.1 /
\plot 10 0.1 10 -0.1 /
\plot 12 0.1 12 -0.3 /
\plot 14 0.1 14 -0.1 /
\plot 16 0.1 16 -0.1 /
\plot 18 0.1 18 -0.3 /
\plot 20 0.1 20 -0.1 /
\plot 22 0.1 22 -0.1 /

\setdots <1mm>
\plot 8.3 0 9.7 0 /
\plot 14.3 0  15.7 0 /
\plot 20.3 0  22 0 /
\put{$u_1$} at 1 0.3
\put{$v_1$} at 3 0.3
\put{$u_2$} at 5 0.3
\put{$v_2$} at 7 0.3
\put{$v_{\sigma(2)}$} at 11 0.3
\put{$u_{\sigma(2)+1}$} at 13 0.3
\put{$u_{\sigma(1)}$} at 17 0.3
\put{$v_{\sigma(1)}$} at 19 0.3 
\put{} at 22 0
\put{$a$} at 4 -0.63
\put{$b$} at 6 -.6
\put{$b$} at 12 -.6
\put{$a$} at 18 -.63
\endpicture}
$$
Thus we may replace
$u_{\sigma(2)+1}\cdots u_{\sigma(1)}$ by $u_2^{-1}$ and obtain
the cyclic word
$$
  u_1v_1\cdots v_{\sigma(2)}|u_2^{-1}|v_{\sigma(1)}\cdots v_m.
$$
But here we use less arrows: all the arrows in $u_{\sigma(2)+1}$ have disappeared.
    
Finally, we have to deal with the case that
$\sigma(1) < \sigma(2).$
$$
\hbox{\beginpicture
\setcoordinatesystem units <.5cm,1cm>
\plot 0 0  8.3 0 /
\plot 9.7 0  14.3 0 /
\plot 15.7 0  20.3 0 /
\plot 0 0.1 0 -0.1 /
\plot 2 0.1 2 -0.1 /
\plot 4 0.1 4 -0.3 /
\plot 6 0.1 6 -0.3 /
\plot 8 0.1 8 -0.1 /
\plot 10 0.1 10 -0.1 /
\plot 12 0.1 12 -0.3 /
\plot 14 0.1 14 -0.1 /
\plot 16 0.1 16 -0.1 /
\plot 18 0.1 18 -0.3 /
\plot 20 0.1 20 -0.1 /
\plot 22 0.1 22 -0.1 /

\setdots <1mm>
\plot 8.3 0 9.7 0 /
\plot 14.3 0  15.7 0 /
\plot 20.3 0  22 0 /
\put{$u_1$} at 1 0.3
\put{$v_1$} at 3 0.3
\put{$u_2$} at 5 0.3
\put{$v_2$} at 7 0.3
\put{$u_{\sigma(1)}$} at 11 0.3
\put{$v_{\sigma(1)}$} at 13 0.3
\put{$v_{\sigma(2)}$} at 17 0.3
\put{$u_{\sigma(2)+1}$} at 19 0.3 
\put{} at 22 0
\put{$a$} at 4 -0.63
\put{$b$} at 6 -.6
\put{$a$} at 12 -.63
\put{$b$} at 18 -.6
\endpicture}
$$
and take the cyclic word 
$$
 v_{\sigma(1)}\cdots v_{\sigma(2)}u_2^{-1}.
$$
This exists, since the pairs $(v_1,u_2) = (v_{\sigma(1)}^{-1},u_2)$ and
$(u_2,v_2) = (u_2,v_{\sigma(2)}^{-1})$ both are attracting. 
In this case, we lose for example all the arrows which occur in $u_1$.
   
Altogether we see that $w$ defines a wind wheel algebra.

\section{Proof of Theorem 1.3}

Let $\Lambda$ be special biserial, let $Q$ be its quiver. 
If $\Lambda$ is minimal representation-infinite,
then the ideal of relations is generated by a set $\rho$ of zero relations, thus the universal cover 
$\Lambda$ of $\Lambda$
is given by the universal cover $\widetilde Q$ of the quiver $Q$ and the set $\widetilde \rho$ of  lifted relations.
It follows that $\widetilde \Lambda$ is special biserial with a quiver which is a tree, and the indecomposable
$\widetilde \Lambda$-modules are string modules with support a quiver of type $\mathbb A_n$. It follows that
any finite convex subcategory is representation-finite.

Conversely, assume now that $\Lambda$ is a finite dimensional basic $k$-algebra which is
 minimal representation-infinite, that
the universal cover $\widetilde \Lambda$ of $\Lambda$ 
is good and that any finite convex subcategory of $\widetilde\Lambda$ is representation-finite.
We want to show that $\Lambda$ is special biserial. 

Let $Q$ be the quiver of $\Lambda$ and $\widetilde Q$ the quiver of $\widetilde \Lambda$. 
We denote by $\pi: \mo \widetilde\Lambda \to \mo \Lambda$  the covering functor (often called push-down
functor, or forgetful functor).

For any finite dimensional $k$-algebra $A$, let $s(A)$ be the number of (isomorphism classes of)
simple $A$-modules. 

Note that $\widetilde\Lambda$ is not of bounded representation type, since otherwise also $\Lambda$
would be of bounded, thus finite,  representation type. Thus we see that there are indecomposable
representations $M$ of $\widetilde\Lambda$ of arbitrarily large length. Given such a representation $M$ say of
length $m$, its support algebra $C$  is a representation-directed algebra, thus $s(C) \ge \frac m6$. 
This shows that there are indecomposable
representations $M$ of $\Lambda$ whose support has with arbitrarily large  cardinality. 

The sincere representation-directed algebras with large support have been classified
by Bongartz (see \cite{B1} or \cite{R1099}). Such an algebra $C$ has a convex subalgebra
$B$ which is given by a quiver of type $\mathbb A_n$ (without any relation), such that 
$s(B) \ge \frac13 s(C).$ Thus we see: $\widetilde\Lambda$ has convex subcategories $B$ which are given
by a quiver of type $\mathbb A_n$ without relations, where $n$ is arbitrarily large.

Let $p(\Lambda)$ the Loewy length of $\Lambda$. Choose a convex subcategory $B$ of $\widetilde\Lambda$ which 
is of the form $\mathbb A_n$ and without relations, where $n > 4p(\Lambda)s(\Lambda)$ and let $M$
the the (unique) since representation of $B$. Note that the indecomposable $B$-modules are string modules,
they are given by words using as letters the arrows of the quiver of $B$. Since $n > 4l(\Lambda)s(\Lambda)$,
it follows easily that there is a simple $\Lambda$-module $S$
such that $[\soc\pi(M:S)] \ge 3$.
But this implies that there is an indecomposable $B$-module $N$ which is given by a word of the
form $w= l_1l_1\cdots l_t$, with arrows $ l_1$ and $ l_t^{-1}$ such that $\pi(l_1)$ and
$\pi(l_t^{-1})$ are different arrows of $Q$ and  end in the same vertex of $Q$ (namely the support vertex
of $S$).

We denote by $A$ the support algebra of $N$, it is given by a quiver of type $\mathbb A_{t+1}$ without any relation,
and $\pi(N)$ has a one-parameter family of simple submodules $U$ such that the $\Lambda$-modules
$\pi(N)/U$ are indecomposable and pairwise non-isomorphic. Since $\Lambda$ is minimal representation-infinite,
it follows that $\pi(N)$ is faithful, thus we obtain all the vertices and all the arrows of $Q$ by applying
$\pi$ to the vertices and arrows of $A$, respectively. 

Since we now know that the vertices and the arrows of $Q$ are images under $\pi$ of vertices and arrows in
the support of $A$, it follows that the support of $A$ contains a fundamental domain for the action of
the Galois group on $\widetilde Q$. In particular, we see that we can compose  the word $w$ with a word
$w'$ such that $\pi(w) = \pi(w')$ and so on. 

Now assume there is a vertex $x$ of $Q$ with 3 arrows ending in $x$, say $\alpha, \beta, \gamma$. 
Looking at the universal cover, we obtain arrows $\tilde \alpha, \tilde\beta,\tilde \gamma$ ending in the
same vertex $\tilde x.$ But the arrow $\tilde\alpha$ is the first letter of a word $w'= l'_1l'_2l'_3$ of length $3$ 
which yields a string module for $\widetilde\Lambda$. Similarly,  the arrow $\tilde\beta$ is the first letter of a word 
$w'' = l''_1l''_2l''_3$ of length $3$ 
which also  yields a string module for $\widetilde\Lambda$. But the union of the support of the words $w', w''$ and
$\gamma$  is a subquiver (without any relation) of $\widetilde Q$ of type $\widetilde{\mathbb E}_7$.
This contradicts the assumption that any finite subcategory of $\widetilde \Lambda$ is representation-finite.

The dual argument shows that any vertex of $Q$ is starting point of at most two arrows.

Now assume that the vertex $x$ of $Q$ is endpoint of the arrows $\alpha \neq \beta$ and starting point
of the arrow $\gamma$ and that neither $\gamma\alpha$ nor $\gamma\beta$ is a zero relation.
We looking again at the universal cover, and obtain arrows $\tilde \alpha, \tilde\beta$ ending in a vertex $\tilde x$,
as well as $\tilde \gamma$ starting in the vertex $\tilde x$. Since $\gamma\alpha$ is not a zero relation, we see
that $\tilde\gamma\tilde\alpha$ must be a subword of $w$ or $w^{-1}$, in particular we can prolong it
to a word $w' = l'_1l'_2l'_3l'_4$ of length $4$ so that we have a corresponding string module $M(w')$ 
Similarly, we prolong $\tilde\gamma\tilde\beta$  to a word $w'' = l''_1l''_2l''_3l''_4$ of length $4$ with string module $M(w'')$.
We consider  the union of the support of the words $w', w''$; it
is a subquiver (without any relation) of $\widetilde Q$, again of type $\widetilde{\mathbb E}_7$, thus again we obtain
a contradiction to  the assumption that any finite subcategory of $\widetilde \Lambda$ is representation-finite.

\section{ Further examples.}
   
First, we present a second example of a  barbell algebras with non-serial bar.
       	          \medskip

{\bf Example 3.} 
Start with $\epsilon = (+),$ $\eta = (+-), \epsilon' = (+-+).$ Then $H(\epsilon\eta\epsilon'\eta^{-1})$ 
has the following shape:
$$
\hbox{\beginpicture
\setcoordinatesystem units <1.5cm,1.2cm>
\put{$a_0$} at 0 1
\put{$a_1$} at 0 0
\put{$a_2$} at 1 0
\put{$a_3$} at 2 0 
\put{$a_4$} at 3 0.15
\put{$a_5$} at 3 0.85
\put{$a_6$} at 2 1
\put{$a_7$} at 1 1
\arrow <1.5mm> [0.25,0.75] from -0.175 0.8 to -0.15 0.85

\arrow <1.5mm> [0.25,0.75] from 0.8 0 to 0.2 0
\arrow <1.5mm> [0.25,0.75] from 1.2 0 to 1.8 0

\arrow <1.5mm> [0.25,0.75] from 2.8 0.13 to 2.2 0
\arrow <1.5mm> [0.25,0.75] from 2.2 1 to 2.8 0.87
\arrow <1.5mm> [0.25,0.75] from 0.8 1 to 0.2 1
\arrow <1.5mm> [0.25,0.75] from 1.2 1 to 1.8 1
\arrow <1.5mm> [0.25,0.75] from 3.2 0.75 to 3.15 0.75
\circulararc -80 degrees from -0.15 0.15 center at .2 0.5 

\put{$\alpha_1$}  at -0.5 0.5
\put{$\alpha_2$}  at  0.5 -0.2
\put{$\alpha_3$}  at  1.5 -0.2
\put{$\alpha_4$}  at  2.6 -0.1
\put{$\alpha_5$}  at  3.8 0.5
\put{$\alpha_6$}  at  2.6 1.1
\put{$\alpha_7$}  at  0.5 1.2
\put{$\alpha_8$}  at  1.5 1.2
\ellipticalarc axes ratio 2.5:1 170 degrees from 3.15 0.25 center at 3.1 0.5 

\endpicture}
$$
and $B(\epsilon,\eta,\epsilon')$ will be
$$
\hbox{\beginpicture
\setcoordinatesystem units <1.5cm,1.2cm>
\put{$a_1$} at 0 0
\put{$a_2$} at 1 0
\put{$a_3$} at 2 0 
\put{$a_4$} at 2.9 -.55
\put{$a_5$} at 2.9 0.5

\arrow <1.5mm> [0.25,0.75] from 0.8 0 to 0.2 0
\arrow <1.5mm> [0.25,0.75] from 1.2 0 to 1.8 0

\arrow <1.5mm> [0.25,0.75] from 2.7 -0.5 to 2.2 -0.1
\arrow <1.5mm> [0.25,0.75] from 2.2 0.1 to 2.7 0.5
\ellipticalarc axes ratio 1:1 180 degrees from 3.1 -0.55 center at 3.1 0
\arrow <1.5mm> [0.25,0.75] from 3.15 0.545 to 3.1 0.55

\circulararc 320 degrees from -0.05 0.2 center at -0.4 0 
\arrow <1.5mm> [0.25,0.75] from -0.095 0.3 to -0.05 0.2
\multiput{} at -1 0.5  4 -0.5 /
\setdots <.7mm>
\setquadratic
\plot -0.2 -0.3 -0.15 0 -0.2 0.3 /
\plot  2.5 -0.2  2.4 0 2.5 0.2 /
\endpicture}
$$	
Again, the bar (given by the arrows $a_1 \leftarrow a_2 \rightarrow a_3)$ is not serial.
       \medskip

The next examples are wind wheel algebras.
    \medskip

{\bf Example 4.} The wind wheel algebra for the word $w$ 
$$
\hbox{\beginpicture
\setcoordinatesystem units <.6cm,.4cm>
\put{$3$} at -0 0
\put{$4$} at -1 1 
\put{$2$} at -2 2
\put{$1$} at -3 1
\put{$5$} at -4 0 
\put{$4$} at -6 2
\put{$3$} at -7 1
\put{$1$} at -8 0
\put{$2$} at -9 1 
\put{$6$} at -10 2 
\put{$3$} at -12 0
\plot -0.3 0.3  -0.7 0.7 /
\plot -1.3 1.3  -1.7 1.7 /
\plot -2.3 1.7  -2.7 1.3 /
\plot -3.3 0.7  -3.7 0.3 /

\plot -4.3 0.3  -5.7 1.7 /
\plot -6.3 1.7  -6.7 1.3 /
\plot -7.3 0.7  -7.7 0.3 /

\plot -8.3 0.3  -8.7 0.7 /
\plot -9.3 1.3  -9.7 1.7 /
\plot -10.3 1.7  -11.7 0.3 /
\put{$v_4$} at  -.5 -1.5
\put{$u_4$} at  -1.5 -1.5  
\put{$v_3$} at  -2.5 -1.5
\put{$u_3$} at  -4.5  -1.5
\put{$v_2$} at  -6.5 -1.5
\put{$u_2$} at  -7.5 -1.5
\put{$v_1$} at  -8.5 -1.5
\put{$u_1$} at  -10.5 -1.5
\setdots <1mm>
\plot  -0 -2   -0 0 /
\plot  -1 -2   -1 1 /
\plot  -2 -2   -2 2 /
\plot  -3 -2   -3 1 /
\plot  -6 -2   -6 2 /
\plot  -7 -2   -7 1 /
\plot  -8 -2   -8 0 /
\plot  -9 -2   -9 1 /
\plot  -12 -2   -12 0 /

\endpicture}
$$
The permutation is $\sigma = (13)(24).$ 
The short zero relations are

\begin{align}
u_{1,\omega}u_{3+1,1}& = 6 - 2 - 4 \nonumber
\\
u_{2,\omega}u_{4+1,1}& = 1 - 3 - 6 \nonumber
\\
u_{3,\omega}u_{1+1,1}& = 5 - 1 - 3 \nonumber
\\
u_{4,\omega}u_{2+1,1}& = 2 - 4 - 5  \nonumber
\end{align}

The long zero relations are

\begin{align}
u_{1,\omega}v_1u_{3,\omega to }^{-1} &=  6 - 2 - 1 - 5 \nonumber
\\
u_{2,\omega}v_2u_{4,\omega to }^{-1} &=  1 - 3 - 4 - 2 \nonumber
\\
u_{3,\omega}v_3u_{1,\omega to }^{-1} &= (u_{1,\omega}v_1u_{3,\omega to }^{-1})^{-1} \nonumber
\\
 u_{4,\omega}v_4u_{2,\omega to }^{-1} &= (u_{2,\omega}v_2u_{4,\omega to }^{-1})^{-1} \nonumber
\end{align}

The corresponding wind wheel algebra $W = W(w)$ is:
$$
\hbox{\beginpicture
\setcoordinatesystem units <.8cm,.8cm>
\put{$1$} at 0 1
\put{$2$} at 0 3
\put{$3$} at 4 3
\put{$4$} at 4 1
\put{$5$} at 2 0
\put{$6$} at 2 4

\put{$\alpha$} at -0.2 2
\put{$\varepsilon$} at 4.2 2
\arrow <1.5mm> [0.25,0.75] from 0 2.8 to 0 1.2
\arrow <1.5mm> [0.25,0.75] from 4 1.2 to 4 2.8

\arrow <1.5mm> [0.25,0.75] from 0.2 0.9 to 1.8 0.1
\arrow <1.5mm> [0.25,0.75] from 3.8 0.9 to 2.2 0.1

\arrow <1.5mm> [0.25,0.75] from 1.8 3.9 to 0.2 3.1
\arrow <1.5mm> [0.25,0.75] from 2.2 3.9 to 3.8 3.1

\arrow <1.5mm> [0.25,0.75] from 3.8 2.8 to 0.2 1.2
\arrow <1.5mm> [0.25,0.75] from 0.2 2.8 to 3.8 1.2
\setdots<.7mm>
\plot 3.4 2.8  3.4 3.2 /
\plot 3.4 0.8  3.4 1.2 /
\plot 0.6 2.8  0.6 3.2 /
\plot 0.6 0.8  0.6 1.2 /

\endpicture}
$$
with the further relations $6 \to 2 \to 1 \to 5$ and $2 \to 4 \to 3 \to 1.$ There are two 
bars, they are given by the arrows $\alpha$ and $\epsilon$.
      \bigskip

In order to construct $W$, one starts with the following orientation sequence:
$$
\hbox{\beginpicture
\setcoordinatesystem units <.8cm,.5cm>
\put{$+\quad -$} at 1 0
\put{$-$} at 2.5 0
\put{$+$} at 3.5 0
\put{$+$} at 4.5 0
\put{$-\quad +$} at 6 0
\put{$+$} at 7.5 0
\put{$-$} at 8.5 0
\put{$-$} at 9.5 0

\put{$\epsilon_1$} at 1 -1
\put{$\eta_1$} at 2.5 -1
\put{$\epsilon_2$} at 3.5 -1
\put{$\eta_2$} at 4.5 -1
\put{$\epsilon_3$} at 6 -1
\put{$\eta_3$} at 7.5 -1
\put{$\epsilon_4$} at 8.5 -1
\put{$\eta_4$} at 9.5 -1
\setdots <1mm>
\plot 0 -1.5  0 .5 /
\plot 2 -1.5  2 .5 /
\plot 3 -1.5  3 .5 /
\plot 4 -1.5  4 .5 /
\plot 5 -1.5  5 .5 /
\plot 7 -1.5  7 .5 /
\plot 8 -1.5  8 .5 /
\plot 9 -1.5  9 .5 /
\plot 10 -1.5  10 .5 /
\put{} at 0 -0.7
\endpicture}
$$
and constructs the hereditary algebra $H(\epsilon_1\eta_1\cdots\epsilon_4\eta_4)$:
$$
\hbox{\beginpicture
\setcoordinatesystem units <1cm,.85cm>
\put{$a_0 =$} at -0.6 1
\put{$a_1 =$} at -0.6 0
\put{$3$} at 0 1
\put{$6$} at 0 0
\put{$2$} at 1 0 
\put{$1$} at 2 0 
\put{$3$} at 3 1
\put{$4$} at 3 2
\put{$5$} at 3 3
\put{$1$} at 2 3
\put{$2$} at 1 3
\put{$4$} at 0 2
\arrow <1.5mm> [0.25,0.75] from 0.2 0 to 0.8 0
\arrow <1.5mm> [0.25,0.75] from 1.2 0 to 1.8 0
\arrow <1.5mm> [0.25,0.75] from 2.8 0.8 to 2.2 0.2
\arrow <1.5mm> [0.25,0.75] from 3 1.8 to 3 1.2
\arrow <1.5mm> [0.25,0.75] from 3 2.2 to 3 2.8
\arrow <1.5mm> [0.25,0.75] from 2.2 3 to 2.8 3
\arrow <1.5mm> [0.25,0.75] from 1.2 3 to 1.8 3
\arrow <1.5mm> [0.25,0.75] from 0.8 2.8 to 0.2 2.2
\arrow <1.5mm> [0.25,0.75] from 0 1.8 to 0 1.2
\arrow <1.5mm> [0.25,0.75] from 0 0.2 to 0 0.8
\endpicture}
$$

Thus, we start with a quiver which can be drawn either as a zigzag (with arrows pointing
downwards), where the left end and the right end have to be identified, or else
as a proper cycle:
$$
\hbox{\beginpicture
\setcoordinatesystem units <.7cm,.6cm>
\put{\beginpicture
\multiput{$\bullet$} at 0 1  0 0  1 0   2 0   3 1  3 2  3 3  2 3   1 3   0 2 /
\arrow <1.5mm> [0.25,0.75] from 0.2 0 to 0.8 0
\arrow <1.5mm> [0.25,0.75] from 1.2 0 to 1.8 0
\arrow <1.5mm> [0.25,0.75] from 2.8 0.8 to 2.2 0.2
\arrow <1.5mm> [0.25,0.75] from 3 1.8 to 3 1.2
\arrow <1.5mm> [0.25,0.75] from 3 2.2 to 3 2.8
\arrow <1.5mm> [0.25,0.75] from 2.2 3 to 2.8 3
\arrow <1.5mm> [0.25,0.75] from 1.2 3 to 1.8 3
\arrow <1.5mm> [0.25,0.75] from 0.8 2.8 to 0.2 2.2
\arrow <1.5mm> [0.25,0.75] from 0 1.8 to 0 1.2
\arrow <1.5mm> [0.25,0.75] from 0 0.2 to 0 0.8
\setdashes <1mm>
\plot 0.5 -0.5  0.5 0.5  2.5 0.5  2.5 -0.5  0.5 -0.5 /
\plot 0.5  2.5  0.5 3.5  2.5 3.5  2.5  2.5  0.5  2.5 /
\setshadegrid span <0.5mm>

\vshade -.5 0.5 2.5  .5 0.5 2.5 /
\vshade 2.5 0.5 2.5  3.5 0.5 2.5 /
\endpicture} at 9 0
\put{\beginpicture
\setcoordinatesystem units <.6cm,.4cm>
\multiput{$\bullet$} at  -0 0  -1 1   -2 2  -3 1  -4 0  -6 2  -7 1  -8 0  -9 1   -10 2  -12 0 /
\plot -0.3 0.3  -0.7 0.7 /
\plot -1.3 1.3  -1.7 1.7 /
\plot -2.3 1.7  -2.7 1.3 /
\plot -3.3 0.7  -3.7 0.3 /

\plot -4.3 0.3  -5.7 1.7 /
\plot -6.3 1.7  -6.7 1.3 /
\plot -7.3 0.7  -7.7 0.3 /

\plot -8.3 0.3  -8.7 0.7 /
\plot -9.3 1.3  -9.7 1.7 /
\plot -10.3 1.7  -11.7 0.3 /
\setdashes <1mm>
\plot -10 1  -9 2  -7 0  -8 -1  -10 1 /
\plot -1 2  -2 3  -4 1  -3 0  -1 2 /

\setshadegrid span <0.5mm>
\vshade -8 1 1 <z,z,,> -7 0 2 <z,z,,> -6 1 3 <z,z,,> -5 2 2 / 
\vshade -2 1 1  <z,z,,> -1 0 2 <z,z,,> 0 -1 1 <z,z,,> 1 0 0 /
\plot -12 -1 -12 1.5 /
\plot 0 -1 0 1.5 /

\endpicture} at 0 0
\endpicture}
$$
and we barify on the one hand  the two subquivers which are enclosed in rectangular boxes,
on the other hand also the two subquivers with shaded background.

In both cases, the barification yields an identification of a projective serial module of length 2
with an injective serial module of length 2. 
     \medskip

{\bf Example 5.} We start with the following orientation sequence: 
$$
\hbox{\beginpicture
\setcoordinatesystem units <.8cm,.5cm>
\put{$-$} at 1.5 0
\put{$-$} at 2.5 0
\put{$+$} at 3.5 0
\put{$+$} at 4.5 0
\put{$-$} at 5.5 0
\put{$-$} at 6.5 0
\put{$+$} at 7.5 0
\put{$+$} at 8.5 0

\put{$\epsilon_1$} at 1.5 -1
\put{$\eta_1$} at 2.5 -1
\put{$\epsilon_2$} at 3.5 -1
\put{$\eta_2$} at 4.5 -1
\put{$\epsilon_3$} at 5.5 -1
\put{$\eta_3$} at 6.5 -1
\put{$\epsilon_4$} at 7.5 -1
\put{$\eta_4$} at 8.5 -1
\setdots <1mm>
\plot 1 -1.5  1 .5 /
\plot 2 -1.5  2 .5 /
\plot 3 -1.5  3 .5 /
\plot 4 -1.5  4 .5 /
\plot 5 -1.5  5 .5 /
\plot 6 -1.5  6 .5 /
\plot 7 -1.5  7 .5 /
\plot 8 -1.5  8 .5 /
\plot 9 -1.5  9 .5 /
\put{} at 0 -0.7
\endpicture}
$$
and construct the hereditary algebra $H(\epsilon_1\eta_1\cdots\epsilon_4\eta_4)$:
$$
\hbox{\beginpicture
\setcoordinatesystem units <1cm,.85cm>
\put{$a_0 =$} at -0.6 1
\put{$a_1 =$} at 0.4 0
\put{$4$} at 0 1
\put{$2$} at 1 0 
\put{$1$} at 2 0 
\put{$1$} at 3 1
\put{$2$} at 3 2
\put{$4$} at 2 3
\put{$3$} at 1 3
\put{$3$} at 0 2
\arrow <1.5mm> [0.25,0.75] from 0.2 0.8 to 0.8 0.2
\arrow <1.5mm> [0.25,0.75] from 1.2 0 to 1.8 0
\arrow <1.5mm> [0.25,0.75] from 2.8 .8 to 2.2 0.2
\arrow <1.5mm> [0.25,0.75] from 3 1.8 to 3 1.2
\arrow <1.5mm> [0.25,0.75] from 2.8 2.2 to 2.2 2.8
\arrow <1.5mm> [0.25,0.75] from 1.8 3 to 1.2 3
\arrow <1.5mm> [0.25,0.75] from 0.2 2. to 0.8 2.8
\arrow <1.5mm> [0.25,0.75] from 0 1.2 to 0 1.8
\endpicture}
$$
The quiver which we obtain is
$$
\hbox{\beginpicture
\setcoordinatesystem units <1cm,1cm>
\put{} at 0 1.5
\put{$1$} at 0 0
\put{$2$} at 0 1
\put{$4$} at 2 1
\put{$3$} at 2 0
\arrow <1.5mm> [0.25,0.75] from 0 0.8 to 0 0.2
\arrow <1.5mm> [0.25,0.75] from 2 0.8 to 2 0.2
\circulararc -310 degrees from 0.2 -.1 center at 0 -.5 
\circulararc -310 degrees from 2.2 -.1 center at 2 -.5 
\arrow <1.5mm> [0.25,0.75] from -0.25 -.14 to -0.2 -.1
\arrow <1.5mm> [0.25,0.75] from 1.75 -.14 to 1.8 -.1

\setquadratic
\plot 0.2 1.2  1 1.4  1.8 1.2 /
\plot 0.2 0.8  1 0.6  1.8 0.8 /

\arrow <1.5mm> [0.25,0.75] from 1.75 1.22 to 1.8 1.2
\arrow <1.5mm> [0.25,0.75] from 0.25 0.78 to 0.2 0.8

\setdots <.5mm>
\setquadratic
\plot -0.25 -.3 0 -.25  .25 -.3 /
\plot  1.75 -.3 2 -.25 2.25 -.3 /
\plot 0.5 0.8  0.4 1  0.5 1.2 /
\plot 1.5 0.8  1.6 1  1.5 1.2 /
\put{} at 0 -1 
\endpicture}
$$
with the additional relations: $4 \to 2 \to 1 \to 1$ and $2 \to 4 \to 3 \to 3.$
The bars are given by the arrows $2 \to 1$ and $4 \to 3.$
    \medskip

{\bf Example 6.}
As in example 4, consider again
$$
\hbox{\beginpicture
\setcoordinatesystem units <.8cm,.5cm>
\put{$+\quad -$} at 1 0
\put{$-$} at 2.5 0
\put{$+$} at 3.5 0
\put{$+$} at 4.5 0
\put{$-\quad +$} at 6 0
\put{$+$} at 7.5 0
\put{$-$} at 8.5 0
\put{$-$} at 9.5 0

\put{$\epsilon_1$} at 1 -1
\put{$\eta_1$} at 2.5 -1
\put{$\epsilon_2$} at 3.5 -1
\put{$\eta_2$} at 4.5 -1
\put{$\epsilon_3$} at 6 -1
\put{$\eta_3$} at 7.5 -1
\put{$\epsilon_4$} at 8.5 -1
\put{$\eta_4$} at 9.5 -1
\setdots <1mm>
\plot 0 -1.5  0 .5 /
\plot 2 -1.5  2 .5 /
\plot 3 -1.5  3 .5 /
\plot 4 -1.5  4 .5 /
\plot 5 -1.5  5 .5 /
\plot 7 -1.5  7 .5 /
\plot 8 -1.5  8 .5 /
\plot 9 -1.5  9 .5 /
\plot 10 -1.5  10 .5 /
\put{} at 0 -0.7
\endpicture}
$$
But now we take $\sigma = (12)(34)$
$$
\hbox{\beginpicture
\setcoordinatesystem units <1cm,.85cm>
\put{$3$} at 0 1
\put{$6$} at 0 0
\put{$2$} at 1 0 
\put{$1$} at 2 0 
\put{$1$} at 3 1
\put{$2$} at 3 2
\put{$5$} at 3 3
\put{$3$} at 2 3
\put{$4$} at 1 3
\put{$4$} at 0 2
\arrow <1.5mm> [0.25,0.75] from 0.2 0 to 0.8 0
\arrow <1.5mm> [0.25,0.75] from 1.2 0 to 1.8 0
\arrow <1.5mm> [0.25,0.75] from 2.8 0.8 to 2.2 0.2
\arrow <1.5mm> [0.25,0.75] from 3 1.8 to 3 1.2
\arrow <1.5mm> [0.25,0.75] from 3 2.2 to 3 2.8
\arrow <1.5mm> [0.25,0.75] from 2.2 3 to 2.8 3
\arrow <1.5mm> [0.25,0.75] from 1.2 3 to 1.8 3
\arrow <1.5mm> [0.25,0.75] from 0.8 2.8 to 0.2 2.2
\arrow <1.5mm> [0.25,0.75] from 0 1.8 to 0 1.2
\arrow <1.5mm> [0.25,0.75] from 0 0.2 to 0 0.8
\endpicture}
$$
Here is the quiver:
$$
\hbox{\beginpicture
\setcoordinatesystem units <1cm,1cm>
\put{} at 0 2
\put{$2$} at 0 0
\put{$1$} at 0 1
\put{$4$} at 2 1
\put{$3$} at 2 0
\put{$5$} at 1 -0.5
\put{$6$} at 1 0.5
\arrow <1.5mm> [0.25,0.75] from 0 0.2 to 0 0.8
\arrow <1.5mm> [0.25,0.75] from 2 0.8 to 2 0.2
\arrow <1.5mm> [0.25,0.75] from 0.8 0.4 to 0.2 0.1
\arrow <1.5mm> [0.25,0.75] from 0.2 -.1 to 0.8 -.4
\arrow <1.5mm> [0.25,0.75] from 1.2 0.4 to 1.8 0.1
\arrow <1.5mm> [0.25,0.75] from 1.8 -.1 to 1.2 -.4
\circulararc 310 degrees from 0.2 1.1 center at 0 1.5 
\circulararc 310 degrees from 2.2 1.1 center at 2 1.5 
\arrow <1.5mm> [0.25,0.75] from -0.25 1.14 to -0.2 1.1
\arrow <1.5mm> [0.25,0.75] from 1.75 1.14 to 1.8 1.1
\setdots <.5mm>
\setquadratic
\plot -0.25 1.3 0 1.25 .25 1.3 /
\plot 0.5 -0.2  0.4 0  0.5 0.2 /
\plot 1.75 1.3 2 1.25 2.25 1.3 /
\plot 1.5 -0.2  1.6 0  1.5 0.2 /
\endpicture}
$$
with the additional relations $6 \to 2 \to 1 \to 1$ and $5 \leftarrow 3 \leftarrow 4
\leftarrow 4$. The bars are again the arrows $2 \to 1$ and $4 \to 3$.
	               
\bigskip\bigskip\bigskip

\centerline{\large\bf Part II. The module categories}

\section{The  cycle algebras}

We consider the algebras $H = H(\epsilon)$ where 
$\epsilon$ is not constant, so that $H$ is finite dimensional.

\subsection{The Auslander-Reiten quiver}
The structure of the Auslander-Reiten quiver of $H$ is well-known: there is the
preprojective and the preinjective component, the remaining components are regular
tubes. 

The string modules form four components of the Auslander-Reiten quiver, namely 
 the preprojective component, the preinjective component, and two tubes (we call them
{\it string tubes}); the remaining components (the {\it band tubes})
are homogeneous. Note that the string tubes may be
homogeneous or exceptional!

The Auslander-Reiten quiver of $H$ looks as follows:
$$
\hbox{\beginpicture
\setcoordinatesystem units <.5cm,.5cm>
\multiput{} at 0 0  20 5.5 /
\plot 3 3  0 3  0 1  3 1  /
\plot 5 4  5 0  7 0  7 4 /
\plot 7.5 4  7.5 0  9.5 0  9.5 4 /
\plot 10 4  10 0  10.5 0  10.5 4 /
\plot 11 4  11 0  11.5 0  11.5 1.7 /
\plot 11.5 2.35  11.5 4 /
\plot 14.5 4  14.5 0  15 0  15 4 /

\plot 17 1  20 1  20 3  17 3 /

\setdashes <1mm>
\plot 3 3  4.5 3 /
\plot 3 1  4.5 1 /
\plot 5 4  5 5.5 /
\plot 7 4  7 5.5 /
\plot 7.5 4  7.5 5.5 /
\plot 9.5 4  9.5 5.5 /
\plot 10 4  10 5.5 /
\plot 10.5 4  10.5 5.5 /
\plot 11 4  11 5.5 /
\plot 11.5 4  11.5 5.5 /
\plot 14.5 4  14.5 5.5 /
\plot 15 4  15 5.5 /
\plot 15.5 1  17 1 /
\plot 15.5 3  17 3 /

\setdots <1mm>
\plot 12 0.5 14 0.5 /
\plot 12 2.5 14 2.5 /
\plot 12 4.5 14 4.5 /
\setsolid
\put{preprojectives} at 2.3 -3.5
\put{preinjectives} at 17.7 -3.5
\plot 5 -2 5 -2.7  15 -2.7 15 -2 /
\put{the regular modules} at 10 -3.5
\put{string} at 7.5 -1
\put{tubes} at 7.5 -1.7
\plot 9.75 -0.7  9.75 -2 /
\put{band} at 12.5 -1
\put{tubes} at 12.5 -1.7
\put{(tubes indexed by $\mathbb P_1(k)$)} at 10 -4.2
\put{$\mathcal P$} at 2.5 2
\put{$\mathcal R_0$} at 6 2 
\put{$\mathcal R_\infty$} at 8.5 2 
\put{$\mathcal R_\lambda$} at 11.6 2 
\put{$\mathcal Q$} at 17.5 2
\endpicture}
$$

One should stress that in this case all the Auslander-Reiten components	
are (considered as simplicial complexes, thus as topological spaces) 
surfaces with boundary; all are homeomorphic to $[0,\infty[\times S^1$.

\subsection{An example} 
We consider $\epsilon = (++++----)$ and depict here, for later reference, the four
components which contain string modules, always we draw two fundamental
domains  inside the universal cover of the component, they are separated by 
dashed lines (horizontal ones for the preprojective and the preinjective component,
vertical ones for the regular components). Note that instead 
of arrows, we show edges, the orientation is from left to right.
$$ 
\hbox{\beginpicture
\setcoordinatesystem units <.2cm,.2cm>
\put{the preprojective} [l] at  -15 28
\put{component $\mathcal P$} [l] at -15 26
\put{the preinjective} [r] at 45 28
\put{component $\mathcal Q$} [r] at 45 26

\put{\beginpicture
\multiput{} at 0 0  10 16 /
\plot 0 12 4 16 /
\plot 1 11 6 16 /
\plot 2 10 8 16 /
\plot 3 9  10 16 /
\plot 0 4  10 14 /
\plot 1 3  10 12 /
\plot 2 2  10 10 /
\plot 3 1  10 8 /
\plot 4 0  10 6 /
\plot 6 0  10 4 /
\plot 8 0  10 2 /

\plot 0 4  4 0 /
\plot 1 5  6 0 /
\plot 2 6  8 0 /
\plot 3 7  10 0 /
\plot 0 12 10 2 /
\plot 1 13 10 4 /
\plot 2 14 10 6 /
\plot 3 15 10 8 /
\plot 4 16 10 10 /
\plot 6 16 10 12 /
\plot 8 16 10 14 /
\setdashes <1mm>
\plot 4 0  11 0 /
\plot 4 8  11 8 /
\plot 4 16 11 16 /

\endpicture} at 3 20

\put{\beginpicture
\multiput{} at 0 0  10 16 /
\plot 6 0  7 1 /
\plot 4 0  6 2 /
\plot 2 0  5 3 /
\plot 0 0  8 8 /
\plot 0 2  7 9 /
\plot 0 4  6 10 /
\plot 0 6  5 11 /
\plot 0 8  8 16 /
\plot 0 10  6 16 /
\plot 0 12  4 16 /
\plot 0 14  2 16 /

\plot 6 16  7 15 /
\plot 4 16  6 14 /
\plot 2 16  5 13 /
\plot 0 16  8 8 /
\plot 0 14  7 7 /
\plot 0 12  6 6 /
\plot 0 10  5 5 /

\plot 0 8  8 0 /
\plot 0 6  6 0 /
\plot 0 4  4 0 /
\plot 0 2  2 0 /
\setdashes <1mm>
\plot -1 0 8 0 /
\plot -1 8 8 8 /
\plot -1 16 8 16 /

\endpicture} at 27 20 
\endpicture}
$$
the two exceptional tubes:
$$ 
\hbox{\beginpicture
\setcoordinatesystem units <.2cm,.2cm>
\put{$\mathcal R_0$} at -12 5
\put{$\mathcal R_\infty$} at 30 5
\put{\beginpicture
\multiput{} at 0 0  16 9 /
\plot 14 0  16 2 /
\plot 12 0  16 4 /
\plot 10 0  16 6 /
\plot 8 0   16 8 /
\plot 6 0  16 10 /
\plot 4 0  14 10 /
\plot 2 0  12 10 /
\plot 0 0  10 10 /
\plot 0 2  8 10 /
\plot 0 4  6 10 /
\plot 0 6  4 10 /
\plot 0 8  2 10 /

\plot 0 2  2 0 /
\plot 0 4  4 0 /
\plot 0 6  6 0 /
\plot 0 8  8 0 /
\plot 0 10 10 0 /
\plot 2 10 12 0 /
\plot 4 10 14 0 /
\plot 6 10 16 0 /
\plot 8 10 16 2 /
\plot 10 10 16 4 /
\plot 12 10 16 6 /
\plot 14 10 16 8 /

\setdashes <1mm>
\plot 0 0  0 11 /
\plot 8 0  8 11 /
\plot 16 0  16 11 /
\setdots <1mm> 
\plot 0 0  16 0 /

\endpicture} at  0 0
\put{\beginpicture
\multiput{} at 0 0  16 9 /
\plot 14 0  16 2 /
\plot 12 0  16 4 /
\plot 10 0  16 6 /
\plot 8 0   16 8 /
\plot 6 0  16 10 /
\plot 4 0  14 10 /
\plot 2 0  12 10 /
\plot 0 0  10 10 /
\plot 0 2  8 10 /
\plot 0 4  6 10 /
\plot 0 6  4 10 /
\plot 0 8  2 10 /

\plot 0 2  2 0 /
\plot 0 4  4 0 /
\plot 0 6  6 0 /
\plot 0 8  8 0 /
\plot 0 10 10 0 /
\plot 2 10 12 0 /
\plot 4 10 14 0 /
\plot 6 10 16 0 /
\plot 8 10 16 2 /
\plot 10 10 16 4 /
\plot 12 10 16 6 /
\plot 14 10 16 8 /

\setdashes <1mm>
\plot 0 0  0 11 /
\plot 8 0  8 11 /
\plot 16 0  16 11 /

\setdots <1mm> 
\plot 0 0  16 0 / 

\endpicture} at  18 0
\endpicture}
$$

\subsection{The serial modules for a cycle algebra}

Recall that a module is called {\it serial} provided it has a unique composition series.
We consider the serial $H$-modules, where $H$ is a cycle algebra. The following assertions
are  easy to verify:
     
\begin{lem}
 Any serial $H$-module $M$ is projective or regular or injective.

If  $M$ is a serial $H$-module of length at least two, and 
 $M$ is projective, then $M/\soc$  belongs to $\mathcal R_0$ or $\mathcal R_\infty.$

If  $M$ is a serial $H$-module of length at least two, and 
 $M$ is injective, then $\rad M$ belongs to $\mathcal R_0$ or $\mathcal R_\infty.$
\end{lem}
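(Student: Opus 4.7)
The plan is to reduce to direct string modules and then exploit the arc structure of the quiver. I would first note that a serial indecomposable $H$-module must be a string module $M(w)$, not a band module, since a band module is built from a primitive cyclic word containing both direct and inverse letters, which produces two distinct maximal submodules, so band modules are never serial. Moreover, $M(w)$ is serial exactly when the word $w$ is serial, i.e.\ direct or inverse; since $M(w) \cong M(w^{-1})$, I may assume $w$ is direct, so $w$ corresponds to a directed path in the quiver $Q$ of $H(\epsilon)$.

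Because $\epsilon$ is non-constant, its sign changes produce alternating sources and sinks on $Q$, which cut the cycle into arcs $s \to v_1 \to \cdots \to v_{k-1} \to t$ running from a source $s$ to the next sink $t$. Any directed path $w$ lies inside one such arc; let $x$ and $y$ be the starting and ending vertices of $w$. I claim the following trichotomy: if $y = t$, then $M(w) = P(x)$; if $x = s$, then $M(w) = I(y)$; and otherwise $M(w)$ is regular. The first two equalities come from the fact that, for $x$ not a source, the unique maximal direct path starting at $x$ goes to the sink of its arc and presents $P(x)$ as a serial string module; dually for $I(y)$ when $y$ is not a sink. In the remaining case $M(w)$ is a string module whose top and socle obstruct it being of the form $P(a)$ or $I(a)$, so it is necessarily regular. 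This establishes the first statement of the lemma.

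For the second statement, suppose $M = P(a)$ is serial of length $\ge 2$. Then $a$ is not a source (otherwise $P(a)$ has two outgoing arrows and branches, contradicting seriality) nor a sink (otherwise $P(a) = S(a)$ has length one), so $a$ is internal to some arc, and $M = M(w)$ with $w$ the direct path from $a$ to the sink of that arc. Hence $M/\soc = M(w')$, where $w'$ is $w$ with the last letter removed; so $w'$ runs between two internal vertices of the arc. By the trichotomy $M(w')$ is a regular string module; and since the band tubes of $H$ contain only band modules, while the string modules are distributed among the preprojective component, the preinjective component, and the two string tubes, $M(w')$ must lie in $\mathcal R_0$ or $\mathcal R_\infty$. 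The third statement follows by the duality $\rad I(a) \leftrightarrow P(a)/\soc$.

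The main step requiring care is the trichotomy in the second paragraph, specifically the identifications $M(w) = P(x)$ and $M(w) = I(y)$ at the boundary cases; these depend on the explicit description of the indecomposable projectives and injectives of a cycle algebra as the serial string modules of the maximal direct words starting at non-source vertices, respectively ending at non-sink vertices.
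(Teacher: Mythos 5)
Your overall strategy---reduce to string modules for direct words, cut the cycle into source--to--sink arcs, and decide projectivity, injectivity or regularity from the position of the path inside its arc---is the natural one (the paper itself offers no proof, declaring the assertions easy to verify). However, two of your intermediate claims are false as stated. First, band modules can be serial: for $\epsilon = (+-)$ the cycle algebra is the Kronecker algebra and the band modules $M(w,\lambda)$ with one-dimensional $V$ are uniserial of length $2$. This does no harm to the lemma, since band modules are always regular, but the reduction should be phrased that way rather than by excluding band modules outright. Second, your trichotomy misfires when $w$ is the entire arc, i.e.\ $x=s$ and $y=t$ simultaneously: your first two clauses would then give $M(w)\cong P(s)\cong I(t)$, whereas $P(s)$ and $I(t)$ are not even serial (each branches at the $2$-valent source, resp.\ sink), and the full-arc module is in fact regular. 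So the correct case division is four-fold, not three-fold.

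The more substantial gap is the inference ``$M(w)$ is neither of the form $P(a)$ nor $I(a)$, so it is necessarily regular.'' That implication is not valid for indecomposable modules over a tame hereditary algebra: most preprojective indecomposables are not projective. What makes the conclusion true here is the defect. For a thin module $M(w)$ supported on a directed path from $x$ to $y$ inside an arc $s\to\cdots\to t$, one computes $\partial(\underline\dim M(w)) = [x=s]-[y=t]\in\{-1,0,1\}$, and an indecomposable module is preprojective, regular or preinjective according to whether its defect is negative, zero or positive. Defect $-1$ forces $x\neq s$ and $y=t$, so $w$ is the maximal directed path starting at $x$ and $M(w)=P(x)$; defect $+1$ dually gives $I(y)$; defect $0$ covers both the interior paths and the full arc and yields regularity. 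With this in place your treatment of the second and third assertions is fine: for $M=P(a)$ serial of length at least two, $M/\soc$ is the string module of a path with both endpoints away from the source and the sink of the arc, hence has defect $0$ and is a regular string module, and since the string modules distribute over $\mathcal P$, $\mathcal Q$, $\mathcal R_0$ and $\mathcal R_\infty$ only, it lies in $\mathcal R_0$ or $\mathcal R_\infty$; the statement for $\rad M$ with $M$ injective follows by duality.
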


In the barification process, we barify a projective serial module $M(b)$  of length at least $2$
with $M(b)/\soc$ say in $\mathcal R_0$  and an injective serial module $M(b')$ of the same length with $\rad M(b')$ 
in $\mathcal R_\infty$. 
   \medskip

Our convention for distinguishing $\mathcal R_0$ and $\mathcal R_\infty$  
will be the following:
given an indecomposable projective $H$-module $P$ with radical $\rad P = X\oplus X'$, where
$X$ and $X'$ are serial modules, we fix the order $X, X'$ and assume that the module
$P/X'$ as well as the composition factors of $X'/\soc$ are simple regular objects of
$\mathcal R_0$, whereas the module
$P/X$ as well as the composition factors of $X/\soc$ are simple regular objects of
$\mathcal R_\infty$:
$$
\hbox{\beginpicture
\setcoordinatesystem units <.3cm,.5cm>
\put{\beginpicture
\multiput{$\bullet$} at 0 0 1 1  3 3  4 4  5 5  6 4  7 3    9 1  10 0 /
\plot 0 0  1.4 1.4 /
\plot 2.8 2.8  5 5 7.2 2.8 /
\plot 8.6 1.4  10 0 /
\circulararc 360 degrees from 9 1.3 center at 9 1
\circulararc 360 degrees from 6 4.3 center at 6 4
\circulararc 360 degrees from 7 3.3 center at 7 3
\plot 0.4 -.15  5.4 4.85 /
\plot -.4  .15  4.6 5.15 /
\circulararc -180 degrees from .4 -.15 center at 0 0
\circulararc  180 degrees from 5.4 4.85 center at 5 5
\setdots <1mm>
\plot 2 2 3 3 /
\plot 8 2 7 3 /
\put{$\mathcal R_0$} at 0 4
\endpicture} at 0 0

\put{\beginpicture
\multiput{$\bullet$} at 0 0 1 1  3 3  4 4  5 5  6 4  7 3    9 1  10 0 /
\plot 0 0  1.4 1.4 /
\plot 2.8 2.8  5 5 7.2 2.8 /
\plot 8.6 1.4  10 0 /
\circulararc 360 degrees from 1 1.3 center at 1 1
\circulararc 360 degrees from 3 3.3 center at 3 3
\circulararc 360 degrees from 4 4.3 center at 4 4
\plot 10.4 .15  5.4 5.15 /
\plot 9.6  -.15  4.6 4.85 /
\circulararc  180 degrees from  5.4 5.15 center at 5 5 
\circulararc  180 degrees from 9.6 -.15 center at 10 0
\setdots <1mm>
\plot 2 2 3 3 /
\plot 8 2 7 3 /
\put{$\mathcal R_\infty$} at 0 4

\endpicture} at 15 0
\endpicture}
$$

\section{ The wind wheel algebras.}

Let $H$ be hereditary of type $\widetilde {\mathbb A}$, and $W \subset H$ a corresponding wind wheel
algebra. Let us look at the restriction functor $ \eta:\mo H \to \mo W$.

First, let us recall the shape of the category $\mo H$. 
$$
\hbox{\beginpicture
\setcoordinatesystem units <.5cm,.5cm>
\multiput{} at 0 0  20 5.5 /
\plot 3 3  0 3  0 1  3 1  /
\plot 5 4  5 0  7 0  7 4 /
\plot 7.5 4  7.5 0  9.5 0  9.5 4 /
\plot 10 4  10 0  10.5 0  10.5 4 /
\plot 11 4  11 0  11.5 0  11.5 4 /
\plot 14.5 4  14.5 0  15 0  15 4 /

\plot 17 1  20 1  20 3  17 3 /

\setdashes <1mm>
\plot 3 3  4.5 3 /
\plot 3 1  4.5 1 /
\plot 5 4  5 5.5 /
\plot 7 4  7 5.5 /
\plot 7.5 4  7.5 5.5 /
\plot 9.5 4  9.5 5.5 /
\plot 10 4  10 5.5 /
\plot 10.5 4  10.5 5.5 /
\plot 11 4  11 5.5 /
\plot 11.5 4  11.5 5.5 /
\plot 14.5 4  14.5 5.5 /
\plot 15 4  15 5.5 /
\plot 15.5 1  17 1 /
\plot 15.5 3  17 3 /

\setdots <1mm>
\plot 12 0.5 14 0.5 /
\plot 12 2.5 14 2.5 /
\plot 12 4.5 14 4.5 /
\setsolid
\put{preprojectives} at 2.3 8
\put{preinjectives} at 17.7 8
\plot 5 7.2 5 7.5  15 7.5 15 7.2 /
\put{the regular modules} at 10 8
\put{string} at 7.3 6.7
\put{tubes} at 7.3 6
\plot 9.75 5.7  9.75 7 /
\put{band} at 12.5 6.7
\put{tubes} at 12.5 6
\put{$\mathcal P$} at 2.5 2
\put{$\mathcal R_0$} at 6 2 
\put{$\mathcal R_\infty$} at 8.5 2 
\put{$\mathcal R_\lambda$} at 11.6 2 
\put{$\mathcal Q$} at 17.5 2
\setshadegrid span <0.5mm>
\hshade -0.3 9.7 15.3  5.7 9.7 15.3 / 
\endpicture}
$$

We have shaded the homogeneous tubes: they  remain untouched; whereas 
the other four components are cut (between rays or corays) 
into pieces  and these pieces  are embedded (with some overlap) into a 
component  which contain in addition so-called quarters.  This cut-and-paste
process will now be explained. 

\subsection{Example} We consider the wind wheel algebra $W(w)$ for the word
$$
 w = \alpha\beta_1\beta_2\beta_3\gamma^{-1}\beta_3^{-1}\beta_2^{-1}\beta_1^{-1},
$$
thus we start with the  quiver $H = H(\epsilon(w))$
$$ 
\hbox{\beginpicture
\setcoordinatesystem units <1cm,.5cm>
\multiput{$\circ$} at 0 1  1 0  1 2  2 0  2 2  3 0  3 2  4 1 /
\arrow <1.5mm> [0.25,0.75] from 0.8 0.2 to 0.2 0.8
\arrow <1.5mm> [0.25,0.75] from 0.8 1.8 to 0.2 1.2
\arrow <1.5mm> [0.25,0.75] from 1.8 0 to 1.2 0
\arrow <1.5mm> [0.25,0.75] from 1.8 2 to 1.2 2
\arrow <1.5mm> [0.25,0.75] from 2.8 0 to 2.2 0
\arrow <1.5mm> [0.25,0.75] from 2.8 2 to 2.2 2
\arrow <1.5mm> [0.25,0.75] from 3.8 1.2 to 3.2 1.8
\arrow <1.5mm> [0.25,0.75] from 3.8 0.8 to 3.2 0.2
\put{$\ssize 0$} at 0 0.5
\put{$\ssize 1'$} at 1 2.5
\put{$\ssize 2'$} at 2 2.5
\put{$\ssize 3'$} at 3 2.5
\put{$\ssize 1$} at 1 -0.5
\put{$\ssize 2$} at 2 -0.5
\put{$\ssize 3$} at 3 -0.5
\put{$\ssize 4$} at 4 0.5
\put{$\ssize \alpha$} at 0.5 0
\put{$\ssize \beta_1'$} at 0.4 2.1
\put{$\ssize \beta_2'$} at 1.5 2.7
\put{$\ssize \beta_3'$} at 2.5 2.7
\put{$\ssize \beta_1$} at 1.5 -.7
\put{$\ssize \beta_2$} at 2.5 -.7
\put{$\ssize \beta_3$} at 3.6 -.1
\put{$\ssize \gamma$} at 3.5 2
\endpicture}
$$
and barify the subquivers $1 \leftarrow 2 \leftarrow 3 \leftarrow 4$ and $0 \leftarrow 1' \leftarrow 2' \leftarrow 3'$.

We obtain in this way the wind wheel algebra $W = W(w)$
$$
\hbox{\beginpicture
\setcoordinatesystem units <0.6cm,0.6cm>
\put{} at -1 0.8
\put{} at  3 -0.8
\multiput{$\circ$} at 0 0  2 0  4 0  6 0 /
\arrow <1.5mm> [0.25,0.75] from 1.7 0 to 0.3 0
\arrow <1.5mm> [0.25,0.75] from 3.7 0 to 2.3 0
\arrow <1.5mm> [0.25,0.75] from 5.7 0 to 4.3 0
\circulararc 320 degrees from -0.05 0.2 center at -0.8 0 
\arrow <1.5mm> [0.25,0.75] from -0.09 0.3 to -0.05 0.2
\circulararc -330 degrees from  6.05 0.2 center at  6.8 0 
\arrow <1.5mm> [0.25,0.75] from 6.09 -0.3 to 6.05 -0.2
\put{$\alpha$} at -2 0
\put{$\beta_1$} at 1 0.4
\put{$\beta_2$} at 3 0.4
\put{$\beta_3$} at 5 0.4
\put{$\gamma$} at 8 0
\setdots <.7mm>
\setquadratic
\setdots <.7mm>
\setquadratic
\plot -0.4 -0.4 -0.3 0 -0.4 0.4 /
\plot 6.4 -0.4  6.3 0  6.4 0.4 /
\plot -0.3 -.8  3 -.3  6.3  -.8 / 
\endpicture}
$$
with $\alpha^2 = \gamma^2 = \alpha\beta_1\beta_2\beta_3\gamma = 0$.
     
Recall that  we know:
{\it There is precisely one  non-periodic (but biperiodic)
$\mathbb Z$-word, namely 
$$ r(b) = \ \ 
   {}^\infty (w'') \cdot \alpha^{-1} 
      \beta_1\beta_2\beta_3\gamma^{-1}\cdot (w')^{\infty}
$$
where $w' = 
\beta_3^{-1}\beta_2^{-1}\beta_1^{-1}\alpha\beta_1\beta_2\beta_3\gamma^{-1},$
and $w'' = \alpha^{-1}\beta_1\beta_2\beta_3\gamma\beta_3^{-1}\beta_2^{-1}\beta_1^{-1}.$
}

$$
\hbox{\beginpicture
\setcoordinatesystem units <0.4cm,0.4cm>
\put{} at 0 -2.5
\multiput{$\bullet$} at 0 1  1 0  2 1  3 2  4 3  5 4  6 3  7 2  8 1  9 0
   10 1  11 2  12 3  13 2  14 1  15 0  16 -1  17 0  18 1  19 2  20 3  21 2 /
\plot -1 2  1 0  5 4  9 0  12 3  16 -1  20 3  22 1 /
\plot 8 1.05  9 0.05  12 3.05  13 2.05 /
\plot 8 1.1  9 0.1  12 3.1  13 2.1 /
\plot 8 .95  9 -.05  12 2.95  13 1.95 /
\plot 8 .9  9 -.1  12 2.9  13 1.9 /
\put{$\cdots$} at -2 2
\put{$\cdots$} at 23 1
\setdots <1mm>
\plot 0 .8  0 -3 /
\plot 8 .8  8 -3 /
\plot 13 1  13 -3 /
\plot 21 1.5  21 -3 /
\multiput{$w''$} at 4 -2  -2 -2 /
\put{$\overline b$} at 10.5 -2
\multiput{$w'$} at 17 -2.05  23 -2.05 /
\multiput{$\ssize \alpha$} at 0.35 0.2  8.35 0.2  16.65 -.8 /
\multiput{$\ssize \gamma$} at 4.3 3.8  12.7 2.8  20.7 2.8 /
\multiput{$\ssize \beta_1$} at 1.9 0.2  7.2 1.1  9.9 0.2  15.2 -.9  17.9 .2 /
\multiput{$\ssize \beta_2$} at 2.9 1.2  6.2 2.1  10.9 1.2  14.2 0.1  18.9 1.2 /
\multiput{$\ssize \beta_3$} at 3.9 2.2  5.2 3.1  11.9 2.2  13.2 1.1  19.9 2.2 /
\endpicture}
$$
with $b = \beta_1\beta_2\beta_3$ and $\overline b = \gamma^{-1}\beta_1\beta_2\beta_3\alpha^{-1}$
(the word $w'$ is obtained from $w$ by rotation, the word $w''$ by rotation and inversion). 

     \medskip

Here we see the reason why we call these algebras the wind wheels:
We consider the word $r(b)$ as a pair of opposite ''rotor blades''.

\subsection{Proposition} {\it The restriction functor 
$$
 \eta:\mo H \to \mo W
$$ 
has the following properties: }

(1) {\it Indecomposable modules are sent to indecomposable modules.}

(2) {\it Corresponding modules on the two $\mathbb A_4$-quivers which yield the bar become isomorphic,
otherwise non-isomorphy is preserved.}

(3) {\it The indecomposable $W$-modules which are not in
the image of the functor are the string modules for words which contain 
$\alpha^{-1}\beta_1\beta_2\beta_3\gamma^{-1}$ as a subword.} 
															        \medskip

Note that $\overline b = \alpha^{-1}\beta_1\beta_2\beta_3\gamma^{-1}$ is the closure
of the bar $b = \beta_1\beta_2\beta_3$, as defined in section 6.
   \medskip

In order to outline the cut-and-paste process, we start with the 
Auslander-Reiten components containing string modules, as shown above. 
We assume that $\mathcal R_0$ is the tube which contains at the boundary the simple $H$-modules
$1, 2, 3$ as well as the serial module with composition factors $0,1',2',3',4$, whereas 
$\mathcal R_\infty$ is the tube which contains at the boundary the simple $H$-modules
$1', 2', 3'$ and the serial module with composition factors $0,1,2,3,4$.

Let us look at the full subcategories of $\mo H$ with modules with support in $\{0,1,2,3\}$ on the one
hand (see the left picture) as well
as those with support in $\{1',2',3',4\}$ on the other hand (the right picture) and describe the role
of the various modules inside $\mo H$: 
$$
\hbox{\beginpicture
\setcoordinatesystem units <.8cm,.8cm>
\put{\beginpicture
\multiput{$\bullet$} at 0 0  1 1  2 0  2 2  3 1  3 3  4 0  4 2  5 1  6 0 /
\plot 0 0  3 3  6 0 /
\plot 1 1  2 0  4 2 /
\plot 2 2  4 0  5 1 /
\setshadegrid span <0.7mm>
\hshade 0 2 6  2 4 4 /
\plot 0.3 -0.3  2.3 1.7 /
\plot -.3 0.3  1.7 2.3 /
\circulararc 180 degrees from 2.3 1.7 center at 2 2 
\circulararc 180 degrees from -.3 0.3 center at 0 0 

\plot 3.7 1.7  5.7  -.3 /
\plot 4.3 2.3  6.3 .3 /
\circulararc 180 degrees from 4.3 2.3 center at 4 2 
\circulararc 180 degrees from 5.7 -.3 center at 6 0 

\circulararc 360 degrees from 3.3 3.3 center at 3 3 

\plot 3.3 1.3  4.9 -.3  1.1 -.3  2.7 1.3 /
\circulararc  90 degrees from 3.3 1.3 center at 3 1
\put{$0$} at 0 -.7
\put{$1$} at 2 -.7
\put{$2$} at 4 -.7
\put{$3$}  at 6 -.7
\put{$M(b)$} at 3 3.8
\put{$M(b)/\soc$} [l] at 4.5 2.3

\endpicture} at 0 0
\put{\beginpicture
\multiput{$\bullet$} at 0 0  1 1  2 0  2 2  3 1  3 3  4 0  4 2  5 1  6 0 /
\plot 0 0  3 3  6 0 /
\plot 1 1  2 0  4 2 /
\plot 2 2  4 0  5 1 /
\setshadegrid span <0.7mm>
\hshade 0 0 4  2 2 2 /
\plot 0.3 -0.3  2.3 1.7 /
\plot -.3 0.3  1.7 2.3 /
\circulararc 180 degrees from 2.3 1.7 center at 2 2 
\circulararc 180 degrees from -.3 0.3 center at 0 0 

\plot 3.7 1.7  5.7  -.3 /
\plot 4.3 2.3  6.3 .3 /
\circulararc 180 degrees from 4.3 2.3 center at 4 2 
\circulararc 180 degrees from 5.7 -.3 center at 6 0 

\circulararc 360 degrees from 3.3 3.3 center at 3 3 

\plot 3.3 1.3  4.9 -.3  1.1 -.3  2.7 1.3 /
\circulararc  90 degrees from 3.3 1.3 center at 3 1
\put{$1'$} at 0 -.7
\put{$2'$} at 2 -.7
\put{$3'$} at 4 -.7
\put{$4$}  at 6 -.7
\put{$M(b')$} at 3 3.8
\put{$\rad M(b')$} [r] at 1.5 2.3

\endpicture} at 9 0
\endpicture}
$$
In the left picture, the shaded area marks those modules which belong to $\mathcal R_0$, whereas the
remaining modules (those which form the left boundary) are projective, thus in $\mathcal P.$
In the right picture,  the shaded area marks those modules which belong to $\mathcal R_\infty$, 
the remaining ones are injective, thus in $\mathcal Q.$
According to property (2), any module of the left triangle is identified under $\eta$ with the corresponding
module of the right triangle.

Let us look at the various components of $\mo H$ which contain string modules.
We will add bullets $\bullet$ in order to 
mark the position of the indecomposables with support contained either in $\{0,1,2,3\}$
or else in $\{1',2',3',4\}.$ We are going to  cut these components into suitable pieces:
 these are the dashed areas seen in the pictures. 

First, we exhibit the
preprojective component (left) and the preinjective component (right):
$$ 
\hbox{\beginpicture
\setcoordinatesystem units <.35cm,.35cm>
\put{\beginpicture
\put{$\mathcal P$} at -1 14
\multiput{} at 0 0  10 16 /
\multiput{$\bullet$} at 0 4  1 5  2 6  3 7  0 12  1 13  2 14  3 15 /
\plot 0 12 4 16 /
\plot 1 11 6 16 /
\plot 2 10 8 16 /
\plot 3 9  10 16 /
\plot 0 4  10 14 /
\plot 1 3  10 12 /
\plot 2 2  10 10 /
\plot 3 1  10 8 /
\plot 4 0  10 6 /
\plot 6 0  10 4 /
\plot 8 0  10 2 /

\plot 0 4  4 0 /
\plot 1 5  6 0 /
\plot 2 6  8 0 /
\plot 3 7  10 0 /
\plot 0 12 10 2 /
\plot 1 13 10 4 /
\plot 2 14 10 6 /
\plot 3 15 10 8 /
\plot 4 16 10 10 /
\plot 6 16 10 12 /
\plot 8 16 10 14 /
\setdashes <1mm>
\plot 4 0  11 0 /
\plot 4 8  11 8 /
\plot 4 16 11 16 /
\setshadegrid span <0.5mm>
\hshade 0 4 8 <,,,z> 4 0 4 <,,z,> 6 2 2 /
\hshade 0 10 10 <,,,z> 6 4  10 <,,z,z> 7 3 9 <,,z,z> 8 4 8 <,,z,z> 12 0 4 <,,z,> 14 2 2 /
\hshade 8 10 10 <,,,z>  14 4 10 <,,z,z> 15  3 9 <,,z,> 16 4 8 /

\endpicture} at 0 20

\put{\beginpicture
\multiput{} at 0 0  10 16 /
\put{$\mathcal Q$} at 10 14
\multiput{$\bullet$} at 5 3  6 2  7 1  8 0  5 11  6 10  7 9  8 8  8 16  /
\plot 6 0  7 1 /
\plot 4 0  6 2 /
\plot 2 0  5 3 /
\plot 0 0  8 8 /
\plot 0 2  7 9 /
\plot 0 4  6 10 /
\plot 0 6  5 11 /
\plot 0 8  8 16 /
\plot 0 10  6 16 /
\plot 0 12  4 16 /
\plot 0 14  2 16 /

\plot 6 16  7 15 /
\plot 4 16  6 14 /
\plot 2 16  5 13 /
\plot 0 16  8 8 /
\plot 0 14  7 7 /
\plot 0 12  6 6 /
\plot 0 10  5 5 /

\plot 0 8  8 0 /
\plot 0 6  6 0 /
\plot 0 4  4 0 /
\plot 0 2  2 0 /
\setdashes <1mm>
\plot -1 0 8 0 /
\plot -1 8 8 8 /
\plot -1 16 8 16 /
\setshadegrid span <0.5mm>
\hshade 0 4 8  2 6 6 /
\hshade 0 0 2 <,,,z> 3 0 5 <,,z,z> 4 0 4 <,,z,z> 8 4 8 <,,z,> 10 6 6 /
\hshade 6 0 0 <,,,z> 11 0 5 <,,z,z> 12 0 4 <,,z,> 16 4 8 /
\hshade 14 0 0  16 0 2 /
\endpicture} at 16 20 
\endpicture}
$$
Next, the two regular components (the boundary of any of the two components contains three
simple modules; they are labeled):
$$ 
\hbox{\beginpicture
\setcoordinatesystem units <.35cm,.35cm>
\put{\beginpicture
\multiput{} at 0 0  16 11 /
\put{$\mathcal R_0$} at 4  12

\multiput{$\bullet$} at 0 0  1 1  2 0  2 2  3 1  4 0  8 0  9 1  10 0  10 2  11 1  12 0 /
\plot 14 0  16 2 /
\plot 12 0  16 4 /
\plot 10 0  16 6 /
\plot 8 0   16 8 /
\plot 6 0  16 10 /
\plot 4 0  14 10 /
\plot 2 0  12 10 /
\plot 0 0  10 10 /
\plot 0 2  8 10 /
\plot 0 4  6 10 /
\plot 0 6  4 10 /
\plot 0 8  2 10 /

\plot 0 2  2 0 /
\plot 0 4  4 0 /
\plot 0 6  6 0 /
\plot 0 8  8 0 /
\plot 0 10 10 0 /
\plot 2 10 12 0 /
\plot 4 10 14 0 /
\plot 6 10 16 0 /
\plot 8 10 16 2 /
\plot 10 10 16 4 /
\plot 12 10 16 6 /
\plot 14 10 16 8 /

\setdashes <1mm>
\plot 0 0  0 11 /
\plot 8 0  8 11 /
\plot 16 0  16 11 /
\setshadegrid span <0.5mm>
\hshade 0 0 2  2 0 0 /
\hshade 0 4 10 <,,,z> 4 0 6 <,,z,> 10 0 0 /
\hshade 0 12 16 <,,,z> 2 10 16 <,,z,z> 10 2 8 /
\hshade 4 16 16 10 10 16 /
\put{$\ssize 1$} at 0 -0.8
\put{$\ssize 2$} at 2 -0.8
\put{$\ssize 3$} at 4 -0.8
\put{$\ssize 1$} at 8 -0.8
\put{$\ssize 2$} at 10 -0.8
\put{$\ssize 3$} at 12 -0.8
\setdots <1mm> 
\plot 0 0  16 0 /

\endpicture} at  0 0
\put{\beginpicture
\multiput{} at 0 0  16 11 /
\put{$\mathcal R_\infty$} at 4  12
\multiput{$\bullet$} at 0 0  1 1  2 0  2 2  3 1  4 0  8 0  9 1  10 0  10 2  11 1  12 0 /
\plot 14 0  16 2 /
\plot 12 0  16 4 /
\plot 10 0  16 6 /
\plot 8 0   16 8 /
\plot 6 0  16 10 /
\plot 4 0  14 10 /
\plot 2 0  12 10 /
\plot 0 0  10 10 /
\plot 0 2  8 10 /
\plot 0 4  6 10 /
\plot 0 6  4 10 /
\plot 0 8  2 10 /

\plot 0 2  2 0 /
\plot 0 4  4 0 /
\plot 0 6  6 0 /
\plot 0 8  8 0 /
\plot 0 10 10 0 /
\plot 2 10 12 0 /
\plot 4 10 14 0 /
\plot 6 10 16 0 /
\plot 8 10 16 2 /
\plot 10 10 16 4 /
\plot 12 10 16 6 /
\plot 14 10 16 8 /

\setdashes <1mm>
\plot 0 0  0 11 /
\plot 8 0  8 11 /
\plot 16 0  16 11 /
\setshadegrid span <0.5mm>
\hshade 8 0 0  10 0 2 /
\hshade 0 0 0  <,,,z> 6 0 6 <,,z,> 10 4 10 /
\hshade 0 2 8  <,,,z> 8 10 16  <,,z,> 10 12 16 /
\hshade 0 10 16  6 16 16 /
\put{$\ssize 1'$} at 0 -0.8
\put{$\ssize 2'$} at 2 -0.8
\put{$\ssize 3'$} at 4 -0.8
\put{$\ssize 1'$} at 8 -0.8
\put{$\ssize 2'$} at 10 -0.8
\put{$\ssize 3'$} at 12 -0.8

\setdots <1mm> 
\plot 0 0  16 0 / 

\endpicture} at  18 0
\endpicture}
$$

As we have mentioned, under the restriction functor 
$ \eta:\mo H \to \mo W$, some serial $H$-modules become isomorphic,
namely, the  ten $H$-modules with support in in the subquiver with
vertices $\{0,1,2,3\}$ are identified with the corresponding ten $H$-modules 
 with support in in the subquiver with
vertices $\{1',2',3',4\}$. In a first step, we make the identification
of the nine pairs consisting of modules of length at most 3. 
We obtain the following partial translation quiver:
$$ 
\hbox{\beginpicture
\setcoordinatesystem units <.35cm,.35cm>
\multiput{} at 0 0  26 20 /
\put{$\mathcal P'$} at 28 2
\put{$\mathcal Q'$} at -2  2
\put{$\mathcal R'_0$} at -2  22
\put{$\mathcal R'_\infty$} at 28  22

\multiput{$\bullet$} at 6 12  7 11  8 10  12 10  13 11  14 10  18 10  19 11  20 12  
   5 5  21 5 /
\plot 0 0  5 5  /
\plot 0 2  11 13 /
\plot 0 4  10 14 /
\plot 0 6  9 15 /

\plot 0 2  1 1 /
\plot 0 4  2 2 /
\plot 0 6  3 3 /
\plot 1 7  4 4 /
\plot 2 8  5 5  /
\plot 3 9  5 7 /
\plot 4 10  6 8 /
\plot 5 11  7 9 /
\plot 0 18  8 10 /
\plot 0 20  10 10 /
\plot 2 20  12 10 /
\plot 4 20  14 10 /
\plot 10 10  12 12 /
\plot 12 10  22 20 /
\plot 14 10  24 20 /
\plot 16 10  26 20 /
\plot 18 10 26 18 /

\plot 5 13  8 16 /
\plot 4 14  7 17 /
\plot 3 15  6 18 /
\plot 2 16  5 19 /
\plot 1 17  4 20 /
\plot 0 18  2 20 /
\plot 14 12  16 10 /
\plot 15 13  26 2 /
\plot 16 14  26 4 /
\plot 17 15  26 6 /
\plot 18 16  21 13 /
\plot 19 17  22 14 /
\plot 20 18  23 15 /
\plot 21 19  24 16 /
\plot 22 20  25 17 /
\plot 24 20  26 18 /
\plot 21 5  26 0 /
\plot 19 9  21 11 /
\plot 20 8  22 10 /
\plot 21 7  23 9 /
\plot 21 5  24 8 /
\plot 22 4  25 7 /
\plot 23 3  26 6 /
\plot 24 2  26 4 /
\plot 25 1  26 2 /
\setdots <1mm>
\plot 8 10  18 10 /
\put{$\ssize M(b')$} at 6.5 5
\put{$\ssize M(b)$} at 19.5 5
\endpicture}
$$
Here we denote by $\mathcal P'$ the rays coming from $\mathcal P$, and so on.
Note that we did not yet identify the points labeled $M(b)$ and $M(b')$,
these are $H$-modules of length 4 which are identified under
the restriction functor. 

Now let us make this last identification, and insert the $W$-modules which are
not in the image of $\eta:\mo H \to \mo W.$
$$ 
\hbox{\beginpicture
\setcoordinatesystem units <.35cm,.35cm>
\multiput{} at 0 0  26 20 /
\multiput{$\bullet$} at 6 12  7 11  8 10  12 10  13 11  14 10  18 10  19 11  20 12  
   13 5   /
\put{$\ssize M(b)$} at 13 6
\plot 9.5 1.5   13 5  /
\plot 13 5  16 2 /

\plot 0 2  11 13 /
\plot 0 4  10 14 /
\plot 0 6  9 15 /

\plot 0 4  1 3  10 2 /

\plot 0 6  2 4  11 3 /
\plot 1 7  3 5  12 4 /
\plot 2 8  4 6  13 5 /
\plot 3 9  5 7 /
\plot 4 10  6 8 /
\plot 5 11  7 9 /
\plot 0 18  8 10 /
\plot 0 20  10 10 /
\plot 2 20  12 10 /
\plot 4 20  14 10 /
\plot 10 10  12 12 /
\plot 12 10  22 20 /
\plot 14 10  24 20 /
\plot 16 10  26 20 /
\plot 18 10 26 18 /

\plot 5 13  8 16 /
\plot 4 14  7 17 /
\plot 3 15  6 18 /
\plot 2 16  5 19 /
\plot 1 17  4 20 /
\plot 0 18  2 20 /
\plot 14 12  16 10 /
\plot 15 13  26 2 /
\plot 16 14  26 4 /
\plot 17 15  26 6 /
\plot 18 16  21 13 /
\plot 19 17  22 14 /
\plot 20 18  23 15 /
\plot 21 19  24 16 /
\plot 22 20  25 17 /
\plot 24 20  26 18 /

\plot 19 9  21 11 /
\plot 20 8  22 10 /
\plot 21 7  23 9 /
\plot 13 5  22 6  24 8 /
\plot 14 4  23 5  25 7 /
\plot 15 3  24 4  26 6 /
\plot 16.5 1.5 16 2  25 3  26 4 /
\setdots <1mm>
\plot 8 10  18 10 /

\setdashes <1mm>
\plot 11 3  12.5 1.5 /
\plot 10 2  10.5 1.5 /
\plot 16 2  16.5 1.5 /

\plot 12 4  14.5 1.5 /
\plot 14 4  11.5 1.5 /
\plot 15 3  13.5 1.5 /

\plot 0 8  5 13 /
\plot 0 10  4 14 /
\plot 0 12  3 15 /
\plot 0 14  2 16 /
\plot 0 16  1 17 /

\plot 0 16  5 11 /
\plot 0 14  4 10 /
\plot 0 12  3 9 /
\plot 0 10  2 8 /
\plot 0 8  1 7 /

\plot 21 11  26 16 /
\plot 22 10  26 14 /
\plot 23 9  26 12 /
\plot 24 8  26 10 /
\plot 25 7  26 8 /

\plot 21 13 26 8 /
\plot 22 14 26 10 /
\plot 23 15 26 12 /
\plot 24 16 26 14 /
\plot 25 17 26 16 /

\plot 6 20  14 12 /
\plot 8 20  15 13 /
\plot 10 20  16 14 /
\plot 12 20  17 15 /
\plot 14 20  18 16 /
\plot 16 20  19 17 /
\plot 18 20  20 18 /
\plot 20 20  21 19 /

\plot 6 20  5 19  /
\plot 8 20  6 18  /
\plot 10 20  7 17  /
\plot 12 20  8 16  /
\plot 14 20  9 15  /
\plot 16 20  10 14  /
\plot 18 20  11 13  /
\plot 20 20  12 12  /
\multiput{$\bigcirc$} at 13 3  4 12  22 12  13 13 /
\setshadegrid span <0.5mm>
\hshade  1.5 11.5  14.5  3 13 13 /
\hshade  13 13 13  20 6 20 /
\vshade 0 8 16  4 12 12 /
\vshade 22 12 12  26 8 16 /

\put{\bf I} at  27 12
\put{\bf II} at  13 21
\put{\bf III} at -1.3 12
\put{\bf IV} at 13 0.5
\endpicture}
$$

What are the additional modules? These are the string modules for
words which contain a completed bar as a subword. These modules
form quarters (as introduced in \cite{Rinf}), namely the four shaded
areas in the picture above. The four quarters can
be rearranged in order to be parts of a tile, similar to those exhibited in \cite{Rinf},
p 54f, this will be explained in the next section. 
Of special interest seem to be  the four encircled module, the corner modules for the
quarters.

\subsection{The corner modules}
It may be worthwhile to identify explicitly the four corner modules
(in the presentation of this component given above, we have encircled these modules):
$$
\hbox{\beginpicture
\setcoordinatesystem units <.8cm,1cm>
\put{quarter {\bf I}} at 0 0 
\put{quarter {\bf II}} at 3.8 0 
\put{quarter {\bf III}} at 8 0 
\put{quarter {\bf IV}} at 11.7 0 

\put{\beginpicture
\setcoordinatesystem units <0.3cm,0.3cm>
\multiput{$\bullet$} at    8 1  9 0
   10 1  11 2  12 3  13 2  14 1  15 0  16 -1  /
\plot   8 1   9 0  12 3  16 -1   /
\plot 8 1.05  9 0.05  12 3.05  13 2.05 /
\plot 8 1.1  9 0.1  12 3.1  13 2.1 /
\plot 8 .95  9 -.05  12 2.95  13 1.95 /
\plot 8 .9  9 -.1  12 2.9  13 1.9 /
\multiput{$\ssize \alpha$} at  8.35 0.2  /
\multiput{$\ssize \gamma$} at  12.7 2.8  /
\multiput{$\ssize \beta_1$} at   9.9 0.2  15.2 -.9   /
\multiput{$\ssize \beta_2$} at   10.9 1.2  14.2 0.1   /
\multiput{$\ssize \beta_3$} at  11.9 2.2  13.2 1.1   /
\endpicture} at 0 -1.5

\put{\beginpicture
\setcoordinatesystem units <0.3cm,0.3cm>
\multiput{$\bullet$} at   5 4  6 3  7 2  8 1  9 0
   10 1  11 2  12 3  13 2  14 1  15 0  16 -1  /
\plot   5 4  9 0  12 3  16 -1   /
\plot 8 1.05  9 0.05  12 3.05  13 2.05 /
\plot 8 1.1  9 0.1  12 3.1  13 2.1 /
\plot 8 .95  9 -.05  12 2.95  13 1.95 /
\plot 8 .9  9 -.1  12 2.9  13 1.9 /
\multiput{$\ssize \alpha$} at  8.35 0.2  /
\multiput{$\ssize \gamma$} at  12.7 2.8  /
\multiput{$\ssize \beta_1$} at  7.2 1.1  9.9 0.2  15.2 -.9   /
\multiput{$\ssize \beta_2$} at  6.2 2.1  10.9 1.2  14.2 0.1   /
\multiput{$\ssize \beta_3$} at  5.2 3.1  11.9 2.2  13.2 1.1   /
\endpicture} at 4 -1.5

\put{\beginpicture
\setcoordinatesystem units <0.3cm,0.3cm>
\multiput{$\bullet$} at   5 4  6 3  7 2  8 1  9 0
   10 1  11 2  12 3  13 2   /
\plot   5 4  9 0  12 3  13 2   /
\plot 8 1.05  9 0.05  12 3.05  13 2.05 /
\plot 8 1.1  9 0.1  12 3.1  13 2.1 /
\plot 8 .95  9 -.05  12 2.95  13 1.95 /
\plot 8 .9  9 -.1  12 2.9  13 1.9 /
\multiput{$\ssize \alpha$} at  8.35 0.2  /
\multiput{$\ssize \gamma$} at  12.7 2.8  /
\multiput{$\ssize \beta_1$} at  7.2 1.1  9.9 0.2    /
\multiput{$\ssize \beta_2$} at  6.2 2.1  10.9 1.2    /
\multiput{$\ssize \beta_3$} at  5.2 3.1  11.9 2.2    /
\endpicture} at 8 -1.5

\put{\beginpicture
\setcoordinatesystem units <0.3cm,0.3cm>
\multiput{$\bullet$} at    8 1  9 0
   10 1  11 2  12 3  13 2   /
\plot   8 1  9 0  12 3  13 2   /
\plot 8 1.05  9 0.05  12 3.05  13 2.05 /
\plot 8 1.1  9 0.1  12 3.1  13 2.1 /
\plot 8 .95  9 -.05  12 2.95  13 1.95 /
\plot 8 .9  9 -.1  12 2.9  13 1.9 /
\multiput{$\ssize \alpha$} at  8.35 0.2  /
\multiput{$\ssize \gamma$} at  12.7 2.8  /
\multiput{$\ssize \beta_1$} at    9.9 0.2    /
\multiput{$\ssize \beta_2$} at   10.9 1.2    /
\multiput{$\ssize \beta_3$} at   11.9 2.2    /
\endpicture} at 11.7 -1.5

\put{$\tau^-(\rad M(b))$} at 0 -3
\put{$N$} at 4 -3
\put{$\tau(M(b)/\soc)$} at 8.1 -3
\put{$\overline M(b)$} at 11.7 -3
\endpicture}
$$
In our example both the socle and the top of all corner modules are of length 2.
In general, the corner modules for the quarters {\bf I} and {\bf IV} may have a socle of
length 3, and dually, the  corner modules for the quarter {\bf III} and {\bf IV} may have a top of
length 3, as the following description shows:

For the quarter {\bf I}, the corner module
$\tau^-(\rad M(b))$ is obtained from $\rad M(b)$ by adding
hooks on the left and on the right. 

Dually, for the quarter {\bf III},
the corner module $\tau(M(b)/\soc)$ is obtained from $M(b)/\soc$ 
by adding cohooks on the left and on the right. 

For the quarter
{\bf IV} we obtain the corner module 
$\overline M(b)$ by adding  to $M(b)$ a hook on the left, a cohook
on the right. In our case we have $\overline M(b) = M(\overline b)$, where
$\overline b$ is the completion of $b$. 

The corner module for the quarter {\bf II} has been denoted 
here by $N = N(b)$, in our example, we start with
$\rad M(b)/\soc$ and add a cohook on the left and a hook on the
right, in order to obtain $N$ --- however, this rule makes sense only in case 
$\rad M(b)/\soc$ is non-zero, thus in case the bar module  $M(b)$ is
of length at least $3$. In general, let  $N_0$ be the boundary module
in $\mathcal R_0$ which has the same socle as $M(b)$ and $N_\infty$ the boundary
module in $\mathcal R_\infty$ which has the same top as $M(b)$.
Then $N$ has a filtration 
$
 0 \subset N'' \subseteq N' \subset N 
$
with 
$$
 N'' = \eta(N_0),\quad \quad  N'/N_0 =  \rad M(b)/\soc, \quad N/N' = \eta(N_\infty).
$$ 
In case $M(b)$ is of length $2$, say $b = \beta$ 
where $\beta$ is an arrow, 
then we deal with an exact sequence
$$
 0 \to \eta(N_0) \to N \to \eta(N_\infty) \to 0.
$$
This is one of the
Auslander-Reiten sequences involving string modules and having an indecomposable
middle term, namely that corresponding to the arrow$\beta$, see \cite{BR}.
       \medskip

This description of the corner modules shows that all of them are
related to the following Auslander-Reiten sequence 
$$
 0 \to \rad M(b) \to M(b)\oplus \rad M(b)/\soc \to M(b)/\soc \to 0
$$
for $W/I$, where $I$ is the annihilator of $M(b)$. Recall that we have
used the Auslander-Reiten quiver of $W/I$ as our gluing device, let us 
mark the Auslander-Reiten sequence in question:
$$
\hbox{\beginpicture
\setcoordinatesystem units <1.2cm,1.2cm>
\multiput{$\bullet$} at 0 0  1 1  2 0  2 2  3 1  3 3  4 0  4 2  5 1  6 0 /
\plot 0 0  3 3  6 0 /
\plot 1 1  2 0  4 2 /
\plot 2 2  4 0  5 1 /
\setshadegrid span <0.7mm>
\hshade 1 3 3 <,,z,> 2 2 4 <,,,z> 3 3 3 /
\plot 0.3 -0.3  2.3 1.7 /
\plot -.3 0.3  1.7 2.3 /
\circulararc 180 degrees from 2.3 1.7 center at 2 2 
\circulararc 180 degrees from -.3 0.3 center at 0 0 

\plot 3.7 1.7  5.7  -.3 /
\plot 4.3 2.3  6.3 .3 /
\circulararc 180 degrees from 4.3 2.3 center at 4 2 
\circulararc 180 degrees from 5.7 -.3 center at 6 0 

\circulararc 360 degrees from 3.3 3.3 center at 3 3 

\plot 3.3 1.3  4.9 -.3  1.1 -.3  2.7 1.3 /
\circulararc  90 degrees from 3.3 1.3 center at 3 1
 
\put{$M(b)$} at 3 3.7
\put{$\rad M(b)$} [r] at 1.5 2.2
\put{$M(b)/\soc$} [l] at 4.5 2.2
\put{$\rad M(b)/\soc$} at 3 .6
\put{} at 0 -.3
\endpicture}
$$
In case $M(b)$ is of length at least $3$ we deal with a square, if it is
of length 2, then with a triangle. In section 13.2 we will see in which 
way this square or triangle is enlarged in $\mo W$.

\subsection{Wind wheels with several bars} 
Now we consider the general case of $t$ bars. Any of the components $\mathcal P, \mathcal R_0,
\mathcal R_\infty, \mathcal Q$ will be cut into $t$ pieces, and always the pieces will be
indexed the the set $\mathcal B$ of the direct bars.

Write $w = w_1\cdots w_t$ where all the words $w_i$
start with a direct letter and end with an inverse letter, and such that any $w_i$
ends with an inverse  bar, say $(b_i)^{-1}$. 
We denote by $\lambda:\mathcal B \to \mathcal B$ the
cyclic permutation with $\lambda(b_i) = b_{i+1}.$

Similar to the case $t=1$, we remove arrows from the 
preprojective component,  but now we want to retain $t$ connected pieces $\mathcal P(b)$
with $b\in \mathcal B$. 
The piece $\mathcal P(b)$
is supposed to contain the projective modules starting with $\rad M(b)$ and
ending with $M(\lambda b)$. Here is a picture of $\mathcal P(b)$:

$$ 
\hbox{\beginpicture
\setcoordinatesystem units <.35cm,.35cm>
\multiput{} at 0 0  10 16 /
\multiput{$\bullet$} at 0 4  1 5  2 6  3 7  0 12  1 13  2 14  3 15 /
\put{$M(b)$} at 1.5 15.5
\put{$\rad M(b)$} at -.2 14.5
\put{$M(\lambda b)$} at 1.2 7.5
\plot 0 12 4 16 /
\plot 1 11 6 16 /
\plot 2 10 8 16 /
\plot 3 9  10 16 /
\plot 0 4  10 14 /
\plot 1 3  10 12 /
\plot 2 2  10 10 /
\plot 3 1  10 8 /
\plot 4 0  10 6 /
\plot 6 0  10 4 /
\plot 8 0  10 2 /

\plot 0 4  4 0 /
\plot 1 5  6 0 /
\plot 2 6  8 0 /
\plot 3 7  10 0 /
\plot 0 12 10 2 /
\plot 1 13 10 4 /
\plot 2 14 10 6 /
\plot 3 15 10 8 /
\plot 4 16 10 10 /
\plot 6 16 10 12 /
\plot 8 16 10 14 /
\setdashes <1mm>
\setshadegrid span <0.5mm>
\hshade 0 10 10 <,,,z> 6 4  10 <,,z,z> 7 3 9 <,,z,z> 8 4 8 <,,z,z> 12 0 4 <,,z,> 14 2 2 /
\put{$\mathcal P(b)$} at 12 3
\endpicture} 
$$

Next, consider the regular component $\mathcal R_0$; according to our convention, this is 
the component 
which contains the simple modules $T(b) = \topp M(b)$.
Again, we remove arrows in order to obtain $t$ pieces consisting of full corays;
the piece $\mathcal R(b)$ with index $b$ 
shall contain the modules $T(\lambda b), \tau^{-1}T(\lambda b), \tau^{-2}T(\lambda b),\dots $ up to 
$\tau T(b)$.
$$ 
\hbox{\beginpicture
\setcoordinatesystem units <.35cm,.35cm>
\multiput{} at 0 0  16 11 /
\multiput{$\bullet$} at 0 0  1 1  2 0  2 2  3 1  4 0  8 0  9 1  10 0  10 2  11 1  12 0 /
\plot 14 0  16 2 /
\plot 12 0  16 4 /
\plot 10 0  16 6 /
\plot 8 0   16 8 /
\plot 6 0  16 10 /
\plot 4 0  14 10 /
\plot 2 0  12 10 /
\plot 0 0  10 10 /
\plot 0 2  8 10 /
\plot 0 4  6 10 /
\plot 0 6  4 10 /
\plot 0 8  2 10 /

\plot 0 2  2 0 /
\plot 0 4  4 0 /
\plot 0 6  6 0 /
\plot 0 8  8 0 /
\plot 0 10 10 0 /
\plot 2 10 12 0 /
\plot 4 10 14 0 /
\plot 6 10 16 0 /
\plot 8 10 16 2 /
\plot 10 10 16 4 /
\plot 12 10 16 6 /
\plot 14 10 16 8 /

\setdashes <1mm>
\setshadegrid span <0.5mm>
\hshade 0 4 10 <,,,z> 4 0 6 <,,z,> 10 0 0 /
\put{$T(\lambda b)$} at 4 -1
\put{$T(b)$} at 12 -1
\setdots <1mm> 
\plot 0 0  16 0 / 
\put{$\mathcal R(b)$} at -2 8
\endpicture}
$$

Similarly, we consider a word $w'$ obtained from $w$ by cyclic rotation, such that
$w' = w_1'\cdots w_t'$ where any $w_i'$ starts with an inverse letter and ends with a direct
letter, and ends in a (direct)  bar, say the bar $b_{\sigma(i)}$ and we denote by 
$\rho: \mathcal B \to \mathcal B$ the permutation which sends $b_{\sigma(i)}$ to
$b_{\sigma(i+1)}.$ 

Now we cut the preinjective component in order to get pieces made up of corays.
the 
piece $\mathcal Q(b)$ has to contain $M(b)/\soc$ up to $M(\rho b)$.
Here is a picture of $\mathcal Q(b)$.

$$ 
\hbox{\beginpicture
\setcoordinatesystem units <.35cm,.35cm>

\multiput{} at 0 0  10 16 /
\multiput{$\bullet$} at 5 3  6 2  7 1  8 0  5 11  6 10  7 9  8 8  8 16  /
\plot 6 0  7 1 /
\plot 4 0  6 2 /
\plot 2 0  5 3 /
\plot 0 0  8 8 /
\plot 0 2  7 9 /
\plot 0 4  6 10 /
\plot 0 6  5 11 /
\plot 0 8  8 16 /
\plot 0 10  6 16 /
\plot 0 12  4 16 /
\plot 0 14  2 16 /

\plot 6 16  7 15 /
\plot 4 16  6 14 /
\plot 2 16  5 13 /
\plot 0 16  8 8 /
\plot 0 14  7 7 /
\plot 0 12  6 6 /
\plot 0 10  5 5 /

\plot 0 8  8 0 /
\plot 0 6  6 0 /
\plot 0 4  4 0 /
\plot 0 2  2 0 /
\setdashes <1mm>
\setshadegrid span <0.5mm>
\hshade 0 0 2 <,,,z> 3 0 5 <,,z,z> 4 0 4 <,,z,z> 8 4 8 <,,z,> 10 6 6 /
\put{$M(b/\soc)$} at 8.5 10.5
\put{$M(\rho b)$} at 7 3 
\put{$\mathcal Q(b)$} at -2 3

\endpicture}
$$

Finally, we consider the regular component which contains the simple modules 
$S(b) = \soc M(b)$.
Again, we remove arrows in order to obtain $t$ pieces consisting now of full rays.
The piece $\mathcal R_\infty(b)$ indexed by $b$ contains the rays 
starting at
$ \tau^{-1}S(b),\tau^{-2}S(b), \dots,$ up to $S(\rho b).$
$$ 
\hbox{\beginpicture
\setcoordinatesystem units <.35cm,.35cm>
\multiput{} at 0 0  16 11 /
\multiput{$\bullet$} at 0 0  1 1  2 0  2 2  3 1  4 0  8 0  9 1  10 0  10 2  11 1  12 0 /
\put{$\mathcal R_\infty(b))$} at 18 11
\plot 14 0  16 2 /
\plot 12 0  16 4 /
\plot 10 0  16 6 /
\plot 8 0   16 8 /
\plot 6 0  16 10 /
\plot 4 0  14 10 /
\plot 2 0  12 10 /
\plot 0 0  10 10 /
\plot 0 2  8 10 /
\plot 0 4  6 10 /
\plot 0 6  4 10 /
\plot 0 8  2 10 /

\plot 0 2  2 0 /
\plot 0 4  4 0 /
\plot 0 6  6 0 /
\plot 0 8  8 0 /
\plot 0 10 10 0 /
\plot 2 10 12 0 /
\plot 4 10 14 0 /
\plot 6 10 16 0 /
\plot 8 10 16 2 /
\plot 10 10 16 4 /
\plot 12 10 16 6 /
\plot 14 10 16 8 /

\setdashes <1mm>
\setshadegrid span <0.5mm>
\hshade 0 2 8  <,,,z> 8 10 16  <,,z,> 10 12 16 /
\put{$\ssize  S(\rho b)$} at 8 -0.8                               
\put{$\ssize S(b)$} at 0 -0.8                          

\setdots <1mm> 
\plot 0 0  16 0 / 

\endpicture}
$$

Altogether we have cut the four string components of $\mo H$  into flat pieces: any
such component yields $t$ pieces. The gluing of these pieces is done by identifying
$H$-modules which become isomorphic under the restriction functor 
$$
 \mo H \to \mo W
$$
(and finally we will have to add various quarters). 

As in the case $t=1$, we first will look at the proper subfactors of the bars. 
Identifying
the corresponding $H$-modules, we obtain partial translation quivers which are planar:
Any of the components $\mathcal P, \mathcal Q,
\mathcal R_0, \mathcal R_\infty$ has been cut into $t$ pieces, and the identification process
will use one piece of each kind, in order to obtain $t$ planar 
partial translation quivers of the following form ($**$):
$$ 
\hbox{\beginpicture
\setcoordinatesystem units <.35cm,.35cm>
\multiput{} at 0 0  26 20 /
\put{$\mathcal P(\rho b)$} at 28 2
\put{$\mathcal Q(\lambda b)$} at -2  2
\put{$\mathcal R_0(b)$} at -2  21
\put{$\mathcal R_\infty(b)$} at 28  21

\put{$\rad M(\rho b)$} at 23 12
\put{$M(\lambda b)/\soc$} at 3 12

\multiput{$\bullet$} at 6 12  7 11  8 10  12 10  13 11  14 10  18 10  19 11  20 12  
   5 5  21 5 /
\plot 0 0  5 5  /
\plot 0 2  11 13 /
\plot 0 4  10 14 /
\plot 0 6  9 15 /

\plot 0 2  1 1 /
\plot 0 4  2 2 /
\plot 0 6  3 3 /
\plot 1 7  4 4 /
\plot 2 8  5 5  /
\plot 3 9  5 7 /
\plot 4 10  6 8 /
\plot 5 11  7 9 /
\plot 0 18  8 10 /
\plot 0 20  10 10 /
\plot 2 20  12 10 /
\plot 4 20  14 10 /
\plot 10 10  12 12 /
\plot 12 10  22 20 /
\plot 14 10  24 20 /
\plot 16 10  26 20 /
\plot 18 10 26 18 /

\plot 5 13  8 16 /
\plot 4 14  7 17 /
\plot 3 15  6 18 /
\plot 2 16  5 19 /
\plot 1 17  4 20 /
\plot 0 18  2 20 /
\plot 14 12  16 10 /
\plot 15 13  26 2 /
\plot 16 14  26 4 /
\plot 17 15  26 6 /
\plot 18 16  21 13 /
\plot 19 17  22 14 /
\plot 20 18  23 15 /
\plot 21 19  24 16 /
\plot 22 20  25 17 /
\plot 24 20  26 18 /
\plot 21 5  26 0 /
\plot 19 9  21 11 /
\plot 20 8  22 10 /
\plot 21 7  23 9 /
\plot 21 5  24 8 /
\plot 22 4  25 7 /
\plot 23 3  26 6 /
\plot 24 2  26 4 /
\plot 25 1  26 2 /
\setdots <1mm>
\plot 8 10  18 10 /
\put{$\ssize M(b')$} at 6.5 5
\put{$\ssize M(b'')$} at 19.5 5
\endpicture}
$$

It is important to observe that in contrast to the case $t=1$, the 
modules labeled $M(b')$ and $M(b'')$ (corresponding to bars $b'$ and $b''$)
now may be different!

We obtain in this way a permutation $\pi$ of the bars
such that $b'' = \pi(b')$. But we know that $b' = \rho\lambda (b)$
and $b'' = \lambda\rho(b).$ Now  $b' = \rho\lambda(b)$ means that 
 $b = \lambda^{-1}\rho^{-1}(b')$, thus 
$$
 \pi(b') = b'' = \lambda\rho(b) = \lambda\rho \lambda^{-1}\rho^{-1}(b') = 
[\lambda,\rho](b').
$$
This shows: 
$$
 \pi =  [\lambda,\rho].
$$
     \medskip

Of course, as we have mentioned already, we still have to add the indecomposable
$W$-modules  which do not belong to the
image of $\eta$. We know that there are precisely $t$ non-periodic (but biperiodic)
$\mathbb Z$-words; they give rise to $4\cdot t$ quarters  which have to be inserted 
as in  the case $t=1$. Before making the final identifcations, let us attach 
the quarters of type {\bf IV} to the pieces of the
form $\mathcal Q(b)$.
In this way, we obtain $t$ partial translation quivers of the form
$$ 
\hbox{\beginpicture
\setcoordinatesystem units <.7cm,.7cm>

\put{\beginpicture
\multiput{} at 0 0  5 3 /

\setdashes <1mm>
\plot 0 0 5 0  5 3  0 3  0 0 /
\setsolid
\ellipticalarc axes ratio 2:1 293 degrees from 3.5 1.5 center at 2.5 1.5 
\ellipticalarc axes ratio 2:1 -50 degrees from 3.5 1.5 center at 2.5 1.5 
\plot 5 0  2.9 1 /
\plot 5 0.2 3.1 1.1 /
\endpicture} at 0 0
\put{or better} at 5 0

\put{\beginpicture
\multiput{} at -.5 -.5  5.5 3.5 /

\setdashes <1mm>
\plot -.5 1.5  2.5 3.5  5.5 1.5  4 .5 /
\plot 3.85  0.45  2.5 -0.5  -.5 1.5 /

\setsolid
\ellipticalarc axes ratio 2:1 293 degrees from 3.5 1.5 center at 2.5 1.5 
\ellipticalarc axes ratio 2:1 -55 degrees from 3.5 1.5 center at 2.5 1.5 
\plot 3.9 0.43  2.88 1.03 /
\plot 4.05 0.53 3.05 1.1 /
\endpicture} at 10 0

\endpicture}
$$
(The visualization on the right hand side takes into account the embedding of
these Auslander-Reiten components into the corresponding ``Auslander-Reiten quilt''
which we will discuss in the next section.)
      \medskip

The partial translation quivers 
are sewn together in the same way as one constructs the Riemann surfaces of the $n$-the root functions
in complex analysis (taking into account the permutation $\pi$). For example,  we may obtain
a $3$-ramified component which roughly will have the following shape:
$$
\hbox{\beginpicture
\setcoordinatesystem units <1cm,1cm>
\put{\beginpicture
\multiput{} at -.5 -.5  5.5 3.5 /

\setdots <.2mm>
\plot -.5 1.5  2.5 3.5  5.5 1.5  4 .5 /
\plot 3.85  0.45  2.5 -0.5  -.5 1.5 /

\setsolid
\ellipticalarc axes ratio 2:1 293 degrees from 3.5 1.5 center at 2.5 1.5 
\ellipticalarc axes ratio 2:1 -55 degrees from 3.5 1.5 center at 2.5 1.5 
\plot 3.9 0.43  2.88 1.03 /
\plot 4.05 0.53 3.05 1.1 /

\endpicture} at 0 0
\put{\beginpicture
\multiput{} at -.5 -.5  5.5 3.5 /

\setdots <.6mm>
\plot -.5 1.5  2.5 3.5  5.5 1.5  4 .5 /
\plot 3.9 0.43  2.88 1.03 /
\plot 4.05 0.53 3.05 1.1 /

\ellipticalarc axes ratio 2:1 293 degrees from 3.5 1.5 center at 2.5 1.5 
\ellipticalarc axes ratio 2:1 -55 degrees from 3.5 1.5 center at 2.5 1.5 

\plot 3.05 1.1  2.9  1.3 /

\setdots <.2mm>

\plot 3 1.25  3.04 1.4 /

\ellipticalarc axes ratio 2:1 150 degrees from 3.5 1.59 center at 2.5 1.5 

\plot 3.9 0.43  3.65  .57 /

\plot 3.85  0.45  2.5 -0.5  -.5 1.5  -.3 1.63 /
\plot 4 .5  5.5 1.5  5.3  1.63 / 
\setsolid

\plot 4.05 0.55  3.85 .75 / 


\endpicture} at 0 -0.3

\put{\beginpicture
\multiput{} at -.5 -.5  5.5 3.5 /

\setdots <.6mm>
\plot -.5 1.5  2.5 3.5  5.5 1.5  4 .5 /
\plot 3.9 0.43  2.88 1.03 /
\plot 4.05 0.53 3.05 1.1 /

\ellipticalarc axes ratio 2:1 293 degrees from 3.5 1.5 center at 2.5 1.5 
\ellipticalarc axes ratio 2:1 -55 degrees from 3.5 1.5 center at 2.5 1.5 

\plot 3.05 1.1  2.9  1.3 /

\plot 2.9 1   3 1.6 /

\setdots <.2mm>


\ellipticalarc axes ratio 2:1 110 degrees from 3.3 1.8 center at 2.5 1.5 

\plot 3.9 0.43  3.65  .57 /

\plot 3.85  0.45  2.5 -0.5  -.5 1.5  -.3 1.63 /
\plot 4 .5  5.5 1.5  5.3  1.63 / 
\setsolid

\plot 4.05 0.55  3.85 .75 / 

\plot 3.85 0.45  4.04 1.14  /

\endpicture} at 0 -0.6

\endpicture}
$$
We will call such a component with $r$ leaves an {\it $r$-ramified component of type
$\mathbb A_\infty^\infty$.}

\begin{prop} Let $W$ be a wind wheel with $t$ bars. Then:
Any non-regular Auslander-Reiten component is an $r$-ramified component of
type $\mathbb A_\infty^\infty$ with $1\le r \le t.$
If $\mathcal C_1,\dots, \mathcal C_c$ are the
non-regular Auslander-Reiten components of $W$ and $\mathcal C_i$ is $r_i$-ramified,
for $1\le i\le c$, then $\sum_{i=1}^c r_i = t.$
\end{prop}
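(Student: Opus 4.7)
The plan is to read off the structure of the non-regular components directly from the cut-and-paste construction developed in this section. The non-regular part of the Auslander-Reiten quiver $\Gamma(W)$ is assembled from $t$ planar partial translation quivers of the form $(**)$, one for each bar $b \in \mathcal B$ (with four quadrants drawn from $\mathcal P(\rho b)$, $\mathcal Q(\lambda b)$, $\mathcal R_0(b)$, $\mathcal R_\infty(b)$), together with the $4t$ quarters carrying the $W$-modules outside the image of $\eta$. Attaching to each $(**)$-piece the quarter of type \textbf{IV} at its $\mathcal Q(\lambda b)$-corner, as indicated at the end of 12.4, produces $t$ flat ``sheets'', each of which by itself has the shape of an $\mathbb A_\infty^\infty$ translation quiver; the remaining identifications between distinct sheets occur only at the two marked bar modules $M(b'), M(b'')$ of $(**)$.

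First I would check that each single sheet is really an $\mathbb A_\infty^\infty$ translation quiver, using the explicit description of the corner modules given in 12.3 together with the identification of the indecomposables in the image of $\eta$ from 12.2. Then I would track the gluing of sheets: the bar module $M(b'')$ at the right corner of the sheet indexed by $b$ coincides with the left-corner bar module of exactly one other sheet, and by the defining property $b'' = \pi(b')$ of the permutation $\pi = [\lambda, \rho]$ introduced in 12.4, this ``next sheet'' relation is conjugate to the action of $\pi$ on $\mathcal B$. Hence the graph whose vertices are sheets and whose edges record shared bar modules is the functional graph of $\pi$, and its connected components correspond bijectively to the $\pi$-orbits on $\mathcal B$, with as many sheets per component as the size of the orbit.

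Having organised the sheets into orbits, I would argue that gluing $r$ sheets together along their common bar modules produces an $r$-ramified $\mathbb A_\infty^\infty$ component in the sense pictured at the end of 12.4: the $r$ sheets become the $r$ leaves, and the shared bar modules together with the four surrounding string-component pieces form the ramification locus along which the leaves meet. Since $|\mathcal B| = t$, the orbit sizes $r_1, \ldots, r_c$ satisfy $\sum_i r_i = t$ and in particular $1 \le r_i \le t$ for each $i$, which proves both halves of the proposition.

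The main obstacle is the last step: one must verify that the abstract sewing of sheets really is, topologically and as a translation quiver, an $r$-ramified $\mathbb A_\infty^\infty$ component and not some other degeneration. Concretely this amounts to checking that the mesh relations across each bar module match those of the intended ramified translation quiver, and that no further identifications occur beyond those forced by the quarters and by the isomorphisms produced by $\eta$. This is a careful combinatorial check, guided by the explicit form of the Auslander-Reiten sequence $0 \to \rad M(b) \to M(b) \oplus \rad M(b)/\soc \to M(b)/\soc \to 0$ recalled in 12.3 and its enlargement inside $\mo W$ foreshadowed there.
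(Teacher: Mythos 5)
Your proposal follows essentially the same route as the paper: the paper likewise derives the proposition directly from the cut-and-paste construction, forming the $t$ leaves indexed by the bars (each a $(**)$-piece with its attached quarters) and sewing them together according to the permutation $\pi=[\lambda,\rho]$, so that the non-regular components correspond to the $\pi$-orbits and the ramification indices are the orbit lengths, which sum to $t$. Your closing remark correctly identifies the combinatorial verification that the paper leaves to the reader.
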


We may assume that $r_1\ge r_2 \ge \dots \ge r_c$, thus we deal with a partition
and we call this partition $(r_1,r_2,\cdots,r_c)$ the {\it ramification sequence} of $W$.

\subsection{Wind wheels with arbitrarily many non-regular components} We are going 
to present a wind wheel with $t$ bars which has $t$ non-regular Auslander-Reiten
components (all being necessarily $1$-ramified: the ramification sequence is $(1,1,\dots,1)$).
Here  is the quiver:
$$
\hbox{\beginpicture
\setcoordinatesystem units <1cm,1cm>
\put{} at 0 -1.6
\put{\beginpicture
\put{} at 0 2
\put{$0$} at 0 0
\put{$1$} at 0 1
\arrow <1.5mm> [0.25,0.75] from 0 0.8 to 0 0.2
\circulararc 155 degrees from 0 2 center at 0 1.5
\circulararc -155 degrees from 0 2 center at 0 1.5
\arrow <1.5mm> [0.25,0.75] from -0.22 1.05 to -0.17 1.02
\setdots <.7mm>
\setquadratic
\plot -.4 1.4 0 1.3 .4 1.4 /

\endpicture} at 0 0 
\put{\beginpicture
\put{} at 0 2
\put{$2$} at 0 0
\put{$3$} at 0 1
\arrow <1.5mm> [0.25,0.75] from 0 0.8 to 0 0.2

\circulararc 155 degrees from 0 2 center at 0 1.5
\circulararc -155 degrees from 0 2 center at 0 1.5
\arrow <1.5mm> [0.25,0.75] from -0.22 1.05 to -0.17 1.02
\setdots <.7mm>
\setquadratic
\plot -.4 1.4 0 1.3 .4 1.4 /

\endpicture} at 2 0

\put{\beginpicture
\put{} at 0 2
\put{$2t\!-\!2$} at 0 0
\put{$2t\!-\!1$} at 0 1
\arrow <1.5mm> [0.25,0.75] from 0 0.8 to 0 0.2

\circulararc 123 degrees from 0 2 center at 0 1.5
\circulararc -125 degrees from 0 2 center at 0 1.5
\arrow <1.5mm> [0.25,0.75] from -0.47 1.33 to -0.42 1.25
\setdots <.7mm>
\setquadratic
\plot -.4 1.4 0 1.3 .4 1.4 /

\endpicture} at 6 0 

\plot -0.3 -.95  -.6 -.95 /

\arrow <1.5mm> [0.25,0.75] from 1.6 -.95 to 0.4 -.95
\arrow <1.5mm> [0.25,0.75] from 3 -.95 to 2.4 -.95
\arrow <1.5mm> [0.25,0.75] from 5.5 -.95 to 5 -.95

\arrow <1.5mm> [0.25,0.75] from 7 -.95 to 6.5 -.95

\circulararc 180 degrees from -.6 -.95 center at -.6 -1.3
\circulararc -180 degrees from 7 -.95 center at 7 -1.3
\plot -.6 -1.65  7 -1.65 /

\setdots <.7mm>
\setquadratic
\plot -.4 -1.1  0 -1.3 .4 -1.1 /
\plot 1.6 -1.1  2 -1.3 2.4 -1.1 /
\plot 5.4 -1.1  6 -1.3 6.6 -1.1 /

\plot -0.4 0  -0.2 -.45 -.4 -.8 /
\plot  1.6 0  1.8 -.45  1.6 -.8 /
\plot  5.5 0  5.7 -.45  5.5 -.8 /
\endpicture}
$$

For example, for $t=5$, the primitive cyclic word is
$$ 
\hbox{\beginpicture
\setcoordinatesystem units <.4cm,.4cm>
\put{$0$} at 0 0
\put{$1$} at -1 1
\put{$1$} at -2 2
\put{$0$} at -3 1
\put{$2$} at -4 0
\put{$3$} at -5 1
\put{$3$} at -6 2
\put{$2$} at -7 1
\put{$4$} at -8 0
\put{$5$} at -9 1
\put{$5$} at -10 2
\put{$4$} at -11 1
\put{$6$} at -12 0
\put{$7$} at -13 1
\put{$7$} at -14 2

\put{$6$} at -15 1
\put{$8$} at -16 0
\put{$9$} at -17 1
\put{$9$} at -18 2
\put{$8$} at -19 1
\put{$0$} at -20 0
\endpicture}
$$

All the non-regular Auslander-Reiten components of this algebra look similar,
here is one of these components:
$$ 
\hbox{\beginpicture
\setcoordinatesystem units <.8cm,.8cm>
\multiput{} at 0 -1  14 7 /
\put{
 \beginpicture
 \setcoordinatesystem units <.15cm,.15cm>
 \put{$\ssize 3$} at 4 2
 \put{$\ssize 2$} at 3 1
 \put{$\ssize 4$} at 2 0
 \put{$\ssize 5$} at 1 1
 \put{$\ssize 5$} at 0 2 
\endpicture} at 2 4

\put{
 \beginpicture
 \setcoordinatesystem units <.15cm,.15cm>
 \put{$\ssize 5$} at 4 1
 \put{$\ssize 5$} at 3 2 
\endpicture} at 3 5

\put{$5$} at 4 6

\put{$\circ$} at 1 7

\put{
 \beginpicture
 \setcoordinatesystem units <.15cm,.15cm>
 \put{$\ssize 5$} at 7 1
 \put{$\ssize 5$} at 6 2
 \put{$\ssize 4$} at 5 1
 \put{$\ssize 2$} at 4 2
 \put{$\ssize 3$} at 3 3
\plot 4 1  5 0  7 2  6 3  4 1 /

 \endpicture} at  2 6

\plot 2 7  1 8 /
\plot 2 5  0 3 /
\setquadratic
\plot 2 7  2.6  6  2 5 /
\setlinear

\put{
 \beginpicture
 \setcoordinatesystem units <.15cm,.15cm>
 \put{$\ssize 5$} at 7 2
 \put{$\ssize 4$} at 6 1
 \put{$\ssize 2$} at 5 2
 \put{$\ssize 3$} at 4 3
 
\endpicture} at  3 7

\put{
 \beginpicture
 \setcoordinatesystem units <.15cm,.15cm>
 \put{$\ssize 6$} at 3 0
 \put{$\ssize 4$} at 2 1
 \put{$\ssize 5$} at 1 2
 \put{$\ssize 5$} at 0 1  
\endpicture} at  5 7

\put{
 \beginpicture
 \setcoordinatesystem units <.15cm,.15cm>
 \put{$\ssize 6$} at 5 0
 \put{$\ssize 4$} at 4 1
 \put{$\ssize 5$} at 3 2
\endpicture} at  6 6

\put{
 \beginpicture
 \setcoordinatesystem units <.15cm,.15cm>
 \put{$\ssize 6$} at 5 0
 \put{$\ssize 7$} at 4 1
 \put{$\ssize 7$} at 3 2
\endpicture} at  8 6

\put{$\circ$} at 1 5
\put{$\circ$} at 0 4

\put{$\circ$} at 13 5
\put{$\circ$} at 14 4

\put{$\circ$} at 1 3
\put{$\circ$} at 13 3
\put{$\circ$} at 0 2

\put{\bf III$_{45}$} at -0.3 6

\put{$\circ$} at 14 2

\put{\bf I$_{45}$} at 14.3 6

\put{\bf II$_{67}$} at 7 8
\put{\bf IV\!$_{23}$} at 7 -.7

\put{ 
\beginpicture
 \setcoordinatesystem units <.15cm,.15cm>
 \put{$\ssize 6$} at 5 0
 \put{$\ssize 7$} at 4 1
 \put{$\ssize 7$} at 3 2
 \put{$\ssize 6$} at 2 1
 \put{$\ssize 4$} at 1 2
 \put{$\ssize 5$} at 0 3
\plot 1 1  2 0  4 2  3 3  1 1 /

\endpicture} at  7 7

\plot 6 7  5 8 /
\plot 8 7  9 8 /
\setquadratic
\plot 6 7  7 6.4  8 7 /
\setlinear

\put{
 \beginpicture
 \setcoordinatesystem units <.15cm,.15cm>
 \put{$\ssize 4$} at 5 1
 \put{$\ssize 6$} at 4 0
 \put{$\ssize 7$} at 3 1
 \put{$\ssize 7$} at 2 2
\endpicture} at  9 7

\put{
 \beginpicture
 \setcoordinatesystem units <.15cm,.15cm>
 \put{$\ssize 4$} at 5 0
 \put{$\ssize 5$} at 4 1
 \put{$\ssize 5$} at 3 2
 \put{$\ssize 4$} at 2 0
\endpicture} at  11 7

\put{
 \beginpicture
 \setcoordinatesystem units <.15cm,.15cm>
 \put{$\ssize 4$} at 6 0
 \put{$\ssize 5$} at 5 1
 \put{$\ssize 5$} at 4 2
 \put{$\ssize 4$} at 3 1
 \put{$\ssize 2$} at 2 2
\plot 2 1  3 0  5 2  4 3  2 1 /
\endpicture} at  12 6

\plot 12 7  13 8 /
\plot 12 5  14 3 /
\setquadratic
\plot 12 7  11.4  6  12 5 /
\setlinear

\put{$\circ$} at 13 7

\put{$4$} at 10 6 

\put{
 \beginpicture
 \setcoordinatesystem units <.15cm,.15cm>
 \put{$\ssize 4$} at 6 0
 \put{$\ssize 2$} at 5 1
\endpicture} at  11 5

\put{
 \beginpicture
 \setcoordinatesystem units <.15cm,.15cm>
 \put{$\ssize 4$} at 4 0
 \put{$\ssize 2$} at 3 1
 \put{$\ssize 3$} at 2 2
 \put{$\ssize 3$} at 1 1
 \put{$\ssize 2$} at 0 0
\endpicture} at  12 4

\put{
 \beginpicture
 \setcoordinatesystem units <.15cm,.15cm>
 \put{$\ssize 3$} at 1 1
 \put{$\ssize 2$} at 0 0
\endpicture} at  7 3

\put{
 \beginpicture
 \setcoordinatesystem units <.15cm,.15cm>
 \put{$\ssize 3$} at 2 0
 \put{$\ssize 3$} at 1 1
 \put{$\ssize 2$} at 0 0
\endpicture} at  6 2

\put{
 \beginpicture
 \setcoordinatesystem units <.15cm,.15cm>
 \put{$\ssize 3$} at 2 1
 \put{$\ssize 2$} at 1 0
 \put{$\ssize 0$} at 0 1
\endpicture} at  8 2

\put{$\circ$} at 5 1

\put{
 \beginpicture
 \setcoordinatesystem units <.15cm,.15cm>
 \put{$\ssize 3$} at 3 0
 \put{$\ssize 3$} at 2 1
 \put{$\ssize 2$} at 1 0
 \put{$\ssize 0$} at 0 1
\plot 0 0  1 -1  3 1  2 2  0 0 /

\endpicture} at  7 1

\plot 6 1  4.5 -.5 /
\plot 8 1  9.5 -.5 /
\setquadratic
\plot 6 1  7 1.6  8 1 /
\setlinear

\put{$\circ$} at 9 1
\put{$\circ$} at 4 0
\put{$\circ$} at 6 0
\put{$\circ$} at 8 0 
\put{$\circ$} at 10 0

\arrow <1.5mm> [0.25,0.75] from 0.3 4.3 to 0.7 4.7
\arrow <1.5mm> [0.25,0.75] from 2.3 4.3 to 2.7 4.7
\arrow <1.5mm> [0.25,0.75] from 1.3 5.3 to 1.7 5.7
\arrow <1.5mm> [0.25,0.75] from 3.3 5.3 to 3.7 5.7
\arrow <1.5mm> [0.25,0.75] from 4.3 6.3 to 4.7 6.7
\arrow <1.5mm> [0.25,0.75] from 6.3 6.3 to 6.6 6.6
\arrow <1.5mm> [0.25,0.75] from 8.3 6.3 to 8.7 6.7
\arrow <1.5mm> [0.25,0.75] from 5.3 6.7 to 5.7 6.3
\arrow <1.5mm> [0.25,0.75] from 7.3 6.7 to 7.7 6.3
\arrow <1.5mm> [0.25,0.75] from 9.3 6.7 to 9.7 6.3
\arrow <1.5mm> [0.25,0.75] from 10.3 5.7 to 10.7 5.3
\arrow <1.5mm> [0.25,0.75] from 11.3 4.7 to 11.7 4.3
\arrow <1.5mm> [0.25,0.75] from 12.3 5.7 to 12.7 5.3
\arrow <1.5mm> [0.25,0.75] from 13.3 4.7 to 13.7 4.3

\arrow <1.5mm> [0.25,0.75] from 4.3 0.3 to 4.7 0.7
\arrow <1.5mm> [0.25,0.75] from 6.3 0.3 to 6.7 0.7
\arrow <1.5mm> [0.25,0.75] from 8.3 0.3 to 8.7 0.7
\arrow <1.5mm> [0.25,0.75] from 5.3 0.7 to 5.7 0.3
\arrow <1.5mm> [0.25,0.75] from 7.3 0.7 to 7.7 0.3
\arrow <1.5mm> [0.25,0.75] from 9.3 0.7 to 9.7 0.3

\arrow <1.5mm> [0.25,0.75] from 5.3 1.3 to 5.7 1.7
\arrow <1.5mm> [0.25,0.75] from 7.3 1.3 to 7.7 1.7
\arrow <1.5mm> [0.25,0.75] from 6.3 1.7 to 6.7 1.3
\arrow <1.5mm> [0.25,0.75] from 8.3 1.7 to 8.7 1.3

\arrow <1.5mm> [0.25,0.75] from 6.3 2.3 to 6.7 2.7
\arrow <1.5mm> [0.25,0.75] from 7.3 2.7 to 7.7 2.3

\arrow <1.5mm> [0.25,0.75] from 0.3 6.3 to 0.7 6.7
\arrow <1.5mm> [0.25,0.75] from 2.3 6.3 to 2.7 6.7
\arrow <1.5mm> [0.25,0.75] from 10.3 6.3 to 10.7 6.7
\arrow <1.5mm> [0.25,0.75] from 12.3 6.3 to 12.7 6.7
\arrow <1.5mm> [0.25,0.75] from 12.3 4.3 to 12.7 4.7
\arrow <1.5mm> [0.25,0.75] from 0.3 2.3 to 0.7 2.7
\arrow <1.5mm> [0.25,0.75] from 11.3 5.3 to 11.7 5.7
\arrow <1.5mm> [0.25,0.75] from 13.3 5.3 to 13.7 5.7
\arrow <1.5mm> [0.25,0.75] from 13.3 3.3 to 13.7 3.7
\arrow <1.5mm> [0.25,0.75] from 1.3 3.3 to 1.7 3.7

\arrow <1.5mm> [0.25,0.75] from 0.3 3.7 to 0.7 3.3
\arrow <1.5mm> [0.25,0.75] from 0.3 5.7 to 0.7 5.3
\arrow <1.5mm> [0.25,0.75] from 1.3 4.7 to 1.7 4.3
\arrow <1.5mm> [0.25,0.75] from 1.3 6.7 to 1.7 6.3

\arrow <1.5mm> [0.25,0.75] from 2.3 5.7 to 2.7 5.3
\arrow <1.5mm> [0.25,0.75] from 3.3 6.7 to 3.7 6.3

\arrow <1.5mm> [0.25,0.75] from 11.3 6.7 to 11.7 6.3
\arrow <1.5mm> [0.25,0.75] from 13.3 6.7 to 13.7 6.3

\arrow <1.5mm> [0.25,0.75] from 12.3 3.7 to 12.7 3.3
\arrow <1.5mm> [0.25,0.75] from 13.3 2.7 to 13.7 2.3

\arrow <1.5mm> [0.25,0.75] from 2.4 3.9 to 6.7 3.1
\arrow <1.5mm> [0.25,0.75] from 1.4 2.9 to 5.7 2.1
\arrow <1.5mm> [0.25,0.75] from 0.4 1.9 to 4.7 1.1
\arrow <1.5mm> [0.25,0.75] from 0.2 0.8 to 3.7 0.1

\arrow <1.5mm> [0.25,0.75] from 7.3 3.1 to 11.4 3.9
\arrow <1.5mm> [0.25,0.75] from 8.3 2.1 to 12.4 2.9
\arrow <1.5mm> [0.25,0.75] from 9.3 1.1 to 13.4 1.9
\plot 10.3 0.1 13.8 0.8 /

\setdots<1mm>
\plot 4.4 6  5.6 6 /
\plot 6.4 6  7.6 6 /
\plot 8.4 6  9.6 6 /

\setshadegrid span <0.7mm>

\endpicture}
$$
The inserted quarters are labeled {\bf I, II, III, IV,} 
with a bar $b$ as an index: all the modules in such a quarter
are of the form $M(v)$, where $v$ is a word which contains $\overline b$
as a subword (such a quarter will later be seen as part of the tile
$\mathcal T(b)$). 

\subsection{Wind wheels with  non-regular Auslander-Reiten components with
arbitrary ramification}

First, let us present an example with a $3$-ramified component. Here is 
the quiver with the zero relations of length 2 (in addition
all paths of length 3 are zero relations):
$$
\hbox{\beginpicture
\setcoordinatesystem units <1cm,1cm>
\put{} at 0 -2
\put{\beginpicture
\put{} at 0 2
\put{$0$} at 0 0
\put{$1$} at 0 1
\arrow <1.5mm> [0.25,0.75] from 0 0.8 to 0 0.2
\circulararc 155 degrees from 0 2 center at 0 1.5
\circulararc -155 degrees from 0 2 center at 0 1.5
\arrow <1.5mm> [0.25,0.75] from -0.22 1.05 to -0.17 1.02
\setdots <.7mm>
\setquadratic
\plot -.4 1.4 0 1.3 .4 1.4 /

\endpicture} at 0 0 
\put{\beginpicture
\put{} at 0 2
\put{$2$} at 0 0
\put{$3$} at 0 1
\arrow <1.5mm> [0.25,0.75] from 0 0.8 to 0 0.2

\circulararc 155 degrees from 0 2 center at 0 1.5
\circulararc -155 degrees from 0 2 center at 0 1.5
\arrow <1.5mm> [0.25,0.75] from -0.22 1.05 to -0.17 1.02
\setdots <.7mm>
\setquadratic
\plot -.4 1.4 0 1.3 .4 1.4 /

\endpicture} at 2 0 

\put{\beginpicture
\put{} at 0 2
\put{$4$} at 0 0
\put{$5$} at 0 1
\arrow <1.5mm> [0.25,0.75] from 0 0.8 to 0 0.2


\endpicture} at 4 0 
\put{\beginpicture
\put{} at 0 2
\put{$6$} at 0 0
\put{$7$} at 0 1
\arrow <1.5mm> [0.25,0.75] from 0 0.8 to 0 0.2

\circulararc 155 degrees from 0 -1 center at 0 -.5
\circulararc -155 degrees from 0 -1 center at 0 -.5
\arrow <1.5mm> [0.25,0.75] from -0.22 -.05 to -0.17 .02
\setdots <.7mm>
\setquadratic
\plot -.4 -.4 0 -.3 .4 -.4 /

\endpicture} at 6 0

\setquadratic
\plot 0  -1.3   2 -2  4 -1.3 / 

\arrow <1.5mm> [0.25,0.75] from 1.6 -.95 to 0.4 -.95
\arrow <1.5mm> [0.25,0.75] from 3.6 -.95 to 2.4 -.95

\setquadratic
\plot 5.6 0.4   5 0.6    4.4 0.4    /
\plot 5.6 0.0   5 -.2    4.4 0.0    /

\arrow <1.5mm> [0.25,0.75] from 4.45 0.43 to 4.4 .4
\arrow <1.5mm> [0.25,0.75] from 5.55 .0 to 5.6 .03

\setdots <.7mm>
\setquadratic
\plot .6 -1.5  0.2 -1.2 .6 -1.1 /
\plot 1.6 -1.1  2 -1.3 2.4 -1.1 /
\plot 3.4 -1.5  3.8 -1.2 3.4 -1.1 /

\plot 4.7 0.4  4.4 0.2  4.7 0.0 /
\plot 5.3 0.4  5.6 0.2  5.3 0.0 /
\endpicture}
$$

The primitive cyclic word is
$$ 
\hbox{\beginpicture
\setcoordinatesystem units <.4cm,.4cm>
\put{$4$} at 0 0
\put{$0$} at 1 1
\put{$1$} at 2 2
\put{$1$} at 3 1
\put{$0$} at 4 0
\put{$2$} at 5 1
\put{$3$} at 6 2
\put{$3$} at 7 1
\put{$2$} at 8 0
\put{$4$} at 9 1
\put{$5$} at 10 2
\put{$7$} at 11 1
\put{$6$} at 12 0
\put{$6$} at 13 1
\put{$7$} at 14 2
\put{$5$} at 15 1
\put{$4$} at 16 0
\endpicture}
$$
Let us exhibit one part of the $3$-ramified 
Auslander-Reiten component (as  before, the inserted quarters 
are labeled {\bf I, II, III, IV,} with a bar $b$ as an index):

$$ 
\hbox{\beginpicture
\setcoordinatesystem units <.8cm,.8cm>
\multiput{} at 0 -1  14 7 /
\put{
 \beginpicture
 \setcoordinatesystem units <.15cm,.15cm>
 \put{$\ssize 3$} at 4 2
 \put{$\ssize 2$} at 3 1
 \put{$\ssize 0$} at 2 0
 \put{$\ssize 1$} at 1 1
 \put{$\ssize 1$} at 0 2 
\endpicture} at 2 4

\put{
 \beginpicture
 \setcoordinatesystem units <.15cm,.15cm>
 \put{$\ssize 1$} at 4 1
 \put{$\ssize 1$} at 3 2 
\endpicture} at 3 5

\put{$1$} at 4 6

\put{$\circ$} at 1 7

\put{
 \beginpicture
 \setcoordinatesystem units <.15cm,.15cm>
 \put{$\ssize 1$} at 7 1
 \put{$\ssize 1$} at 6 2
 \put{$\ssize 0$} at 5 1
 \put{$\ssize 2$} at 4 2
 \put{$\ssize 3$} at 3 3
\plot 4 1  5 0  7 2  6 3  4 1 /

 \endpicture} at  2 6

\plot 2 7  1 8 /
\plot 2 5  0 3 /
\setquadratic
\plot 2 7  2.6  6  2 5 /
\setlinear

\put{
 \beginpicture
 \setcoordinatesystem units <.15cm,.15cm>
 \put{$\ssize 1$} at 7 2
 \put{$\ssize 0$} at 6 1
 \put{$\ssize 2$} at 5 2
 \put{$\ssize 3$} at 4 3
 
\endpicture} at  3 7

\put{
 \beginpicture
 \setcoordinatesystem units <.15cm,.15cm>
 \put{$\ssize 4$} at 3 0
 \put{$\ssize 0$} at 2 1
 \put{$\ssize 1$} at 1 2
 \put{$\ssize 1$} at 0 1  
\endpicture} at  5 7

\put{
 \beginpicture
 \setcoordinatesystem units <.15cm,.15cm>
 \put{$\ssize 4$} at 5 0
 \put{$\ssize 0$} at 4 1
 \put{$\ssize 1$} at 3 2
\endpicture} at  6 6

\put{
 \beginpicture
 \setcoordinatesystem units <.15cm,.15cm>
 \put{$\ssize 6$} at 5 0
 \put{$\ssize 7$} at 4 1
 \put{$\ssize 5$} at 3 2
\endpicture} at  8 6

\put{$\circ$} at 1 5
\put{$\circ$} at 0 4

\put{$\circ$} at 13 5
\put{$\circ$} at 14 4

\put{$\circ$} at 1 3
\put{$\circ$} at 13 3
\put{$\circ$} at 0 2

\put{\bf III$_{01}$} at -0.3 6

\put{$\circ$} at 14 2

\put{\bf I$_{67}$} at 14.3 6

\put{\bf II$_{45}$} at 7 8
\put{\bf IV\!$_{23}$} at 3 -.7

\put{ 
\beginpicture
 \setcoordinatesystem units <.15cm,.15cm>
 \put{$\ssize 6$} at 5 0
 \put{$\ssize 7$} at 4 1
 \put{$\ssize 5$} at 3 2
 \put{$\ssize 4$} at 2 1
 \put{$\ssize 0$} at 1 2
 \put{$\ssize 1$} at 0 3
\plot 1 1  2 0  4 2  3 3  1 1 /

\endpicture} at  7 7

\plot 6 7  5 8 /
\plot 8 7  9 8 /
\setquadratic
\plot 6 7  7 6.4  8 7 /
\setlinear

\put{
 \beginpicture
 \setcoordinatesystem units <.15cm,.15cm>
 \put{$\ssize 6$} at 5 1
 \put{$\ssize 6$} at 4 0
 \put{$\ssize 7$} at 3 1
 \put{$\ssize 5$} at 2 2
\endpicture} at  9 7

\put{ \beginpicture
 \setcoordinatesystem units <.15cm,.15cm>
 \put{$\ssize 4$} at 5 0
 \put{$\ssize 5$} at 4 1
 \put{$\ssize 7$} at 3 2
 \put{$\ssize 6$} at 2 0
\endpicture} at  11 7

\put{
 \beginpicture
 \setcoordinatesystem units <.15cm,.15cm>
 \put{$\ssize 4$} at 6 0
 \put{$\ssize 5$} at 5 1
 \put{$\ssize 7$} at 4 2
 \put{$\ssize 6$} at 3 1
 \put{$\ssize 6$} at 2 2
\plot 2 1  3 0  5 2  4 3  2 1 /
\endpicture} at  12 6

\plot 12 7  13 8 /
\plot 12 5  14 3 /
\setquadratic
\plot 12 7  11.4  6  12 5 /
\setlinear

\put{$\circ$} at 13 7

\put{$6$} at 10 6 

\put{
 \beginpicture
 \setcoordinatesystem units <.15cm,.15cm>
 \put{$\ssize 6$} at 6 0
 \put{$\ssize 6$} at 5 1
\endpicture} at  11 5

\put{
 \beginpicture
 \setcoordinatesystem units <.15cm,.15cm>
 \put{$\ssize 6$} at 4 0
 \put{$\ssize 6$} at 3 1
 \put{$\ssize 7$} at 2 2
 \put{$\ssize 5$} at 1 1
 \put{$\ssize 4$} at 0 0
\endpicture} at  12 4

\put{
 \beginpicture
 \setcoordinatesystem units <.15cm,.15cm>
 \put{$\ssize 3$} at 1 1
 \put{$\ssize 2$} at 0 0
\endpicture} at  3 3

\put{
 \beginpicture
 \setcoordinatesystem units <.15cm,.15cm>
 \put{$\ssize 3$} at 2 0
 \put{$\ssize 3$} at 1 1
 \put{$\ssize 2$} at 0 0
\endpicture} at  2 2

\put{
 \beginpicture
 \setcoordinatesystem units <.15cm,.15cm>
 \put{$\ssize 3$} at 2 1
 \put{$\ssize 2$} at 1 0
 \put{$\ssize 4$} at 0 1
\endpicture} at  4 2

\put{$\circ$} at 1 1

\put{
 \beginpicture
 \setcoordinatesystem units <.15cm,.15cm>
 \put{$\ssize 3$} at 3 0
 \put{$\ssize 3$} at 2 1
 \put{$\ssize 2$} at 1 0
 \put{$\ssize 4$} at 0 1
\plot 0 0  1 -1  3 1  2 2  0 0 /

\endpicture} at  3 1

\plot 2 1  .5 -.5 /
\plot 4 1  5.5 -.5 /
\setquadratic
\plot 2 1  3 1.6  4 1 /
\setlinear

\put{$\circ$} at 5 1
\put{$\circ$} at 0 0
\put{$\circ$} at 2 0
\put{$\circ$} at 4 0 
\put{$\circ$} at 6 0

\put{
 \beginpicture
 \setcoordinatesystem units <.15cm,.15cm>
 \put{$\ssize 5$} at 1 1
 \put{$\ssize 4$} at 0 0
\endpicture} at  11 3

\put{
 \beginpicture
 \setcoordinatesystem units <.15cm,.15cm>

 \put{$\ssize 0$} at -1 1
 \put{$\ssize 5$} at 1 1
 \put{$\ssize 4$} at 0 0

\endpicture} at  12 2

\put{$\circ$} at 13 1
\put{$\circ$} at 14 0

\arrow <1.5mm> [0.25,0.75] from 11.3 3.3 to 11.6 3.6
\arrow <1.5mm> [0.25,0.75] from 12.3 2.3 to 12.6 2.6
\arrow <1.5mm> [0.25,0.75] from 13.3 1.3 to 13.7 1.7
\arrow <1.5mm> [0.25,0.75] from 14.3  .3 to 14.7  .7

\arrow <1.5mm> [0.25,0.75] from 11.3 2.7 to 11.7 2.3
\arrow <1.5mm> [0.25,0.75] from 12.3 1.7 to 12.7 1.3
\arrow <1.5mm> [0.25,0.75] from 13.3  .7 to 13.7  .3

\arrow <1.5mm> [0.25,0.75] from 0.3 4.3 to 0.7 4.7
\arrow <1.5mm> [0.25,0.75] from 2.3 4.3 to 2.7 4.7
\arrow <1.5mm> [0.25,0.75] from 1.3 5.3 to 1.7 5.7
\arrow <1.5mm> [0.25,0.75] from 3.3 5.3 to 3.7 5.7
\arrow <1.5mm> [0.25,0.75] from 4.3 6.3 to 4.7 6.7
\arrow <1.5mm> [0.25,0.75] from 6.3 6.3 to 6.6 6.6
\arrow <1.5mm> [0.25,0.75] from 8.3 6.3 to 8.7 6.7
\arrow <1.5mm> [0.25,0.75] from 5.3 6.7 to 5.7 6.3
\arrow <1.5mm> [0.25,0.75] from 7.3 6.7 to 7.7 6.3
\arrow <1.5mm> [0.25,0.75] from 9.3 6.7 to 9.7 6.3
\arrow <1.5mm> [0.25,0.75] from 10.3 5.7 to 10.7 5.3
\arrow <1.5mm> [0.25,0.75] from 11.3 4.7 to 11.7 4.3
\arrow <1.5mm> [0.25,0.75] from 12.3 5.7 to 12.7 5.3
\arrow <1.5mm> [0.25,0.75] from 13.3 4.7 to 13.7 4.3

\arrow <1.5mm> [0.25,0.75] from 0.3 0.3 to 0.7 0.7
\arrow <1.5mm> [0.25,0.75] from 2.3 0.3 to 2.7 0.7
\arrow <1.5mm> [0.25,0.75] from 4.3 0.3 to 4.7 0.7
\arrow <1.5mm> [0.25,0.75] from 1.3 0.7 to 1.7 0.3
\arrow <1.5mm> [0.25,0.75] from 3.3 0.7 to 3.7 0.3
\arrow <1.5mm> [0.25,0.75] from 5.3 0.7 to 5.7 0.3

\arrow <1.5mm> [0.25,0.75] from 1.3 1.3 to 1.7 1.7
\arrow <1.5mm> [0.25,0.75] from 3.3 1.3 to 3.7 1.7
\arrow <1.5mm> [0.25,0.75] from 2.3 1.7 to 2.7 1.3
\arrow <1.5mm> [0.25,0.75] from 4.3 1.7 to 4.7 1.3
\arrow <1.5mm> [0.25,0.75] from 2.3 2.3 to 2.7 2.7
\arrow <1.5mm> [0.25,0.75] from 3.3 2.7 to 3.7 2.3

\arrow <1.5mm> [0.25,0.75] from 0.3 6.3 to 0.7 6.7
\arrow <1.5mm> [0.25,0.75] from 2.3 6.3 to 2.7 6.7
\arrow <1.5mm> [0.25,0.75] from 10.3 6.3 to 10.7 6.7
\arrow <1.5mm> [0.25,0.75] from 12.3 6.3 to 12.7 6.7
\arrow <1.5mm> [0.25,0.75] from 12.3 4.3 to 12.7 4.7
\arrow <1.5mm> [0.25,0.75] from 0.3 2.3 to 0.7 2.7
\arrow <1.5mm> [0.25,0.75] from 11.3 5.3 to 11.7 5.7
\arrow <1.5mm> [0.25,0.75] from 13.3 5.3 to 13.7 5.7
\arrow <1.5mm> [0.25,0.75] from 13.3 3.3 to 13.7 3.7
\arrow <1.5mm> [0.25,0.75] from 1.3 3.3 to 1.7 3.7

\arrow <1.5mm> [0.25,0.75] from 0.3 3.7 to 0.7 3.3
\arrow <1.5mm> [0.25,0.75] from 0.3 5.7 to 0.7 5.3
\arrow <1.5mm> [0.25,0.75] from 1.3 4.7 to 1.7 4.3
\arrow <1.5mm> [0.25,0.75] from 1.3 6.7 to 1.7 6.3

\arrow <1.5mm> [0.25,0.75] from 2.3 5.7 to 2.7 5.3
\arrow <1.5mm> [0.25,0.75] from 3.3 6.7 to 3.7 6.3

\arrow <1.5mm> [0.25,0.75] from 11.3 6.7 to 11.7 6.3
\arrow <1.5mm> [0.25,0.75] from 13.3 6.7 to 13.7 6.3

\arrow <1.5mm> [0.25,0.75] from 12.3 3.7 to 12.7 3.3
\arrow <1.5mm> [0.25,0.75] from 13.3 2.7 to 13.7 2.3

\arrow <1.5mm> [0.25,0.75] from 2.4 3.6 to 2.7 3.3
\arrow <1.5mm> [0.25,0.75] from 1.4 2.6 to 1.7 2.3
\arrow <1.5mm> [0.25,0.75] from 0.4 1.6 to 0.7 1.3

\setdots<1mm>
\plot 4.4 6  5.6 6 /
\plot 6.4 6  7.6 6 /
\plot 8.4 6  9.6 6 /

\setshadegrid span <0.7mm>

\endpicture}
$$

This concerns the part of the component containing 
the module $I(0) = 
 \beginpicture
 \setcoordinatesystem units <.15cm,.15cm>
 \put{$\ssize 3$} at 4 2
 \put{$\ssize 2$} at 3 1
 \put{$\ssize 0$} at 2 0
 \put{$\ssize 1$} at 1 1
 \put{$\ssize 1$} at 0 2 
\endpicture.
$
The leaves containing the modules $I(6)$ and $I(2)$ look
similar. These three leaves together form a component,
namely a 3-ramified component of type $\Bbb A_\infty^\infty$.
$$ 
\hbox{\beginpicture
\setcoordinatesystem units <.8cm,.55cm>
\put{\beginpicture
\put{$1$} at 0 0

\put{
 \beginpicture
 \setcoordinatesystem units <.15cm,.15cm>
 \put{$\ssize 1$} at 0 2
 \put{$\ssize 0$} at 1 1
 \put{$\ssize 4$} at 2 0
\endpicture} at  2 0

\put{
 \beginpicture
 \setcoordinatesystem units <.15cm,.15cm>
 \put{$\ssize 5$} at 0 2
 \put{$\ssize 7$} at 1 1
 \put{$\ssize 6$} at 2 0
\endpicture} at  4 0

\put{$6$} at 6 0 

\put{
 \beginpicture
 \setcoordinatesystem units <.15cm,.15cm>
 \put{$\ssize 6$} at 1 1
 \put{$\ssize 6$} at 2 0
\endpicture} at  7 -1

\put{
 \beginpicture
 \setcoordinatesystem units <.15cm,.15cm>
 \put{$\ssize 4$} at 0 0
 \put{$\ssize 5$} at 1 1
 \put{$\ssize 7$} at 2 2
 \put{$\ssize 6$} at 3 1
 \put{$\ssize 6$} at 4 0
\endpicture} at  8 -2

\put{
 \beginpicture
 \setcoordinatesystem units <.15cm,.15cm>
 \put{$\ssize 4$} at 1 1
 \put{$\ssize 5$} at 2 2
\endpicture} at 7 -3

\put{
 \beginpicture
 \setcoordinatesystem units <.15cm,.15cm>
 \put{$\ssize 1$} at 1 1
 \put{$\ssize 1$} at 2 0
\endpicture} at  -1 -1

\put{
 \beginpicture
 \setcoordinatesystem units <.15cm,.15cm>
 \put{$\ssize 1$} at 0 2
 \put{$\ssize 1$} at 1 1
 \put{$\ssize 0$} at 2 0
 \put{$\ssize 2$} at 3 1
 \put{$\ssize 3$} at 4 2
\endpicture} at  -2 -2

\put{
 \beginpicture
 \setcoordinatesystem units <.15cm,.15cm>
 \put{$\ssize 2$} at 1 1
 \put{$\ssize 3$} at 2 2
\endpicture} at  -1 -3

\setdots<1mm>
\plot 0.4 0  1.6 0 /
\plot 2.4 0  3.6 0 /
\plot 4.4 0  5.6 0 /

\setsolid
\arrow <1.5mm> [0.25,0.75] from -.7 -0.7 to -.3 -.3
\arrow <1.5mm> [0.25,0.75] from -1.6 -1.6 to -1.3 -1.3
\arrow <1.5mm> [0.25,0.75] from -1.6 -2.4 to -1.3 -2.7

\arrow <1.5mm> [0.25,0.75] from 6.4 -0.4 to 6.7 -.7
\arrow <1.5mm> [0.25,0.75] from 7.4 -1.4 to 7.7 -1.7
\arrow <1.5mm> [0.25,0.75] from 7.4 -2.7 to 7.7 -2.4

\setshadegrid span <.7mm>
\vshade -3 -4 1  <z,z,,>  -2 -4 1 /
\vshade -2 -4 -2   <z,z,,>  -1 -4 -3 /
\vshade -2 -2 1  <z,z,,>   -0 0 1  <z,z,,>   6 0 1  <z,z,,>   7.99 -2 1  <z,z,,>   8  -4 1  <z,z,,>    9 -4 1 / 
\vshade 7 -4 -3   <z,z,,>   8 -4 -2 /

\multiput{} at 0 -4  0 1 /  
\endpicture} at 0 0
\put{\beginpicture
\put{$5$} at 0 0

\put{
 \beginpicture
 \setcoordinatesystem units <.15cm,.15cm>
 \put{$\ssize 7$} at 0 2
 \put{$\ssize 6$} at 1 1
 \put{$\ssize 6$} at 2 0
\endpicture} at  2 0

\put{
 \beginpicture
 \setcoordinatesystem units <.15cm,.15cm>
 \put{$\ssize 7$} at 0 2
 \put{$\ssize 5$} at 1 1
 \put{$\ssize 4$} at 2 0
\endpicture} at  4 0

\put{$0$} at 6 0 

\put{
 \beginpicture
 \setcoordinatesystem units <.15cm,.15cm>
 \put{$\ssize 2$} at 1 1
 \put{$\ssize 0$} at 2 0
\endpicture} at  7 -1

\put{
 \beginpicture
 \setcoordinatesystem units <.15cm,.15cm>
 \put{$\ssize 2$} at 0 0
 \put{$\ssize 3$} at 1 1
 \put{$\ssize 3$} at 2 2
 \put{$\ssize 2$} at 3 1
 \put{$\ssize 0$} at 4 0
\endpicture} at  8 -2

\put{
 \beginpicture
 \setcoordinatesystem units <.15cm,.15cm>
 \put{$\ssize 2$} at 1 1
 \put{$\ssize 3$} at 2 2
\endpicture} at 7 -3

\put{
 \beginpicture
 \setcoordinatesystem units <.15cm,.15cm>
 \put{$\ssize 5$} at 1 1
 \put{$\ssize 7$} at 2 0
\endpicture} at  -1 -1

\put{
 \beginpicture
 \setcoordinatesystem units <.15cm,.15cm>
 \put{$\ssize 5$} at 0 2
 \put{$\ssize 7$} at 1 1
 \put{$\ssize 6$} at 2 0
 \put{$\ssize 6$} at 3 1
 \put{$\ssize 7$} at 4 2
\endpicture} at  -2 -2

\put{
 \beginpicture
 \setcoordinatesystem units <.15cm,.15cm>
 \put{$\ssize 6$} at 1 1
 \put{$\ssize 7$} at 2 2
\endpicture} at  -1 -3

\setdots<1mm>
\plot 0.4 0  1.6 0 /
\plot 2.4 0  3.6 0 /
\plot 4.4 0  5.6 0 /

\setsolid
\arrow <1.5mm> [0.25,0.75] from -.7 -0.7 to -.3 -.3
\arrow <1.5mm> [0.25,0.75] from -1.6 -1.6 to -1.3 -1.3
\arrow <1.5mm> [0.25,0.75] from -1.6 -2.4 to -1.3 -2.7

\arrow <1.5mm> [0.25,0.75] from 6.4 -0.4 to 6.7 -.7
\arrow <1.5mm> [0.25,0.75] from 7.4 -1.4 to 7.7 -1.7
\arrow <1.5mm> [0.25,0.75] from 7.4 -2.7 to 7.7 -2.4

\setshadegrid span <.7mm>
\vshade -3 -4 1  <z,z,,>  -2 -4 1 /
\vshade -2 -4 -2   <z,z,,>  -1 -4 -3 /
\vshade -2 -2 1  <z,z,,>   -0 0 1  <z,z,,>   6 0 1  <z,z,,>   7.99 -2 1  <z,z,,>   8  -4 1  <z,z,,>    9 -4 1 / 
\vshade 7 -4 -3   <z,z,,>   8 -4 -2 /

\multiput{} at 0 -4  0 1 /  
\endpicture} at 0 -6
\put{\beginpicture
\put{$3$} at 0 0

\put{
 \beginpicture
 \setcoordinatesystem units <.15cm,.15cm>
 \put{$\ssize 3$} at 0 2
 \put{$\ssize 2$} at 1 1
 \put{$\ssize 0$} at 2 0
\endpicture} at  2 0

\put{
 \beginpicture
 \setcoordinatesystem units <.15cm,.15cm>
 \put{$\ssize 1$} at 0 2
 \put{$\ssize 1$} at 1 1
 \put{$\ssize 0$} at 2 0
\endpicture} at  4 0

\put{$2$} at 6 0 

\put{
 \beginpicture
 \setcoordinatesystem units <.15cm,.15cm>
 \put{$\ssize 4$} at 1 1
 \put{$\ssize 2$} at 2 0
\endpicture} at  7 -1

\put{
 \beginpicture
 \setcoordinatesystem units <.15cm,.15cm>
 \put{$\ssize 6$} at 0 0
 \put{$\ssize 7$} at 1 1
 \put{$\ssize 5$} at 2 2
 \put{$\ssize 4$} at 3 1
 \put{$\ssize 2$} at 4 0
\endpicture} at  8 -2

\put{
 \beginpicture
 \setcoordinatesystem units <.15cm,.15cm>
 \put{$\ssize 6$} at 1 1
 \put{$\ssize 7$} at 2 2
\endpicture} at 7 -3

\put{
 \beginpicture
 \setcoordinatesystem units <.15cm,.15cm>
 \put{$\ssize 3$} at 1 1
 \put{$\ssize 3$} at 2 0
\endpicture} at  -1 -1

\put{
 \beginpicture
 \setcoordinatesystem units <.15cm,.15cm>
 \put{$\ssize 3$} at 0 2
 \put{$\ssize 3$} at 1 1
 \put{$\ssize 2$} at 2 0
 \put{$\ssize 4$} at 3 1
 \put{$\ssize 5$} at 4 2
\endpicture} at  -2 -2

\put{
 \beginpicture
 \setcoordinatesystem units <.15cm,.15cm>
 \put{$\ssize 4$} at 1 1
 \put{$\ssize 5$} at 2 2
\endpicture} at  -1 -3

\setdots<1mm>
\plot 0.4 0  1.6 0 /
\plot 2.4 0  3.6 0 /
\plot 4.4 0  5.6 0 /

\setsolid
\arrow <1.5mm> [0.25,0.75] from -.7 -0.7 to -.3 -.3
\arrow <1.5mm> [0.25,0.75] from -1.6 -1.6 to -1.3 -1.3
\arrow <1.5mm> [0.25,0.75] from -1.6 -2.4 to -1.3 -2.7

\arrow <1.5mm> [0.25,0.75] from 6.4 -0.4 to 6.7 -.7
\arrow <1.5mm> [0.25,0.75] from 7.4 -1.4 to 7.7 -1.7
\arrow <1.5mm> [0.25,0.75] from 7.4 -2.7 to 7.7 -2.4

\setshadegrid span <.7mm>
\vshade -3 -4 1  <z,z,,>  -2 -4 1 /
\vshade -2 -4 -2   <z,z,,>  -1 -4 -3 /
\vshade -2 -2 1  <z,z,,>   -0 0 1  <z,z,,>   6 0 1  <z,z,,>   7.99 -2 1  <z,z,,>   8  -4 1  <z,z,,>    9 -4 1 / 
\vshade 7 -4 -3   <z,z,,>   8 -4 -2 /

\multiput{} at 0 -4  0 1 /  
\endpicture} at 0 -12

\endpicture}
$$

In addition there is a second non-regular Auslander-Reiten component, 
namely the component containing the module $I(4)$; 
it is $1$-ramified. The boundary looks as follows:
$$ 
\hbox{\beginpicture
\setcoordinatesystem units <.8cm,.55cm>
\put{$7$} at 0 0

\put{
 \beginpicture
 \setcoordinatesystem units <.15cm,.15cm>
 \put{$\ssize 5$} at 0 2
 \put{$\ssize 4$} at 1 1
 \put{$\ssize 2$} at 2 0
\endpicture} at  2 0

\put{
 \beginpicture
 \setcoordinatesystem units <.15cm,.15cm>
 \put{$\ssize 3$} at 0 2
 \put{$\ssize 3$} at 1 1
 \put{$\ssize 2$} at 2 0
\endpicture} at  4 0

\put{$4$} at 6 0 

\put{
 \beginpicture
 \setcoordinatesystem units <.15cm,.15cm>
 \put{$\ssize 0$} at 1 1
 \put{$\ssize 4$} at 2 0
\endpicture} at  7 -1

\put{
 \beginpicture
 \setcoordinatesystem units <.15cm,.15cm>
 \put{$\ssize 0$} at 0 0
 \put{$\ssize 1$} at 1 1
 \put{$\ssize 1$} at 2 2
 \put{$\ssize 0$} at 3 1
 \put{$\ssize 4$} at 4 0
\endpicture} at  8 -2

\put{
 \beginpicture
 \setcoordinatesystem units <.15cm,.15cm>
 \put{$\ssize 7$} at 1 1
 \put{$\ssize 5$} at 2 0
\endpicture} at  -1 -1

\put{
 \beginpicture
 \setcoordinatesystem units <.15cm,.15cm>
 \put{$\ssize 7$} at 0 2
 \put{$\ssize 5$} at 1 1
 \put{$\ssize 4$} at 2 0
 \put{$\ssize 0$} at 3 1
 \put{$\ssize 1$} at 4 2
\endpicture} at  -2 -2

\put{
 \beginpicture
 \setcoordinatesystem units <.15cm,.15cm>
 \put{$\ssize 0$} at 1 1
 \put{$\ssize 1$} at 2 2
\endpicture} at  3 -3

\setdots<1mm>
\plot 0.4 0  1.6 0 /
\plot 2.4 0  3.6 0 /
\plot 4.4 0  5.6 0 /

\setsolid
\arrow <1.5mm> [0.25,0.75] from -.7 -0.7 to -.3 -.3
\arrow <1.5mm> [0.25,0.75] from -1.6 -1.6 to -1.3 -1.3
\arrow <1.5mm> [0.25,0.75] from -1.4 -2.1 to 2.7 -3

\arrow <1.5mm> [0.25,0.75] from 6.4 -0.4 to 6.7 -.7
\arrow <1.5mm> [0.25,0.75] from 7.4 -1.4 to 7.7 -1.7
\arrow <1.5mm> [0.25,0.75] from 3.4 -3 to 7.4 -2.1

\setshadegrid span <.7mm>
\vshade -3 -4 1  <z,z,,>  -2 -4 1 /
\vshade -2 -4 -2   <z,z,,>  3 -4 -3 /
\vshade -2 -2 1  <z,z,,>   -0 0 1  <z,z,,>   6 0 1  <z,z,,>   7.99 -2 1  <z,z,,>   8  -4 1  <z,z,,>    9 -4 1 / 
\vshade 3 -4 -3   <z,z,,>   8 -4 -2 /

\multiput{} at 0 -4  0 1 /  
\endpicture}
$$

We use Galois coverings of this wind wheel in order to exhibit
wind wheels with arbitrary ramification. Let us 
consider the Galois covering obtained by an $s$-covering of the
cycle of length $2$, thus we deal with a wind wheel of the following shape
(for $s = 3$, we have to identify the upper left hand arrow with the upper
right hand arrow in order to have an arrow $7' \to {}'5$):
$$
\hbox{\beginpicture
\setcoordinatesystem units <1cm,1.2cm>
\multiput{} at 0 -4.5  0 0.2 /
\put{$7'$} at 0 0
\put{$5'$} at 2 0
\put{$7$} at 4 0
\put{$5$} at 6 0
\put{$'7$} at 8 0
\put{$'5$} at 10 0

\put{$6'$} at 0 -1
\put{$4'$} at 2 -1
\put{$6$} at 4 -1
\put{$4$} at 6 -1 
\put{$'6$} at 8 -1
\put{$'4$} at 10 -1

\put{$2'$} at 1 -2
\put{$0'$} at 3 -2
\put{$2$} at 5 -2
\put{$0$} at 7 -2
\put{$'2$} at 9 -2
\put{$'0$} at 11 -2

\put{$1'$} at 1 -3
\put{$3'$} at 3 -3
\put{$1$} at 5 -3
\put{$3$} at 7 -3
\put{$'1$} at 9 -3
\put{$'3$} at 11 -3

\arrow <1.5mm> [0.25,0.75] from 0 -.2 to 0 -.8
\arrow <1.5mm> [0.25,0.75] from 2 -.2 to 2 -.8
\arrow <1.5mm> [0.25,0.75] from 4 -.2 to 4 -.8
\arrow <1.5mm> [0.25,0.75] from 6 -.2 to 6 -.8
\arrow <1.5mm> [0.25,0.75] from 8 -.2 to 8 -.8
\arrow <1.5mm> [0.25,0.75] from 10 -.2 to 10 -.8

\arrow <1.5mm> [0.25,0.75] from 1 -2.8 to 1 -2.2 
\arrow <1.5mm> [0.25,0.75] from 3 -2.8 to 3 -2.2 
\arrow <1.5mm> [0.25,0.75] from 5 -2.8 to 5 -2.2 
\arrow <1.5mm> [0.25,0.75] from 7 -2.8 to 7 -2.2 
\arrow <1.5mm> [0.25,0.75] from 9 -2.8 to 9 -2.2 
\arrow <1.5mm> [0.25,0.75] from 11 -2.8 to 11 -2.2

\circulararc -155 degrees from 0 -2 center at 0 -1.5
\circulararc  155 degrees from 0 -2 center at 0 -1.5
\arrow <1.5mm> [0.25,0.75] from 0.22 -1.04 to 0.17 -1.02
\circulararc -155 degrees from 4 -2 center at 4 -1.5
\circulararc  155 degrees from 4 -2 center at 4 -1.5
\arrow <1.5mm> [0.25,0.75] from 4.22 -1.04 to 4.17 -1.02
\circulararc -155 degrees from 8 -2 center at 8 -1.5
\circulararc  155 degrees from 8 -2 center at 8 -1.5
\arrow <1.5mm> [0.25,0.75] from 8.22 -1.04 to 8.17 -1.02
\setdots <.7mm>
\setquadratic
\plot -.4 -1.4 0 -1.3 .4 -1.4 /
\plot 3.6 -1.4 4 -1.3 4.4 -1.4 /
\plot 7.6 -1.4 8 -1.3 8.4 -1.4 /

\setsolid
\circulararc -155 degrees from 1 -4 center at 1 -3.5
\circulararc  155 degrees from 1 -4 center at 1 -3.5
\arrow <1.5mm> [0.25,0.75] from 1.22 -3.04 to 1.17 -3.02
\circulararc -155 degrees from 3 -4 center at 3 -3.5
\circulararc  155 degrees from 3 -4 center at 3 -3.5
\arrow <1.5mm> [0.25,0.75] from 3.22 -3.04 to 3.17 -3.02
\circulararc -155 degrees from 5 -4 center at 5 -3.5
\circulararc  155 degrees from 5 -4 center at 5 -3.5
\arrow <1.5mm> [0.25,0.75] from 5.22 -3.04 to 5.17 -3.02
\circulararc -155 degrees from 7 -4 center at 7 -3.5
\circulararc  155 degrees from 7 -4 center at 7 -3.5
\arrow <1.5mm> [0.25,0.75] from 7.22 -3.04 to 7.17 -3.02
\circulararc -155 degrees from 9 -4 center at 9 -3.5
\circulararc  155 degrees from 9 -4 center at 9 -3.5
\arrow <1.5mm> [0.25,0.75] from 9.22 -3.04 to 9.17 -3.02
\circulararc -155 degrees from 11 -4 center at 11 -3.5
\circulararc  155 degrees from 11 -4 center at 11 -3.5
\arrow <1.5mm> [0.25,0.75] from 11.22 -3.04 to 11.17 -3.02
\setdots <.7mm>
\setquadratic
\plot .6 -3.4  1 -3.3 1.4 -3.4 /
\plot 2.6 -3.4  3 -3.3 3.4 -3.4 /
\plot 4.6 -3.4  5 -3.3 5.4 -3.4 /
\plot 6.6 -3.4  7 -3.3 7.4 -3.4 /
\plot 8.6 -3.4  9 -3.3 9.4 -3.4 /
\plot 10.6 -3.4  11 -3.3 11.4 -3.4 /

\setsolid
\arrow <1.5mm> [0.25,0.75] from 1.7 -1.3 to 1.3 -1.7
\arrow <1.5mm> [0.25,0.75] from 2.7 -1.7 to 2.3 -1.3
\arrow <1.5mm> [0.25,0.75] from 1.4 -2 to 2.6 -2 

\arrow <1.5mm> [0.25,0.75] from 5.7 -1.3 to 5.3 -1.7
\arrow <1.5mm> [0.25,0.75] from 6.7 -1.7 to 6.3 -1.3
\arrow <1.5mm> [0.25,0.75] from 5.4 -2 to 6.6 -2 

\arrow <1.5mm> [0.25,0.75] from 9.7 -1.3 to 9.3 -1.7
\arrow <1.5mm> [0.25,0.75] from 10.7 -1.7 to 10.3 -1.3
\arrow <1.5mm> [0.25,0.75] from 9.4 -2 to 10.6 -2 

\arrow <1.5mm> [0.25,0.75] from -.4 0 to -1 0
\arrow <1.5mm> [0.25,0.75] from 1.6 0 to .4 0
\arrow <1.5mm> [0.25,0.75] from 3.6 0 to 2.4 0
\arrow <1.5mm> [0.25,0.75] from 5.6 0 to 4.4 0
\arrow <1.5mm> [0.25,0.75] from 7.6 0 to 6.4 0
\arrow <1.5mm> [0.25,0.75] from 9.6 0 to 8.4 0
\arrow <1.5mm> [0.25,0.75] from 11 0 to 10.4 0

\setdots <.7mm>
\plot 2.45 -1.5  2 -1.4  1.55 -1.5 /
\plot 6.45 -1.5  6 -1.4  5.55 -1.5 /
\plot 10.45 -1.5  10 -1.4  9.55 -1.5 /

\plot 1.5 -1.6  1.4 -1.8  1.7 -1.9 /
\plot 2.5 -1.6  2.6 -1.8  2.3 -1.9 /

\plot 5.5 -1.6  5.4 -1.8  5.7 -1.9 /
\plot 6.5 -1.6  6.6 -1.8  6.3 -1.9 /

\plot 9.5 -1.6  9.4 -1.8  9.7 -1.9 /
\plot 10.5 -1.6  10.6 -1.8  10.3 -1.9 /

\plot -.4 0.2  0 0.3  .4 .2 /
\plot 1.6 0.2  2 0.3  2.4 .2 /
\plot 3.6 0.2  4 0.3  4.4 .2 /
\plot 5.6 0.2  6 0.3  6.4 .2 /
\plot 7.6 0.2  8 0.3  8.4 .2 /
\plot 9.6 0.2  10 0.3  10.4 .2 /

\endpicture}
$$

The corresponding primitive cyclic word is
$$ 
\hbox{\beginpicture
\setcoordinatesystem units <.35cm,.4cm>
\put{$'2$} at -8 0
\put{$'4$} at -7 1
\put{$'5$} at -6 2
\put{$'7$} at -5 1
\put{$'6$} at -4 0
\put{$'6$} at -3 1
\put{$'7$} at -2 2
\put{$5$} at -1 1
\put{$4$} at 0 0
\put{$0$} at 1 1
\put{$1$} at 2 2
\put{$1$} at 3 1
\put{$0$} at 4 0
\put{$2$} at 5 1
\put{$3$} at 6 2
\put{$3$} at 7 1
\put{$2$} at 8 0
\put{$4$} at 9 1
\put{$5$} at 10 2
\put{$7$} at 11 1
\put{$6$} at 12 0
\put{$6$} at 13 1
\put{$7$} at 14 2
\put{$5'$} at 15 1
\put{$4'$} at 16 0
\put{$0'$} at 17 1
\put{$1'$} at 18 2
\put{$1'$} at 19 1
\put{$0'$} at 20 0
\put{$2'$} at 21 1
\put{$3'$} at 22 2
\put{$3'$} at 23 1
\put{$2'$} at 24 0
\put{$4'$} at 25 1
\put{$5'$} at 26 2
\put{$7'$} at 27 1
\put{$6'$} at 28 0
\multiput{$\cdots$} at 29 1  -9 1 /
\endpicture}
$$

Let us show the leaves which contain
the modules $I(0), I(6), I(2)$, they are quite similar to those seen above
--- the only difference occurs on the right hand side of the upper leaf
and in the upper row of the middle leaf:
$$ 
\hbox{\beginpicture
\setcoordinatesystem units <.8cm,.55cm>
\put{\beginpicture
\put{$1$} at 0 0

\put{
 \beginpicture
 \setcoordinatesystem units <.15cm,.15cm>
 \put{$\ssize 1$} at 0 2
 \put{$\ssize 0$} at 1 1
 \put{$\ssize 4$} at 2 0
\endpicture} at  2 0

\put{
 \beginpicture
 \setcoordinatesystem units <.15cm,.15cm>
 \put{$\ssize 5$} at 0 2
 \put{$\ssize 7$} at 1 1
 \put{$\ssize 6$} at 2 0
\endpicture} at  4 0

\put{$6$} at 6 0 

\put{
 \beginpicture
 \setcoordinatesystem units <.15cm,.15cm>
 \put{$\ssize 6$} at 1 1
 \put{$\ssize 6$} at 2 0
\endpicture} at  7 -1

\put{
 \beginpicture
 \setcoordinatesystem units <.15cm,.15cm>
 \put{$\ssize 4'$} at -.4 0
 \put{$\ssize 5'$} at 1 1
 \put{$\ssize 7$} at 2 2
 \put{$\ssize 6$} at 3 1
 \put{$\ssize 6$} at 4 0
\endpicture} at  8 -2

\put{
 \beginpicture
 \setcoordinatesystem units <.15cm,.15cm>
 \put{$\ssize 4'$} at .6 1
 \put{$\ssize 5'$} at 2 2
\endpicture} at 7 -3

\put{
 \beginpicture
 \setcoordinatesystem units <.15cm,.15cm>
 \put{$\ssize 1$} at 1 1
 \put{$\ssize 1$} at 2 0
\endpicture} at  -1 -1

\put{
 \beginpicture
 \setcoordinatesystem units <.15cm,.15cm>
 \put{$\ssize 1$} at 0 2
 \put{$\ssize 1$} at 1 1
 \put{$\ssize 0$} at 2 0
 \put{$\ssize 2$} at 3 1
 \put{$\ssize 3$} at 4 2
\endpicture} at  -2 -2

\put{
 \beginpicture
 \setcoordinatesystem units <.15cm,.15cm>
 \put{$\ssize 2$} at 1 1
 \put{$\ssize 3$} at 2 2
\endpicture} at  -1 -3

\setdots<1mm>
\plot 0.4 0  1.6 0 /
\plot 2.4 0  3.6 0 /
\plot 4.4 0  5.6 0 /

\setsolid
\arrow <1.5mm> [0.25,0.75] from -.7 -0.7 to -.3 -.3
\arrow <1.5mm> [0.25,0.75] from -1.6 -1.6 to -1.3 -1.3
\arrow <1.5mm> [0.25,0.75] from -1.6 -2.4 to -1.3 -2.7

\arrow <1.5mm> [0.25,0.75] from 6.4 -0.4 to 6.7 -.7
\arrow <1.5mm> [0.25,0.75] from 7.4 -1.4 to 7.7 -1.7
\arrow <1.5mm> [0.25,0.75] from 7.4 -2.7 to 7.7 -2.4

\setshadegrid span <.7mm>
\vshade -3 -4 1  <z,z,,>  -2 -4 1 /
\vshade -2 -4 -2   <z,z,,>  -1 -4 -3 /
\vshade -2 -2 1  <z,z,,>   -0 0 1  <z,z,,>   6 0 1  <z,z,,>   7.99 -2 1  <z,z,,>   8  -4 1  <z,z,,>    9 -4 1 / 
\vshade 7 -4 -3   <z,z,,>   8 -4 -2 /

\multiput{} at 0 -4  0 1 /  
\endpicture} at 0 0
\put{\beginpicture
\put{$5$} at 0 0

\put{
 \beginpicture
 \setcoordinatesystem units <.2cm,.18cm>
 \put{$\ssize {}'7$} at 0 2
 \put{$\ssize {}'6$} at 1 1
 \put{$\ssize {}'6$} at 2.1 0
\endpicture} at  2 0

\put{
 \beginpicture
 \setcoordinatesystem units <.15cm,.15cm>
 \put{$\ssize {}'7$} at 0 2
 \put{$\ssize 5$} at 1 1
 \put{$\ssize 4$} at 2 0
\endpicture} at  4 0

\put{$0$} at 6 0 

\put{
 \beginpicture
 \setcoordinatesystem units <.15cm,.15cm>
 \put{$\ssize 2$} at 1 1
 \put{$\ssize 0$} at 2 0
\endpicture} at  7 -1

\put{
 \beginpicture
 \setcoordinatesystem units <.15cm,.15cm>
 \put{$\ssize 2$} at 0 0
 \put{$\ssize 3$} at 1 1
 \put{$\ssize 3$} at 2 2
 \put{$\ssize 2$} at 3 1
 \put{$\ssize 0$} at 4 0
\endpicture} at  8 -2

\put{
 \beginpicture
 \setcoordinatesystem units <.15cm,.15cm>
 \put{$\ssize 2$} at 1 1
 \put{$\ssize 3$} at 2 2
\endpicture} at 7 -3

\put{
 \beginpicture
 \setcoordinatesystem units <.15cm,.15cm>
 \put{$\ssize 5$} at 1 1
 \put{$\ssize 7$} at 2 0
\endpicture} at  -1 -1

\put{
 \beginpicture
 \setcoordinatesystem units <.15cm,.15cm>
 \put{$\ssize 5$} at 0 2
 \put{$\ssize 7$} at 1 1
 \put{$\ssize 6$} at 2 0
 \put{$\ssize 6$} at 3 1
 \put{$\ssize 7$} at 4 2
\endpicture} at  -2 -2

\put{
 \beginpicture
 \setcoordinatesystem units <.15cm,.15cm>
 \put{$\ssize 6$} at 1 1
 \put{$\ssize 7$} at 2 2
\endpicture} at  -1 -3

\setdots<1mm>
\plot 0.4 0  1.6 0 /
\plot 2.4 0  3.6 0 /
\plot 4.4 0  5.6 0 /

\setsolid
\arrow <1.5mm> [0.25,0.75] from -.7 -0.7 to -.3 -.3
\arrow <1.5mm> [0.25,0.75] from -1.6 -1.6 to -1.3 -1.3
\arrow <1.5mm> [0.25,0.75] from -1.6 -2.4 to -1.3 -2.7

\arrow <1.5mm> [0.25,0.75] from 6.4 -0.4 to 6.7 -.7
\arrow <1.5mm> [0.25,0.75] from 7.4 -1.4 to 7.7 -1.7
\arrow <1.5mm> [0.25,0.75] from 7.4 -2.7 to 7.7 -2.4

\setshadegrid span <.7mm>
\vshade -3 -4 1  <z,z,,>  -2 -4 1 /
\vshade -2 -4 -2   <z,z,,>  -1 -4 -3 /
\vshade -2 -2 1  <z,z,,>   -0 0 1  <z,z,,>   6 0 1  <z,z,,>   7.99 -2 1  <z,z,,>   8  -4 1  <z,z,,>    9 -4 1 / 
\vshade 7 -4 -3   <z,z,,>   8 -4 -2 /

\multiput{} at 0 -4  0 1 /  
\endpicture} at 0 -6
\put{\beginpicture
\put{$3$} at 0 0

\put{
 \beginpicture
 \setcoordinatesystem units <.15cm,.15cm>
 \put{$\ssize 3$} at 0 2
 \put{$\ssize 2$} at 1 1
 \put{$\ssize 0$} at 2 0
\endpicture} at  2 0

\put{
 \beginpicture
 \setcoordinatesystem units <.15cm,.15cm>
 \put{$\ssize 1$} at 0 2
 \put{$\ssize 1$} at 1 1
 \put{$\ssize 0$} at 2 0
\endpicture} at  4 0

\put{$2$} at 6 0

\put{
 \beginpicture
 \setcoordinatesystem units <.15cm,.15cm>
 \put{$\ssize 4$} at 1 1
 \put{$\ssize 2$} at 2 0
\endpicture} at  7 -1

\put{
 \beginpicture
 \setcoordinatesystem units <.15cm,.15cm>
 \put{$\ssize 6$} at 0 0
 \put{$\ssize 7$} at 1 1
 \put{$\ssize 5$} at 2 2
 \put{$\ssize 4$} at 3 1
 \put{$\ssize 2$} at 4 0
\endpicture} at  8 -2

\put{
 \beginpicture
 \setcoordinatesystem units <.15cm,.15cm>
 \put{$\ssize 6$} at 1 1
 \put{$\ssize 7$} at 2 2
\endpicture} at 7 -3

\put{
 \beginpicture
 \setcoordinatesystem units <.15cm,.15cm>
 \put{$\ssize 3$} at 1 1
 \put{$\ssize 3$} at 2 0
\endpicture} at  -1 -1

\put{
 \beginpicture
 \setcoordinatesystem units <.15cm,.15cm>
 \put{$\ssize 3$} at 0 2
 \put{$\ssize 3$} at 1 1
 \put{$\ssize 2$} at 2 0
 \put{$\ssize 4$} at 3 1
 \put{$\ssize 5$} at 4 2
\endpicture} at  -2 -2

\put{
 \beginpicture
 \setcoordinatesystem units <.15cm,.15cm>
 \put{$\ssize 4$} at 1 1
 \put{$\ssize 5$} at 2 2
\endpicture} at  -1 -3

\setdots<1mm>
\plot 0.4 0  1.6 0 /
\plot 2.4 0  3.6 0 /
\plot 4.4 0  5.6 0 /

\setsolid
\arrow <1.5mm> [0.25,0.75] from -.7 -0.7 to -.3 -.3
\arrow <1.5mm> [0.25,0.75] from -1.6 -1.6 to -1.3 -1.3
\arrow <1.5mm> [0.25,0.75] from -1.6 -2.4 to -1.3 -2.7

\arrow <1.5mm> [0.25,0.75] from 6.4 -0.4 to 6.7 -.7
\arrow <1.5mm> [0.25,0.75] from 7.4 -1.4 to 7.7 -1.7
\arrow <1.5mm> [0.25,0.75] from 7.4 -2.7 to 7.7 -2.4

\setshadegrid span <.7mm>
\vshade -3 -4 1  <z,z,,>  -2 -4 1 /
\vshade -2 -4 -2   <z,z,,>  -1 -4 -3 /
\vshade -2 -2 1  <z,z,,>   -0 0 1  <z,z,,>   6 0 1  <z,z,,>   7.99 -2 1  <z,z,,>   8  -4 1  <z,z,,>    9 -4 1 / 
\vshade 7 -4 -3   <z,z,,>   8 -4 -2 /

\multiput{} at 0 -4  0 1 /  
\endpicture} at 0 -12

\endpicture}
$$

As before, the upper leaf and the middle leaf are sewn together
(both contain the module $M(23)$), similarly, the middle leaf and the lower
leaf are sewn together (both contain the module $M(67)$).
But the change of the right hand side of the upper leaf is important,
since it means that the upper leave and the lower leave no longer
are sewn together (the lower leaf contains the module $M(45)$,
the upper one the shifted module $M(4'5')$). It follows that for the 
$s$-fold covering $3s$ leaves are sewn together and form a
$3s$-ramified component (this is the Auslander-Reiten component 
which contains the modules $I(0), I(6), I(4)$ and their shifts
under the Galois group). 
      \medskip

What happens with the remaining non-regular component (the $1$-ramified one)?
Thus, let us start to calculate the Auslander-Reiten component which contains the 
module $P(1)$. Here is the relevant part of the boundary:
$$ 
\hbox{\beginpicture
\setcoordinatesystem units <.8cm,.55cm>
\put{$7$} at 0 0

\put{
 \beginpicture
 \setcoordinatesystem units <.15cm,.15cm>
 \put{$\ssize 5$} at 0 2
 \put{$\ssize 4$} at 1 1
 \put{$\ssize 2$} at 2 0
\endpicture} at  2 0

\put{
 \beginpicture
 \setcoordinatesystem units <.15cm,.15cm>
 \put{$\ssize 3$} at 0 2
 \put{$\ssize 3$} at 1 1
 \put{$\ssize 2$} at 2 0
\endpicture} at  4 0

\put{$4$} at 6 0 

\put{
 \beginpicture
 \setcoordinatesystem units <.15cm,.15cm>
 \put{$\ssize 0$} at 1 1
 \put{$\ssize 4$} at 2 0
\endpicture} at  7 -1

\put{
 \beginpicture
 \setcoordinatesystem units <.15cm,.15cm>
 \put{$\ssize 0$} at 0 0
 \put{$\ssize 1$} at 1 1
 \put{$\ssize 1$} at 2 2
 \put{$\ssize 0$} at 3 1
 \put{$\ssize 4$} at 4 0
\endpicture} at  8 -2

\put{
 \beginpicture
 \setcoordinatesystem units <.15cm,.15cm>
 \put{$\ssize 7$} at 1 1
 \put{$\ssize 5'$} at 2 0
\endpicture} at  -1 -1

\put{
 \beginpicture
 \setcoordinatesystem units <.2cm,.18cm>
 \put{$\ssize 7$} at 0 2
 \put{$\ssize 5'$} at 1 1
 \put{$\ssize 4'$} at 2 0
 \put{$\ssize 0'$} at 3 1
 \put{$\ssize 1'$} at 4 2
\endpicture} at  -2 -2

\setdots<1mm>
\plot 0.4 0  1.6 0 /
\plot 2.4 0  3.6 0 /
\plot 4.4 0  5.6 0 /

\setsolid
\arrow <1.5mm> [0.25,0.75] from -.7 -0.7 to -.3 -.3
\arrow <1.5mm> [0.25,0.75] from -1.6 -1.6 to -1.3 -1.3

\arrow <1.5mm> [0.25,0.75] from 6.4 -0.4 to 6.7 -.7
\arrow <1.5mm> [0.25,0.75] from 7.4 -1.4 to 7.7 -1.7


\setshadegrid span <.7mm>
\vshade -3 -4 1  <z,z,,>  -2 -4 1 /
\vshade -2 -4 -2   <z,z,,>  -1 -4 -3 /
\vshade -2 -2 1  <z,z,,>   -0 0 1  <z,z,,>   6 0 1  <z,z,,>   7.99 -2 1  <z,z,,>   8  -4 1  <z,z,,>    9 -4 1 / 
\vshade 7 -4 -3   <z,z,,>   8 -4 -2 /

\put{
 \beginpicture
 \setcoordinatesystem units <.15cm,.15cm>
 \put{$\ssize 0$} at 1 1
 \put{$\ssize 1$} at 2 2
\endpicture} at 7 -3

\put{
 \beginpicture
 \setcoordinatesystem units <.2cm,.18cm>
 \put{$\ssize 0'$} at 1 1
 \put{$\ssize 1'$} at 2 2
\endpicture} at  -1 -3

\arrow <1.5mm> [0.25,0.75] from -1.6 -2.4 to -1.3 -2.7
\arrow <1.5mm> [0.25,0.75] from 7.4 -2.7 to 7.7 -2.4

\multiput{} at 0 -4  0 1 /  
\endpicture}
$$
It follows that the Galois shifts of the module $P(1)$ all lie
in one component, and this is a component of
type $\mathbb A_\infty^\infty$ which is  $s$-ramified.
This shows that any ramification does occur.


\subsection{The ramification sequence of a wind wheel} 
We have seen above, that the sewing of the leaves is accomplished via the
permutation $\pi =  [\lambda,\rho].$
	    \medskip

Thus, we see: {\it All the non-regular components are $1$-ramified if and only if the
permutations $\lambda$ and $\rho$ commute} (in particular, this
will be the case if these permutations coincide, as in example 12.5).
      \medskip

In general, we see that we do not get all the possible permutations for $\pi$.
The mathematics behind it, is as follows: In the symmetric group $\Sigma_t$, we fix one
$t$-cycle as $\rho$ and form for any $t$-cycle $\lambda$ the commutator 
$\pi= [\lambda,\rho]$: {\it these are the permutations which arise for the sewing procedure.}

\begin{prop} A partition of $t$ is the ramifcation sequence of a wind wheel if and only if
it is the cycle partition for the commutator of two $t$-cycles.
\end{prop}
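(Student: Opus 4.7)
The plan is to use the analysis already carried out in Section~12.4, where the non-regular Auslander-Reiten components of a wind wheel $W$ with $t$ bars are obtained by sewing together $t$ planar pieces of the form $(**)$, and the sewing is governed by the permutation $\pi = [\lambda,\rho]$ on the set $\mathcal B$ of direct bars. I will prove both implications from this description.

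For the ``only if'' direction, the number of non-regular components and their ramifications are encoded exactly by the orbits of $\pi$ on $\mathcal B$: an orbit of length $r$ produces an $r$-ramified component, and the orbit lengths sum to $t$. Since both $\lambda$ and $\rho$ are $t$-cycles by their definitions as preprojective and preinjective cyclic orderings of the bars in the defining word $w$, this directly exhibits the ramification sequence as the cycle partition of a commutator of two $t$-cycles in $\Sigma_t$.

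For the ``if'' direction, I would start with an arbitrary pair of $t$-cycles $\lambda,\rho \in \Sigma_t$. After relabeling the bars we may assume $\lambda = (b_1\,b_2\,\cdots\,b_t)$ is standard. I then construct a primitive cyclic word $w$ satisfying (WW1)--(WW3) whose preprojective bar-ordering is $\lambda$ and whose preinjective bar-ordering is $\rho$. Concretely, take $t$ modular ``gadgets'' in the spirit of the preceding examples --- each consisting of a single-arrow direct bar with two loops attached at its endpoints --- and glue them around a cycle, using the connecting subwords $u_i$ to implement the matching of direct and inverse bars dictated by $\rho$. The resulting cyclic word satisfies the wind-wheel axioms and yields the prescribed permutations $\lambda$ and $\rho$, so $\pi = [\lambda,\rho]$ is its sewing permutation and the ramification sequence realises the given cycle partition.

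The main obstacle will be the backward direction: one must verify that, for every pair $(\lambda,\rho)$ of $t$-cycles, the matching of direct-bar positions in the two cyclic readings of $w$ can be chosen to realise $\rho$ without creating forbidden subwords or violating the attracting/non-attracting conditions of (WW3). Since the bar gadgets are rigid (so the intra-bar combinatorics is automatic) and the connecting subwords $u_i$ can be chosen with sufficient flexibility, this reduces to checking that any cyclic pairing between the two $t$-element orderings of the bars can be realised by a concrete assignment of the $u_i$, which is a straightforward verification.
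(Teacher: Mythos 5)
Your ``only if'' direction is the paper's own argument and is fine: the sewing analysis of 12.4 identifies the ramification sequence with the cycle partition of $\pi=[\lambda,\rho]$, and $\lambda$, $\rho$ are $t$-cycles by their very definition. The gap is in your realization step. The gadget you propose --- a single-arrow bar $\beta\colon y\to x$ with a loop at each of $x$ and $y$ --- cannot be glued to anything. A bar endpoint of a wind wheel is a $3$-vertex (Theorems 1.1 and 1.2 exclude $4$-vertices, since those are nodes), and a loop at $x$ already accounts for two of the three arrow-ends there; so the only letters of $w$ incident to $x$ are $\beta$, $\beta^{-1}$ and one of $\alpha$, $\alpha^{-1}$ (by (WW2) the loop, lying in some $u_i$, occurs only once). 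Counting the two passages of $w$ through $x$ forces them to be consecutive, of the form $\beta^{-1}\alpha^{\varepsilon}\beta$; the same argument at $y$ closes the word up as $\beta^{-1}\alpha^{\varepsilon}\beta\gamma^{\varepsilon'}$, i.e.\ the one-bar wind wheel of Example 1, leaving no room for connecting subwords at all. Even if you weaken the gadget to one loop per bar (as in example 12.5), the direct and inverse occurrences of every bar remain cyclically adjacent, which forces $\lambda=\rho$, hence $\pi=\mathrm{id}$ and the ramification sequence $(1,\dots,1)$ only. So the construction you describe realizes precisely the trivial commutator and nothing else.

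What the ``if'' direction actually requires is the opposite design: bars neither of whose endpoints carries a loop (compare the bar $45$ in example 12.6, or the Galois coverings in 12.6), so that between the direct and the inverse passage through a given bar the word can travel off and visit other bars. The substantive combinatorial content is then that every cyclic interleaving of the $t$ direct and $t$ inverse bar occurrences --- equivalently every pair $(\lambda,\rho)$ of $t$-cycles up to simultaneous conjugation --- can be realized by $2t$ connecting words $u_i$ on fresh $2$-vertices with the orientations and relations dictated by (WW1)--(WW3). That verification is routine once the gadgets are chosen correctly, but with the gadget as you have specified it the step you call ``straightforward'' is in fact impossible, so the backward implication is not established.
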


(By definition, the cycle partition of a permutation has as parts the lengths of the cycles when
written as a product of disjoint cycles.)

For $t=2$, the group $\Sigma_t$ is commutative, thus we get as $\pi$ only the identity.
This means: For $t = 2$, we always get two non-regular components, both being $1$-ramified.

For $t=3$, the group  $\Sigma_t$ is no longer commutative, however the $3$-cycles
commute, thus again the only commutator $\pi = [\sigma,\rho]$ is the identity,
thus again we see that we only get $1$-ramified components.

The first case where one can obtain an $r$-ramified component with $r > 1$ is $t = 4;$
an explicit example has been discussed in 12.6.
     
For $t=4$ one checks easily that the possible ramification sequences are $(3,1)$ and
$(1,1,1,1).$. For $t=5$, they are $(5)$, $(3,1,1)$ and $(1,1,1,1,1)$  (for example,
the commutator of the permutations $(12345)$ and $(12354)$ has the cycle partition
$(3,1,1)$, that of $(12345)$ and $(12453)$ has the cycle partition $(5)$). In particular,
we see that  for $t\le 5$, there are no $2$-ramified components. For $t=6$, the
commutator of $(123456)$ and $(124653)$ has the cycle partition $(4,2)$.

It seems that a description of the structure of the commutators $[\lambda,\rho],$
where $\lambda$ and $\rho$ are $t$-cycles in $\Sigma_t$ is not known (but see the related
investigations \cite{Hu, Be}).


\section{The Auslander-Reiten quilt of a wind wheel}

Auslander-Reiten quilts 
have been considered until now only for suitable special biserial
algebras $\Lambda$. A general definition can be given in case $\Lambda$ is a 1-domestic
special biserial algebra, see \cite{Rinf}:
The vertices are (finite or infinite) words,
and there are arrows,  meshes, but also a convergence relation. The Auslander-Reiten quilt 
considers not only the indecomposable $\Lambda$-modules of finite length, but also related 
indecomposable $\Lambda$-modules of infinite length which are algebraically compact. 
The main objective is to sew together Auslander-Reiten components which contain string modules,
using $\mathbb N$-words and $\mathbb Z$-words, the $\mathbb Z$-words yield ''tiles''.

\subsection{Tiles and quarters} We recall from \cite{Rinf} some considerations
concerning the Auslander-Reiten quilt of a special biserial algebra. 

The poset $\Sigma $ is the ordered sum of $\mathbb N$ and $-\mathbb N$; inserting a limit
point $\omega$ in the middle, we obtain the completion $\overline \Sigma $
$$
\hbox{\beginpicture
\setcoordinatesystem units <0.7cm,0.5cm>
\put{\beginpicture
\setcoordinatesystem units <0.7cm,0.5cm>
\put{$\ssize\bullet$} at 0 0
\put{$\ssize\bullet$} at 2 0
\put{$\ssize\bullet$} at 3 0
\put{$\ssize\bullet$} at 3.5 0
\put{$\ssize\bullet$} at 4.5 0
\put{$\ssize\bullet$} at 5 0
\put{$\ssize\bullet$} at 6 0
\put{$\ssize\bullet$} at 8 0
\plot 0 0    3.5 0 /
\plot 4.5 0  8 0 /
\setdots<2pt>
\plot 3.65 0 3.9 0 /
\plot 4.15 0 4.4 0 /
\put{$\Sigma $} at -2 0
\endpicture} at 0 1.5
\put{\beginpicture
\setcoordinatesystem units <0.7cm,0.5cm>
\put{$\ssize\bullet$} at 0 0
\put{$\ssize\bullet$} at 2 0
\put{$\ssize\bullet$} at 3 0
\put{$\ssize\bullet$} at 3.5 0
\put{$\bullet$} at 4 -0.01
\put{$\omega$} at 4 -0.5
\put{$\ssize\bullet$} at 4.5 0
\put{$\ssize\bullet$} at 5 0
\put{$\ssize\bullet$} at 6 0
\put{$\ssize\bullet$} at 8 0
\plot 0 0    3.5 0 /
\plot 4.5 0  8 0 /
\setdots<2pt>
\plot 3.65 0 3.9 0 /
\plot 4.15 0 4.4 0 /
\put{$\overline\Sigma $} at -2 0
\endpicture} at 0 0
\endpicture}
$$ 

We may consider $\overline \Sigma $ as a topological space, namely
as a closed interval; correspondingly, we will consider 
$\mathcal T = \overline\Sigma 
\times \overline\Sigma $ as a square or better as
a lozenge. The center $\blacksquare$ is the vertex $(\omega,\omega)$, 
the vertices on the diagonals (some are marked by $\bullet$) are the
pairs $(x,\omega)$ and $(\omega, x)$ with $x\in \overline\Sigma $.
In a rather obvious way, we can consider $\mathcal T$ also
as a translation quiver and as in \cite{Rinf} we will call it a {\it tile},
see the following picture on the left:
$$ 
\hbox{\beginpicture
\setcoordinatesystem units <0.8cm,0.8cm>
\put{\beginpicture
\multiput{$\bullet$} at 1 1  3 3  1 3  3 1  1.4 1.4  1.6 1.6 
2.4 2.4  2.6 2.6  1.4 2.6  1.6 2.4
   2.4 1.6  2.6 1.4 /
\plot 0 2  2 0  4 2  2 4  0 2 /
\plot 1 1  3 3 /
\plot 3 1  1 3 /
\put{} at 0 0
\put{} at 0 4
\put{} at 4 0
\plot 0.4 1.6    1.2 2.4 /
\plot 0.6 1.4    1.4 2.2 /
\plot 0.7 1.3    1.5 2.1 /
\plot 0.75 1.25  1.55 2.05 /
\plot 1.6  0.4   2.4  1.2  /
\plot 1.4  0.6   2.2  1.4  /
\plot 1.3  0.7   2.1  1.5  /
\plot 1.25 0.75  2.05 1.55 /
\plot 2.4  0.4   1.6  1.2  /
\plot 2.6  0.6   1.8  1.4  /
\plot 2.7  0.7   1.9  1.5  /
\plot 2.75 0.75  1.95 1.55 /
\plot 0.4  2.4   1.2  1.6  /
\plot 0.6  2.6   1.4  1.8  /
\plot 0.7  2.7   1.5  1.9  /
\plot 0.75 2.75  1.55 1.95 /
\plot 1.6  3.6   2.4  2.8  /
\plot 1.4  3.4   2.2  2.6  /
\plot 1.3  3.3   2.1  2.5  /
\plot 1.25 3.25  2.05 2.45 /
\plot 2.4  3.6   1.6  2.8  /
\plot 2.6  3.4   1.8  2.6  /
\plot 2.7  3.3   1.9  2.5  /
\plot 2.75 3.25  1.95 2.45 /
\plot 3.6  2.4   2.8  1.6  /
\plot 3.4  2.6   2.6  1.8  /
\plot 3.3  2.7   2.5  1.9  /
\plot 3.25 2.75  2.45 1.95 /
\plot 3.6  1.6   2.8  2.4  /
\plot 3.4  1.4   2.6  2.2  /
\plot 3.3  1.3   2.5  2.1  /
\plot 3.25 1.25  2.45 2.05 /
\setdots<2pt>

\plot 1.2  2.4  1.6 2.8 /
\plot 1.4  2.2  1.8 2.6 /
\plot 1.5  2.1  1.9 2.5 /
\plot 1.55 2.05 1.95 2.45 /
\plot 1.6  1.2  1.2  1.6 /
\plot 1.8  1.4  1.4  1.8 /
\plot 1.9  1.5  1.5  1.9 /
\plot 1.95 1.55 1.55 1.95 /
\plot 2.8  1.6  2.4  1.2 /
\plot 2.6  1.8  2.2  1.4 /
\plot 2.5  1.9  2.1  1.5 /
\plot 2.45 1.95 2.05 1.55 /
\plot 2.8  2.4  2.4  2.8 /
\plot 2.6  2.2  2.2  2.6 /
\plot 2.5  2.1  2.1  2.5  /
\plot 2.45 2.05 2.05 2.45 /
\put{$\blacksquare$} at 2 2
\endpicture} at 0 0

\put{\beginpicture
\setcoordinatesystem units <.8cm,.8cm>
\plot 0.9 1.1  0 2  0.9 2.9 /
\plot 1.1 3.1  2 4  2.9 3.1 /
\plot 3.1 2.9  4 2  3.1 1.1 /
\plot 2.9 0.9  2 0  1.1 0.9 /
\setdashes <1mm>
\plot 0.95 1.05  1.9 2  0.95 2.95 /
\plot 1.05 0.95  2 1.9  2.95 0.95 /
\plot 1.05 3.05  2 2.1  2.95 3.05 /
\plot 3.05 2.95  2.1 2  3.05 1.05 /
\setsolid
\put{} at 0 0
\put{} at 0 4
\put{} at 4 0
\plot 0.4 1.6    1.2 2.4 /
\plot 0.6 1.4    1.4 2.2 /
\plot 0.7 1.3    1.5 2.1 /
\plot 0.75 1.25  1.55 2.05 /
\plot 1.6  0.4   2.4  1.2  /
\plot 1.4  0.6   2.2  1.4  /
\plot 1.3  0.7   2.1  1.5  /
\plot 1.25 0.75  2.05 1.55 /
\plot 2.4  0.4   1.6  1.2  /
\plot 2.6  0.6   1.8  1.4  /
\plot 2.7  0.7   1.9  1.5  /
\plot 2.75 0.75  1.95 1.55 /
\plot 0.4  2.4   1.2  1.6  /
\plot 0.6  2.6   1.4  1.8  /
\plot 0.7  2.7   1.5  1.9  /
\plot 0.75 2.75  1.55 1.95 /
\plot 1.6  3.6   2.4  2.8  /
\plot 1.4  3.4   2.2  2.6  /
\plot 1.3  3.3   2.1  2.5  /
\plot 1.25 3.25  2.05 2.45 /
\plot 2.4  3.6   1.6  2.8  /
\plot 2.6  3.4   1.8  2.6  /
\plot 2.7  3.3   1.9  2.5  /
\plot 2.75 3.25  1.95 2.45 /
\plot 3.6  2.4   2.8  1.6  /
\plot 3.4  2.6   2.6  1.8  /
\plot 3.3  2.7   2.5  1.9  /
\plot 3.25 2.75  2.45 1.95 /
\plot 3.6  1.6   2.8  2.4  /
\plot 3.4  1.4   2.6  2.2  /
\plot 3.3  1.3   2.5  2.1  /
\plot 3.25 1.25  2.45 2.05 /
\setdots<2pt>
\plot 1.2  2.4  1.35 2.55 /
\plot           1.45 2.65 1.6 2.8 /
\plot 1.4  2.2  1.55 2.35 /
\plot           1.65 2.45  1.8 2.6 /
\plot 1.5  2.1  1.65 2.25 /
\plot           1.75 2.35  1.9 2.5 /
\plot 1.55 2.05 1.7  2.2 /
\plot           1.8  2.3   1.95 2.45 /
\plot 1.6  1.2  1.45 1.35 /
\plot           1.35 1.45  1.2  1.6 /
\plot 1.8  1.4  1.65 1.55 /
\plot           1.55 1.65  1.4  1.8 /
\plot 1.9  1.5  1.75 1.65 /
\plot           1.65 1.75  1.5  1.9 /
\plot 1.95 1.55 1.8  1.7 /
\plot           1.7  1.8   1.55 1.95 /
\plot 2.8  1.6  2.65 1.45 /
\plot           2.55 1.35  2.4  1.2 /
\plot 2.6  1.8  2.45 1.65 /
\plot           2.35 1.55  2.2  1.4 /
\plot 2.5  1.9  2.35 1.75 /
\plot           2.25 1.65  2.1  1.5 /
\plot 2.45 1.95 2.3  1.8 /
\plot           2.2  1.7   2.05 1.55 /
\plot 2.8  2.4  2.65 2.55 /
\plot           2.55 2.65  2.4  2.8 /
\plot 2.6  2.2  2.45 2.35 /
\plot           2.35 2.45  2.2  2.6 /
\plot 2.5  2.1  2.35 2.25 /
\plot           2.25 2.35  2.1  2.5  /
\plot 2.45 2.05 2.3  2.2 /
\plot           2.2  2.3   2.05 2.45 /
\endpicture}  at 5 0
\put{\beginpicture
\setcoordinatesystem units <0.8cm,0.8cm>
\plot 0.9 1.1  0 2  0.9 2.9 /
\plot 1.1 3.1  2 4  2.9 3.1 /
\plot 3.1 2.9  4 2  3.1 1.1 /
\plot 2.9 0.9  2 0  1.1 0.9 /
\put{\bf I} at 1 2
\put{\bf II} at 2 1
\put{\bf III} at 3 2
\put{\bf IV} at 2 3

\multiput{$\bigcirc$} at 2 0  0 2  2 4  4 2 /
\setdashes <1mm>
\plot 0.95 1.05  1.9 2  0.95 2.95 /
\plot 1.05 0.95  2 1.9  2.95 0.95 /
\plot 1.05 3.05  2 2.1  2.95 3.05 /
\plot 3.05 2.95  2.1 2  3.05 1.05 /
\put{} at 0 0 
\put{} at 4 4 
\endpicture} at 10 0 

\endpicture}
$$
The middle picture presents the translation subquiver $\Sigma \times\Sigma $
obtained from the left picture by deleting the diagonals through the center.
It may be considered as the disjoint union of four parts, the {\it quarters} 
{\bf I, II, III, IV,}
see the right picture. Here, we also have marked  the four {\it corners} using circles. 

Given any bar $b$, we obtain such a tile by considering all the (finite or infinite)
words containing the completed bar $\overline b$, thus by considering all the
(finite or infinite) subwords of $r(b)$. The word $r(b)$ itself will be the center
of the tile, the infinite words form the diagonals through the center. If we look
only at the finite words, we obtain in this way the four quarters. 

As we have seen in \cite{Rinf}, tiles can occur as hammocks (the bridges in \cite{Rinf}
yield such a hammock). Here we encounter a different situation where tiles
appear. The tiles
which occur when dealing with minimal representation-finite special biserial algebras
have the special property that all the irreducible maps in the quarter {\bf I} are
monomorphisms, whereas those in the quarter {\bf III} are epimorphisms. 

\subsection{Return to the example 12.1} There is one bar $b$, thus one tile $\mathcal T(b)$.
The finite-dimensional modules in the tile are those which do not lie in the image of the
restriction functor $\eta:\mo H \to \mo W,$ thus the string modules $M(v)$ where $v$
is a finite word which contains the completed bar $\overline b$. 

Any tile yields four quarters, and we have seen above that the 
corresponfing corner modules are related to the following Auslander-Reiten
sequence of $W/I$, where $I$ is the annihilator of $M(b)$ (in case $M(b)$ 
is of length $2$ we deal with a triangle, otherwise with a square):
$$
\hbox{\beginpicture
\setcoordinatesystem units <1.2cm,1.2cm>
\multiput{$\bullet$} at 0 0  1 1  2 0  2 2  3 1  3 3  4 0  4 2  5 1  6 0 /
\plot 0 0  3 3  6 0 /
\plot 1 1  2 0  4 2 /
\plot 2 2  4 0  5 1 /
\setshadegrid span <0.7mm>
\hshade 1 3 3 <,,z,> 2 2 4 <,,,z> 3 3 3 /
\plot 0.3 -0.3  2.3 1.7 /
\plot -.3 0.3  1.7 2.3 /
\circulararc 180 degrees from 2.3 1.7 center at 2 2 
\circulararc 180 degrees from -.3 0.3 center at 0 0 

\plot 3.7 1.7  5.7  -.3 /
\plot 4.3 2.3  6.3 .3 /
\circulararc 180 degrees from 4.3 2.3 center at 4 2 
\circulararc 180 degrees from 5.7 -.3 center at 6 0 

\circulararc 360 degrees from 3.3 3.3 center at 3 3 

\plot 3.3 1.3  4.9 -.3  1.1 -.3  2.7 1.3 /
\circulararc  90 degrees from 3.3 1.3 center at 3 1
 
\put{$M(b)$} at 3 3.7
\put{$\rad M(b)$} [r] at 1.5 2.2
\put{$M(b)/\soc$} [l] at 4.5 2.2
\put{$\rad M(b)/\soc$} at 3 .6
\put{} at 0 -.3
\endpicture}
$$
We are going to outline in which 
way this square or triangle is enlarged in $\mo W$. It is the 
tile $\mathcal T$ which may be considered as being inserted into 
this square --- alternatively, we may say that we add a border to the tile:
$$
\hbox{\beginpicture
\setcoordinatesystem units <1cm,1cm>
\put{\beginpicture
\multiput{$\bullet$} at -.4 2  4.4 2  2 4.4  2 -.4 /
\plot .7 3.1  -.4 2  .7 0.9 /
\plot 3.3 3.1  4.4 2  3.3 0.9 /
\plot .9 0.7  2 -.4  3.1 .7 /
\plot .9 3.3  2 4.4  3.1 3.3 /
\put{$\rad M(b)$} [r] at -.6 2
\put{$M(b)/\soc$} [l] at 4.6 2
\put{$M(b)$} at 2 4.7
\put{$\rad M(b)/\soc$}  at 2 -.7

\plot 1.8 -.2  2 0  2.2 -.2 /
\plot 1.8 4.2  2 4  2.2 4.2 /
\plot -.2 1.8  0 2 -.2 2.2 /
\plot 4.2 2.2  4 2 4.2 1.8 /

\plot 0.9 1.1  0 2  0.9 2.9 /
\plot 1.1 3.1  2 4  2.9 3.1 /
\plot 3.1 2.9  4 2  3.1 1.1 /
\plot 2.9 0.9  2 0  1.1 0.9 /
\put{\bf I} at 1 2
\put{\bf II} at 2 1
\put{\bf III} at 3 2
\put{\bf IV} at 2 3

\multiput{$\bigcirc$} at 2 0  0 2  2 4  4 2 /
\multiput{$\bullet$} at 1 1 1 3 3 1 3 3 0.8 0.8 0.8 3.2 3.2 0.8 3.2 3.2 /
\multiput{$\blacksquare$} at 2 2 /
\plot 0.8 0.8  3.2 3.2 /
\plot 0.8 3.2  3.2 0.8 /

\put{} at 0 0 
\put{} at 4 4 
\endpicture} at 0 0

\put{\beginpicture
\plot  -.4 2  .7 0.9 /
\plot .9 0.7  1.8 -.2   /
\plot 2.2 -.2 3.1 0.7 /
\plot 3.3 0.9 4.2 1.8 /

\plot 1.8 -.2  2 0  2.2 -.2 /

\plot 0.9 1.1  0 2  /
\plot   4 2  3.1 1.1 /
\plot 2.9 0.9  2 0  1.1 0.9 /
\put{\bf I} at 1 2
\put{\bf II} at 2 1
\put{\bf III} at 3 2

\put{$\bigcirc$} at   2 0 

\multiput{$\bullet$} at 1 1  3 1  0.8 0.8  3.2 0.8  /
\multiput{$\blacksquare$} at 2 2 /
\plot 0.8 0.8  2 2 /
\plot 2 2  3.2 0.8 /

\setdots <.7mm>
\plot 1.8 -.2  2.2 -.2 / 
\put{} at 0 0 
\put{} at 4 4 
\put{$N_0$} [r] at 1.7 -.4
\put{$N_\infty$} [l] at 2.3 -.4
\multiput{$\ssize\bullet$} at 1.8 -.2  2.2 -.2 /  
\endpicture} at 7 -.25
\endpicture}
$$
As we have noted above, dealing with the corner of the quarter {\bf II}
we have to distinguish whether $\rad M(b)/\soc$ is non-zero (as in the left picture)
or zero (the right picture shows the changes); in the zero case 
we have marked the modules $N_0$ and $N_\infty$, where $N_0$ is the boundary module
in $\mathcal R_0$ which has the same 
socle as $M(b),$ and $N_\infty$ is the boundary
module in $\mathcal R_\infty$ which has the same top as $M(b)$.
\vfill\eject

\subsection{One tile in detail}
We have  exhibited in 12.5  a wind wheel  with  5 bars, thus  there are 5 tiles.
Let us present at least one of the tiles, say $\mathcal T(45)$  in more detail:
$$
\hbox{\beginpicture
\setcoordinatesystem units <.8cm,.8cm>
\put{} at 0 -8.5
\put{\beginpicture
\multiput{} at 0 2  2 -2 /
\put{
 \beginpicture
 \setcoordinatesystem units <.15cm,.15cm>
 \put{$\ssize 2$} at 0 2
 \put{$\ssize 4$} at 1 1
 \put{$\ssize 5$} at 2 2
 \put{$\ssize 5$} at 3 1
 \put{$\ssize 4$} at 4 0
 \plot 0 1  1 0  3 2  2 3  0 1 /
\endpicture} at  0 0

\put{
 \beginpicture
 \setcoordinatesystem units <.15cm,.15cm>
 \put{$\ssize 2$} at 0 2
 \put{$\ssize 4$} at 1 1
 \put{$\ssize 5$} at 2 2
 \put{$\ssize 5$} at 3 1
 \put{$\ssize 4$} at 4 0
 \put{$\ssize 2$} at 5 1
 \plot 0 1  1 0  3 2  2 3  0 1 /
\endpicture} at  1 1

\put{
 \beginpicture
 \setcoordinatesystem units <.15cm,.15cm>
 \put{$\ssize 2$} at 0 2
 \put{$\ssize 4$} at 1 1
 \put{$\ssize 5$} at 2 2
 \put{$\ssize 5$} at 3 1
 \put{$\ssize 4$} at 4 0
 \put{$\ssize 2$} at 5 1
 \put{$\ssize 3$} at 6 2
 \put{$\ssize 3$} at 7 1
 \put{$\ssize 2$} at 8 0
 \plot 0 1  1 0  3 2  2 3  0 1 /
\endpicture} at  2 2

\put{
 \beginpicture
 \setcoordinatesystem units <.15cm,.15cm>
 \put{$\ssize 2$} at -3 1
 \put{$\ssize 3$} at -2 2
 \put{$\ssize 3$} at -1 3
 \put{$\ssize 2$} at 0 2
 \put{$\ssize 4$} at 1 1
 \put{$\ssize 5$} at 2 2
 \put{$\ssize 5$} at 3 1
 \put{$\ssize 4$} at 4 0
 \put{$\ssize 2$} at 5 1
 \plot 0 1  1 0  3 2  2 3  0 1 /
\endpicture} at  2 0

\put{
 \beginpicture
 \setcoordinatesystem units <.15cm,.15cm>
 \put{$\ssize 2$} at -3 1
 \put{$\ssize 3$} at -2 2
 \put{$\ssize 3$} at -1 3
 \put{$\ssize 2$} at 0 2
 \put{$\ssize 4$} at 1 1
 \put{$\ssize 5$} at 2 2
 \put{$\ssize 5$} at 3 1
 \put{$\ssize 4$} at 4 0
 \plot 0 1  1 0  3 2  2 3  0 1 /
\endpicture} at  1 -1

\put{
 \beginpicture
 \setcoordinatesystem units <.15cm,.15cm>
 \put{$\ssize 0$} at -4 2
 \put{$\ssize 2$} at -3 1
 \put{$\ssize 3$} at -2 2
 \put{$\ssize 3$} at -1 3
 \put{$\ssize 2$} at 0 2
 \put{$\ssize 4$} at 1 1
 \put{$\ssize 5$} at 2 2
 \put{$\ssize 5$} at 3 1
 \put{$\ssize 4$} at 4 0
 \plot 0 1  1 0  3 2  2 3  0 1 /
\endpicture} at  2 -2

\arrow <1.5mm> [0.25,0.75] from 0.4 0.4  to 0.6 0.6 
\arrow <1.5mm> [0.25,0.75] from 1.4 1.4  to 1.6 1.6 
\arrow <1.5mm> [0.25,0.75] from 1.4 0.6  to 1.6 0.4 
\arrow <1.5mm> [0.25,0.75] from 0.4 -.4  to 0.6 -.6 
\arrow <1.5mm> [0.25,0.75] from 1.4 -1.4  to 1.6 -1.6 
\arrow <1.5mm> [0.25,0.75] from 1.4 -.6  to 1.6 -.4 
\plot 2.3 2.3  2.5 2.5 /
\plot 2.3 -2.3  2.5 -2.5 /
\plot 2.3 1.7  2.5 1.5 /
\plot 2.3 -1.7  2.5 -1.5 /

\plot 2.3 0.3  2.5 0.5 /
\plot 2.3 -.3  2.5 -.5 /

\setdots <1mm>

\plot 2.7 2.7  3 3 /
\plot 2.7 -2.7  3 -3 /
\endpicture} at -6 0
\put{\beginpicture
\multiput{} at 0 2  -2 -2 /
\put{
 \beginpicture
 \setcoordinatesystem units <.15cm,.15cm>
 \put{$\ssize 3$} at -1 3
 \put{$\ssize 2$} at 0 2
 \put{$\ssize 4$} at 1 1
 \put{$\ssize 5$} at 2 2
 \put{$\ssize 5$} at 3 1
 \plot 0 1  1 0  3 2  2 3  0 1 /
\endpicture} at  0 0

\put{
 \beginpicture
 \setcoordinatesystem units <.15cm,.15cm>
 \put{$\ssize 3$} at -2 2
 \put{$\ssize 3$} at -1 3
 \put{$\ssize 2$} at 0 2
 \put{$\ssize 4$} at 1 1
 \put{$\ssize 5$} at 2 2
 \put{$\ssize 5$} at 3 1
 \plot 0 1  1 0  3 2  2 3  0 1 /
\endpicture} at  -1 1

\put{
 \beginpicture
 \setcoordinatesystem units <.15cm,.15cm>
 \put{$\ssize 1$} at -5 3
 \put{$\ssize 0$} at -4 2
 \put{$\ssize 2$} at -3 1
 \put{$\ssize 3$} at -2 2
 \put{$\ssize 3$} at -1 3
 \put{$\ssize 2$} at 0 2
 \put{$\ssize 4$} at 1 1
 \put{$\ssize 5$} at 2 2
 \put{$\ssize 5$} at 3 1
 \plot 0 1  1 0  3 2  2 3  0 1 /
\endpicture} at  -2 2

\put{
 \beginpicture
 \setcoordinatesystem units <.15cm,.15cm>
 \put{$\ssize 3$} at -2 2
 \put{$\ssize 3$} at -1 3
 \put{$\ssize 2$} at 0 2
 \put{$\ssize 4$} at 1 1
 \put{$\ssize 5$} at 2 2
 \put{$\ssize 5$} at 3 1
 \put{$\ssize 4$} at 4 0
 \put{$\ssize 2$} at 5 1
 \put{$\ssize 3$} at 6 2
 \plot 0 1  1 0  3 2  2 3  0 1 /
\endpicture} at  -2 0

\put{
 \beginpicture
 \setcoordinatesystem units <.15cm,.15cm>
 \put{$\ssize 3$} at -1 3
 \put{$\ssize 2$} at 0 2
 \put{$\ssize 4$} at 1 1
 \put{$\ssize 5$} at 2 2
 \put{$\ssize 5$} at 3 1
 \put{$\ssize 4$} at 4 0
 \put{$\ssize 2$} at 5 1
 \put{$\ssize 3$} at 6 2
 \plot 0 1  1 0  3 2  2 3  0 1 /
\endpicture} at  -1 -1

\put{
 \beginpicture
 \setcoordinatesystem units <.15cm,.15cm>
 \put{$\ssize 3$} at -1 3
 \put{$\ssize 2$} at 0 2
 \put{$\ssize 4$} at 1 1
 \put{$\ssize 5$} at 2 2
 \put{$\ssize 5$} at 3 1
 \put{$\ssize 4$} at 4 0
 \put{$\ssize 2$} at 5 1
 \put{$\ssize 3$} at 6 2
 \put{$\ssize 3$} at 7 1
 \plot 0 1  1 0  3 2  2 3  0 1 /
\endpicture} at  -2 -2

\arrow <1.5mm> [0.25,0.75] from  -0.6 0.6 to -0.4 0.4
\arrow <1.5mm> [0.25,0.75] from -1.6 1.6 to -1.4 1.4  
\arrow <1.5mm> [0.25,0.75] from -1.6 0.4 to -1.4 0.6  
\arrow <1.5mm> [0.25,0.75] from -0.6 -.6 to -0.4 -.4  
\arrow <1.5mm> [0.25,0.75] from  -1.6 -1.6 to -1.4 -1.4
\arrow <1.5mm> [0.25,0.75] from  -1.6 -.4 to -1.4 -.6  
\plot -2.3 2.3  -2.5 2.5 /
\plot -2.35 -2.35  -2.5 -2.5 /
\plot -2.3 1.7  -2.5 1.5 /
\plot -2.3 -1.7  -2.5 -1.5 /

\plot -2.3 0.3  -2.5 0.5 /
\plot -2.3 -.3  -2.5 -.5 /

\setdots <1mm>

\plot -2.7 2.7  -3 3 /
\plot -2.7 -2.7  -3 -3 /
\endpicture} at 6 0

\put{\beginpicture

\put{
 \beginpicture
 \setcoordinatesystem units <.15cm,.15cm>
 \put{$\ssize 2$} at 0 2
 \put{$\ssize 4$} at 1 1
 \put{$\ssize 5$} at 2 2
 \put{$\ssize 5$} at 3 1
 \plot 0 1  1 0  3 2  2 3  0 1 /
\endpicture} at  0 2

\put{
 \beginpicture
 \setcoordinatesystem units <.15cm,.15cm>
 \put{$\ssize 2$} at 0 2
 \put{$\ssize 4$} at 1 1
 \put{$\ssize 5$} at 2 2
 \put{$\ssize 5$} at 3 1
 \put{$\ssize 4$} at 4 0
 \put{$\ssize 2$} at 5 1
 \put{$\ssize 3$} at 6 2
 \plot 0 1  1 0  3 2  2 3  0 1 /
\endpicture} at  -1 1

\put{
 \beginpicture
 \setcoordinatesystem units <.15cm,.15cm>
 \put{$\ssize 2$} at 0 2
 \put{$\ssize 4$} at 1 1
 \put{$\ssize 5$} at 2 2
 \put{$\ssize 5$} at 3 1
 \put{$\ssize 4$} at 4 0
 \put{$\ssize 2$} at 5 1
 \put{$\ssize 3$} at 6 2
 \put{$\ssize 3$} at 7 1
 \plot 0 1  1 0  3 2  2 3  0 1 /
\endpicture} at  -2 0

\put{
 \beginpicture
 \setcoordinatesystem units <.15cm,.15cm>
 \put{$\ssize 2$} at -3 1
 \put{$\ssize 3$} at -2 2
 \put{$\ssize 3$} at -1 3
 \put{$\ssize 2$} at 0 2
 \put{$\ssize 4$} at 1 1
 \put{$\ssize 5$} at 2 2
 \put{$\ssize 5$} at 3 1
 \plot 0 1  1 0  3 2  2 3  0 1 /
\endpicture} at  1 1

\put{
 \beginpicture
 \setcoordinatesystem units <.15cm,.15cm>
 \put{$\ssize 2$} at -3 1
 \put{$\ssize 3$} at -2 2
 \put{$\ssize 3$} at -1 3
 \put{$\ssize 2$} at 0 2
 \put{$\ssize 4$} at 1 1
 \put{$\ssize 5$} at 2 2
 \put{$\ssize 5$} at 3 1
 \put{$\ssize 4$} at 4 0
 \put{$\ssize 2$} at 5 1
 \put{$\ssize 3$} at 6 2
 \plot 0 1  1 0  3 2  2 3  0 1 /
\endpicture} at  0 0

\put{
 \beginpicture
 \setcoordinatesystem units <.15cm,.15cm>
 \put{$\ssize 0$} at -4 2
 \put{$\ssize 2$} at -3 1
 \put{$\ssize 3$} at -2 2
 \put{$\ssize 3$} at -1 3
 \put{$\ssize 2$} at 0 2
 \put{$\ssize 4$} at 1 1
 \put{$\ssize 5$} at 2 2
 \put{$\ssize 5$} at 3 1
 \plot 0 1  1 0  3 2  2 3  0 1 /
\endpicture} at  2 0

\arrow <1.5mm> [0.25,0.75] from  -1.6 0.4 to  -1.4 0.6
\arrow <1.5mm> [0.25,0.75] from  -.6 1.4 to    -.4 1.6
\arrow <1.5mm> [0.25,0.75] from  -.6 0.6 to   -.4 .4
\arrow <1.5mm> [0.25,0.75] from  0.4 1.6 to  0.6 1.4
\arrow <1.5mm> [0.25,0.75] from  1.4 0.6 to  1.6 0.4
\arrow <1.5mm> [0.25,0.75] from  0.4 0.4 to  0.6 0.6
\plot -2.5 -0.5  -2.3 -0.3 /
\plot -1.7 -0.3  -1.5 -.5 /
\plot -0.5 -0.5  -.3 -.3  /
\plot 2.5 -0.5  2.35 -0.35 /
\plot 1.7 -0.3  1.5 -.5 /
\plot 0.5 -0.5  .3 -.3  /

\setdots <1mm>

\plot  -3 -1  -2.7 -.7 /
\plot   3 -1  2.7 -.7 /

\endpicture} at  0 6

\put{\beginpicture
\multiput{} at -2 0  2 -2 /

\put{
 \beginpicture
 \setcoordinatesystem units <.15cm,.15cm>
 \put{$\ssize 3$} at -1 3
 \put{$\ssize 2$} at 0 2
 \put{$\ssize 4$} at 1 1
 \put{$\ssize 5$} at 2 2
 \put{$\ssize 5$} at 3 1
 \put{$\ssize 4$} at 4 0
 \plot 0 1  1 0  3 2  2 3  0 1 /
\endpicture} at  0 -2

\put{
 \beginpicture
 \setcoordinatesystem units <.15cm,.15cm>
 \put{$\ssize 3$} at -2 2
 \put{$\ssize 3$} at -1 3
 \put{$\ssize 2$} at 0 2
 \put{$\ssize 4$} at 1 1
 \put{$\ssize 5$} at 2 2
 \put{$\ssize 5$} at 3 1
 \put{$\ssize 4$} at 4 0
 \plot 0 1  1 0  3 2  2 3  0 1 /
\endpicture} at  -1 -1

\put{
 \beginpicture
 \setcoordinatesystem units <.15cm,.15cm>
 \put{$\ssize 1$} at -5 3
 \put{$\ssize 0$} at -4 2
 \put{$\ssize 2$} at -3 1
 \put{$\ssize 3$} at -2 2
 \put{$\ssize 3$} at -1 3
 \put{$\ssize 2$} at 0 2
 \put{$\ssize 4$} at 1 1
 \put{$\ssize 5$} at 2 2
 \put{$\ssize 5$} at 3 1
 \put{$\ssize 4$} at 4 0
 \plot 0 1  1 0  3 2  2 3  0 1 /
\endpicture} at  -2 0

\put{
 \beginpicture
 \setcoordinatesystem units <.15cm,.15cm>
 \put{$\ssize 3$} at -2 2
 \put{$\ssize 3$} at -1 3
 \put{$\ssize 2$} at 0 2
 \put{$\ssize 4$} at 1 1
 \put{$\ssize 5$} at 2 2
 \put{$\ssize 5$} at 3 1
 \put{$\ssize 4$} at 4 0
 \put{$\ssize 2$} at 5 1
 \plot 0 1  1 0  3 2  2 3  0 1 /
\endpicture} at  0 0 

\put{
 \beginpicture
 \setcoordinatesystem units <.15cm,.15cm>
 \put{$\ssize 3$} at -1 3
 \put{$\ssize 2$} at 0 2
 \put{$\ssize 4$} at 1 1
 \put{$\ssize 5$} at 2 2
 \put{$\ssize 5$} at 3 1
 \put{$\ssize 4$} at 4 0
 \put{$\ssize 2$} at 5 1
 \plot 0 1  1 0  3 2  2 3  0 1 /
\endpicture} at   1 -1 

\put{
 \beginpicture
 \setcoordinatesystem units <.15cm,.15cm>
 \put{} at -3 3
 \put{$\ssize 3$} at -1 3
 \put{$\ssize 2$} at 0 2
 \put{$\ssize 4$} at 1 1
 \put{$\ssize 5$} at 2 2
 \put{$\ssize 5$} at 3 1
 \put{$\ssize 4$} at 4 0
 \put{$\ssize 2$} at 5 1
 \put{$\ssize 3$} at 6 2
 \put{$\ssize 3$} at 7 1
 \put{$\ssize 2$} at 8 0
 \plot 0 1  1 0  3 2  2 3  0 1 /
\endpicture} at   2 0 
\arrow <1.5mm> [0.25,0.75] from  -1.6 -0.4 to  -1.4 -0.6
\arrow <1.5mm> [0.25,0.75] from  -.6 -1.4 to    -.4 -1.6
\arrow <1.5mm> [0.25,0.75] from  -.6 -0.6 to   -.4 -.4
\arrow <1.5mm> [0.25,0.75] from  0.4 -1.6 to  0.6 -1.4
\arrow <1.5mm> [0.25,0.75] from  1.4 -0.6 to  1.6 -0.4
\arrow <1.5mm> [0.25,0.75] from  0.4 -0.4 to  0.6 -0.6
\plot -2.5 0.5  -2.3 0.3 /
\plot -1.7 0.3  -1.5 .5 /
\plot -0.5 0.5  -.3 .3  /
\plot 2.5 0.5  2.3 0.3 /
\plot 1.7 0.3  1.5 .5 /
\plot 0.5 0.5  .3 .3  /

\setdots <1mm>

\plot  -3 1  -2.7 .7 /
\plot   3 1  2.7 .7 /

\endpicture} at  0 -6

\put{\bf I$_{45}$} at -2.5 0
\put{\bf II$_{45}$} at 0 -2.5
\put{\bf III$_{45}$} at 2.5 0
\put{\bf IV\!$_{45}$} at  0 2.5

\setdots <1mm>
\plot -3.5 3.5 -2 2 /
\plot -1 1  1 -1 /
\plot  2 -2 3.5 -3.5 /

\plot -3.5 -3.5 -2 -2 /
\plot -1 -1  1 1 /
\plot  2 2 3.5 3.5 /

\put{$\blacksquare$} at 0 0 
\multiput{$\bullet$} at -3.5 -3.5  -2.5 -2.5  
  3.5 3.5  2.5 2.5  /
\multiput{$\circ$} at 
  3.5 -3.5  2.5 -2.5 
  -3.5 3.5  -2.5 2.5  /

\put{adding hooks} [r] at -6 -2.6
\put{on the left} [r] at -6.3 -3
\put{deleting cohooks} [r] at -2 -6.6
\put{on the left} [r] at -2.3 -7

\put{adding hooks} [l] at 2 -6.6
\put{on the right} [l] at 2.1 -7
\put{deleting cohooks} [l] at 6 -2.6
\put{on the right} [l] at 6.3 -3

\setsolid
\plot -8 -1.4  -7.2 -2.2 /
\arrow <1.5mm> [0.25,0.75] from  -6.2 -3.2 to  -5.4 -4
\plot -4 -5.4  -3.2 -6.2 /
\arrow <1.5mm> [0.25,0.75] from  -2.2 -7.2 to  -1.4 -8
\arrow <1.5mm> [0.25,0.75] from  7.2 -2.2 to  8 -1.4
\plot 1.4 -8  2.2 -7.2 /
\arrow <1.5mm> [0.25,0.75] from  3.2 -6.2 to  4 -5.4

\endpicture}
$$

The vertices of such a tile $\mathcal T(b)$, with $b$ a bar, 
are all the (finite or infinite) words which contain the 
completed bar $\overline b$ as a subword. Thus, in our case
we deal with the words which contain the word \ 
 \beginpicture
  \setcoordinatesystem units <.15cm,.15cm>
   \put{$\ssize 2$} at 0 2
    \put{$\ssize 4$} at 1 1
     \put{$\ssize 5$} at 2 2
      \put{$\ssize 5$} at 3 1
       \plot 0 1  1 0  3 2  2 3  0 1 /
       \endpicture \ \
        as a subword.  
There is precisely one $\mathbb  Z$-word of this form, namely $r(b)$,
it lies in the center and is marked by the black square $\blacksquare$. There are
many $\mathbb  N$-words, they lie on the two diagonals through the center.
Note that the $\mathbb  Z$-word as well as all the $\mathbb  N$-words are not periodic,
since any word in $\mathcal T(b)$ contains $\overline b$ only once as a subword. 
There are two kinds of $\mathbb  N$-words: On the northwest-southeast diagonal they
are marked by a circle $\circ$, these are the words of the form
$$
  x = v\overline b(w')^\infty
$$
with $v$ a finite word. The maximal periodic subword of $x$ starts with the bar $b$; one easily
checks that $x$ is contracting in the sense of \cite{Ralgcom} . Those on the northeast-southwest
diagonal are marked by a bullet $\bullet$, these are the words of the form
$$
  y = v\overline b^{-1}(w'')^{-\infty}
$$
again with $v$ a finite word. Here the maximal periodic subword starts with $b^{-1}$ and
$y$ turns out to be expanding.

As we know, the corner modules for the quarters are related to 
the exact sequence
$$
 0 \to \rad M(b) \to M(b)\oplus \rad M(b)/\soc \to M(b)/\soc \to 0,
 $$
 we obtain a border for $\mathcal T(b)$:
 $$
 \hbox{\beginpicture
 \setcoordinatesystem units <1cm,1cm>

 \multiput{$\bullet$} at -.4 2  4.4 2  2 4.4  /

 \plot .7 3.1  -.4 2  .7 0.9 /
 \plot 3.3 3.1  4.4 2  3.3 0.9 /

 \plot .9 3.3  2 4.4  3.1 3.3 /

 \plot .9 0.7  1.8 -.2   /
 \plot 2.2 -.2 3.1 0.7 /

 \plot 1.8 -.2  2 0  2.2 -.2 /

 \setdots <.7mm>
 \plot 1.8 -.2  2.2 -.2 /
 \put{$N_0 = 432$} [r] at 1.7 -.4
 \put{$455 = N_\infty$} [l] at 2.3 -.4
 \multiput{$\ssize\bullet$} at 1.8 -.2  2.2 -.2 /

 \put{$\rad 45= S(4)$} [r] at -.6 2
 \put{$S(5) = 45/\soc$} [l] at 4.6 2
 \put{$45$} at 2 4.7
 \setsolid
 \plot 1.8 -.2  2 0  2.2 -.2 /
 \plot 1.8 4.2  2 4  2.2 4.2 /
 \plot -.2 1.8  0 2 -.2 2.2 /
 \plot 4.2 2.2  4 2 4.2 1.8 /

 \plot 0.9 1.1  0 2  0.9 2.9 /
 \plot 1.1 3.1  2 4  2.9 3.1 /
 \plot 3.1 2.9  4 2  3.1 1.1 /
 \plot 2.9 0.9  2 0  1.1 0.9 /
 \put{\bf I$_{45}$} at 1 2
 \put{\bf II$_{45}$} at 2 1
 \put{\bf III$_{45}$} at 3 2
 \put{\bf IV$_{45}$} at 2 3

 \multiput{$\bigcirc$} at 2 0  0 2  2 4  4 2 /
 \multiput{$\bullet$} at 1 1  3 3 0.8 0.8   3.2 3.2  1.2 1.2  2.8 2.8 /
 \multiput{$\circ$} at 1 3  3 1  0.8 3.2  3.2 0.8  1.2 2.8  2.8 1.2 /
 
\multiput{$\blacksquare$} at 2 2 /
\plot 0.8 0.8  3.2 3.2 /
\plot 0.8 3.2  3.2 0.8 /

\put{} at 0 0
\put{} at 4 4
\endpicture}
$$
Here,
$N_0$ is the boundary module
in $\mathcal R_0$ which has the same socle as $M(45)$ and $N_\infty$ is the boundary
module in $\mathcal R_\infty$ which has the same top as $M(45)$.

We can further enlarge the picture by adding rays from $\mathcal P$ and $\mathcal R_\infty$,
as well as corays from $\mathcal R_0$ and $\mathcal Q$. In the case $t=1$, we obtain in this way 
the complete module category (of course, opposite edges have to be identified): 
$$
\hbox{\beginpicture
\setcoordinatesystem units <1.2cm,1.2cm>
\multiput{} at 0 4.6  0 -.6 /
\multiput{$\mathcal P(b)$} at .15 1.25  /
\multiput{$\mathcal R_0(b)$} at 1.15 0.25  /
\multiput{$\mathcal Q(b')$} at   1.15 3.65 /
\multiput{$\mathcal R_\infty(b')$} at  .15 2.75 /

\multiput{$\mathcal Q(b)$} at 3.85 1.25   /
\multiput{$\mathcal R_\infty(b)$} at 2.85 0.25  /
\multiput{$\mathcal P(b'')$} at  2.9 3.65 /
\multiput{$\mathcal R_0(b'')$} at  3.85 2.75 /

\put{$b' = \rho^{-1} b$} [r] at  0 4
\put{$b'' = \lambda^{-1} b$} [l] at 4 4

\plot -.6 1.4  1.4 -.6 /
\plot 2.6 4.6  4.6  2.6 /

\plot -.6 2.6  1.4 4.6 /
\plot 2.6 -0.6  4.6 1.4 /

\setsolid
\plot 1.8 -.2  2 0  2.2 -.2 /
\plot 1.8 4.2  2 4  2.2 4.2 /
\plot -.2 1.8  0 2 -.2 2.2 /
\plot 4.2 2.2  4 2 4.2 1.8 /

\plot 0.9 1.1  0 2  0.9 2.9 /
\plot 1.1 3.1  2 4  2.9 3.1 /
\plot 3.1 2.9  4 2  3.1 1.1 /
\plot 2.9 0.9  2 0  1.1 0.9 /
\put{\bf I} at 1 2
\put{\bf II} at 2 1
\put{\bf III} at 3 2
\put{\bf IV} at 2 3

\multiput{$\bigcirc$} at 2 0  0 2  2 4  4 2 /
\multiput{$\bullet$} at 1 1  3 3  0.8 0.8  0.6 0.6  3.2 3.2  3.4 3.4  1.2 1.2  2.8 2.8 
 3.6 3.6 0.4 0.4 /
\multiput{$\circ$} at 1 3  3 1  0.8 3.2  3.2 0.8  0.6 3.4  3.4 0.6  1.2 2.8  2.8 1.2 
 3.6 0.4  0.4 3.6  /

\multiput{$\blacksquare$} at 2 2 /
\plot 0.4 0.4  3.6 3.6 /
\plot 0.4 3.6  3.6 0.4 /

\put{} at 0 0
\put{} at 4 4
\endpicture}
$$

In particular, we get the following neighboring relation for tiles:
$$
\hbox{\beginpicture
\setcoordinatesystem units <.3cm,.3cm>
\plot 0 0  3 3  6 0  3 -3 0 0 /
\plot 4 4  7 7  10 4  7 1  4 4 /
\plot 8 0  11 3  14 0  11 -3  8 0 /
\put{$\mathcal T(b)$} at 7 4 
\put{$\mathcal T(\lambda b)$} at 3 0
\put{$\mathcal T(\rho b)$} at 11 0 
\multiput{} at 0 -2.5  0  6.5 /
\endpicture}
$$

       \medskip

According to \cite{Ralgcom},  for any almost periodic
$\mathbb  N$-word or biperiodic $\mathbb  Z$-word $x$ there exists an indecomposable algebraically
compact module $C(x)$. In case $x$ is a contracting $\mathbb  N$-word, the module $C(x)$ is 
the usual string module for $x$, whereas for $x$ an expanding $\mathbb  N$-word, one needs to complete
the string module in order to obtain the module $C(x)$.
The $\mathbb  Z$-words $r(b)$ are mixed words (the right hand side is contracting, the left
hand side expanding), thus the module $C(r(b))$ is obtained from the corresponding string module
by a partial completion: the left hand side has to be completed, the right hand side not.

Let us observe that for our enlargement by adding  rays from $\mathcal P, \mathcal R_\infty$
and corays from $\mathcal R_\infty, \mathcal Q$, we also have to invoke the adic modules 
for the component $\mathcal R_0$ and the Pr\"ufer 
modules for the component $\mathcal R_\infty$.
\vfill\eject

\subsection{Example of a quilt}
In our example 12.5 with $t=5$,  five non-regular Auslander-Reiten components have been obtained.
Using infinite words, thus 
infinite dimensional representations, we see that these Auslander-Reiten components have to  be arranged 
as follows:

$$
\hbox{\beginpicture
\setcoordinatesystem units <1.5cm,1.5cm>

\multiput{\beginpicture
\plot 0 -.1  -.15  .05 -.1 0.1  .1 .1  .15 .05 0 -.1 /  
\setdashes <1mm>
\plot -.6 -.4  -.2 0  -.6 .4 / 
\plot -.4 .6  0 .2  .4 .6 /

\plot .6 -.4  .2 0  .6 .4 / 
\plot .4 -.6  0 -.2  -.4 -.6 /
\setdots <1mm> 
\plot 0 -1  -1 0  0 1  1 0  0 -1 / 
\multiput{$\circ$} at 0.2 0.8  0.3 0.7  0.4 0.6  0.5 0.5  0.6 0.4  0.7 0.3  0.8 0.2 /
\multiput{$\circ$} at -0.2 -0.8  -0.3 -0.7  -0.4 -0.6  -0.5 -0.5  -0.6 -0.4  -0.7 -0.3  -0.8 -0.2 /
\multiput{$\bullet$} at 0.2 -0.8  0.3 -0.7  0.4 -0.6  0.5 -0.5  0.6 -0.4  0.7 -0.3  0.8 -0.2 /
\multiput{$\bullet$} at -0.2 0.8  -0.3 0.7  -0.4 0.6  -0.5 0.5  -0.6 0.4  -0.7 0.3  -0.8 0.2 /

\setshadegrid  span <.4mm>
\vshade -1 -0 -0 <,z,,>
  -0.15 -0.85 0.85 <z,z,,>
  -0.16 0.1 0.9 <z,z,,> 
  0 0.1 1  <z,z,,> .14 0.1 0.9  <z,z,,>
  .15 -.85 .85  <z,,,> 
  1 0 0 /
\vshade -0.16 -.9 0 <z,z,,> 
  0 -1 -.1  <z,z,,> 
  .15 -.9 0  /

\endpicture} at 0 0  1 -1  0 -2  1 -3 /

\put{\beginpicture
\multiput{} at -1 -1  1 1 /
\plot .15 .05 0 -.1 -.15  .05 /  
\setdashes <1mm>
\plot -.6 -.4  -.2 0   / 

\plot .6 -.4  .2 0  /
\plot .4 -.6  0 -.2  -.4 -.6 /
\setdots <1mm> 
\plot 1 0  0 -1  -1 0 /
\plot -1 0  0 -1 / 
\multiput{$\bullet$} at 0.2 -0.8  0.3 -0.7  0.4 -0.6  0.5 -0.5  0.6 -0.4  0.7 -0.3  0.8 -0.2 /

\multiput{$\bullet$} at 0.2 -0.8  0.3 -0.7  0.4 -0.6  0.5 -0.5  0.6 -0.4  0.7 -0.3  0.8 -0.2 /
\setshadegrid  span <.4mm>
\vshade -1 -0 -0 <,z,,>
  -0.15 -0.85 0 <z,z,,>
  0 -1 -.1  <z,z,,> 
  .15 -.85 0  <z,,,> 
  1 0 0 /

\endpicture} at 1 1 

\put{\beginpicture
\multiput{} at -1 -1  1 1 /
\plot  -.15  .05 -.1 0.1  .1 .1 .15 .05 /  
\setdashes <1mm>
\plot  -.2 0  -.6 .4 / 
\plot -.4 .6  0 .2  .4 .6 /

\plot  .2 0  .6 .4 / 
\setdots <1mm> 
\plot  -1 0  0 1  1 0   / 
\multiput{$\bullet$} at -0.2 0.8  -0.3 0.7  -0.4 0.6  -0.5 0.5  -0.6 0.4  -0.7 0.3  -0.8 0.2 /

\setshadegrid  span <.4mm>
\vshade -1 -0 -0 <,z,,>
  -0.15 0 0.85 <z,z,,>
  -0.16 0.1 0.9 <z,z,,> 
  0 0.1 1  <z,z,,> .14 0.1 0.9  <z,z,,>
  .15 0 .85  <z,,,> 
  1 0 0 /

\endpicture} at 0 -4

\setdashes <2mm>
\plot -1.3 1  2.3 1 /
\plot -1.3 -4  2.3 -4 /
\multiput{$r(01)$} at -.3 1.15  2.3 1.15 /
\multiput{$r(89)$} at 1.4 -0  -1.37 -0 /
\multiput{$r(67)$} at -.3 -1   2.3 -1 /
\multiput{$r(45)$} at 1.4 -2   -1.37 -2 /
\multiput{$r(23)$} at -.3 -3   2.3 -3 /
\multiput{$r(01)$} at 1.4 -4.1   -1.37 -4.1 /
\multiput{$\blacksquare$} at 0 1  1 0  0 -1  1 -2  0 -3  1 -4 
   2 1  -1 0  2 -1  2 -3    -1 -2  -1 -4  /
\endpicture}
$$
Here,  the left boundary has to be identified with the right boundary, and the
lower dashed line with the (slightly rotated) upper dashed line. The quilt which we obtain in this
way is a torus with 5 holes. 

Let us summarize: The picture above presents the quilt of our wind wheel, it exhibits on a surface
(finite and infinite) words which give rise to relevant indecomposable algebraically compact
modules. The black squares $\blacksquare$ mark the non-periodic $\mathbb Z$-words, the circles $\circ$
and the bullets $\bullet$ mark the $\Bbb N$-words. If we delete the infinite words,
we obtain the 5 Auslander-Reiten components which contain string modules, 
all being shown here as squares with a hole in the middle.
On the other hand, for any bar $b$, we also spot easily the tile $\mathcal T(b)$, it is a square with
center $r(b)$.
       
\subsection{The indecomposable algebraically compact modules} Let us stress that almost all, but {\bf not all}
indecomposable algebraically compact modules are used in the construction of the Auslander-Reiten quilt
of a wind wheel. Here is the list of the additional modules:

(1) The generic module,

(2) the Pr\"ufer modules for the tube $\mathcal R_0,$  and

(3) the adic modules for the tube $\mathcal R_\infty.$

\subsection{The Euler characteristic}
There is the following general observation:

\begin{prop}
Let $\Lambda$ be a wind wheel with $t$ bars. The Auslander-Reiten
quilt $\Gamma$ of $\Lambda$ is a
connected surface with boundary, its Euler characteristic is $\chi(\Gamma) = -t.$
\end{prop}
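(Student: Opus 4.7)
The plan is to compute $\chi(\Gamma)$ via a cell-decomposition argument, after first establishing the manifold structure of $\Gamma$.

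First I would verify that $\Gamma$ is a connected orientable $2$-manifold with boundary. At each finite-dimensional indecomposable $M \in \Gamma$, the local structure of the AR quiver (the meshes incident at $M$) provides a disk-like neighborhood; at each infinite-dimensional algebraically compact module included in $\Gamma$ (the $\mathbb N$-words and the $\mathbb Z$-words $r(b)$), the explicit gluing described in Section 13 supplies a disk-like neighborhood. Orientability follows from the consistent orientation that the AR translation $\tau$ provides on each mesh, together with the fact that the gluings preserve this orientation. Connectedness follows from the cyclic structure of the wind wheel: the $t$ tiles are interlinked via the four cycle components, which are themselves connected.

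For the Euler characteristic itself I would use the decomposition $\Gamma = A \cup B$, where $A$ is the union of the four cycle components $\mathcal P, \mathcal Q, \mathcal R_0, \mathcal R_\infty$ (each a closed cylinder with $\chi = 0$) and $B$ is the union of the $t$ tiles $\mathcal T(b)$ (each a closed disk, $\chi = 1$). The intersection $A \cap B$ consists of the ``attaching arcs'' by which each tile meets the four cycle components, visible in 13.3 where the four corners of a tile are linked to rays and corays of $\mathcal P, \mathcal R_\infty, \mathcal R_0, \mathcal Q$. Because the barification process already identifies certain modules between different cycle components, $A$ is not a disjoint union of four cylinders, so $\chi(A)$ and $\chi(A \cap B)$ must be computed with these identifications taken into account. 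Inclusion--exclusion then yields $\chi(\Gamma) = \chi(A) + \chi(B) - \chi(A \cap B) = -t$.

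A more transparent alternative is to identify the topological type of $\Gamma$ directly. By the classification of compact orientable surfaces with boundary, it suffices to check that $\Gamma$ has genus $1$ and exactly $t$ boundary components, giving $\chi = 2 - 2(1) - t = -t$. For the example in 13.4 with $t = 5$, the quilt is exhibited explicitly as a torus with $5$ boundary circles. In general I would construct a fundamental polygon: the $t$ tiles together with the connecting strips from the cycle components assemble into a polygon whose edge identifications produce one handle (reflecting the cyclic structure inherited from the wind-wheel word $w$) together with $t$ boundary circles (one per tile). The main obstacle is the combinatorial bookkeeping. In the first approach, the difficulty is computing $\chi(A)$ and $\chi(A\cap B)$ precisely after the barification identifications; in the second, the difficulty is verifying that the genus is always exactly $1$, independently of the permutation $\pi = [\lambda,\rho]$ governing the ramification of the non-regular AR components. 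Ramified components, where several tiles coalesce into a single AR component, in particular need care: one must confirm that the sewing pattern never boosts the genus beyond $1$.
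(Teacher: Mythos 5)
Your second route contains a genuine error. You propose to read off $\chi(\Gamma)$ from the classification of surfaces by checking that $\Gamma$ always has genus $1$ and exactly $t$ boundary components. Neither claim holds in general: the number of boundary circles of $\Gamma$ equals the number $h$ of non-regular Auslander-Reiten components, which is the number of cycles of the commutator $\pi=[\lambda,\rho]$, and this is strictly less than $t$ whenever some component is $r$-ramified with $r>1$ (e.g.\ the $t=4$ example of 12.6 with ramification sequence $(3,1)$ has $h=2$). In that case $\Gamma$ has $h<t$ boundary circles and genus $(2+t-h)/2>1$; the ``torus with $t$ holes'' picture of 13.4 is special to the case $\pi=\mathrm{id}$. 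So the sewing pattern \emph{does} boost the genus beyond $1$ --- the point you flagged as needing confirmation is exactly where the argument breaks. (The two effects cancel in $\chi$, which is why the statement is ramification-independent, but your route would have to prove that cancellation rather than assume genus $1$.)

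Your first route (Mayer--Vietoris with $A$ the union of the images of the four cycle components and $B$ the union of the $t$ closed tiles) is a reasonable plan, but as written it is only a plan: the entire content of the proposition sits in the computation of $\chi(A)$ and $\chi(A\cap B)$ after the cut-and-paste identifications, and you do not carry it out. Note also that in $\Gamma$ the four cycle components are not present as cylinders --- each is cut into $t$ contractible wedges which are then reglued according to $\lambda$ and $\rho$ --- so even the starting value $\chi(A)$ requires the same bookkeeping you defer. The paper avoids all of this by a single direct cell count: $\Gamma$ is assembled from $4t$ pentagonal faces, each with one free boundary edge and four interior edges identified in pairs; the boundary-edge endpoints are identified in pairs and the remaining vertices four at a time, giving $f=4t$, $e=4t+8t$, $v=4t\cdot\tfrac{2}{2}+4t\cdot\tfrac{3}{4}$ and hence $f-e+v=-t$ with no case distinction on the ramification. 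I would recommend adopting that decomposition, or at least using it to supply the missing counts in your inclusion--exclusion.
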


This result can be interpreted as follows: Let $h$ be the number of non-regular Auslander-Reiten components
of $\Lambda$, thus $h$ is the number of components of the boundary of $\Gamma$ and we have $h\le t$.
There is a (connected) compact Riemann surface $\Gamma'$ without boundary  and with
Euler characteristic $\chi(\Gamma') = -t+h$ such that $\Gamma$ is obtained from $\Gamma'$
by inserting $h$ holes.  For $h = t$ (as in the examples 12.8),
the surface $\Gamma'$ has Euler characteristic $0$, thus it is a torus. 

\begin{proof}
Our cut-and-paste procedure presents $\Gamma$ as being obtained from $4t$ pieces of
the following form
$$
\hbox{\beginpicture
\setcoordinatesystem units <1cm,1cm>
\multiput{$\bullet$} at 0 0  0 2  2 0  1 2  2 1 /
\plot  0 0  2 0  2 1  1 2  0 2  0 0 /
\put{$A$} at 1 2.3
\put{$C$} at 2.3 1
\put{$b$} at 1.8 1.8
\setshadegrid span <0.5mm>
\vshade 0 0 2  <,z,,> 1 0 2 <z,,,> 2 0 1 / 
\endpicture}
$$
where the edge $b$ will be part of the boundary, whereas the remaining edges have to be
identified in pairs. Looking at the vertices, we have to distinguish the  endpoints $A$ and $C$ of 
the boundary edge $b$ and the remaining ones: the endpoints of the boundary edges are 
identified pairwise, whereas always four of the remaining ones yield a vertex of $\Gamma$.
Let $v,e,f$ be the number of vertices, edges and faces  respectively. 
There are $f = 4t$ faces, there are $e = 4t + 4t\cdot 4/2$ edges and $v = 4t\cdot 2/2 + 
4t \cdot 3/4$ vertices, thus $f-e+v = -t.$
\end{proof}

\subsection{Orientability} 

\begin{prop}
The Auslander-Reiten
quilt $\Gamma$ of any wind wheel is connected and orientable.
\end{prop}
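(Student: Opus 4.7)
The plan is to handle connectedness and orientability separately, in both cases exploiting the explicit cut-and-paste description of $\Gamma$ given in Sections~12 and~13.

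For connectedness, I would argue as follows. By the construction in 13.3--13.4, the quilt $\Gamma$ is obtained by gluing the $t$ tiles $\mathcal T(b)$, one for each bar $b$, together with the four families of pieces $\mathcal P(b), \mathcal R_0(b), \mathcal R_\infty(b), \mathcal Q(b)$ coming from the Auslander-Reiten components of $H$. The neighboring relation exhibited at the end of 13.3 shows that $\mathcal T(b)$ shares an edge with $\mathcal T(\lambda b)$ via the piece $\mathcal R_\infty(b)\cup \mathcal Q(\lambda b)$ and with $\mathcal T(\rho b)$ via the piece $\mathcal R_0(b)\cup \mathcal P(\rho b)$. Since $\lambda$ (equivalently $\rho$) acts transitively as a $t$-cycle on the set $\mathcal B$ of bars, iterating this relation reaches every tile, and hence every piece of every non-regular Auslander-Reiten component, from a fixed starting tile. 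Thus $\Gamma$ is path-connected.

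For orientability, I would check that $\Gamma$ admits a coherent orientation by inspecting the $4t$ quadrilateral pieces used in the Euler-characteristic computation of~13.6. Each such piece sits naturally inside a flat planar portion of an Auslander-Reiten component (as visible in the pictures of 12.1 and 13.3) and therefore carries a canonical orientation. The identifications of boundary edges come in three types: (a) identification of rays of $\mathcal P(\rho b)$ with corays of $\mathcal R_0(b)$, and dually of rays of $\mathcal R_\infty(b)$ with corays of $\mathcal Q(\lambda b)$, arising from the barification of serial projective/injective modules; (b) identification of the two diagonal $\mathbb N$-word rays of $\mathcal T(b)$ with the corresponding rays/corays of neighboring Auslander-Reiten pieces, along the dashed lines in the tile picture; (c) identification of the single $\mathbb Z$-word center $r(b)$. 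In each of these cases I would verify that the identification map between the two boundary segments is orientation-reversing with respect to the chosen planar orientations, by examining the direction of the mesh arrows on both sides: on one side the meshes point outward from the piece, on the other they point inward, which is precisely the condition for orientation-reversing gluing.

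The main obstacle I anticipate is the bookkeeping in (b): there the diagonals of a tile $\mathcal T(b)$ consist of $\mathbb N$-words of two different types (contracting, marked $\circ$, going to the $\mathcal R_0\cup \mathcal P$ side, and expanding, marked $\bullet$, going to the $\mathcal R_\infty\cup\mathcal Q$ side), and one must match them correctly with the boundary rays of the appropriate adjacent pieces $\mathcal R_0(b), \mathcal P(\rho b), \mathcal R_\infty(b), \mathcal Q(\lambda b)$ as dictated by the permutation $\pi=[\lambda,\rho]$. Once this combinatorial matching is written out in full, the orientation-reversing property in each case reduces to the fact that mono rays in $\mathcal P$ and $\mathcal R_\infty$ and epi corays in $\mathcal R_0$ and $\mathcal Q$ have opposite natural orientations, which is essentially the content of the ``mono ray / epi coray'' dichotomy stated in the introduction. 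Granting this, the quilt is built from planar pieces by orientation-reversing edge identifications, hence is a connected orientable surface with boundary.
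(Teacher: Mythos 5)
Your treatment of connectedness is fine and is essentially the paper's: chaining the tiles via the neighboring relation $\mathcal T(b)\leftrightarrow\mathcal T(\lambda b)$ and using that $\lambda$ is a $t$-cycle.

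For orientability, however, there is a genuine gap. You correctly identify the dangerous step -- the identifications between different tiles governed by $\pi=[\lambda,\rho]$ -- but you then defer exactly that verification, and the reduction you propose does not carry it. The mono-ray/epi-coray dichotomy is a statement about whether irreducible maps along a sectional path are injective or surjective; it says nothing about the two-dimensional orientations induced on a shared boundary ray by the two pieces being glued. Likewise, ``meshes pointing outward on one side and inward on the other'' is not the orientability criterion: what must be checked is that, for a suitable choice of orientations of the pieces, each identified edge is traversed in opposite directions by the boundary orientations of the two pieces containing it. That this actually holds for the $\pi$-identifications is precisely the content of the proposition, and it is not automatic -- the paper's own warning in 13.8 exhibits a full subcategory of $\operatorname{mod}W$ whose Auslander-Reiten quiver is a M\"obius strip, so a twist of exactly the kind you are waving away is lurking nearby.

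The paper's argument closes this gap globally rather than locally: fill the boundary holes to get a closed surface $\overline\Gamma$, use $\lambda$ to assemble the $t$ tiles into a single polygon (this gives connectedness), and then read off the boundary word of that polygon, which has the form $y\,\rho(x_t)\cdots\rho(x_1)\,y^{-1}x_1^{-1}\cdots x_t^{-1}$. Every edge letter occurs exactly once with exponent $+1$ and once with exponent $-1$, independently of what the permutation $\rho$ is; by the classification of surfaces such a word reduces to a product of commutators, so $\overline\Gamma$ (hence $\Gamma$) is orientable. If you want to keep your local, piece-by-piece strategy, you would need to replace the appeal to the mono/epi dichotomy by an explicit check, for each of the three gluing types you list, that the two induced boundary orientations disagree -- and in case (b) this check must track how the permutation $\pi$ matches the diagonal rays of $\mathcal T(b)$ with the boundary rays of $\mathcal T(\pi b)$. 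The paper's single boundary-word computation is the more economical way to do all of these checks at once.
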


\begin{proof}
Compact surfaces are often presented by a polygon with an even number of
edges and an identification rule for pairs of the edges, this rule
is shown by a word using the edges (and their inverses) as letters, this
takes into account the orientation in which the edges are identified.

Let $\overline Gamma$ be obtained from $\Gamma$ by filling the holes. 
Clearly, we obtain $\overline\Gamma$ 
by taking $t$ squares (corresponding to the tiles) and identifying pairs of edges. 
The neighboring relation for the tiles shows in which way we have to identify the edges.

First, we use the permutation $\lambda$ in order to obtain the following 
rectangle:
$$
\hbox{\beginpicture
\setcoordinatesystem units <.3cm,.3cm>
\plot 3.3 3.3  0 0  3 -3  6.3 0.3 /
\plot 5.7 5.7 12 12  15 9  8.7 2.7 /
\plot 3 3   6 0 /
\plot  6 6  9 3 /
\plot 9 9 12 6 /
\setdots <1mm>
\plot 4 4  5 5 /
\plot 7 1  8 2 /
\put{$b$} at 12 9 
\put{$\lambda b$} at 9 6
\put{$\lambda^{t-1}b$} at 3 0
\put{$y$} at .9 -1.9
\put{$y$} at 14.1 11 
\put{$x_t$} at .8 2
\put{$\rho(x_t)$} [l] at 13.6 7
\put{$x_2$} at 6.8 8
\put{$\rho(x_2)$} [l] at 10.6 4
\put{$x_1$} at 9.8 11 
\put{$\rho(x_1)$} [l] at 4.6 -2
\setsolid
\arrow <2mm>  [0.25,0.75 ] from 1 1 to 2 2 
\arrow <2mm>  [0.25,0.75 ] from 7 7 to 8 8
\arrow <2mm>  [0.25,0.75 ] from 10 10 to 11 11 
\arrow <2mm>  [0.25,0.75 ] from 13 11 to 14 10
\arrow <2mm>  [0.25,0.75 ] from 1 -1 to 2 -2
\arrow <2mm>  [0.25,0.75 ] from 4 -2 to 5 -1
\arrow <2mm>  [0.25,0.75 ] from 10 4 to 11 5
\arrow <2mm>  [0.25,0.75 ] from 13 7  to 14 8

\endpicture}
$$
This already shows the connectedness. But we also see that the further identifications are
achieved by the word
$$
 y\rho(x_t)\cdots \rho(x_2)\rho(x_1)y^{-1}x_1^{-1}x_2^{-2}\cdots x_t^{-t}.
$$
It is well-known (and easy to see) that we can change the word to a product
of commutators, but this means that $\overline \Gamma$ is orientable.

\end{proof}

\subsection{Warning concerning the orientability}
 As we have seen, the 
Auslander-Reiten quilt of any wind wheel is orientable. For example, in section 13.4, we have
exhibited a the quilt of a wind wheel $W$ which is a torus with holes.

But the category $\mo W$ contains as a full subcategory the module category $\mo L$ of the following
algebra $L$, and the Auslander-Reiten quiver of $L$ is obviously (homeomorphic to) a M\"obius strip:
$$
\hbox{\beginpicture
\setcoordinatesystem units <1cm,1cm>
\put{\beginpicture

\put{} at 0 2
\put{$0$} at 0 0
\put{$1$} at 0 1
\put{$L$} at -1 0.7 
\arrow <1.5mm> [0.25,0.75] from 0 0.8 to 0 0.2
\circulararc 155 degrees from 0 2 center at 0 1.5
\circulararc -155 degrees from 0 2 center at 0 1.5
\arrow <1.5mm> [0.25,0.75] from -0.22 1.05 to -0.17 1.02
\setdots <.7mm>
\setquadratic
\plot -.4 1.4 0 1.3 .4 1.4 /

\endpicture} at -1 0 
\put{\beginpicture

\put{ \beginpicture
 \setcoordinatesystem units <.15cm,.15cm>
 \put{$\ssize 1$} at 0 2
 \put{$\ssize 1$} at 1 1
 \put{$\ssize 0$} at 2 0
\endpicture} at -1 5

\put{ \beginpicture
 \setcoordinatesystem units <.15cm,.15cm>
\put{$\ssize 0$} at 0 0
 \put{$\ssize 1$} at 1 1
 \put{$\ssize 1$} at 2 0
\endpicture} at -1 3

\put{$0$}  at 0 0

\put{ \beginpicture
 \setcoordinatesystem units <.15cm,.15cm>
 \put{$\ssize 1$} at 1 1
 \put{$\ssize 0$} at 0 0
\endpicture} at 0 2

\put{ \beginpicture
 \setcoordinatesystem units <.15cm,.15cm>
 \put{$\ssize 1$} at 1 0
 \put{$\ssize 1$} at 0 1
\endpicture} at 0 4

\put{ \beginpicture
 \setcoordinatesystem units <.15cm,.15cm>
 \put{$\ssize 0$} at 0 1
 \put{$\ssize 1$} at 1 2
 \put{$\ssize 1$} at 2 1
 \put{$\ssize 0$} at 3 0
\endpicture} at 1 1

\put{$1$} at  1 3

\put{ \beginpicture
 \setcoordinatesystem units <.15cm,.15cm>
 \put{$\ssize 1$} at 0 2
 \put{$\ssize 1$} at 1 1
 \put{$\ssize 0$} at 2 0
\endpicture} at 2 0

\put{ \beginpicture
 \setcoordinatesystem units <.15cm,.15cm>
 \put{$\ssize 0$} at 0 0
 \put{$\ssize 1$} at 1 1
 \put{$\ssize 1$} at 2 0
\endpicture} at 2 2

\put{ \beginpicture
 \setcoordinatesystem units <.15cm,.15cm>
 \put{$\ssize 1$} at 0 1
 \put{$\ssize 1$} at 1 0
\endpicture} at 3 1

\put{ \beginpicture
 \setcoordinatesystem units <.15cm,.15cm>
 \put{$\ssize 0$} at 0 0
 \put{$\ssize 1$} at 1 1
\endpicture} at 3 3

\put{$0$} at  3 5

\put{$1$} at 4 2

\put{ \beginpicture
 \setcoordinatesystem units <.15cm,.15cm>
 \put{$\ssize 0$} at 0 1
 \put{$\ssize 1$} at 1 2
 \put{$\ssize 1$} at 2 1
 \put{$\ssize 0$} at 3 0 
\endpicture} at 4 4

\put{ \beginpicture
 \setcoordinatesystem units <.15cm,.15cm>
 \put{$\ssize 1$} at 0 2
 \put{$\ssize 1$} at 1 1
 \put{$\ssize 0$} at 2 0
\endpicture} at 5 5

\put{ \beginpicture
 \setcoordinatesystem units <.15cm,.15cm>
\put{$\ssize 0$} at 0 0
 \put{$\ssize 1$} at 1 1
 \put{$\ssize 1$} at 2 0
\endpicture} at 5 3

\arrow <1.5mm> [0.25,0.75] from -.8 4.8  to -.2 4.2 
\arrow <1.5mm> [0.25,0.75] from -.8 3.2  to -.2 3.8 
\arrow <1.5mm> [0.25,0.75] from -.8 2.8  to -.2 2.2 

\arrow <1.5mm> [0.25,0.75] from .2 3.8   to .8 3.2
\arrow <1.5mm> [0.25,0.75] from .2 2.2   to .8 2.8
\arrow <1.5mm> [0.25,0.75] from .2 1.8   to .8 1.2
\arrow <1.5mm> [0.25,0.75] from .2   .2   to .8  .8

\arrow <1.5mm> [0.25,0.75] from 1.2 2.8   to 1.8 2.2
\arrow <1.5mm> [0.25,0.75] from 1.2 1.2   to 1.8 1.8
\arrow <1.5mm> [0.25,0.75] from 1.2   .8   to 1.8   .2

\arrow <1.5mm> [0.25,0.75] from 2.2 2.2   to 2.8 2.8
\arrow <1.5mm> [0.25,0.75] from 2.2 1.8   to 2.8 1.2
\arrow <1.5mm> [0.25,0.75] from 2.2   .2   to 2.8  .8

\arrow <1.5mm> [0.25,0.75] from 3.2 4.8   to 3.8 4.2
\arrow <1.5mm> [0.25,0.75] from 3.2 3.2   to 3.8 3.8
\arrow <1.5mm> [0.25,0.75] from 3.2 2.8   to 3.8 2.2
\arrow <1.5mm> [0.25,0.75] from 3.2 1.2   to 3.8 1.8

\arrow <1.5mm> [0.25,0.75] from 4.2 4.2   to 4.8 4.8
\arrow <1.5mm> [0.25,0.75] from 4.2 3.8   to 4.8 3.2
\arrow <1.5mm> [0.25,0.75] from 4.2 2.2   to 4.8 2.8

\setdashes <2mm>
\plot 0 5  0 4.3 /
\plot 0 2.3  0 3.7 /
\plot 0 1  0 1.7 /

\plot 3 4     3 3.3 /
\plot 3 1.3  3 2.7 /
\plot 3 0     3  .7 /

\setshadegrid span <.7mm>
\vshade -1 2  5  <z,z,,>  0 2 4 <z,z,,> .99 1 3 <z,z,,>  1 0 3 <z,z,,> 2 0 3 <z,z,,> 3 1 3 <z,z,,> 3.99 2 4  <z,z,,> 4 2 5  <z,z,,> 5 2 5 /
\vshade 0 0 0  <z,z,,> 1 0 1 /
\vshade 3 5 5 <z,z,,> 4 4 5 /

\endpicture} at 5 0
\endpicture}
$$
Here, the vertical dashes lines mark a fundamental domain.

Instead of looking at the wind wheel considered in section 12.5,
let $W$ now be  the smallest possible wind wheel, with two vertices $0,1$, a loop $\alpha$ at
the vertex $0$, a loop $\gamma$ at the vertex $1$ and an arrow $\beta:1 \to 0$
(and the relations $\alpha^2 = \gamma^2 = \alpha\beta\gamma = 0$). 
We want to analyze the embedding of $\mo L$ into $\mo W.$

Some parts of the Auslander-Reiten quiver of $L$ can be identified in the quilt of
$W$ without problems:
$$
\hbox{\beginpicture
\setcoordinatesystem units <1cm,1cm>
 
\put{\beginpicture

\put{$0$}  at 0 0

\put{ \beginpicture
 \setcoordinatesystem units <.15cm,.15cm>
 \put{$\ssize 1$} at 1 1
 \put{$\ssize 0$} at 0 0
\endpicture} at 0 2

\put{ \beginpicture
 \setcoordinatesystem units <.15cm,.15cm>
 \put{$\ssize 1$} at 1 0
 \put{$\ssize 1$} at 0 1
\endpicture} at 0 4

\put{ \beginpicture
 \setcoordinatesystem units <.15cm,.15cm>
 \put{$\ssize 0$} at 0 1
 \put{$\ssize 1$} at 1 2
 \put{$\ssize 1$} at 2 1
 \put{$\ssize 0$} at 3 0
\endpicture} at 1 1

\put{$1$} at  1 3

\put{ \beginpicture
 \setcoordinatesystem units <.15cm,.15cm>
 \put{$\ssize 1$} at 0 2
 \put{$\ssize 1$} at 1 1
 \put{$\ssize 0$} at 2 0
\endpicture} at 2 0

\put{ \beginpicture
 \setcoordinatesystem units <.15cm,.15cm>
 \put{$\ssize 0$} at 0 0
 \put{$\ssize 1$} at 1 1
 \put{$\ssize 1$} at 2 0
\endpicture} at 2 2

\put{ \beginpicture
 \setcoordinatesystem units <.15cm,.15cm>
 \put{$\ssize 1$} at 0 1
 \put{$\ssize 1$} at 1 0
\endpicture} at 3 1

\put{ \beginpicture
 \setcoordinatesystem units <.15cm,.15cm>
 \put{$\ssize 0$} at 0 0
 \put{$\ssize 1$} at 1 1
\endpicture} at 3 3

\put{$0$} at  3 5

\put{$1$} at 4 2

\put{ \beginpicture
 \setcoordinatesystem units <.15cm,.15cm>
 \put{$\ssize 0$} at 0 1
 \put{$\ssize 1$} at 1 2
 \put{$\ssize 1$} at 2 1
 \put{$\ssize 0$} at 3 0 
\endpicture} at 4 4

\put{ \beginpicture
 \setcoordinatesystem units <.15cm,.15cm>
 \put{$\ssize 1$} at 0 2
 \put{$\ssize 1$} at 1 1
 \put{$\ssize 0$} at 2 0
\endpicture} at 5 5

\put{ \beginpicture
 \setcoordinatesystem units <.15cm,.15cm>
\put{$\ssize 0$} at 0 0
 \put{$\ssize 1$} at 1 1
 \put{$\ssize 1$} at 2 0
\endpicture} at 5 3

\arrow <1.5mm> [0.25,0.75] from .2 3.8   to .8 3.2
\arrow <1.5mm> [0.25,0.75] from .2 2.2   to .8 2.8
\arrow <1.5mm> [0.25,0.75] from .2 1.8   to .8 1.2
\arrow <1.5mm> [0.25,0.75] from .2   .2   to .8  .8

\arrow <1.5mm> [0.25,0.75] from 1.2 2.8   to 1.8 2.2
\arrow <1.5mm> [0.25,0.75] from 1.2 1.2   to 1.8 1.8
\arrow <1.5mm> [0.25,0.75] from 1.2   .8   to 1.8   .2

\arrow <1.5mm> [0.25,0.75] from 2.2 2.2   to 2.8 2.8
\arrow <1.5mm> [0.25,0.75] from 2.2 1.8   to 2.8 1.2
\arrow <1.5mm> [0.25,0.75] from 2.2   .2   to 2.8  .8

\arrow <1.5mm> [0.25,0.75] from 3.2 4.8   to 3.8 4.2
\arrow <1.5mm> [0.25,0.75] from 3.2 3.2   to 3.8 3.8
\arrow <1.5mm> [0.25,0.75] from 3.2 2.8   to 3.8 2.2
\arrow <1.5mm> [0.25,0.75] from 3.2 1.2   to 3.8 1.8

\arrow <1.5mm> [0.25,0.75] from 4.2 4.2   to 4.8 4.8
\arrow <1.5mm> [0.25,0.75] from 4.2 3.8   to 4.8 3.2
\arrow <1.5mm> [0.25,0.75] from 4.2 2.2   to 4.8 2.8

\setdashes <2mm>
\plot 0 5  0 4.3 /
\plot 0 2.3  0 3.7 /
\plot 0 1  0 1.7 /

\plot 3 4     3 3.3 /
\plot 3 1.3  3 2.7 /
\plot 3 0     3  .7 /

\setdots <1mm>
\plot 1.4 3  2.6 3 /
\plot 4.4 2  5 2 /
\plot 3.4 5  4.6 5 /

\setshadegrid span <.7mm>
\vshade 0 0 0  <z,z,,>  2 0 2 <z,z,,> 3 1 3 <z,z,,> 4 2 2  /

\setsolid
\circulararc 360 degrees from 2.4 0 center at  2 0
\circulararc 360 degrees from 0.3 0 center at 0 0
\circulararc 360 degrees from 4.3 2  center at 4 2
\circulararc 360 degrees from 3.4 3 center at 3 3

\endpicture} at 0 0
\put{\beginpicture
\setcoordinatesystem units <1cm,1cm>

\put{$0$}  at -1 2

\put{ \beginpicture
 \setcoordinatesystem units <.15cm,.15cm>
 \put{$\ssize 0$} at 0 1
 \put{$\ssize 1$} at 1 2
 \put{$\ssize 1$} at 2 1
 \put{$\ssize 0$} at 3 0
\endpicture} at -.5 2.5

\put{ \beginpicture
 \setcoordinatesystem units <.15cm,.15cm>
 \put{$\ssize 0$} at 0 1
 \put{$\ssize 0$} at 1 0
\endpicture} at -.5 1.5

\put{$1$} at   5 2

\put{ \beginpicture
 \setcoordinatesystem units <.15cm,.15cm>
 \put{$\ssize 1$} at 0 2
 \put{$\ssize 0$} at 1 1
 \put{$\ssize 0$} at 2 0
\endpicture} at 1.5 -0.5

\put{ \beginpicture
 \setcoordinatesystem units <.15cm,.15cm>
 \put{$\ssize 1$} at 0 2
 \put{$\ssize 1$} at 1 1
 \put{$\ssize 0$} at 2 0
\endpicture} at 2.5 -0.5

\put{ \beginpicture
 \setcoordinatesystem units <.15cm,.15cm>
 \put{$\ssize 0$} at 0 0
 \put{$\ssize 1$} at 1 1
 \put{$\ssize 1$} at 2 0
\endpicture} at 1.5 4.5

\put{ \beginpicture
 \setcoordinatesystem units <.15cm,.15cm>
 \put{$\ssize 0$} at 0 1
 \put{$\ssize 0$} at 1 0
 \put{$\ssize 1$} at 2 1
\endpicture} at 2.5 4.5

\put{ \beginpicture
 \setcoordinatesystem units <.15cm,.15cm>
 \put{$\ssize 1$} at 0 1
 \put{$\ssize 1$} at 1 0
\endpicture} at 4.5 1.5

\put{ \beginpicture
 \setcoordinatesystem units <.15cm,.15cm>
 \put{$\ssize 0$} at 0 0
 \put{$\ssize 1$} at 1 1
\endpicture} at 2 5

\put{ \beginpicture
 \setcoordinatesystem units <.15cm,.15cm>
 \put{$\ssize 1$} at 0 2
 \put{$\ssize 0$} at 1 1
 \put{$\ssize 0$} at 2 0
 \put{$\ssize 1$} at 3 1
\endpicture} at 4.5 2.5

\setsolid
\plot 1.8 -.2  2 0  2.2 -.2 /
\plot 1.8 4.2  2 4  2.2 4.2 /
\plot -.2 1.8  0 2 -.2 2.2 /
\plot 4.2 2.2  4 2 4.2 1.8 /

\plot 0.9 1.1  0 2  0.9 2.9 /
\plot 1.1 3.1  2 4  2.9 3.1 /
\plot 3.1 2.9  4 2  3.1 1.1 /
\plot 2.9 0.9  2 0  1.1 0.9 /

\multiput{$\bullet$} at 1 1 1 3 3 1 3 3 
    0.5 0.5 0.5 3.5 3.5 0.5 3.5 3.5 /
\multiput{$\blacksquare$} at 2 2 /
\plot 0.5 0.5  3.5 3.5 /
\plot 0.5 3.5  3.5 0.5 /

\plot -.2 1.2  0.4 0.6 /
\plot 0.6 0.4 1.2 -.2 /
\plot -.8 1.8  -.7 1.7 /

\plot -.2 2.8  0.4 3.4 /
\plot 0.6 3.6 1.2 4.2 /
\plot -.8 2.2   -.7  2.3 /

\plot 4.2 1.2  3.6 0.6 /
\plot 3.4 0.4  2.8 -.2 /

\plot 4.2 2.8  3.6 3.4 /
\plot 3.4 3.6  2.8 4.2 /

\plot 4.8 1.8  4.7 1.7 /
\plot 4.85 2.15   4.75  2.25 /

\plot 1.7 4.7  1.85 4.85 /
\plot 2.3 4.7  2.15 4.85 /

\setdots <1mm>
\plot 1.7 -.5  2.3 -.5 /

\setshadegrid span <.7mm>
\hshade -0.5 1.5 2.5  <,,z,z> 2 -1 5 <,,z,z> 5 2 2 /

\setsolid
\circulararc 360 degrees from 2.9 -.5 center at 2.5 -.5
\circulararc 360 degrees from -1.3 2 center at -1 2
\circulararc 360 degrees from 2.4 5  center at 2 5
\circulararc 360 degrees from 5.3 2 center at 5 2

\endpicture} at 7 0

\endpicture}
$$
All the maps in the shaded part of $\mo L$ (see the left picture)
are nicely factorized in $\mo W$, see the right picture, note that
on the right we see the bordered tile for $W$.

Of course, this concerns also the following shaded part on the left
(actually, we deal with the same part of $\mo L$):
$$
\hbox{\beginpicture
\setcoordinatesystem units <1cm,1cm>
 
\put{\beginpicture

\put{$0$}  at 0 0

\put{ \beginpicture
 \setcoordinatesystem units <.15cm,.15cm>
 \put{$\ssize 1$} at 1 1
 \put{$\ssize 0$} at 0 0
\endpicture} at 0 2

\put{ \beginpicture
 \setcoordinatesystem units <.15cm,.15cm>
 \put{$\ssize 1$} at 1 0
 \put{$\ssize 1$} at 0 1
\endpicture} at 0 4

\put{ \beginpicture
 \setcoordinatesystem units <.15cm,.15cm>
 \put{$\ssize 0$} at 0 1
 \put{$\ssize 1$} at 1 2
 \put{$\ssize 1$} at 2 1
 \put{$\ssize 0$} at 3 0
\endpicture} at 1 1

\put{$1$} at  1 3

\put{ \beginpicture
 \setcoordinatesystem units <.15cm,.15cm>
 \put{$\ssize 1$} at 0 2
 \put{$\ssize 1$} at 1 1
 \put{$\ssize 0$} at 2 0
\endpicture} at 2 0

\put{ \beginpicture
 \setcoordinatesystem units <.15cm,.15cm>
 \put{$\ssize 0$} at 0 0
 \put{$\ssize 1$} at 1 1
 \put{$\ssize 1$} at 2 0
\endpicture} at 2 2

\put{ \beginpicture
 \setcoordinatesystem units <.15cm,.15cm>
 \put{$\ssize 1$} at 0 1
 \put{$\ssize 1$} at 1 0
\endpicture} at 3 1

\put{ \beginpicture
 \setcoordinatesystem units <.15cm,.15cm>
 \put{$\ssize 0$} at 0 0
 \put{$\ssize 1$} at 1 1
\endpicture} at 3 3

\put{$0$} at  3 5

\put{$1$} at 4 2

\put{ \beginpicture
 \setcoordinatesystem units <.15cm,.15cm>
 \put{$\ssize 0$} at 0 1
 \put{$\ssize 1$} at 1 2
 \put{$\ssize 1$} at 2 1
 \put{$\ssize 0$} at 3 0 
\endpicture} at 4 4

\put{ \beginpicture
 \setcoordinatesystem units <.15cm,.15cm>
 \put{$\ssize 1$} at 0 2
 \put{$\ssize 1$} at 1 1
 \put{$\ssize 0$} at 2 0
\endpicture} at 5 5

\put{ \beginpicture
 \setcoordinatesystem units <.15cm,.15cm>
\put{$\ssize 0$} at 0 0
 \put{$\ssize 1$} at 1 1
 \put{$\ssize 1$} at 2 0
\endpicture} at 5 3

\arrow <1.5mm> [0.25,0.75] from .2 3.8   to .8 3.2
\arrow <1.5mm> [0.25,0.75] from .2 2.2   to .8 2.8
\arrow <1.5mm> [0.25,0.75] from .2 1.8   to .8 1.2
\arrow <1.5mm> [0.25,0.75] from .2   .2   to .8  .8

\arrow <1.5mm> [0.25,0.75] from 1.2 2.8   to 1.8 2.2
\arrow <1.5mm> [0.25,0.75] from 1.2 1.2   to 1.8 1.8
\arrow <1.5mm> [0.25,0.75] from 1.2   .8   to 1.8   .2

\arrow <1.5mm> [0.25,0.75] from 2.2 2.2   to 2.8 2.8
\arrow <1.5mm> [0.25,0.75] from 2.2 1.8   to 2.8 1.2
\arrow <1.5mm> [0.25,0.75] from 2.2   .2   to 2.8  .8

\arrow <1.5mm> [0.25,0.75] from 3.2 4.8   to 3.8 4.2
\arrow <1.5mm> [0.25,0.75] from 3.2 3.2   to 3.8 3.8
\arrow <1.5mm> [0.25,0.75] from 3.2 2.8   to 3.8 2.2
\arrow <1.5mm> [0.25,0.75] from 3.2 1.2   to 3.8 1.8

\arrow <1.5mm> [0.25,0.75] from 4.2 4.2   to 4.8 4.8
\arrow <1.5mm> [0.25,0.75] from 4.2 3.8   to 4.8 3.2
\arrow <1.5mm> [0.25,0.75] from 4.2 2.2   to 4.8 2.8

\setdashes <2mm>
\plot 0 5  0 4.3 /
\plot 0 2.3  0 3.7 /
\plot 0 1  0 1.7 /

\plot 3 4     3 3.3 /
\plot 3 1.3  3 2.7 /
\plot 3 0     3  .7 /

\setdots <1mm>
\plot 1.4 3  2.6 3 /
\plot 4.4 2  5 2 /
\plot 3.4 5  4.6 5 /
\plot 0.4 0  1.6 0  /

\setshadegrid span <.7mm>
\hshade 2 0 0  <,,z,z>  3 -1 1 <,,z,z> 5 -3 -1  /

\setsolid

\put{ \beginpicture
 \setcoordinatesystem units <.15cm,.15cm>
 \put{$\ssize 1$} at 0 2
 \put{$\ssize 1$} at 1 1
 \put{$\ssize 0$} at 2 0
\endpicture} at -1 5

\put{ \beginpicture
 \setcoordinatesystem units <.15cm,.15cm>
\put{$\ssize 0$} at 0 0
 \put{$\ssize 1$} at 1 1
 \put{$\ssize 1$} at 2 0
\endpicture} at -1 3

\arrow <1.5mm> [0.25,0.75] from -.8 4.8   to -.2 4.2
\arrow <1.5mm> [0.25,0.75] from -.8 3.2   to -.2 3.8
\arrow <1.5mm> [0.25,0.75] from -.8 2.8   to -.2 2.2

\arrow <1.5mm> [0.25,0.75] from -1.8 4.2   to -1.2 4.8
\arrow <1.5mm> [0.25,0.75] from -1.8 3.8   to -1.2 3.2

\arrow <1.5mm> [0.25,0.75] from -2.8 4.8   to -2.2 4.2

\endpicture} at 0 0
\endpicture}
$$

It is the following square in $\mo L$ which is difficult to recover
in the Auslander-Reiten quilt of $W$:
$$
\hbox{\beginpicture
\setcoordinatesystem units <1cm,1cm>
 
\put{\beginpicture

\put{$0$}  at 0 0

\put{ \beginpicture
 \setcoordinatesystem units <.15cm,.15cm>
 \put{$\ssize 1$} at 1 1
 \put{$\ssize 0$} at 0 0
\endpicture} at 0 2

\put{ \beginpicture
 \setcoordinatesystem units <.15cm,.15cm>
 \put{$\ssize 1$} at 1 0
 \put{$\ssize 1$} at 0 1
\endpicture} at 0 4

\put{ \beginpicture
 \setcoordinatesystem units <.15cm,.15cm>
 \put{$\ssize 0$} at 0 1
 \put{$\ssize 1$} at 1 2
 \put{$\ssize 1$} at 2 1
 \put{$\ssize 0$} at 3 0
\endpicture} at 1 1

\put{$1$} at  1 3

\put{ \beginpicture
 \setcoordinatesystem units <.15cm,.15cm>
 \put{$\ssize 1$} at 0 2
 \put{$\ssize 1$} at 1 1
 \put{$\ssize 0$} at 2 0
\endpicture} at 2 0

\put{ \beginpicture
 \setcoordinatesystem units <.15cm,.15cm>
 \put{$\ssize 0$} at 0 0
 \put{$\ssize 1$} at 1 1
 \put{$\ssize 1$} at 2 0
\endpicture} at 2 2

\put{ \beginpicture
 \setcoordinatesystem units <.15cm,.15cm>
 \put{$\ssize 1$} at 0 1
 \put{$\ssize 1$} at 1 0
\endpicture} at 3 1

\put{ \beginpicture
 \setcoordinatesystem units <.15cm,.15cm>
 \put{$\ssize 0$} at 0 0
 \put{$\ssize 1$} at 1 1
\endpicture} at 3 3

\put{$0$} at  3 5

\put{$1$} at 4 2

\put{ \beginpicture
 \setcoordinatesystem units <.15cm,.15cm>
 \put{$\ssize 0$} at 0 1
 \put{$\ssize 1$} at 1 2
 \put{$\ssize 1$} at 2 1
 \put{$\ssize 0$} at 3 0 
\endpicture} at 4 4

\put{ \beginpicture
 \setcoordinatesystem units <.15cm,.15cm>
 \put{$\ssize 1$} at 0 2
 \put{$\ssize 1$} at 1 1
 \put{$\ssize 0$} at 2 0
\endpicture} at 5 5

\put{ \beginpicture
 \setcoordinatesystem units <.15cm,.15cm>
\put{$\ssize 0$} at 0 0
 \put{$\ssize 1$} at 1 1
 \put{$\ssize 1$} at 2 0
\endpicture} at 5 3

\arrow <1.5mm> [0.25,0.75] from .2 3.8   to .8 3.2
\arrow <1.5mm> [0.25,0.75] from .2 2.2   to .8 2.8
\arrow <1.5mm> [0.25,0.75] from .2 1.8   to .8 1.2
\arrow <1.5mm> [0.25,0.75] from .2   .2   to .8  .8

\arrow <1.5mm> [0.25,0.75] from 1.2 2.8   to 1.8 2.2
\arrow <1.5mm> [0.25,0.75] from 1.2 1.2   to 1.8 1.8
\arrow <1.5mm> [0.25,0.75] from 1.2   .8   to 1.8   .2

\arrow <1.5mm> [0.25,0.75] from 2.2 2.2   to 2.8 2.8
\arrow <1.5mm> [0.25,0.75] from 2.2 1.8   to 2.8 1.2
\arrow <1.5mm> [0.25,0.75] from 2.2   .2   to 2.8  .8

\arrow <1.5mm> [0.25,0.75] from 3.2 4.8   to 3.8 4.2
\arrow <1.5mm> [0.25,0.75] from 3.2 3.2   to 3.8 3.8
\arrow <1.5mm> [0.25,0.75] from 3.2 2.8   to 3.8 2.2
\arrow <1.5mm> [0.25,0.75] from 3.2 1.2   to 3.8 1.8

\arrow <1.5mm> [0.25,0.75] from 4.2 4.2   to 4.8 4.8
\arrow <1.5mm> [0.25,0.75] from 4.2 3.8   to 4.8 3.2
\arrow <1.5mm> [0.25,0.75] from 4.2 2.2   to 4.8 2.8

\setdashes <2mm>
\plot 0 5  0 4.3 /
\plot 0 2.3  0 3.7 /
\plot 0 1  0 1.7 /

\plot 3 4     3 3.3 /
\plot 3 1.3  3 2.7 /
\plot 3 0     3  .7 /

\setdots <1mm>
\plot 1.4 3  2.6 3 /
\plot 4.4 2  5 2 /
\plot 3.4 5  4.6 5 /
\plot 0.4 0 1.6 0  /

\setshadegrid span <.7mm>
\vshade 0 2 2  <z,z,,>  1 1 3 <z,z,,> 2 2 2  /

\setsolid
\put{$f$} at 0.35 1.25
\put{$g$} at 1.6 2.75 

\endpicture} at 0 0
\endpicture}
$$
To be more precise: we should distinguish between the maps
pointing upwards and those pointing downwards, 
the maps pointing upwards have already been discussed
since they are part of the bordered tile, thus let us
concentrate on the maps pointing downwards, they are labeled
$f$ and $g$ in the picture.
We want to see in which way these maps can be factorized in the
category $\mo W$.

First, consider the map   $f:
 \beginpicture
 \setcoordinatesystem units <.15cm,.15cm>
 \put{$\ssize 0$} at 0 0
 \put{$\ssize 1$} at 1 1
\endpicture \longrightarrow 
 \beginpicture
 \setcoordinatesystem units <.15cm,.15cm>
 \put{$\ssize 0$} at 0 1
 \put{$\ssize 1$} at 1 2
 \put{$\ssize 1$} at 2 1
 \put{$\ssize 0$} at 3 0
\endpicture =
 \beginpicture
 \setcoordinatesystem units <.15cm,.15cm>
 \put{$\ssize 0$} at 0 0
 \put{$\ssize 1$} at 1 1
 \put{$\ssize 1$} at 2 2
 \put{$\ssize 0$} at 3 1
\endpicture.
$
We can factor is as follows (always additions or deletions on the right):
$$
\hbox{\beginpicture
 \setcoordinatesystem units <.8cm,.7cm>

\put{\beginpicture
 \setcoordinatesystem units <.1cm,.15cm>
 \put{$\ssize 0$} at 0 0
 \put{$\ssize 1$} at 1 1
\endpicture} at 0.2 0

\put{\beginpicture
 \setcoordinatesystem units <.1cm,.15cm>
 \put{$\ssize 0$} at 0 0
 \put{$\ssize 1$} at 1 1
 \put{$\ssize 1$} at 2 2
 \put{$\ssize 0$} at 3 1
 \put{$\ssize 0$} at 4 0 
\endpicture} at 2 0

\put{\beginpicture
 \setcoordinatesystem units <.1cm,.15cm>
 \put{$\ssize 0$} at 0 0
 \put{$\ssize 1$} at 1 1
 \put{$\ssize 1$} at 2 2
 \put{$\ssize 0$} at 3 1
 \put{$\ssize 0$} at 4 0 
 \put{$\ssize 1$} at 5 1
\endpicture} at 4 0

\put{\beginpicture
 \setcoordinatesystem units <.1cm,.15cm>
 \put{$\ssize 0$} at 0 0
 \put{$\ssize 1$} at 1 1
 \put{$\ssize 1$} at 2 2
 \put{$\ssize 0$} at 3 1
 \put{$\ssize 0$} at 4 0 
 \put{$\ssize 1$} at 5 1
 \put{$\ssize 1$} at 6 2
 \put{$\ssize 0$} at 7 1
 \put{$\ssize 0$} at 8 0 
\endpicture} at 6 0

\arrow <1.5mm> [0.25,0.75] from 0.5 0   to 1.5 0
\arrow <1.5mm> [0.25,0.75] from 2.5 0   to 3.5 0
\arrow <1.5mm> [0.25,0.75] from 4.5 0   to 5.5 0
\arrow <1.5mm> [0.25,0.75] from 6.5 0   to 7.5 0
\put{$f_1$} at 1 0.5
\put{$f_2$} at 3 0.5
\put{$f_3$} at 5 0.5
\put{$f_4$} at 7 0.5

\put{$\cdots$} at 8 0
\put{$\cdots$} at 10 0

\put{\beginpicture
 \setcoordinatesystem units <.1cm,.15cm>
 \put{$\ssize 0$} at 0 0
 \put{$\ssize 1$} at 1 1
 \put{$\ssize 1$} at 2 2
 \put{$\ssize 0$} at 3 1
\endpicture} at 15.8 0

\put{\beginpicture
 \setcoordinatesystem units <.1cm,.15cm>
 \put{$\ssize 0$} at 0 0
 \put{$\ssize 1$} at 1 1
 \put{$\ssize 1$} at 2 2
 \put{$\ssize 0$} at 3 1
 \put{$\ssize 0$} at 4 0 
 \put{$\ssize 1$} at 5 1
 \put{$\ssize 1$} at 6 2
\endpicture} at 14 0

\put{\beginpicture
 \setcoordinatesystem units <.1cm,.15cm>
 \put{$\ssize 0$} at 0 0
 \put{$\ssize 1$} at 1 1
 \put{$\ssize 1$} at 2 2
 \put{$\ssize 0$} at 3 1
 \put{$\ssize 0$} at 4 0 
 \put{$\ssize 1$} at 5 1
 \put{$\ssize 1$} at 6 2
 \put{$\ssize 0$} at 7 1
\endpicture} at 12 0

\arrow <1.5mm> [0.25,0.75] from 10.5 0   to 11.5 0
\arrow <1.5mm> [0.25,0.75] from 12.5 0   to 13.5 0
\arrow <1.5mm> [0.25,0.75] from 14.5 0   to 15.5 0
\put{${}_3f$} at 11 0.5
\put{${}_2f$} at 13 0.5
\put{${}_1f$} at 15 0.5

\endpicture}
$$
Similarly, we look at the map 
$g: 1 
\longrightarrow 
 \beginpicture
 \setcoordinatesystem units <.15cm,.15cm>
 \put{$\ssize 0$} at 0 0
 \put{$\ssize 1$} at 1 1
 \put{$\ssize 1$} at 2 0
\endpicture =
 \beginpicture
 \setcoordinatesystem units <.15cm,.15cm>
 \put{$\ssize 1$} at 0 0
 \put{$\ssize 1$} at 1 1
 \put{$\ssize 0$} at 2 0
\endpicture
$
and factor it  (again always additions or deletions on the right):
$$
\hbox{\beginpicture
 \setcoordinatesystem units <.8cm,.7cm>

\put{\beginpicture
 \setcoordinatesystem units <.1cm,.15cm>
 \put{$\ssize 1$} at 1 1
\endpicture} at 0.2 0

\put{\beginpicture
 \setcoordinatesystem units <.1cm,.15cm>
 \put{$\ssize 1$} at 1 1
 \put{$\ssize 1$} at 2 2
 \put{$\ssize 0$} at 3 1
 \put{$\ssize 0$} at 4 0 
\endpicture} at 2 0

\put{\beginpicture
 \setcoordinatesystem units <.1cm,.15cm>
 \put{$\ssize 1$} at 1 1
 \put{$\ssize 1$} at 2 2
 \put{$\ssize 0$} at 3 1
 \put{$\ssize 0$} at 4 0 
 \put{$\ssize 1$} at 5 1
\endpicture} at 4 0

\put{\beginpicture
 \setcoordinatesystem units <.1cm,.15cm>

 \put{$\ssize 1$} at 1 1
 \put{$\ssize 1$} at 2 2
 \put{$\ssize 0$} at 3 1
 \put{$\ssize 0$} at 4 0 
 \put{$\ssize 1$} at 5 1
 \put{$\ssize 1$} at 6 2
 \put{$\ssize 0$} at 7 1
 \put{$\ssize 0$} at 8 0 
\endpicture} at 6 0

\arrow <1.5mm> [0.25,0.75] from 0.5 0   to 1.5 0
\arrow <1.5mm> [0.25,0.75] from 2.5 0   to 3.5 0
\arrow <1.5mm> [0.25,0.75] from 4.5 0   to 5.5 0
\arrow <1.5mm> [0.25,0.75] from 6.5 0   to 7.5 0
\put{$g_1$} at 1 0.5
\put{$g_2$} at 3 0.5
\put{$g_3$} at 5 0.5
\put{$g_4$} at 7 0.5

\put{$\cdots$} at 8 0
\put{$\cdots$} at 10 0

\put{\beginpicture
 \setcoordinatesystem units <.1cm,.15cm>
 \put{$\ssize 1$} at 1 1
 \put{$\ssize 1$} at 2 2
 \put{$\ssize 0$} at 3 1
\endpicture} at 15.8 0

\put{\beginpicture
 \setcoordinatesystem units <.1cm,.15cm>
 \put{$\ssize 1$} at 1 1
 \put{$\ssize 1$} at 2 2
 \put{$\ssize 0$} at 3 1
 \put{$\ssize 0$} at 4 0 
 \put{$\ssize 1$} at 5 1
 \put{$\ssize 1$} at 6 2
\endpicture} at 14 0

\put{\beginpicture
 \setcoordinatesystem units <.1cm,.15cm>
 \put{$\ssize 1$} at 1 1
 \put{$\ssize 1$} at 2 2
 \put{$\ssize 0$} at 3 1
 \put{$\ssize 0$} at 4 0 
 \put{$\ssize 1$} at 5 1
 \put{$\ssize 1$} at 6 2
 \put{$\ssize 0$} at 7 1
\endpicture} at 12 0

\arrow <1.5mm> [0.25,0.75] from 10.5 0   to 11.5 0
\arrow <1.5mm> [0.25,0.75] from 12.5 0   to 13.5 0
\arrow <1.5mm> [0.25,0.75] from 14.5 0   to 15.5 0
\put{${}_3g$} at 11 0.5
\put{${}_2g$} at 13 0.5
\put{${}_1g$} at 15 0.5

\endpicture}
$$
Now if $i$ is odd, then the maps $f_i$ and $g_i$ both are obtained by  the addition of a hook, thus 
they are irreducible, and both ${}_if$ and ${}_ig$ are obtained by the deletion of a cohook, 
thus they also are irreducible. But for $i$ even, all 
the maps $f_i,\ {}_if,\  g_i,\  {}_ig$ belong to the infinite radical $\rad^\omega$
(and actually not to $(\rad^\omega)^2$). By definition (see for example \cite{Sch}), the infinite
radical $\rad^\omega$ is the intersection of the powers $\rad^d$ with $d\in \mathbb N.$ 
It follows that the maps $f,\ g$  belong to all powers of the infinite radical, thus
to $\rad^{\omega 2}$.

The sequences of maps displayed here show that $f$ factors through the
adic module given by the (expanding) word $(\beta\gamma\beta^{-1}\alpha^{-1})^\infty$, 
whereas $g$ factors through the Pr\"ufer module given by the word (contracting) word
$\gamma\beta^{-1}\alpha^{-1}(\beta)^\infty$. Note that both these modules are 
indecomposable algebraically compact modules which are not used in the quilt.

Let us try to following the factorization in the quilt. First, we consider again $f$.
Looking at the maps $f_i$ we have noted already that those with odd index are irreducible,
they belong to $\mathcal P'$, whereas those with even index factor through an upwards path
in the tile $\mathcal T$. Similarly, the maps ${}_if$ with odd index are irreducible maps
inside $\mathcal R'0$, whereas those with even index factor throgh a downwards path
in the tile $\mathcal T$:
$$
\hbox{\beginpicture
\setcoordinatesystem units <1.4cm,1.4cm>
\put{\beginpicture
\multiput{} at 0 4.4  0 -.4 /
\multiput{$\mathcal P'$} at .35 1.25  2.75 3.65 /
\plot -.4 1.6  1.6 -.4 /
\plot 2.4 4.4  4.4  2.4 /

\plot -.4 2.4  1.6 4.4 /
\plot 2.4 -0.4  4.4 1.6 /

\setsolid
\plot 1.8 -.2  2 0  2.2 -.2 /
\plot 1.8 4.2  2 4  2.2 4.2 /
\plot -.2 1.8  0 2 -.2 2.2 /
\plot 4.2 2.2  4 2 4.2 1.8 /

\plot 0.9 1.1  0 2  0.9 2.9 /
\plot 1.1 3.1  2 4  2.9 3.1 /
\plot 3.1 2.9  4 2  3.1 1.1 /
\plot 2.9 0.9  2 0  1.1 0.9 /

\multiput{$\bigcirc$} at 2 0  0 2  2 4  4 2 /
\multiput{$\bullet$} at 1 1  3 3  0.8 0.8  0.6 0.6  3.2 3.2  3.4 3.4  1.2 1.2  2.8 2.8 /
\multiput{$\circ$} at 1 3  3 1  0.8 3.2  3.2 0.8  0.6 3.4  3.4 0.6  1.2 2.8  2.8 1.2  /

\multiput{$\blacksquare$} at 2 2 /
\plot 0.6 0.6  3.4 3.4 /
\plot 0.6 3.4  3.4 0.6 /

\put{} at 0 0
\put{} at 4 4 

\arrow <2.5mm> [0.25,0.75] from 0.1 1.7 to 2.4 4  
\arrow <2.5mm> [0.25,0.75] from 0.3 1.5 to 2.6 3.8  
\arrow <2.5mm> [0.25,0.75] from 0.6 1.2 to 2.9 3.5
\arrow <2.5mm> [0.25,0.75] from 0.7 1.1 to 3.0 3.4  
\put{$f_2$} at 1.0 2.43
\put{$f_4$} at 1.25 2.2
\setdots <.5mm>
\plot 2 3  2.15 2.85 /
\plot 2.35 2.65  2.5 2.5 /
\plot 1.0 2.0 1.15 1.85 /
\plot 1.35 1.65  1.5 1.5 /
\endpicture} at 0 0
\put{\beginpicture
\multiput{} at 0 4.4  0 -.4 /
\multiput{$\mathcal R'_\infty$} at 2.8 0.35 .35 2.7 /
\plot -.4 1.6  1.6 -.4 /
\plot 2.4 4.4  4.4  2.4 /

\plot -.4 2.4  1.6 4.4 /
\plot 2.4 -0.4  4.4 1.6 /

\setsolid
\plot 1.8 -.2  2 0  2.2 -.2 /
\plot 1.8 4.2  2 4  2.2 4.2 /
\plot -.2 1.8  0 2 -.2 2.2 /
\plot 4.2 2.2  4 2 4.2 1.8 /

\plot 0.9 1.1  0 2  0.9 2.9 /
\plot 1.1 3.1  2 4  2.9 3.1 /
\plot 3.1 2.9  4 2  3.1 1.1 /
\plot 2.9 0.9  2 0  1.1 0.9 /

\multiput{$\bigcirc$} at 2 0  0 2  2 4  4 2 /
\multiput{$\bullet$} at 1 1  3 3  0.8 0.8  0.6 0.6  3.2 3.2  3.4 3.4  1.2 1.2  2.8 2.8 /
\multiput{$\circ$} at 1 3  3 1  0.8 3.2  3.2 0.8  0.6 3.4  3.4 0.6  1.2 2.8  2.8 1.2  /

\multiput{$\blacksquare$} at 2 2 /
\plot 0.6 0.6  3.4 3.4 /
\plot 0.6 3.4  3.4 0.6 /

\put{} at 0 0
\put{} at 4 4 

\arrow <2.5mm> [0.25,0.75] from 0.1 2.3 to 2.4 0  
\arrow <2.5mm> [0.25,0.75] from 0.3 2.5 to 2.6 0.2  
\arrow <2.5mm> [0.25,0.75] from 0.6 2.8 to 2.9 .5
\arrow <2.5mm> [0.25,0.75] from 0.7 2.9 to 3.0 .6  
\put{${}_2f$} at .6 1.8
\put{${}_4f$} at 1 1.8

\setdots <.5mm>
\plot 2 1  2.15 1.15 /
\plot 2.35 1.35  2.5 1.5 /
\plot 1.0 2.0 1.15 2.15 /
\plot 1.35 2.35  1.5 2.5 /

\endpicture} at 5.2 0
\endpicture}
$$
What is of importance is the change of the direction which we encounter:
as long as we deal with the maps $f_i$ we work with maps pointing upwards,
but after we have passed the adic module (which is hidden) we deal with the
maps ${}_if$ and they point downwards. 

There is the similar feature for the map $g$: 
$$
\hbox{\beginpicture
\setcoordinatesystem units <1.4cm,1.4cm>
\put{\beginpicture
\multiput{} at 0 4.4  0 -.4 /
\multiput{$\mathcal R'_0$} at 1.25 0.25 3.75 2.75 /
\plot -.4 1.6  1.6 -.4 /
\plot 2.4 4.4  4.4  2.4 /

\plot -.4 2.4  1.6 4.4 /
\plot 2.4 -0.4  4.4 1.6 /

\setsolid
\plot 1.8 -.2  2 0  2.2 -.2 /
\plot 1.8 4.2  2 4  2.2 4.2 /
\plot -.2 1.8  0 2 -.2 2.2 /
\plot 4.2 2.2  4 2 4.2 1.8 /

\plot 0.9 1.1  0 2  0.9 2.9 /
\plot 1.1 3.1  2 4  2.9 3.1 /
\plot 3.1 2.9  4 2  3.1 1.1 /
\plot 2.9 0.9  2 0  1.1 0.9 /

\multiput{$\bigcirc$} at 2 0  0 2  2 4  4 2 /
\multiput{$\bullet$} at 1 1  3 3  0.8 0.8  0.6 0.6  3.2 3.2  3.4 3.4  1.2 1.2  2.8 2.8 /
\multiput{$\circ$} at 1 3  3 1  0.8 3.2  3.2 0.8  0.6 3.4  3.4 0.6  1.2 2.8  2.8 1.2  /

\multiput{$\blacksquare$} at 2 2 /
\plot 0.6 0.6  3.4 3.4 /
\plot 0.6 3.4  3.4 0.6 /

\put{} at 0 0
\put{} at 4 4

\arrow <2.5mm> [0.25,0.75] from 1.7 .1 to  4 2.4   
\arrow <2.5mm> [0.25,0.75] from 1.5 .3 to 3.8 2.6  
\arrow <2.5mm> [0.25,0.75] from 1.2  .6 to 3.5 2.9 
\arrow <2.5mm> [0.25,0.75] from 1.1  .7 to 3.4 3.0  

\put{$g_2$} at  2.43 1.0
\put{$g_4$} at  2.2 1.2
\setdots <.5mm>
\plot 3 2  2.85 2.15  /
\plot 2.65 2.35  2.5 2.5 /
\plot 2.0 1.0 1.85 1.15 /
\plot 1.65 1.35  1.5 1.5 /
\endpicture} at 0 0

\put{\beginpicture
\multiput{} at 0 4.4  0 -.4 /
\multiput{$\mathcal Q'$} at 3.65 1.25  1.3 3.6 /
\plot -.4 1.6  1.6 -.4 /
\plot 2.4 4.4  4.4  2.4 /

\plot -.4 2.4  1.6 4.4 /
\plot 2.4 -0.4  4.4 1.6 /

\setsolid
\plot 1.8 -.2  2 0  2.2 -.2 /
\plot 1.8 4.2  2 4  2.2 4.2 /
\plot -.2 1.8  0 2 -.2 2.2 /
\plot 4.2 2.2  4 2 4.2 1.8 /

\plot 0.9 1.1  0 2  0.9 2.9 /
\plot 1.1 3.1  2 4  2.9 3.1 /
\plot 3.1 2.9  4 2  3.1 1.1 /
\plot 2.9 0.9  2 0  1.1 0.9 /

\multiput{$\bigcirc$} at 2 0  0 2  2 4  4 2 /
\multiput{$\bullet$} at 1 1  3 3  0.8 0.8  0.6 0.6  3.2 3.2  3.4 3.4  1.2 1.2  2.8 2.8 /
\multiput{$\circ$} at 1 3  3 1  0.8 3.2  3.2 0.8  0.6 3.4  3.4 0.6  1.2 2.8  2.8 1.2  /

\multiput{$\blacksquare$} at 2 2 /
\plot 0.6 0.6  3.4 3.4 /
\plot 0.6 3.4  3.4 0.6 /

\put{} at 0 0
\put{} at 4 4 

\arrow <2.5mm> [0.25,0.75] from 1.7 3.9 to 4. 1.6   
\arrow <2.5mm> [0.25,0.75] from 1.5 3.7 to 3.8 1.4   
\arrow <2.5mm> [0.25,0.75] from 1.2 3.4 to 3.5 1.1   
\arrow <2.5mm> [0.25,0.75] from 1.1 3.3 to 3.4 1.   

\put{${}_2g$} at 2.15 3.3
\put{${}_4g$} at 1.8 3.2

\setdots <.5mm>
\plot 2.85 1.85  3 2 /
\plot 2.55 1.55  2.7 1.7 /
\plot 1.9 2.9 2.05 3.05 /
\plot 1.55 2.55  1.7 2.7 /

\endpicture} at 5.2 0

\endpicture}
$$
Here we deal first with the maps $g_i$  pointing wards,
and after we have passed the hidden Pr\"ufer module we deal with the
maps ${}_ig$ and they point again downwards. 
     \medskip

Altogether we may say that the maps $f$ and $g$ are embedded into the
quilt of $W$ with a kind of crossing, so that the shaded parts are
connected by a square which is folded over:
$$
\hbox{\beginpicture
\setcoordinatesystem units <1cm,1cm>
 
\multiput{\beginpicture

\put{$0$}  at 0 0

\put{ \beginpicture
 \setcoordinatesystem units <.15cm,.15cm>
 \put{$\ssize 0$} at 0 1
 \put{$\ssize 1$} at 1 2
 \put{$\ssize 1$} at 2 1
 \put{$\ssize 0$} at 3 0
\endpicture} at 1 1

\put{ \beginpicture
 \setcoordinatesystem units <.15cm,.15cm>
 \put{$\ssize 1$} at 0 2
 \put{$\ssize 1$} at 1 1
 \put{$\ssize 0$} at 2 0
\endpicture} at 2 0

\put{ \beginpicture
 \setcoordinatesystem units <.15cm,.15cm>
 \put{$\ssize 0$} at 0 0
 \put{$\ssize 1$} at 1 1
 \put{$\ssize 1$} at 2 0
\endpicture} at 2 2

\put{ \beginpicture
 \setcoordinatesystem units <.15cm,.15cm>
 \put{$\ssize 1$} at 0 1
 \put{$\ssize 1$} at 1 0
\endpicture} at 3 1

\put{ \beginpicture
 \setcoordinatesystem units <.15cm,.15cm>
 \put{$\ssize 0$} at 0 0
 \put{$\ssize 1$} at 1 1
\endpicture} at 3 3

\put{$1$} at 4 2

\arrow <1.5mm> [0.25,0.75] from .2   .2   to .8  .8
\arrow <1.5mm> [0.25,0.75] from 1.2   1.2   to 1.8  1.8
\arrow <1.5mm> [0.25,0.75] from 2.2   2.2   to 2.8  2.8

\arrow <1.5mm> [0.25,0.75] from 1.2   .8   to 1.8   .2

\arrow <1.5mm> [0.25,0.75] from 2.2 1.8   to 2.8 1.2
\arrow <1.5mm> [0.25,0.75] from 2.2   .2   to 2.8  .8

\arrow <1.5mm> [0.25,0.75] from 3.2 2.8   to 3.8 2.2
\arrow <1.5mm> [0.25,0.75] from 3.2 1.2   to 3.8 1.8

\setdashes <2mm>

\plot 3 3.5     3 3.3 /
\plot 3 1.3  3 2.7 /
\plot 3 0     3  .7 /

\setshadegrid span <.7mm>
\vshade 0 0 0  <z,z,,>  2 0 2 <z,z,,> 3 1 3 <z,z,,> 4 2 2  /
\put{} at 0 3.5
\endpicture} at 0 0  5 0 /
\setquadratic
\plot 2.2 .6  3.3 1  4.6 0.6 /
\plot 1.4 1.4 3.1 1  4. 0 /
\arrow <1.5mm> [0.25,0.75] from 4.55 0.63  to 4.61 0.6
\arrow <1.5mm> [0.25,0.75] from 4 0  to 4.05 -.2
\put{$f$} at 2 1.6
\put{$g$} at 2.5 .55

\endpicture}
$$

\section{ The Auslander-Reiten quiver of a barbell.}

\begin{proposition} Barbell algebras are of non-polynomial growth.
\end{proposition}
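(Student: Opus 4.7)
The plan is to establish non-polynomial growth for a barbell $B = B(\epsilon,\eta,\epsilon')$ with non-serial bar $\eta$ by exhibiting an exponentially growing family of pairwise inequivalent primitive cyclic words in $\Omega(B)$. By the Gelfand--Ponomarev classification recalled in Section 2, each primitive cyclic word $w$ of length $n$ produces a $1$-parameter family $\{M(w,\lambda)\}_{\lambda \in k\setminus\{0\}}$ of band modules of dimension $n$, and inequivalent primitive cyclic words give non-isomorphic families. Hence exponentially many primitive cyclic words of length at most $N$ yield exponentially many $1$-parameter families of indecomposable $B$-modules of dimension at most $N$, which rules out polynomial growth.

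First I would exploit the non-seriality of the bar. After barification, the quiver of $B$ has two lobes coming from $\epsilon$ and $\epsilon'$, joined along the bar, and the non-serial $\eta$ contains at least one interior turning point; pick such a turning point as a basepoint. Around this basepoint I would construct two elementary words $p_1, p_2 \in \Omega(B)$ in which $p_i$ runs once through the bar into the respective lobe, around it, and back through the bar using the turning point to reverse direction. The crucial observation is that the monomial relations of $B$ are supported only at the two $3$-vertices at the endpoints of the bar (the length-two barification relations) and inside the lobes themselves; the interior of $\eta$ carries no relation. Consequently the $p_i$ and all freely reduced cyclic concatenations of them meet no forbidden subword.

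Next I would concatenate. For every cyclically reduced sequence $s = (p_{i_1}^{\delta_1}, \ldots, p_{i_m}^{\delta_m})$ with $i_j \in \{1,2\}$ and $\delta_j \in \{\pm 1\}$, the word $w_s = p_{i_1}^{\delta_1} \cdots p_{i_m}^{\delta_m}$ is a cyclic word in $\Omega(B)$, and it is primitive provided $s$ is not itself a proper power cyclically. The number of cyclic reduction classes of length at most $m$ in the free semigroup on $\{p_1^{\pm 1}, p_2^{\pm 1}\}$ grows exponentially in $m$ (this is the classical count of primitive conjugacy classes in the free group $F_2$), so the number of primitive cyclic words of length at most $N$ grows at least like $c^N/N$ with $c > 1$. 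Combined with the first paragraph, this establishes non-polynomial growth.

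The main obstacle is verifying that distinct cyclic reduction classes $s$ yield distinct primitive cyclic words $w_s$ (up to rotation and inversion). I expect this to follow by showing that in any $w_s$ the decomposition into the factors $p_i^{\pm 1}$ is intrinsic: each factor is recognised as a maximal subword whose support enters exactly one lobe of the barbell, so $s$ can be read off from $w_s$ uniquely. If $\eta$ were serial, only one direction of traversal would be consistent with the barification relations and one would fall back to a single elementary cyclic word (which is precisely the wind-wheel situation of Section 6, where the algebra is $1$-domestic); the non-seriality of $\eta$ is exactly what frees up a genuine rank-$2$ combinatorics on which the exponential count rests.
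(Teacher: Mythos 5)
Your strategy---exhibit exponentially many pairwise inequivalent primitive cyclic words and hence exponentially many one-parameter families of band modules---is genuinely different from the paper's argument. The paper considers the semigroup $\mathcal N(\alpha_1)$ of cyclic words starting with the arrow $\alpha_1$ and ending in an inverse letter, exhibits in it the two distinct equal-length elements $uvwv^{-1}$ and $uvw^{-1}v^{-1}$ (so that the semigroup cannot be generated by a single element), and quotes the criterion that this already forces non-domesticity and non-polynomial growth; your route replaces that citation by an explicit count. The strategy is viable, but two steps fail as written.

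First, it is not true that every cyclically reduced sequence in $p_1^{\pm 1},p_2^{\pm 1}$ yields an element of $\Omega(B)$. Your basepoint $c$ is a $2$-vertex in the interior of the bar, so a closed walk $p_i$ that stays on the lobe-$i$ side of $c$ must leave and re-enter $c$ through the same edge; hence the last letter of $p_i^{\pm 1}$ is the formal inverse of its first letter, and every juxtaposition $p_i^{\delta}p_i^{\delta'}$ violates condition (W1) at the junction. In particular $p_1^2$ is not a word, although $(p_1,p_1)$ is cyclically reduced in $F_2$. Only the alternating products $p_1^{\pm 1}p_2^{\pm 1}\cdots p_1^{\pm 1}p_2^{\pm 1}$ are legitimate cyclic words. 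This is repairable---there are still $4^m$ alternating sequences with $2m$ factors, so the exponential count survives---but the appeal to the classical count of primitive conjugacy classes in the free group is not the right model and must be replaced by this (equally elementary) alternating count.

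Second, the proposition concerns all barbell algebras, and your reason for discarding the serial-bar case is incorrect: a barbell with serial bar is not the wind wheel of Section 6, since the wind wheel carries the additional long zero relation $u_{i,\omega}v_i(u_{j,\omega})^{-1}$ which the barbell lacks. In Example 1 both $\alpha\beta\gamma\beta^{-1}$ and $\alpha\beta\gamma^{-1}\beta^{-1}$ are cyclic words, as are all the products $\prod_i \alpha\beta\gamma^{\epsilon_i}\beta^{-1}$ with $\epsilon_i\in\{\pm 1\}$, so the serial-bar barbell is also of non-polynomial growth; it merely fails to be minimal representation-infinite. The paper's two generators $uvwv^{-1}$ and $uvw^{-1}v^{-1}$ handle both cases uniformly, which is the advantage of anchoring the argument at a lobe rather than at an interior turning point of the bar (note also that a bar of length one has no interior vertex, so your basepoint need not even exist).
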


\begin{proof} Given an arrow $\alpha$, we denote by $\mathcal N(\alpha)$ 
the set of cyclic words starting with $\alpha$ and ending in an inverse
letter (for all the words in $\mathcal N(\alpha)$, the last letter is a fixed one,
namely the inverse of the only arrow 
different from $\alpha$ which has the same end point as $\alpha$).
Clearly, $\mathcal N(\alpha)$ is a semigroup.
Note that the given algebra is non-domestic (and then even of non-polynomial growth)
if and only if there exists an arrow
$\alpha$ such that 
$\mathcal N(\alpha)$ is non-empty and not cyclic
([R1], Proposition 2 and its proof). 

Here we take $\alpha = \alpha_1.$ We assume that the length of $\epsilon, \eta, \epsilon'$ 
is $r,s,t$, respectively. Let $u = \alpha_1^{\epsilon(1)}\cdots \alpha_r^{\epsilon(r)}$,
$v = \alpha_{r+1}^{\epsilon(r+1)}\cdots \alpha_{r+s}^{\epsilon(r+s)}$ and
$w = \alpha_{r+s+1}^{\epsilon(r+s+1)}\cdots \alpha_{r+s+t}^{\epsilon(r+s+t)}$.
Then both $uvwv^{-1}$ and $uvw{-1}v^{-1}$ are elements of $\mathcal N(\alpha)$.
This shows that $B(\epsilon,\eta,\epsilon')$ is of non-polynomial growth.
 \end{proof}
 \medskip

We consider the algebra given in Example 2. The non-regular component looks as follows:
$$ 
\hbox{\beginpicture
\setcoordinatesystem units <1cm,1cm>
\put{} at 0 5.6
\put{} at 0 -0.7 
\put{$\circ$} at 0 0
\put{$\circ$} at 2 0
\put{$S(3)$} at 4 0
\put{$\smallmatrix 3 & \cr
                     & 2 \endsmallmatrix$} at 6 0
\put{$\circ$} at 8 0

\put{$\circ$} at 1 1
\put{$I(3)$} at 3 1
\put{$P(3)$} at 7 1

\put{$\circ$} at 0 2
\put{$I(2)$} at 2 2
\put{$P(2)$} at 6 2
\put{$\circ$} at 8 2

\put{$\circ$} at 1 3
\put{$I(1)$} at 3 3
\put{$P(1)$} at 7 3

\put{$\circ$} at 0 4
\put{$\circ$} at 2 4
\put{$S(1)$} at 4 4
\put{$\smallmatrix  & 1\cr
                  2 &  \endsmallmatrix$} at 6 4
\put{$\circ$} at 8 4

\put{$\circ$} at 1 5
\put{$\circ$} at 3 5
\put{$\circ$} at 5 5
\put{$\circ$} at 7 5

\arrow <2mm> [0.25,0.75] from 0.3 0.3 to 0.7 0.7
\arrow <2mm> [0.25,0.75] from 2.3 0.3 to 2.7 0.7
\arrow <2mm> [0.25,0.75] from 6.3 0.3 to 6.7 0.7

\arrow <2mm> [0.25,0.75] from 0.3 2.3 to 0.7 2.7
\arrow <2mm> [0.25,0.75] from 2.3 2.3 to 2.7 2.7

\arrow <2mm> [0.25,0.75] from 0.3 4.3 to 0.7 4.7
\arrow <2mm> [0.25,0.75] from 2.3 4.3 to 2.7 4.7
\arrow <2mm> [0.25,0.75] from 4.3 4.3 to 4.7 4.7
\arrow <2mm> [0.25,0.75] from 6.3 4.3 to 6.7 4.7

\arrow <2mm> [0.25,0.75] from 6.3 2.3 to 6.7 2.7

\arrow <2mm> [0.25,0.75] from 3.3 3.3 to 3.7 3.7
\arrow <2mm> [0.25,0.75] from 1.3 1.3 to 1.7 1.7
\arrow <2mm> [0.25,0.75] from 1.3 3.3 to 1.7 3.7

\arrow <2mm> [0.25,0.75] from 7.3 1.3 to 7.7 1.7  

\arrow <2mm> [0.25,0.75] from 7.3 3.3 to 7.7 3.7

\arrow <2mm> [0.25,0.75] from 1.4 -0.6 to 1.7 -0.3
\arrow <2mm> [0.25,0.75] from 3.4 -0.6 to 3.7 -0.3
\arrow <2mm> [0.25,0.75] from 5.4 -0.6 to 5.7 -0.3
\arrow <2mm> [0.25,0.75] from 7.4 -0.6 to 7.7 -0.3

\arrow <2mm> [0.25,0.75] from 1.3 0.7 to 1.7 0.3
\arrow <2mm> [0.25,0.75] from 3.3 0.7 to 3.7 0.3
\arrow <2mm> [0.25,0.75] from 7.3 0.7 to 7.7 0.3

\arrow <2mm> [0.25,0.75] from 1.3 2.7 to 1.7 2.3
\arrow <2mm> [0.25,0.75] from 7.3 2.7 to 7.7 2.3

\arrow <2mm> [0.25,0.75] from 7.3 4.7 to 7.7 4.3
\arrow <2mm> [0.25,0.75] from 5.3 4.7 to 5.7 4.3
\arrow <2mm> [0.25,0.75] from 3.3 4.7 to 3.7 4.3
\arrow <2mm> [0.25,0.75] from 1.3 4.7 to 1.7 4.3

\arrow <2mm> [0.25,0.75] from 8.4 5.6 to 8.7 5.3

\arrow <2mm> [0.25,0.75] from 6.3 3.7 to 6.7 3.3
\arrow <2mm> [0.25,0.75] from 6.3 1.7 to 6.7 1.3

\arrow <2mm> [0.25,0.75] from 0.3 1.7 to 0.7 1.3
\arrow <2mm> [0.25,0.75] from 2.3 1.7 to 2.7 1.3

\arrow <2mm> [0.25,0.75] from 0.3 3.7 to 0.7 3.3
\arrow <2mm> [0.25,0.75] from 2.3 3.7 to 2.7 3.3

\arrow <2mm> [0.25,0.75] from 0.4 5.6 to 0.7 5.3
\arrow <2mm> [0.25,0.75] from 2.4 5.6 to 2.7 5.3
\arrow <2mm> [0.25,0.75] from 4.4 5.6 to 4.7 5.3
\arrow <2mm> [0.25,0.75] from 6.4 5.6 to 6.7 5.3

\plot 1.3 5.3  1.6 5.6 /
\plot 3.3 5.3  3.6 5.6 /
\plot 5.3 5.3  5.6 5.6 /
\plot 7.3 5.3  7.6 5.6 /
\plot 8.3 4.3  8.6 4.6 /
\plot 8.3 2.3  8.6 2.6 /
\plot 8.3 0.3  8.6 0.6 /
\plot 8.3 3.7  8.6 3.4 /
\plot 8.3 1.7  8.6 1.4 /
\plot 8.3 -0.3  8.6 -0.6 /
\plot 6.3 -0.3  6.6 -0.6 /
\plot 4.3 -0.3  4.6 -0.6 /
\plot 2.3 -0.3  2.6 -0.6 /
\plot 0.3 -0.3  0.6 -0.6 /

\setdots<1mm>
\plot 4.5 0  5.7 0 /
\plot 4.45 4  5.8 4 /
\setshadegrid span <0.7mm>
\vshade 0 -0.8 5.6 <,z,,> 2  -0.8 5.6 /
\vshade 2 2 5.6  <z,z,,> 4 4 5.6 <z,z,,>
    6  4 5.6 <z,z,,> 7 3 5.6 /   
\vshade 2 -0.8 2 <z,z,,> 4 -0.8 0 <z,z,,> 6 -0.8 0 <z,z,,>
    7 -0.8 1 /
\vshade 7 -0.8 5.6 <z,z,,> 8.6 -0.8 5.6 /
\vshade 6 2 2 <z,z,,> 7 1 3 /

\setdashes <2mm>
\plot 3.3 2.7  5.7 0.3 /
\plot 4.3 3.7  5.7 2.3 /
\plot 3.3 1.3  5.7 3.7 /
\plot 4.3 0.3  5.7 1.7 /

\endpicture}
$$
The component contains 10 of the 12 string modules which are  boundary
modules; the remaining two string modules which are boundary modules
are the serial string modules of length 3
(with composition factors going up $2,1,1$ and $2,3,3$, respectively).
They form the boundary of a stable tube of rank 2; the boundary meshes are
those provided by the arrows $1\to 2$ and $3\to 2$ respectively:
$$ 
\hbox{\beginpicture
\setcoordinatesystem units <.9cm,.9cm>
\put{} at 0 3
\multiput{$\circ$} at 0 2  2 2  4 2 /

\put{$\circ$} at 1 1 
\put{$\circ$} at 3 1 

\put{$211$} at 0 0
\put{$233$} at 2 0 
\put{$211$} at 4 0 
\arrow <1.5mm> [0.25,0.75] from 0.3 0.3 to 0.7 0.7
\arrow <1.5mm> [0.25,0.75] from 2.3 0.3 to 2.7 0.7

\arrow <1.5mm> [0.25,0.75] from 1.3 0.7 to 1.7 0.3
\arrow <1.5mm> [0.25,0.75] from 3.3 0.7 to 3.7 0.3

\arrow <1.5mm> [0.25,0.75] from 1.3 2.7 to 1.7 2.3
\arrow <1.5mm> [0.25,0.75] from 3.3 2.7 to 3.7 2.3

\arrow <1.5mm> [0.25,0.75] from 1.3 1.3 to 1.7 1.7
\arrow <1.5mm> [0.25,0.75] from 3.3 1.3 to 3.7 1.7

\arrow <1.5mm> [0.25,0.75] from 0.3 2.3 to 0.7 2.7
\arrow <1.5mm> [0.25,0.75] from 2.3 2.3 to 2.7 2.7
\arrow <1.5mm> [0.25,0.75] from 0.3 1.7 to 0.7 1.3
\arrow <1.5mm> [0.25,0.75] from 2.3 1.7 to 2.7 1.3
\setdashes <1mm>
\plot 0 0.3  0 1.7 /
\plot 4 0.3  4 1.7 /

\plot 0 2.3  0 3 /
\plot 4 2.3  4 3 /

\setshadegrid span <0.7mm>
\vshade 0  0 3  4 0 3 /

\endpicture}
$$
	\medskip

Let us have another look at the non-regular component. 
The picture shows nicely a phenomenon which has attracted a lot of attention
lately, in some other context, namely when dealing with cluster tilted algebras. 
Let us recall the relevant facts:
Given a cluster tilted algebra $\Lambda$,
the category $\mo \Lambda$ is obtained from the corresponding cluster category
by factoring out a cluster tilting object \cite{BMR}.
Looking at a vertex $a$ of the quiver
of $\Lambda$, the corresponding indecomposable projective $\Lambda$-module
$P(a)$ and the  corresponding indecomposable injective $\Lambda$-module $I(a)$
satisfy  
$$
\tau^2 P(a) =  I(a),
$$ 
where $\tau$ is the Auslander-Reiten translation in the cluster category (if we denote
by $\tau_\Lambda$ the Auslander-Reiten translation in the category $\mo \Lambda$,
then $\tau_\Lambda M = \tau M$ for any indecomposable non-projective $\Lambda$-module,
whereas, of course, $\tau_\Lambda M = 0$ for $M$ indecomposable projective).

As we see in the picture, the non-regular component is a translation quiver which
can be considered as part of a regular translation quiver $\Xi$ 
obtained by adding
a new vertex $p'$ for every  projective vertex $p$, such that the translate of $p$ is $p'$ 
and   the translate of $p'$ is an injective vertex. Let us define a function $f$ on the set
of vertices of $\Xi$ as follows: if $x$ is an old vertex, let $f(x)$ be the length of the corresponding module,
if $x = p'$ is a new vertex, let $f(x) = -1.$ Then $f$ satisfies the following property:
$$
 f(z) + f(\tau z) = \sum f(y), \quad\text{where we sum over all arrows $y \to z$ with $f(y) > 0$,}
$$
for all vertices $z$ of $\Xi$ (one may say that such a function with values in $\mathbb Z$ is 
''cluster-additive'', see \cite{Rca}).
							        \medskip

Other similarities with cluster tilted algebras (see \cite{KR}) should be mentioned:

\begin{proposition} The barbell algebras are Gorenstein algebras of Gorenstein dimension $1$
und the stable category of Cohen-Macaulay modules is Calabi-Yau of CY-dimension $3$.
\end{proposition}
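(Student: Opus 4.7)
The plan is to verify the $1$-Gorenstein condition by direct inspection of projective and injective modules. For a barbell algebra $\Lambda = B(\epsilon,\eta,\epsilon')$ the indecomposable projective $P(a)$ at a vertex $a$ is a string or biserial module whose structure is explicitly read off from the quiver and the two length-$2$ barification relations $\alpha_0\eta(\alpha_0')^{-1}$ and $(\alpha_t')^{-1}\alpha_t$: at the two junction ($3$-) vertices of the bar it is biserial, at every other vertex it is serial. For each vertex one checks that the injective envelope of $P(a)$ is a direct sum of at most two indecomposable injectives and that the cokernel of $P(a)\hookrightarrow I$ is again a direct sum of indecomposable injectives; only those two short relations intervene, so this is a short case analysis. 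This gives $\mathrm{id}(P(a))\le 1$ for every $a$. The class of barbell algebras is visibly closed under taking opposites, so the same argument applied to $\Lambda^{\mathrm{op}}$ yields $\mathrm{pd}(I(a)) \le 1$; hence $\Lambda$ is $1$-Gorenstein.

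Once $\Lambda$ is $1$-Gorenstein, the maximal Cohen--Macaulay modules are $\mathrm{CM}(\Lambda) = \{M\mid \Ext^i_\Lambda(M,\Lambda)=0 \text{ for all } i>0\}$ and the stable category $\underline{\mathrm{CM}}(\Lambda)$ is triangulated with suspension $\Omega^{-1}$. The next step is to classify the non-projective indecomposable objects of $\underline{\mathrm{CM}}(\Lambda)$. Using the standard recipe for syzygies of string modules over monomial special biserial algebras, I expect these to be precisely the string modules supported on one of the two cap cycles of the barbell, together with the band modules coming from these cycles. Geometrically they form the rank-$2$ stable tube (together with the attached homogeneous tubes) whose boundary consists of the two serial string modules of length $3$ at the endpoints of $\eta$, as depicted at the end of Section 14.

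To establish $3$-Calabi--Yau, it suffices to produce a natural isomorphism $\nu \cong \Omega^{-3}$ of endofunctors of $\underline{\mathrm{CM}}(\Lambda)$, where $\nu$ is the Nakayama functor. For a $1$-Gorenstein algebra one has $\tau \cong \Omega^2 \nu$ on the stable category, so establishing $3$-CY reduces to the relation $\tau \cong \Omega^{-1}$, equivalently $\Omega \tau \cong \mathrm{id}$ in $\underline{\mathrm{CM}}(\Lambda)$. On the rank-$2$ tube (and on each homogeneous tube attached to a cap) the Auslander--Reiten translate acts with a transparent period, and $\Omega$ can be computed explicitly at each mesh by reading off the projective cover from the cap structure. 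Checking $\Omega\tau \cong \mathrm{id}$ on representatives of each $\tau$-orbit in $\underline{\mathrm{CM}}(\Lambda)$ then gives the claim.

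The main obstacle is the second and third steps together: one must give a clean uniform description of $\mathrm{CM}(\Lambda)$ that handles both endpoints of the bar $\eta$ simultaneously, and then verify the periodicity statement $\Omega\tau \cong \mathrm{id}$ compatibly across the two caps. The difficulty is bookkeeping, not conceptual: syzygies of string modules for monomial special biserial algebras have a known combinatorial description in terms of continuing past the projective cover, and the relevant computations are confined to the boundary tube of rank $2$. Once it is shown that every non-projective indecomposable CM module is supported on a single cap, the $3$-periodicity up to $\nu$ reflects the three local relations meeting at each junction vertex of the bar, and the verification becomes a finite check whose size depends only on the lengths of $\epsilon$, $\eta$ and $\epsilon'$.
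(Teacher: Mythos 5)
Your first step is essentially the paper's: the paper simply exhibits the minimal injective coresolutions of the indecomposable projectives (they have length $1$, with injective cokernel), and the symmetry of the barbell construction under passing to the opposite algebra gives the other half of the $1$-Gorenstein condition. The serious problem is your second step. For a $1$-Gorenstein finite dimensional algebra the Cohen--Macaulay modules are exactly the torsionless modules, i.e.\ the submodules of projective modules; since a barbell algebra has only finitely many indecomposable projectives, each of finite length, there are only \emph{finitely many} indecomposable CM modules, and no band module can occur among them. Your proposed description of $\underline{\mathcal L}$ as a rank-$2$ stable tube together with attached homogeneous tubes is therefore not a description of the CM category at all --- you have confused it with the exceptional string tube of $\mo\Lambda$ whose boundary consists of the two serial length-$3$ modules (that tube does appear in Section 14, but as a piece of the ordinary module category). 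In Example 2 the paper checks that the only indecomposable torsionless non-projective modules are the two serial modules of length $2$ with socle $S(2)$, namely $L(1)$ and $L(3)$; the stable category $\underline{\mathcal L}$ thus has just two objects, each fixed by both the suspension and the Auslander--Reiten translation, so it is a product of two copies of the stable module category of $k[T]/\langle T^2\rangle$.

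Your reduction of the $3$-CY property to $\tau\cong\Omega^{-1}$ on $\underline{\mathcal L}$ is in itself correct for a $1$-Gorenstein algebra, and in the paper's situation it holds trivially because both $\tau$ and $\Omega^{-1}$ are the identity functor --- which is why the paper observes that $\underline{\mathcal L}$ is in fact $n$-Calabi--Yau for every $n$, the value $3$ being singled out only for the analogy with cluster-tilted algebras. In particular your attempt to explain the number $3$ through ``three local relations at each junction vertex'' is a red herring, and the periodicity check you propose to run over tubes cannot be carried out, since the objects you would be checking it on are not Cohen--Macaulay. To repair the argument you must replace your step two by a direct determination of the submodules of the indecomposable projectives (a short finite computation, as in the paper), after which the Calabi--Yau statement is immediate.
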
    

For our example 2, here are the minimal injective resolutions of the indecomposable projective modules:
\begin{align}
 0 \longrightarrow P(1) \longrightarrow I(2)\oplus I(2) \longrightarrow I(1)\oplus I(3)\oplus I(3) \longrightarrow 0 \nonumber
\\
 0 \longrightarrow P(2) \longrightarrow I(2) \longrightarrow I(1)\oplus I(3) \longrightarrow 0 \qquad\qquad \nonumber
 \\
 0 \longrightarrow P(3) \longrightarrow I(2)\oplus I(2) \longrightarrow I(1)\oplus I(1)\oplus I(3) \longrightarrow 0 \nonumber
\end{align}

Let $\mathcal  L$ be the full subcategory of all torsionless modules (by definition,
a module is torsionless if it can be embedded into a projective module) 
and $\mathcal  P$ the full subcategory of all projective modules. We have to
calculate the factor category $\underline{\mathcal L} = \mathcal  L/\mathcal  P.$ Since we deal with a $1$-Gorenstein
algebra, $\underline{\mathcal L}$ is a triangulated category with Auslander-Reiten translation.

It is not difficult to check that the only indecomposable modules which are
torsionless and not projective are the two serial modules of length 2 with 
socle $P(2)$, we denote them by 
$L(1)=\smallmatrix 1\cr 2 \endsmallmatrix$ 
and $L(3)=\smallmatrix 3\cr 2 \endsmallmatrix.$ 
Thus, $\mathcal  L$ has the following Auslander-Reiten quiver:
$$
\hbox{\beginpicture
\setcoordinatesystem units <1.5cm,1cm>
\multiput{} at 0 -.5  0 4.5 /
\put{$P(1)$} at 1 3
\put{$P(2)$} at 0 2
\put{$P(3)$} at 1 1
\multiput{$L(1)$} at 0 4  2 4 /
\multiput{$L(3)$} at 0 0  2 0 /
\arrow <1.5mm> [0.25,0.75] from 0.2 0.2 to 0.8 0.8 
\arrow <1.5mm> [0.25,0.75] from 1.2 0.8 to 1.8 0.2
\arrow <1.5mm> [0.25,0.75] from 0.3 1.7 to 0.8 1.2
\arrow <1.5mm> [0.25,0.75] from 0.3 2.3 to 0.8 2.8
\arrow <1.5mm> [0.25,0.75] from 0.2 3.8 to 0.8 3.2
\arrow <1.5mm> [0.25,0.75] from 1.2 3.2 to 1.8 3.8
\setdots <1mm> 
\plot 0.3 0  1.7 0 /
\plot 0.3 4  1.7 4 /
\setshadegrid span <0.5mm>
\hshade 0 0 2  1 1 1 /
\hshade 3 1 1  4 0 2 / 
\setdashes <1mm>
\plot 0 0.4  0 0.7 /
\plot 0 -.4  0 -.7 /
\plot 2 0.4  2 0.7 /
\plot 2 -.4  2 -.7 /
\plot 0 4.4  0 4.7 /
\plot 0 3.6  0 3.3 /
\plot 2 4.4  2 4.7 /
\plot 2 3.6  2 3.3 /

\endpicture}
$$
The dashed line indicate that we have to identify vertices of the triangles
exhibited: Note that both serial modules $L(1)$ 
and $L(3)$ are shown twice, these are the vertices
which have to be identified.

It follows that the (triangulated) category $\underline{\mathcal L}$ has just two indecomposable
objects, both being fixed under the suspension functor as well as under the
Auslander-Reiten translation functor (so that $\underline{\mathcal L}$
is the product of two copies 
of the stable module category of the algebra $k[\epsilon] = k[T]/\langle T^2\rangle$
of dual numbers), the Auslander-Reiten quiver of $\underline{\mathcal L}$ looks as follows:
$$
\hbox{\beginpicture
\setcoordinatesystem units <1cm,1cm>
\multiput{} at 0 0.5  0 -.5 /
\put{$L(1)$} at 0 0
\put{$L(3)$} at 2 0
\setdots <1mm>
\circulararc 300 degrees from 0.1 -0.2 center at 0.5 0 
\circulararc 300 degrees from 2.1 -0.2 center at 2.5 0 
\endpicture}
$$

Thus, we deal with a triangulated category 
for which both the suspension functor as well as the
Auslander-Reiten translation functor are the identity functor. This means that
$\underline{\mathcal L}$ is $3$-Calabi-Yau, and indeed $n$-Calabi-Yau for any $n$.
      \medskip

Since the module category of a barbell algebra shares so many properties with
the module category of a cluster tilted algebra, one may wonder whether 
also for a barbell algebra $\Lambda$ the module category $\mo \Lambda$  is obtained from a 
triangulated category $\mathcal C$  by forming
 $\mathcal C/\langle T\rangle$ for some object $T$ in $\mathcal  C$.
As Idun Reiten has pointed out, this is indeed the case if
we deal with a barbell algebra $\Lambda$ with two loops (as in our running example 2) provided
we assume that the characteristic of $k$ is different from $3$: such an algebra 
 is 2-CY-tilted (this means: the endomorphism ring of some cluster tilting object
of a 2-Calabi-Yau category, \cite{Reiten}).
Namely, if  $\Lambda$ is 
a barbell algebra with two loops $\alpha, \delta$ in its quiver $Q$
and  if the characteristic of $k$ is different from $3$,
then $\Lambda$ 
is the Jacobian algebra $J(Q,W) = kQ/\langle 3\alpha^2,3\delta^2\rangle$,
where  $W$ is the potential
$W = \alpha^3+\delta^3$, see \cite{DWZ}, 
thus one can apply theorem 3.6 of Amiot \cite{A}.

\section{Sectional paths}

Recall that a (finite or infinite) path $(\cdots \to X_i \to X_{i+1} \to \cdots)$ in the Auslander-Reiten
quiver of a finite dimensional algebra is called {\it sectional} provided $\tau X_{i+1}$ is not isomorphic
to $X_{i-1}$ for all possible $i$. Such a path will be called {\it maximal} provided it is not
a proper subpath of some sectional path. An infinite sectional path involving only monomorphisms
will be called a {\it mono ray,} an infinite path involving only epimorphisms will be called an
{\it epi coray;} of course, mono rays start with some module, epi corays end in a module.

Note that for Auslander-Reiten components of the form $\mathbb Z\mathbb A_\infty$ as well as 
for stable tubes, all maximal sectional paths are mono rays and epi corays. 

\begin{theorem} 
Let $\Lambda$ be a $k$-algebra which is minimal representation-infinite and special biserial.
Then any maximal sectional path is a mono ray, an epi coray or the concatenation of an
epi coray with a mono ray.
\end{theorem}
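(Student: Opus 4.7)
The plan is to reduce the statement to the behaviour of string modules and then to exploit the explicit description of the Auslander--Reiten sequences for a special biserial algebra given by Butler and Ringel \cite{BR}. Band modules lie in homogeneous tubes, inside which every maximal sectional path is either a mono ray (going up from the mouth) or an epi coray (coming down to the mouth), so the assertion is immediate for those components. It therefore suffices to analyse maximal sectional paths whose vertices are string modules.

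By \cite{BR}, every irreducible map between string modules is either the canonical monomorphism obtained by adding a hook on the left or on the right of the defining string, or the canonical epimorphism obtained by deleting a cohook on the left or on the right; in particular, every such irreducible map is either a monomorphism or an epimorphism. The heart of the argument is therefore the auxiliary claim mentioned in the introduction: if $X\xrightarrow{f} Y$ is an irreducible monomorphism and $Y\xrightarrow{g} Z$ an irreducible epimorphism between indecomposable $\Lambda$-modules, then $X\cong\tau Z$; equivalently, the pair $(f,g)$ sits inside the almost split sequence ending in $Z$, which forces it to be non-sectional.

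To establish this auxiliary claim I would distinguish four cases according to which side the hook of $f$ is added on and which side the cohook of $g$ is deleted from. In each case one computes $\tau Z$ using the Butler--Ringel recipe: the AR-translate of $M(w)$ is obtained by performing a cohook-deletion on one end of $w$ together with a hook-addition on the other end (with the usual convention for strings that do not admit the corresponding hook or cohook). A direct comparison shows that in each of the four combinations the string describing $\tau Z$ coincides with that of $X$. The main obstacle is the careful bookkeeping involved in verifying that the two operations applied at the two ends of $Y$ do not conflict with the monomial relations defining $\Lambda$; here one exploits the absence of nodes and of $4$-vertices (Theorem~1.1) together with the explicit structure of cycle, barbell and wind wheel algebras (Theorem~1.2), which rules out the pathological configurations where the combinatorics of hooks and cohooks would break down.

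Granting the auxiliary claim, the theorem follows at once. In a maximal sectional path $\cdots\to X_{i-1}\to X_i\to X_{i+1}\to\cdots$ no irreducible monomorphism can be immediately followed by an irreducible epimorphism, since this would force $X_{i-1}\cong\tau X_{i+1}$, contradicting sectionality. Hence all epimorphisms of the path occur before all monomorphisms, so the path is either entirely of monomorphisms (a mono ray), entirely of epimorphisms (an epi coray), or an epi coray followed by a mono ray, as asserted.
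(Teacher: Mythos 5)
Your overall strategy inverts the paper's logic: the paper proves the theorem by running through the classification (cycle, barbell, wind wheel algebras) component by component and only afterwards deduces the statement $X\cong\tau Z$ as a corollary, whereas you propose to prove that statement first by hook/cohook combinatorics and deduce the theorem from it. That inversion is legitimate in principle, but your proof of the auxiliary claim has a genuine gap. The assertion that ``in each of the four combinations the string describing $\tau Z$ coincides with that of $X$'' is false for two of the four combinations, namely those in which the hook defining $f$ is added at the \emph{same} end of the string at which the cohook defining $g$ is deleted. In the Butler--Ringel picture, $\tau^{-1}X$ is obtained from $X$ by performing the forward operation at \emph{both} ends; so $Z=\tau^{-1}X$ exactly when the cohook is deleted at the end opposite to the one where the hook was added. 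If hook-addition at one end could be immediately followed by cohook-deletion at that same end, one would obtain an irreducible monomorphism followed by an irreducible epimorphism with $X\not\cong\tau Z$, and your argument would collapse. The real content of the auxiliary claim is therefore not a ``direct comparison'' of strings but the impossibility of the same-side configuration: one must show that after adding a hook at one end, the next irreducible map at that end is again a hook addition. This does hold here, but it needs a proof --- for instance, one checks that if no further hook could be added at that end, then the end vertex of the enlarged string would be forced to be either a $1$-vertex or a node, both of which are excluded for a node-free minimal representation-infinite special biserial algebra. Your appeal to ``the explicit structure of cycle, barbell and wind wheel algebras'' to rule out the pathology is not a substitute: if you invoke the classification and the explicit components anyway, you are back to the paper's case-by-case verification.

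Two smaller omissions: first, the impossibility argument above requires the reduction to the node-free case (resolving nodes), which the paper carries out at the start of its proof and which you skip --- at a node the same-side configuration genuinely threatens to occur. Second, even granting the auxiliary claim, you only obtain that along a maximal sectional path all epimorphisms precede all monomorphisms; mono rays and epi corays are by definition \emph{infinite} paths, so you still owe an argument that the monomorphism part, once nonempty, continues indefinitely (which again follows from the same ``hook can always be followed by a hook'' lemma) and dually for the epimorphism part. None of these points is fatal to your approach, but as written the proposal asserts the key combinatorial facts rather than proving them, and the one explicit claim it does make about the four cases is incorrect as stated.
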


\begin{coro}
Assume that $\Lambda$ is minimal representation-infinite
and special biserial. Let $X,Y,Z$ be indecomposable $\Lambda$-modules with an
irreducible monomorphism $X \to Y$ and an irreducible epimorphism $Y \to Z$.
Then $X  = \tau Z.$
\end{coro}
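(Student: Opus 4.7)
The plan is to argue by contradiction using the preceding Theorem. Suppose $X \neq \tau Z$; then by definition the path $X \to Y \to Z$ is sectional, hence it extends to some maximal sectional path $\mathcal{P}$ in the Auslander-Reiten quiver. By the Theorem, $\mathcal{P}$ must fall into one of three cases: a mono ray, an epi coray, or the concatenation of an epi coray with a mono ray.

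The first two cases should be disposed of immediately. Recall that an irreducible map between indecomposable modules is never an isomorphism, and in particular cannot be simultaneously a monomorphism and an epimorphism. So if $\mathcal{P}$ were a mono ray, the arrow $Y \to Z$ sitting inside $\mathcal{P}$ would have to be a monomorphism, contradicting the hypothesis that $Y \to Z$ is an irreducible epimorphism. Symmetrically, if $\mathcal{P}$ were an epi coray, then $X \to Y$ would be an epimorphism, contradicting its irreducible monomorphism status.

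The only substantial case is the third, where $\mathcal{P}$ has a well-defined transition vertex $W$: all arrows of $\mathcal{P}$ strictly preceding $W$ are epimorphisms, and all arrows strictly following $W$ are monomorphisms. Since $X \to Y$ is a monomorphism, this arrow lies in the mono part, so $Y$ occurs at $W$ or later along $\mathcal{P}$; but then the next arrow $Y \to Z$ also lies in the mono part and would be a monomorphism, contradicting that it is epi. All three cases being impossible, we conclude $X = \tau Z$.

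The essential (and really only) obstacle is to recognize that the Theorem forces a specific order on a mixed maximal sectional path, namely epis before monos and never the reverse; once this asymmetry is spelled out, the configuration "mono followed by epi" in $X \to Y \to Z$ cannot be sectional, and the corollary is immediate.
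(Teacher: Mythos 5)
Your argument is correct and is exactly the deduction the paper intends: the corollary is stated as an immediate consequence of the preceding theorem on maximal sectional paths, and the paper's proof environment that follows is devoted to that theorem rather than to this step. You have simply made explicit the case analysis (mono ray, epi coray, epi-before-mono concatenation) that the paper leaves to the reader, using the standard facts that an irreducible map between indecomposables is mono or epi but never both and that a non-sectional path of length two forces $X\cong\tau Z$.
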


\begin{proof} We may assume that there are no nodes: Namely, if all the sectional
paths of $\nn(\Lambda)$ are as mentioned, the same has to be true for $\Lambda$: the
only maximal sectional paths for $\Lambda$ to be looked at are those passing
through the node. Resolving the node we will obtain sectional paths which are not
double infinite paths, thus by the assumption on $\nn(\Lambda)$, we will deal with 
an epi coray  ending in the node and a mono ray starting in the node, thus
with a concatenation as listed. 

If $\Lambda$ is hereditary of type $\widetilde{\mathbb A}_n$, thus a cycle algebra, then
 any maximal sectional path is a mono ray, an epi coray. We only have to look
at the preprojective component and the preinjective component. But if $f:X\to Y$
is a non-zero map between indecomposable preprojective modules, then $f$
has to be always a monomorphism: otherwise, the kernel of $f$ would have
negative defect, and since the defect of $X$ is $-1$, it would follow that the
image of $f$ is a non-zero submodule of $Y$ with non-negative defect, 
a contradiction. The dual argument shows that the maximal sectional paths
in the preinjective component are epi corays.

Next, let us look at the wind wheels: again, only the non-regular components
have to be considered. But we know how to construct these components:
we use mono rays from the preprojective component and  the tube
$\mathcal R_\infty$ as well as epi corays from the tube $\mathcal R_0$
and from the preinjective component, and in addition rays and corays in the tiles.
But all the maximal sectional paths in the four quarters of a tile are
mono rays and epi corays (in the quarter {\bf I} we have only mono rays,
in {\bf III} only epi corays, whereas {\bf II} and {\bf IV} have both mono
rays and epi corays.

Finally, let us look at the barbells. There is only one non-regular component 
which has to be treated separately. The band modules lie in homogeneous
tubes and there will be an additional regular tube containing string modules.
What really is of interest are the remaining components $\mathcal C$, they are of the form
$\mathbb Z\mathbb A_\infty^\infty$. 
Let us look at the example  2 (the general case
is similar). Let $M = M(v)$ be the Gei\ss{} module (\cite{G})
for $\mathcal C$ (it is the unique
module in $\mathcal C$ of minimal length) and one easily observes that $v$
is a word of the form $1 \to 2 \cdots 2 \leftarrow 3$. It is easy to see that
all the modules $\tau^{-t-1}M$ for $t\ge 0$ are obtained from $\tau^{-t}M$ by
adding hooks both on the left and on the right; similarly, 
all the modules $\tau^{t+1}M$ for $t\ge 0$ are obtained from $\tau^{t}M$ by
adding cohooks both on the left and on the right. But this implies that all the
maximal sectional paths in $\mathcal C$ are concatenation of an
epi coray with a mono ray.
\end{proof}

It may be helpful to call an indecomposable $\Lambda$-module a {\it valley} module
if it is the concatenation vertex for a  sectional path which is the concatenation of an
epi coray with a mono ray,  and to exhibit corresponding pictures: always we encircle
the ''valleys''. First, we present a non-regular component of a wind wheel:
$$ 
\hbox{\beginpicture
\setcoordinatesystem units <.89cm,.89cm>
\multiput{} at 0 -1  14 8 /

\plot 1.5 8  3.75 5.75 /
\plot 2.5 8  4.25 6.25 /
\circulararc 180 degrees from 3.75 5.75 center at 4 6 
\plot 12.5 8  10.25 5.75 /
\plot 11.5 8  9.75 6.25 /
\circulararc -180 degrees from 10.25 5.75 center at 10 6 

\plot 3 -.5  6.75  3.25 /
\plot 4 -.5  7.25 2.75 /
\circulararc -180 degrees from 6.75  3.25 center at 7 3 
\plot 11 -.5  7.25  3.25 /
\plot 10 -.5  6.75 2.75 /
\circulararc  180 degrees from 7.25  3.25 center at 7 3

\put{
 \beginpicture
 \setcoordinatesystem units <.15cm,.15cm>
 \put{$\ssize 3$} at 0 2
 \put{$\ssize 2$} at 1 1
 \put{$\ssize 4$} at 2 0
 \put{$\ssize 5$} at 3 1
 \put{$\ssize 5$} at 4 2 
\endpicture} at 2 4

\put{
 \beginpicture
 \setcoordinatesystem units <.15cm,.15cm>
 \put{$\ssize 5$} at 3 1
 \put{$\ssize 5$} at 4 2 
\endpicture} at 3 5

\put{$5$} at 4 6

\put{$\circ$} at 1 7

\put{
 \beginpicture
 \setcoordinatesystem units <.15cm,.15cm>
 \put{$\ssize 5$} at 3 1
 \put{$\ssize 5$} at 4 2
 \put{$\ssize 4$} at 5 1
 \put{$\ssize 2$} at 6 2
 \put{$\ssize 3$} at 7 3
 \endpicture} at  2 6

\put{
 \beginpicture
 \setcoordinatesystem units <.15cm,.15cm>
 \put{$\ssize 5$} at 4 2
 \put{$\ssize 4$} at 5 1
 \put{$\ssize 2$} at 6 2
 \put{$\ssize 3$} at 7 3
 
\endpicture} at  3 7

\put{
 \beginpicture
 \setcoordinatesystem units <.15cm,.15cm>
 \put{$\ssize 6$} at 0 0
 \put{$\ssize 4$} at 1 1
 \put{$\ssize 5$} at 2 2
 \put{$\ssize 5$} at 3 1  
\endpicture} at  5 7

\put{
 \beginpicture
 \setcoordinatesystem units <.15cm,.15cm>
 \put{$\ssize 6$} at 3 0
 \put{$\ssize 4$} at 4 1
 \put{$\ssize 5$} at 5 2
\endpicture} at  6 6

\put{
 \beginpicture
 \setcoordinatesystem units <.15cm,.15cm>
 \put{$\ssize 6$} at 3 0
 \put{$\ssize 7$} at 4 1
 \put{$\ssize 7$} at 5 2
\endpicture} at  8 6

\put{$\circ$} at 1 5
\put{$\circ$} at 0 4

\put{$\circ$} at 13 5
\put{$\circ$} at 14 4

\put{$\circ$} at 1 3
\put{$\circ$} at 13 3
\put{$\circ$} at 0 2
\put{$\circ$} at 0 6
\put{$\circ$} at 14 2
\put{$\circ$} at 14 6

\put{
 \beginpicture
 \setcoordinatesystem units <.15cm,.15cm>
 \put{$\ssize 6$} at 0 0
 \put{$\ssize 7$} at 1 1
 \put{$\ssize 7$} at 2 2
 \put{$\ssize 6$} at 3 1
 \put{$\ssize 4$} at 4 2
 \put{$\ssize 5$} at 5 3
\endpicture} at  7 7

\put{
 \beginpicture
 \setcoordinatesystem units <.15cm,.15cm>
 \put{$\ssize 4$} at 2 1
 \put{$\ssize 6$} at 3 0
 \put{$\ssize 7$} at 4 1
 \put{$\ssize 7$} at 5 2
\endpicture} at  9 7

\put{
 \beginpicture
 \setcoordinatesystem units <.15cm,.15cm>
 \put{$\ssize 4$} at 2 0
 \put{$\ssize 5$} at 3 1
 \put{$\ssize 5$} at 4 2
 \put{$\ssize 4$} at 5 0
\endpicture} at  11 7

\put{
 \beginpicture
 \setcoordinatesystem units <.15cm,.15cm>
 \put{$\ssize 4$} at 2 0
 \put{$\ssize 5$} at 3 1
 \put{$\ssize 5$} at 4 2
 \put{$\ssize 4$} at 5 0
 \put{$\ssize 2$} at 6 1
\endpicture} at  12 6

\put{$\circ$} at 13 7

\put{$4$} at 10 6 

\put{
 \beginpicture
 \setcoordinatesystem units <.15cm,.15cm>
 \put{$\ssize 4$} at 5 0
 \put{$\ssize 2$} at 6 1
\endpicture} at  11 5

\put{
 \beginpicture
 \setcoordinatesystem units <.15cm,.15cm>
 \put{$\ssize 4$} at 0 0
 \put{$\ssize 2$} at 1 1
 \put{$\ssize 3$} at 2 2
 \put{$\ssize 3$} at 3 1
 \put{$\ssize 2$} at 4 0
\endpicture} at  12 4

\put{
 \beginpicture
 \setcoordinatesystem units <.15cm,.15cm>
 \put{$\ssize 3$} at 0 1
 \put{$\ssize 2$} at 1 0
\endpicture} at  7 3

\put{
 \beginpicture
 \setcoordinatesystem units <.15cm,.15cm>
 \put{$\ssize 3$} at 0 0
 \put{$\ssize 3$} at 1 1
 \put{$\ssize 2$} at 2 0
\endpicture} at  6 2

\put{
 \beginpicture
 \setcoordinatesystem units <.15cm,.15cm>
 \put{$\ssize 3$} at 0 1
 \put{$\ssize 2$} at 1 0
 \put{$\ssize 0$} at 2 1
\endpicture} at  8 2

\put{$\circ$} at 5 1

\put{
 \beginpicture
 \setcoordinatesystem units <.15cm,.15cm>
 \put{$\ssize 3$} at 0 0
 \put{$\ssize 3$} at 1 1
 \put{$\ssize 2$} at 2 0
 \put{$\ssize 0$} at 3 1
\endpicture} at  7 1

\put{$\circ$} at 9 1
\put{$\circ$} at 4 0
\put{$\circ$} at 6 0
\put{$\circ$} at 8 0 
\put{$\circ$} at 10 0

\arrow <1.5mm> [0.25,0.75] from 0.3 4.3 to 0.7 4.7
\arrow <1.5mm> [0.25,0.75] from 2.3 4.3 to 2.7 4.7
\arrow <1.5mm> [0.25,0.75] from 1.3 5.3 to 1.7 5.7
\arrow <1.5mm> [0.25,0.75] from 3.3 5.3 to 3.7 5.7
\arrow <1.5mm> [0.25,0.75] from 4.3 6.3 to 4.7 6.7
\arrow <1.5mm> [0.25,0.75] from 6.3 6.3 to 6.6 6.6
\arrow <1.5mm> [0.25,0.75] from 8.3 6.3 to 8.7 6.7
\arrow <1.5mm> [0.25,0.75] from 5.3 6.7 to 5.7 6.3
\arrow <1.5mm> [0.25,0.75] from 7.3 6.7 to 7.7 6.3
\arrow <1.5mm> [0.25,0.75] from 9.3 6.7 to 9.7 6.3
\arrow <1.5mm> [0.25,0.75] from 10.3 5.7 to 10.7 5.3
\arrow <1.5mm> [0.25,0.75] from 11.3 4.7 to 11.7 4.3
\arrow <1.5mm> [0.25,0.75] from 12.3 5.7 to 12.7 5.3
\arrow <1.5mm> [0.25,0.75] from 13.3 4.7 to 13.7 4.3

\arrow <1.5mm> [0.25,0.75] from 4.3 0.3 to 4.7 0.7
\arrow <1.5mm> [0.25,0.75] from 6.3 0.3 to 6.7 0.7
\arrow <1.5mm> [0.25,0.75] from 8.3 0.3 to 8.7 0.7
\arrow <1.5mm> [0.25,0.75] from 5.3 0.7 to 5.7 0.3
\arrow <1.5mm> [0.25,0.75] from 7.3 0.7 to 7.7 0.3
\arrow <1.5mm> [0.25,0.75] from 9.3 0.7 to 9.7 0.3

\arrow <1.5mm> [0.25,0.75] from 5.3 1.3 to 5.7 1.7
\arrow <1.5mm> [0.25,0.75] from 7.3 1.3 to 7.7 1.7
\arrow <1.5mm> [0.25,0.75] from 6.3 1.7 to 6.7 1.3
\arrow <1.5mm> [0.25,0.75] from 8.3 1.7 to 8.7 1.3

\arrow <1.5mm> [0.25,0.75] from 6.3 2.3 to 6.7 2.7
\arrow <1.5mm> [0.25,0.75] from 7.3 2.7 to 7.7 2.3

\arrow <1.5mm> [0.25,0.75] from 0.3 6.3 to 0.7 6.7
\arrow <1.5mm> [0.25,0.75] from 2.3 6.3 to 2.7 6.7
\arrow <1.5mm> [0.25,0.75] from 10.3 6.3 to 10.7 6.7
\arrow <1.5mm> [0.25,0.75] from 12.3 6.3 to 12.7 6.7
\arrow <1.5mm> [0.25,0.75] from 12.3 4.3 to 12.7 4.7
\arrow <1.5mm> [0.25,0.75] from 0.3 2.3 to 0.7 2.7
\arrow <1.5mm> [0.25,0.75] from 11.3 5.3 to 11.7 5.7
\arrow <1.5mm> [0.25,0.75] from 13.3 5.3 to 13.7 5.7
\arrow <1.5mm> [0.25,0.75] from 13.3 3.3 to 13.7 3.7
\arrow <1.5mm> [0.25,0.75] from 1.3 3.3 to 1.7 3.7

\arrow <1.5mm> [0.25,0.75] from 0.3 3.7 to 0.7 3.3
\arrow <1.5mm> [0.25,0.75] from 0.3 5.7 to 0.7 5.3
\arrow <1.5mm> [0.25,0.75] from 1.3 4.7 to 1.7 4.3
\arrow <1.5mm> [0.25,0.75] from 1.3 6.7 to 1.7 6.3

\arrow <1.5mm> [0.25,0.75] from 2.3 5.7 to 2.7 5.3
\arrow <1.5mm> [0.25,0.75] from 3.3 6.7 to 3.7 6.3

\arrow <1.5mm> [0.25,0.75] from 11.3 6.7 to 11.7 6.3
\arrow <1.5mm> [0.25,0.75] from 13.3 6.7 to 13.7 6.3

\arrow <1.5mm> [0.25,0.75] from 12.3 3.7 to 12.7 3.3
\arrow <1.5mm> [0.25,0.75] from 13.3 2.7 to 13.7 2.3

\arrow <1.5mm> [0.25,0.75] from 2.4 3.9 to 6.7 3.1
\arrow <1.5mm> [0.25,0.75] from 1.4 2.9 to 5.7 2.1
\arrow <1.5mm> [0.25,0.75] from 0.4 1.9 to 4.7 1.1
\arrow <1.5mm> [0.25,0.75] from 0.2 0.8 to 3.7 0.1

\arrow <1.5mm> [0.25,0.75] from 7.3 3.1 to 11.4 3.9
\arrow <1.5mm> [0.25,0.75] from 8.3 2.1 to 12.4 2.9
\arrow <1.5mm> [0.25,0.75] from 9.3 1.1 to 13.4 1.9
\plot 10.3 0.1 13.8 0.8 /

\setdots<.1cm>
\plot 4.4 6  5.6 6 /
\plot 6.4 6  7.6 6 /
\plot 8.4 6  9.6 6 /

\setshadegrid span <0.7mm>

\endpicture}
$$
The valleys may be considered as the natural places where to cut such a component into
pieces. Of course, in our cut-and-paste process, we followed this rule. 

The second example is the non-regular component of the barbell given as
example 2:

$$ 
\hbox{\beginpicture
\setcoordinatesystem units <1cm,1cm>
\plot 1.5 6  3.75 3.75 /
\plot 2.5 6  4.25 4.25 /
\circulararc 180 degrees from 3.75 3.75 center at 4 4 
\plot 8.5 6  6.25 3.75 /
\plot 7.5 6  5.75 4.25 /
\circulararc -180 degrees from 6.25 3.75 center at 6 4 

\plot 2.5 -1  3.75  .25 /
\plot 3.5 -1  4.25 -.25 /
\circulararc -180 degrees from 3.75  .25 center at 4 0 
\plot 7.5 -1  6.25  .25 /
\plot 6.5 -1  5.75 -.25 /
\circulararc  180 degrees from 6.25  .25 center at 6 0

\put{} at 0 5.6
\put{} at 0 -1 
\put{$\circ$} at 0 0
\put{$\circ$} at 2 0
\put{$S(3)$} at 4 0
\put{$\smallmatrix 3 & \cr
                     & 2 \endsmallmatrix$} at 6 0
\put{$\circ$} at 8 0

\put{$\circ$} at 1 1
\put{$I(3)$} at 3 1
\put{$P(3)$} at 7 1

\put{$\circ$} at 0 2
\put{$I(2)$} at 2 2
\put{$P(2)$} at 6 2
\put{$\circ$} at 8 2

\put{$\circ$} at 1 3
\put{$I(1)$} at 3 3
\put{$P(1)$} at 7 3

\put{$\circ$} at 0 4
\put{$\circ$} at 2 4
\put{$S(1)$} at 4 4
\put{$\smallmatrix  & 1\cr
                  2 &  \endsmallmatrix$} at 6 4
\put{$\circ$} at 8 4

\put{$\circ$} at 1 5
\put{$\circ$} at 3 5
\put{$\circ$} at 5 5
\put{$\circ$} at 7 5

\arrow <2mm> [0.25,0.75] from 0.3 0.3 to 0.7 0.7
\arrow <2mm> [0.25,0.75] from 2.3 0.3 to 2.7 0.7
\arrow <2mm> [0.25,0.75] from 6.3 0.3 to 6.7 0.7

\arrow <2mm> [0.25,0.75] from 0.3 2.3 to 0.7 2.7
\arrow <2mm> [0.25,0.75] from 2.3 2.3 to 2.7 2.7

\arrow <2mm> [0.25,0.75] from 0.3 4.3 to 0.7 4.7
\arrow <2mm> [0.25,0.75] from 2.3 4.3 to 2.7 4.7
\arrow <2mm> [0.25,0.75] from 4.3 4.3 to 4.7 4.7
\arrow <2mm> [0.25,0.75] from 6.3 4.3 to 6.7 4.7

\arrow <2mm> [0.25,0.75] from 6.3 2.3 to 6.7 2.7

\arrow <2mm> [0.25,0.75] from 3.3 3.3 to 3.7 3.7
\arrow <2mm> [0.25,0.75] from 1.3 1.3 to 1.7 1.7
\arrow <2mm> [0.25,0.75] from 1.3 3.3 to 1.7 3.7

\arrow <2mm> [0.25,0.75] from 7.3 1.3 to 7.7 1.7  

\arrow <2mm> [0.25,0.75] from 7.3 3.3 to 7.7 3.7

\arrow <2mm> [0.25,0.75] from 1.4 -0.6 to 1.7 -0.3
\arrow <2mm> [0.25,0.75] from 3.4 -0.6 to 3.7 -0.3
\arrow <2mm> [0.25,0.75] from 5.4 -0.6 to 5.7 -0.3
\arrow <2mm> [0.25,0.75] from 7.4 -0.6 to 7.7 -0.3

\arrow <2mm> [0.25,0.75] from 1.3 0.7 to 1.7 0.3
\arrow <2mm> [0.25,0.75] from 3.3 0.7 to 3.7 0.3
\arrow <2mm> [0.25,0.75] from 7.3 0.7 to 7.7 0.3

\arrow <2mm> [0.25,0.75] from 1.3 2.7 to 1.7 2.3
\arrow <2mm> [0.25,0.75] from 7.3 2.7 to 7.7 2.3

\arrow <2mm> [0.25,0.75] from 7.3 4.7 to 7.7 4.3
\arrow <2mm> [0.25,0.75] from 5.3 4.7 to 5.7 4.3
\arrow <2mm> [0.25,0.75] from 3.3 4.7 to 3.7 4.3
\arrow <2mm> [0.25,0.75] from 1.3 4.7 to 1.7 4.3

\arrow <2mm> [0.25,0.75] from 8.4 5.6 to 8.7 5.3

\arrow <2mm> [0.25,0.75] from 6.3 3.7 to 6.7 3.3
\arrow <2mm> [0.25,0.75] from 6.3 1.7 to 6.7 1.3

\arrow <2mm> [0.25,0.75] from 0.3 1.7 to 0.7 1.3
\arrow <2mm> [0.25,0.75] from 2.3 1.7 to 2.7 1.3

\arrow <2mm> [0.25,0.75] from 0.3 3.7 to 0.7 3.3
\arrow <2mm> [0.25,0.75] from 2.3 3.7 to 2.7 3.3

\arrow <2mm> [0.25,0.75] from 0.4 5.6 to 0.7 5.3
\arrow <2mm> [0.25,0.75] from 2.4 5.6 to 2.7 5.3
\arrow <2mm> [0.25,0.75] from 4.4 5.6 to 4.7 5.3
\arrow <2mm> [0.25,0.75] from 6.4 5.6 to 6.7 5.3

\plot 1.3 5.3  1.6 5.6 /
\plot 3.3 5.3  3.6 5.6 /
\plot 5.3 5.3  5.6 5.6 /
\plot 7.3 5.3  7.6 5.6 /
\plot 8.3 4.3  8.6 4.6 /
\plot 8.3 2.3  8.6 2.6 /
\plot 8.3 0.3  8.6 0.6 /
\plot 8.3 3.7  8.6 3.4 /
\plot 8.3 1.7  8.6 1.4 /
\plot 8.3 -0.3  8.6 -0.6 /
\plot 6.3 -0.3  6.6 -0.6 /
\plot 4.3 -0.3  4.6 -0.6 /
\plot 2.3 -0.3  2.6 -0.6 /
\plot 0.3 -0.3  0.6 -0.6 /

\setdots<.1cm>
\plot 4.5 0  5.7 0 /
\plot 4.45 4  5.8 4 /

\endpicture}
$$

Of course, when dealing with a barbell and look at a regular component $\mathcal C$
of string modules, say with Gei\ss{}-module $M$, then the valley modules are
precisely those which lie on the sectional paths which contain $M$.

In all these components, the ''valleys'' provide a clear division into regions with
common growth pattern. For example, in the regions on the left, all irreducible maps
are epimorphisms, whereas in the regions on the right, all are monomorphisms. 
    \bigskip\bigskip\bigskip

\centerline{\large\bf Part III. Appendix}
							         \bigskip\medskip

The appendix collects some remarks related to the investigations presented above. First, we show an
example of an algebra which may be considered as a twisted version of a barbell.

\section{Further minimal representation-infinite algebras}

Consider the following algebra:
$$
\hbox{\beginpicture
\setcoordinatesystem units <0.6cm,0.6cm>
\put{} at -1 0.8
\put{} at  3 -0.8
\put{$1$} at 0 0
\put{$2$} at 2 0
\put{$3$} at 4 1
\put{$3'$} at 4 -1
\arrow <1.5mm> [0.25,0.75] from 1.7 0 to 0.3 0
\arrow <1.5mm> [0.25,0.75] from 3.7 0.9 to 2.3 0.1
\arrow <1.5mm> [0.25,0.75] from 3.7 -.9 to 2.3 -.1
\circulararc 320 degrees from -0.05 0.2 center at -0.8 0 
\arrow <1.5mm> [0.25,0.75] from -0.09 0.3 to -0.05 0.2
\put{$\alpha$} at -2 0
\put{$\beta$} at 1 0.4
\put{$\gamma$} at 3 0.9
\put{$\gamma'$} at 3 -.9
\setdots <.7mm>
\setquadratic
\setquadratic
\plot -0.4 -0.4 -0.3 0 -0.4 0.4 /

\endpicture}
$$
(or, more generally, the corresponding algebras where 
$\alpha$ and $\beta$ are replaced by longer
paths). Note that the universal covering are the ``dancing girls'' of Brenner-Butler.
	\medskip

This is a Gorenstein algebra of Gorenstein dimension 1, the minimal
injective resolutions of the indecomposable projective modules are as follows:

\begin{align}
 0 \longrightarrow P(1) \longrightarrow I(1) \longrightarrow I(2)\oplus I(2) \longrightarrow 0\quad \, \nonumber
\\
 0 \longrightarrow P(2) \longrightarrow I(1) \longrightarrow I(2)\oplus I(3)\oplus 3' \longrightarrow 0 \nonumber
\\
 0 \longrightarrow P(3) \longrightarrow I(1) \longrightarrow I(2)\oplus I(3') \longrightarrow 0 \quad \nonumber
\\
 0 \longrightarrow P(3') \longrightarrow I(1) \longrightarrow I(2)\oplus I(3) \longrightarrow 0 \quad \nonumber
\end{align}

and here is the central part of the non-regular component:
$$ 
\hbox{\beginpicture
\setcoordinatesystem units <1cm,1cm>
\put{} at 0 5.6
\put{} at 0 -0.5 

\put{$\circ$} at 1 1
\put{$I(3')$} at 3 1
\put{$P(3')$} at 7 1

\put{$I(3)$} at 3 2
\put{$P(3)$} at 7 2

\put{$\circ$} at 0 2
\put{$\circ$} at 1 2

\put{$I(2)$} at 2 2
\put{$P(2)$} at 6 2
\put{$\circ$} at 8 2

\put{$I(1)$} at 1 3
\put{$P(1)$} at 5 3
\put{$\circ$} at 7 3

\put{$\circ$} at 0 4
\put{$\circ$} at 2 4
\put{$S(1)$} at 4 4
\put{$\smallmatrix  & 1\cr
                  2 &  \endsmallmatrix$} at 6 4
\put{$\circ$} at 8 4

\put{$\circ$} at 1 5
\put{$\circ$} at 3 5
\put{$\circ$} at 5 5
\put{$\circ$} at 7 5

\arrow <2mm> [0.25,0.75] from 0.3 4.3 to 0.7 4.7

\arrow <2mm> [0.25,0.75] from 0.3 2.3 to 0.7 2.7

\arrow <2mm> [0.25,0.75] from 0.3 4.3 to 0.7 4.7
\arrow <2mm> [0.25,0.75] from 2.3 4.3 to 2.7 4.7
\arrow <2mm> [0.25,0.75] from 4.3 4.3 to 4.7 4.7
\arrow <2mm> [0.25,0.75] from 6.3 4.3 to 6.7 4.7

\arrow <2mm> [0.25,0.75] from 6.3 2.3 to 6.7 2.7

\arrow <2mm> [0.25,0.75] from 5.3 3.3 to 5.7 3.7
\arrow <2mm> [0.25,0.75] from 1.3 1.3 to 1.7 1.7
\arrow <2mm> [0.25,0.75] from 1.3 3.3 to 1.7 3.7

\arrow <2mm> [0.25,0.75] from 7.3 1.3 to 7.7 1.7

\arrow <2mm> [0.25,0.75] from 7.3 3.3 to 7.7 3.7

\arrow <2mm> [0.25,0.75] from 1.3 2.7 to 1.7 2.3
\arrow <2mm> [0.25,0.75] from 7.3 2.7 to 7.7 2.3

\arrow <2mm> [0.25,0.75] from 7.3 4.7 to 7.7 4.3
\arrow <2mm> [0.25,0.75] from 5.3 4.7 to 5.7 4.3
\arrow <2mm> [0.25,0.75] from 3.3 4.7 to 3.7 4.3
\arrow <2mm> [0.25,0.75] from 1.3 4.7 to 1.7 4.3

\arrow <2mm> [0.25,0.75] from 6.3 3.7 to 6.7 3.3
\arrow <2mm> [0.25,0.75] from 6.3 1.7 to 6.7 1.3

\arrow <2mm> [0.25,0.75] from 0.3 1.7 to 0.7 1.3
\arrow <2mm> [0.25,0.75] from 2.3 1.7 to 2.7 1.3

\arrow <2mm> [0.25,0.75] from 0.3 3.7 to 0.7 3.3
\arrow <2mm> [0.25,0.75] from 4.3 3.7 to 4.7 3.3 
\arrow <2mm> [0.25,0.75] from 5.3 2.7 to 5.7 2.3

\arrow <2mm> [0.25,0.75] from 0.4 5.6 to 0.7 5.3
\arrow <2mm> [0.25,0.75] from 2.4 5.6 to 2.7 5.3
\arrow <2mm> [0.25,0.75] from 4.4 5.6 to 4.7 5.3
\arrow <2mm> [0.25,0.75] from 6.4 5.6 to 6.7 5.3

\arrow <2mm> [0.25,0.75] from 1.3 1.3 to 1.7 1.7

\arrow <2mm> [0.25,0.75] from 0.3 2 to 0.7 2 
\arrow <2mm> [0.25,0.75] from 1.2 2 to 1.6 2 
\arrow <2mm> [0.25,0.75] from 2.4 2 to 2.7 2 

\arrow <2mm> [0.25,0.75] from 6.4 2 to 6.6 2 
\arrow <2mm> [0.25,0.75] from 7.4 2 to 7.7 2 

\plot 8.4 5.6  8.7 5.3 /

\plot 8.3 2  8.7 2 /

\plot 1.3 5.3  1.6 5.6 /
\plot 3.3 5.3  3.6 5.6 /
\plot 5.3 5.3  5.6 5.6 /
\plot 7.3 5.3  7.6 5.6 /
\plot 8.3 4.3  8.6 4.6 /
\plot 8.3 2.3  8.6 2.6 /
\plot 8.3 3.7  8.6 3.4 /
\plot 8.3 1.7  8.6 1.4 /

\setdots<1mm>
\plot 2.2 4  3.5 4 /

\plot 0.2 1  0.8 1 /
\plot 1.2 1  2.5 1 /
\plot 7.5 1  8.8 1 /
\setshadegrid span <0.7mm>
\vshade 0 0.8 5.6 <,z,,> 1 0.8 5.6 /
\vshade 1 3 5.6  <z,z,,> 2 4 5.6 <z,z,,>
    4  4 5.6 <z,z,,> 7 0.8 5.6 <,z,,> 8.7 0.8 5.6 /   
\vshade 1 0.8 3 <z,z,,> 2.2  0.8  2.2 <z,z,,> 3.2 0.8 2  /

\setdashes <2mm>
\plot 2.3 3.7  4.9 1.1 /
\plot 5.1 1.1  5.8 1.8 /
\plot 2.3 2.3  3.8 3.8 /
\plot 3.3 1.3  4.8 2.8 /
\plot 3.4 2    5.7 2 /
\endpicture}
$$

\section{Barification may change the representation type.}
     
Consider the path algebra of the quiver
$$
\hbox{\beginpicture
\setcoordinatesystem units <1cm,1cm>
\multiput{$\circ$} at 0 0  1 0  2 0  3 0  4 0  5 0  /
\arrow <1.5mm> [0.25,0.75] from 0.8 0 to 0.2 0
\arrow <1.5mm> [0.25,0.75] from 1.8 0 to 1.2 0
\arrow <1.5mm> [0.25,0.75] from 2.2 0 to 2.8 0
\arrow <1.5mm> [0.25,0.75] from 3.8 0 to 3.2 0
\arrow <1.5mm> [0.25,0.75] from 4.8 0 to 4.2 0
\put{$\ssize 0$} at 0 0.3
\put{$\ssize 1'$} at 1 0.3
\put{$\ssize 2'$} at 2 0.3
\put{$\ssize 1''$} at 3 0.3
\put{$\ssize 2''$} at 4 0.3
\put{$\ssize 3$} at 5 0.3
\put{$\alpha$} at 0.5 0.3
\put{$\beta'$} at 1.5 0.3
\put{$\gamma$} at 2.5 0.3
\put{$\beta''$} at 3.5 0.3
\put{$\delta$} at 4.5 0.3
\endpicture}
$$
and barify the arrows $\alpha_2$ and $\alpha_4$. The we obtain the quiver
$$
\hbox{\beginpicture
\setcoordinatesystem units <1cm,1cm>
\multiput{$\circ$} at 0 0  1 0  2 0  3 0 /
\arrow <1.5mm> [0.25,0.75] from 0.8 0 to 0.2 0
\arrow <1.5mm> [0.25,0.75] from 2.8 0 to 2.2 0
\arrow <1.5mm> [0.25,0.75] from 1.25 0.125 to 1.2 0.1
\arrow <1.5mm> [0.25,0.75] from 1.25 -.125 to 1.2 -.1
\put{$\ssize 0$} at 0 0.3
\put{$\ssize 1$} at 1 0.3
\put{$\ssize 2$} at 2 0.3
\put{$\ssize 3$} at 3 0.3
\put{$\alpha$} at 0.5 0.3
\put{$\beta$} at 1.5 0.4
\put{$\gamma$} at 1.5 -0.4
\put{$\delta$} at 2.5 0.3
\setquadratic
\plot 1.8 0.1  1.5 0.2  1.2 0.1 /
\plot 1.8 -.1  1.5 -.2  1.2 -.1 /
\setdots <.5mm>
\plot 0.6 -0.1 1 -0.13  1.3 -0.3 /
\plot 2.6 -0.1 2 -0.13  1.7 -0.3 /
\endpicture}
$$
Here, starting with a representation-finite algebra, we obtain a tame one.
Similarly, if we start with the following tame quiver, the barification 
of $b'$ and $b''$ yields
a wild algebra:
$$
\hbox{\beginpicture
\setcoordinatesystem units <1cm,1cm>
\multiput{$\circ$} at 0 0  1 0  2 0  3 0  4 0  5 0  6 -0.5  6 0.5  7 0.5 /
\arrow <1.5mm> [0.25,0.75] from 0.8 0 to 0.2 0
\arrow <1.5mm> [0.25,0.75] from 1.8 0 to 1.2 0
\arrow <1.5mm> [0.25,0.75] from 2.2 0 to 2.8 0
\arrow <1.5mm> [0.25,0.75] from 3.8 0 to 3.2 0
\arrow <1.5mm> [0.25,0.75] from 4.8 0 to 4.2 0
\arrow <1.5mm> [0.25,0.75] from 6.8 0.5 to 6.2 0.5
\put{$\ssize 0$} at 0 0.3
\put{$\ssize 1'$} at 1 0.3
\put{$\ssize 2'$} at 2 0.3
\put{$\ssize 1''$} at 3 0.3
\put{$\ssize 2''$} at 4 0.3
\put{$\ssize 3$} at 5 0.3
\put{$\ssize 4$} at 6 0.7
\put{$\ssize 6$} at 6 -0.3
\put{$\ssize 5$} at 7 0.7
\arrow <1.5mm> [0.25,0.75] from 5.8 0.4 to 5.2 0.1
\arrow <1.5mm> [0.25,0.75] from 5.8 -.4 to 5.2 -.1
\put{$\alpha$} at 0.5 0.3
\put{$\beta'$} at 1.5 0.3
\put{$\gamma$} at 2.5 0.3
\put{$\beta''$} at 3.5 0.3
\put{$\delta$} at 4.5 0.3
\endpicture}
$$
$$
\hbox{\beginpicture
\setcoordinatesystem units <1cm,1cm>

\multiput{$\circ$} at 0 0  1 0  2 0  3 0  4 -0.5  4 0.5  5 0.5 /
\arrow <1.5mm> [0.25,0.75] from 0.8 0 to 0.2 0
\arrow <1.5mm> [0.25,0.75] from 2.8 0 to 2.2 0
\arrow <1.5mm> [0.25,0.75] from 1.25 0.125 to 1.2 0.1
\arrow <1.5mm> [0.25,0.75] from 1.25 -.125 to 1.2 -.1
\put{$\ssize 0$} at 0 0.3
\put{$\ssize 1$} at 1 0.3
\put{$\ssize 2$} at 2 0.3
\put{$\ssize 3$} at 3 0.3
\put{$\alpha$} at 0.5 0.3
\put{$\beta$} at 1.5 0.4
\put{$\gamma$} at 1.5 -0.4
\put{$\delta$} at 2.5 0.3
\setquadratic
\plot 1.8 0.1  1.5 0.2  1.2 0.1 /
\plot 1.8 -.1  1.5 -.2  1.2 -.1 /
\setdots <.5mm>
\plot 0.6 -0.1 1 -0.13  1.3 -0.3 /
\plot 2.6 -0.1 2 -0.13  1.7 -0.3 /

\setsolid
\arrow <1.5mm> [0.25,0.75] from 4.8 0.5 to 4.2 0.5
\put{$\ssize 4$} at 4 0.7
\put{$\ssize 6$} at 4 -0.3
\put{$\ssize 5$} at 5 0.7
\arrow <1.5mm> [0.25,0.75] from 3.8 0.4 to 3.2 0.1
\arrow <1.5mm> [0.25,0.75] from 3.8 -.4 to 3.2 -.1

\endpicture}
$$

\section{Accessible representations}
 
We have mentioned in the introduction that the recent paper \cite{B2} of Bongartz 
has drawn the attention to the minimal representation-infinite
algebras which have a good cover $\widetilde \Lambda$, such that all finite convex subcategories of $\widetilde \Lambda$
are representation-finite. As we show above, these algebras are special biserial and can be completely
classified. The title of the Bongartz paper \cite{B2} indicates that his main concern was 
to proof the following theorem: {\it Let $\Lambda$ be a finite dimensional $k$-algebra where
$k$ is an algebraically closed field. 
If there exists an indecomposable $\Lambda$-module 
of length $n > 1$, then there exists an indecomposable $\Lambda$-module of length $n-1$.}
Unfortunately, the statement does not assert any relationship between 
the modules of length $n$ and 
those of length $n-1$. There is the following open problem:
{\it Given an indecomposable $\Lambda$-module $M$ of length $n\ge 2$. 
Is there an indecomposable submodule or factor module of length $n-1?$}
The three subspace quiver shows that this may not be true in case the field $k$
is not algebraically closed, say if it is finite field with few elements. 

In \cite{RLMS} we slightly modified the arguments of Bongartz  in order to
strengthen his assertion. Using induction, one may define {\it accessible} modules: 
First, the simple modules are accessible.
Second, a module of length $n \ge 2$ is accessible provided it is indecomposable and 
there is a submodule or a factor module of length $n-1$ which is accessible. 
The open problem mentioned above can be reformulated as follows: Are all indecomposable 
representations of a $k$-algebra $\Lambda$, where $k$ is algebraically closed, 
accessible? This is known to hold in case $\Lambda$ is representation-finite and
the aim of \cite{RLMS} was to show that any representation-infinite algebra over
an algebraically closed field 
has at least accessible modules of arbitrarily large length.

In dealing with special biserial algebras, we do not have to worry about the
size of the base field $k$. The following assertion is vaild for $k$-algebras with
$k$ an arbitrary field.

\begin{proposition}Any indecomposable representation of a special biserial algebra is accessible.
\end{proposition}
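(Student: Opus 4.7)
The plan is to argue by strong induction on the length $\ell$ of $M$, using the Gelfand--Ponomarev classification from Section 2: every indecomposable $A$-module is either a string module $M(w)$ or a band module $M(w,\phi)$. The base case $\ell=1$ (simple modules) is accessible by definition.

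For $M = M(w)$ a string module with $w = l_1 l_2 \cdots l_{n-1}$, so $\ell = n$, put $w' = l_2 \cdots l_{n-1}$. If $l_1$ is a direct letter, then projection kills the initial basis vector of $M(w)$ and gives a surjection $M(w) \to M(w')$; if $l_1$ is inverse, then the span of the remaining $n-1$ basis vectors is a submodule isomorphic to $M(w')$. Either way $M(w')$ is an indecomposable string module of length $n-1$, accessible by the induction hypothesis, and consequently so is $M(w)$.

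For $M = M(w,\phi)$ a band module with $w$ a primitive cyclic word of length $n \ge 2$ and $(V,\phi)$ an indecomposable pair of $k$-dimension $m$, so $\ell = nm$, I would exploit a peak vertex $v_i$ of $w$, meaning a vertex at which $l_i$ is direct and $l_{i+1}$ is inverse; such a vertex exists because $w$ contains both direct and inverse letters. At any peak, the socle summand of $M$ at $v_i$ is $m$-dimensional, spanned by $m$ simple submodules $ke_i^{(j)} \cong S(v_i)$. In the base subcase $m=1$ the quotient $M(w,\lambda)/ke_i$ is directly identified with the string module $M(w')$ for the cut word $w' = l_{i+2}\cdots l_{i-1}$, an indecomposable module of length $n-1$ already handled. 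For $m \ge 2$ one chooses $e_i^{(m)}$ to represent a generator of $V$ modulo a codimension-one $k[T,T^{-1}]$-submodule $(V',\phi')$ (such $V'$ exists when $\phi$ has an eigenvalue in $k$; over more general base fields one uses the analogous codimension-one submodule coming from the socle filtration of $(V,\phi)$ as a $k[T,T^{-1}]$-module) and sets $N = M/ke_i^{(m)}$, of length $nm-1$.

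The crucial step is the indecomposability of $N$: once this is known, the induction hypothesis immediately furnishes accessibility of $N$ and hence of $M$. I would establish indecomposability by fitting $N$ into a short exact sequence
\[
  0 \to M(w,\phi') \to N \to M(w') \to 0
\]
with both outer terms indecomposable, and showing the extension is non-split. A splitting would yield a section $M(w') \to N$, lifting to a surjection $M \to M(w')$; such a surjection cannot exist because any homomorphism from a band module to a string module must factor through the simple top layer of $M$ corresponding to $V/V'$, which is incompatible with the choice of $e_i^{(m)}$ at the very top of $V$. This indecomposability argument for $m \ge 2$ is the main technical obstacle; in particular, handling fields $k$ over which $\phi$ has no eigenvalue requires replacing the Jordan-block intuition by the socle filtration of $(V,\phi)$ as a $k[T,T^{-1}]$-module, and invoking the description of $\End M(w,\phi)$ as a matrix algebra over the local ring $\End_{k[T,T^{-1}]}(V,\phi)$ together with the explicit $\Hom$-calculus between strings and bands recalled in \cite{Ralgcom}.
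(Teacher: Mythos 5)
Your treatment of string modules and of band modules with $\dim V=1$ is fine, but the case $m=\dim V\ge 2$ contains a genuine gap, and it is exactly the point where the field enters. The paper asserts the proposition for an \emph{arbitrary} field $k$. An indecomposable pair $(V,\phi)$ is, as a $k[T,T^{-1}]$-module, of the form $k[T]/(p^e)$ with $p$ irreducible, $p\neq T$; it has a $\phi$-invariant subspace of codimension one only when $\deg p=1$, i.e.\ when $\phi$ has an eigenvalue in $k$. Your fallback of using ``the socle filtration of $(V,\phi)$'' does not repair this: the steps of that filtration have codimension $\deg p$, not $1$. So for $\deg p>1$ the exact sequence $0\to M(w,\phi')\to N\to M(w')\to 0$ on which your whole indecomposability argument rests simply does not exist, and $N=M/ke_i^{(m)}$ has no evident structure. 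Moreover, even in the eigenvalue case your non-splitness argument is not sound as stated: surjections $M(w,\phi)\twoheadrightarrow M(w')$ \emph{do} exist (compose the quotient map $M(w,\phi)\twoheadrightarrow M(w,\bar\phi)=M(w,\lambda)$ onto the top layer with the surjection $M(w,\lambda)\twoheadrightarrow M(w')$ from your $m=1$ case), so the claim ``such a surjection cannot exist'' is false; one would have to analyse the particular composite arising from a putative splitting, which you do not do.

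The paper avoids all of this by producing, for every band module, a \emph{maximal submodule which is a string module}; since string modules are accessible, no induction through band modules and no eigenvalue hypothesis is needed. Concretely, one first treats the Kronecker algebra: for a regular module $M=(M_1,M_2;\alpha,\beta)$ with $\alpha,\beta$ invertible, pick $0\neq x$ in the ($M_1$-part of the) minimal regular submodule $M'$ and let $U$ be \emph{any} vector-space complement of $kx$ in $M_1$ --- no $\phi$-invariance is required. The submodule $N=(U,M_2)$ has no regular summand (such a summand would contain $M'$, hence $x$) and has defect $-1$, so it is a single indecomposable preprojective, i.e.\ a string module. The general band module is then reduced to this situation by collapsing all the identity maps along the band. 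If you want to salvage your quotient-based approach, you would need to make an analogous ``defect'' or minimality argument for the quotient $M/ke$ without presupposing an invariant hyperplane of $V$; as written, the proof does not go through.
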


\begin{proof} It is obvious that string modules are accessible, thus we only have
to consider band modules. It will be sufficient to show the following: {\it any band module
has a maximal submodule which is a string module.} Thus, let $M$ be a band module.

First, let us consider the special case of dealing with the Kronecker algebra, thus
$M = (M_1,M_2;\alpha,\beta)$ with vector spaces $M_1, M_2$ and invertible  linear maps
$\alpha,\beta:M_1\to M_2.$ Let $M'$ be a submodule of $M$ which is a band module
and of smallest possible dimension. Note that $M'$ is uniquely determined and is
contained in any non-zero regular submodule of $M$. Let $0 \neq x \in M'_1$ and
choose a direct complement $U\subset M_1$ for $kx$. Then $N = (U,M_2;\alpha|U,\beta|U)$
is a submodule of $M$, and of course a maximal one. We claim that $N$ is 
a string module. As a submodule of a regular Kronecker module, we can write 
$N = N'\oplus N''$ with $N'$ preprojective and $N''$ regular.
But $N''$ has to be zero, since otherwise $M'\subseteq N''$, thus $x \in N''_1 \subseteq U$, a contradiction.
This shows that $N$ is a direct sum of say $t$ indecomposable preprojective Kronecker
modules. Since $\dim N_1 - \dim N_2 = -1$, it follows that $t = 1.$ This shows that
$N$ is an indecomposable  preprojective Kronecker module and thus a string module.

Now consider an arbitrary special biserial algebra $\Lambda$ with quiver $Q$. There is a primitive
cyclic word $w\in \Omega(\Lambda)$ and an indecomposable  vector space automorphism 
$\phi:V\to V$ such that $M = M(w,\phi).$ Let $w = l_1\cdots l_n$ with letters $l_i$;
we can assume that $l_{n-1}$ is a direct letter, whereas 
$l_n$ is an inverse letter. Denote by $x_{i-1}$ the terminal point of $l_i$,
for $1\le i \le n.$ Then $M$ is given by $t$ copies $V_i$ of $V$, indexed by $0 \le i \le n-1$,
such that  the arrows of $Q$
operate as follows: if $l_i = \alpha$ is a direct letter (thus an arrow), then $\alpha$
is the identity map $V_i \to V_{i-1}$, if  $l_i$ is an inverse  letter, say 
 $l_i = \alpha^{-1}$ for some arrow  $\alpha$, then $\alpha$ 
is the identity map $V_{i-1} \to V_{i}$  for $i\neq n$ and the map $\phi:V_{n-1} \to V_{0}$
for $i = n$. Note that $(V,V;1,\phi)$ is a band module for the Kronecker quiver,
thus, as we have seen already,  it has a maximal submodule $ (U,V;1|U,\phi|U)$
which is a string module. We obtain a submodule $N$ of $M = \bigoplus_{i=0}^{n-1} V_i$  by
taking the subspace $N =  \bigoplus_{i=0}^{n-2} V_ i\oplus U$, where 
$U$ is considered as a subspace of $V_{n-1}$. Since  $ (U,V;1|U,\phi|U)$ is a string
module for the Kronecker algebra, it follows that $N$ is
a string $\Lambda$-module.
\end{proof}

\section{Semigroup algebras}

It should be mentioned that algebras defined by a quiver, commutativity relations and zero relations 
can be
considered as factor algebras of a semigroup algebra $k[S]$  modulo a one-dimensional ideal generated
by a central idempotent $e$, thus the paper may be seen as dealing with a class of minimal
representation-infinite semigroups. 

Let $S$ be a semigroup (a set with an associative binary operation). An element $z$ of $S$ is called
a {\it zero} element provided $sz = z = zs$ for all $s\in S$. Of course, if there is a zero element, then it
is uniquely determined. 
Let $S$ be a semigroup with zero element $z$, we want to  consider the semigroup algebra $k[S]$. 
Obviously, the element $z$ considered as an element of $k[S]$ is a central idempotent and the ideal 
$\langle s\rangle$ generated by $z$ is one-dimensional, thus $z$ is a primitive idempotent. With $z$
also $1-z$ is a central idempotent, and we obtain a direct decomposition of $k[S]$ as a product of
$k$-algebras
$$
 k[S] = \langle z\rangle \times \langle 1-z\rangle = kz \times k[S](1-z).
$$
One may call $ k[S](1-z) = k[S]/\langle z\rangle$ the {\it reduced\/} semigroup algebra of $S$.
It follows that the modules for the reduced semigroup algebra of $S$ are precisely the $k[S]$ modules $M$ with
$zM = 0.$ 

The product decomposition of the semigroup algebra $k[S]$ shows that there is a unique simple
(one-dimensional) $k[S]$-module which is not annihilated by $z$, all other indecomposable
$k[S]$-modules are annihilated by $z$ and thus are modules over the reduced semigroup-algebra. 

Given a quiver $Q$, let $S(Q)$ be obtained from the set of all paths (including the paths of length $0$) by
adding an element $z$ (it will become the zero element). As in the definition of the path algebra $kQ$
of a quiver, define the product of two paths to be the concatenation, if it exist, and to be $z$ otherwise.
In this way, $S(Q)$ becomes a semigroup with zero element $z$, and {\it the reduced semigroup algebra
of $S(Q)$ can be identified with the path algebra $k[Q]$ of the quiver $Q$.}

Of course, if we deal with a set $\rho$ of commutativity relations and zero relations, then we may consider
the factor
semigroup $S(Q,\rho) = S(Q)/\langle \rho\rangle$, this is again a semigroup with zero, and its
reduced semigroup algebra is just the algebra defined by the quiver $Q$ and the relations $\rho$.

\frenchspacing

\end{document}